\documentclass[10pt]{amsart}
\usepackage{amssymb}
\usepackage{amscd}
\usepackage[all]{xy}
\usepackage{xcolor}
\usepackage[colorlinks=false]{hyperref}
\newcounter{TmpEnumi}

\numberwithin{equation}{section}

\def \today{\number\day\space\ifcase\month\or   January\or February\or
   March\or April\or May\or June\or   July\or August\or September\or
   October\or November\or December\fi\   \number\year}

\theoremstyle{definition}
\newtheorem{thm}{Theorem}[section]
\newtheorem{lem}[thm]{Lemma}
\newtheorem{prp}[thm]{Proposition}
\newtheorem{dfn}[thm]{Definition}
\newtheorem{cor}[thm]{Corollary}

\newtheorem{rmk}[thm]{Remark}
\newtheorem{ntn}[thm]{Notation}
\newtheorem{exa}[thm]{Example}
\newtheorem{pbm}[thm]{Problem}

\newtheorem{ctn}[thm]{Construction}

\newcommand{\beq}{\begin{equation}}
\newcommand{\eeq}{\end{equation}}
\newcommand{\beqr}{\begin{eqnarray*}}
\newcommand{\eeqr}{\end{eqnarray*}}
\newcommand{\bei}{\begin{itemize}}
\newcommand{\eei}{\end{itemize}}
\newcommand{\limi}[1]{\lim_{{#1} \to \infty}}

\newcommand{\af}{\alpha}
\newcommand{\bt}{\beta}
\newcommand{\gm}{\gamma}
\newcommand{\dt}{\delta}
\newcommand{\ep}{\varepsilon}

\newcommand{\et}{\eta}
\newcommand{\ch}{\chi}
\newcommand{\io}{\iota}
\newcommand{\te}{\theta}
\newcommand{\ld}{\lambda}
\newcommand{\sm}{\sigma}
\newcommand{\kp}{\kappa}
\newcommand{\ph}{\varphi}
\newcommand{\ps}{\psi}
\newcommand{\rh}{\rho}
\newcommand{\om}{\omega}
\newcommand{\ta}{\tau}

\newcommand{\Ld}{\Lambda}

\newcommand{\Q}{{\mathbb{Q}}}
\newcommand{\Z}{{\mathbb{Z}}}

\newcommand{\C}{{\mathbb{C}}}
\newcommand{\N}{{\mathbb{Z}}_{> 0}}
\newcommand{\Nz}{{\mathbb{Z}}_{\geq 0}}

\newcommand{\tsr}{\operatorname{tsr}}
\newcommand{\RR}{\operatorname{RR}}

\newcommand{\id}{\operatorname{id}}

\newcommand{\spec}{\operatorname{sp}}

\newcommand{\diag}{\operatorname{diag}}
\newcommand{\supp}{\operatorname{supp}}
\newcommand{\rank}{\operatorname{rank}}

\newcommand{\card}{\operatorname{card}}
\newcommand{\Aut}{\operatorname{Aut}}
\newcommand{\Ad}{\operatorname{Ad}}

\newcommand{\rc}{\operatorname{rc}}

\newcommand{\T}{\operatorname{T}}
\newcommand{\QT}{\operatorname{QT}}
\newcommand{\W}{\operatorname{W}}
\newcommand{\Cu}{\operatorname{Cu}}
\newcommand{\arc}{\operatorname{r}}
\newcommand{\U}{\operatorname{U}}

\newcommand{\cM}{{\mathcal{M}}}

\newcommand{\dirlim}{\varinjlim}

\newcommand{\Mi}{M_{\infty}}

\newcommand{\andeqn}{\qquad {\mbox{and}} \qquad}
\newcommand{\aqn}{\quad {\mbox{and}} \quad}

\newcommand{\Ntn}[1]{Notation~\ref{#1}}
\newcommand{\Ctn}[1]{Construction~\ref{#1}}

\newcommand{\Wolog}{Without loss of generality}

\newcommand{\tfae}{the following are equivalent}
\newcommand{\ifo}{if and only if}

\newcommand{\ca}{C*-algebra}
\newcommand{\uca}{unital C*-algebra}

\newcommand{\hm}{homomorphism}

\newcommand{\fd}{finite dimensional}
\newcommand{\tst}{tracial state}

\newcommand{\pj}{projection}

\newcommand{\mvnt}{Murray-von Neumann equivalent}
\newcommand{\mvnc}{Murray-von Neumann equivalence}

\newcommand{\ct}{continuous}

\newcommand{\chs}{compact Hausdorff space}

\newcommand{\CGAa}{C^* (G, A, \af)}
\newcommand{\afGAA}{\af \colon G \to \Aut (A)}

\newcommand{\K}{K}
\newcommand{\Kt}[1]{\K \otimes #1}

\renewcommand{\S}{\subset}

\newcommand{\SM}{\setminus}
\newcommand{\I}{\infty}
\newcommand{\E}{\varnothing}

\newcommand{\Def}[1]{Definition~\ref{#1}}

\title[Dynamical Radius of Comparison]{The
  Dynamical Radius of Comparison for C*-Dynamical Systems}   

\author{M.~Ali Asadi-Vasfi and N.~Christopher Phillips}

\date{24 September 2026}
\curraddr{Department of Mathematics, Purdue University,
 150 N University St, West Lafayette IN 47907, USA.}
\email[]{masadiva@purdue.edu}

\address{Department of Mathematics, University of Oregon,
       Eugene OR 97403-1222, USA.}

\subjclass[2010]{Primary 46L55;
 Secondary 46L80; 19K14.}
\thanks{This material is based upon work supported by the
  US National Science Foundation under
   Grant DMS-2400332.
   The second author was also partially supported by the
   Institut Henri Poincar\'{e} (UAR 839 CNRS Sorbonne Universit\'{e})
   and LabEx CARMIN (ANR-10-LABX-59-01).}

\begin{document}

\begin{abstract}
We introduce a dynamical version $\operatorname{rc} (A, \alpha)$
of the radius of comparison $\operatorname{rc} (A)$ of a unital
C*-algebra,
based on the dynamical Cuntz semigroup. 
We also give an intrinsic ordered semigroup definition,
which agrees with $\operatorname{rc} (A, \alpha)$
when $A$ is residually stably finite,
and is lower semicontinuous for equivariant direct limits
with injective unital maps.

We construct actions $\alpha$ of $G = \mathbb{Z} / 2 \mathbb{Z}$
on simple unital AH~algebras for which $\operatorname{rc} (A, \alpha)$
lies strictly between $\operatorname{rc} (A)$
and $\operatorname{rc} (A) / \operatorname{card} (G)$,
and actions $\alpha$ of a finite group $G$ on unital C*-algebras
for which $\operatorname{rc} (A, \alpha)$
lies strictly between $\operatorname{rc} (A)$
and $\operatorname{rc} (C^* (G, A, \alpha))$.

For a minimal action of a countable discrete group $G$
on a zero dimensional compact metrizable space $X$,
we prove that $\rc (C (X), \alpha) = 0$ if and
only if the action has dynamical comparison as defined by Kerr.

For finite group actions on simple unital stably finite C*-algebras,
assuming the weak tracial Rokhlin property, we get
$\operatorname{rc} (A, \alpha)
 \leq \operatorname{rc} (A) / \operatorname{card} (G)$,
and assuming weak tracial strict
approximate innerness, we get
$\operatorname{rc} (A, \alpha) = \operatorname{rc} (A)$.
\end{abstract}

\maketitle

\tableofcontents

\section{Introduction}\label{6712_Intro}

\indent
A recurring theme in noncommutative geometry is how
dimension and comparison phenomena extend from compact spaces to \ca{s}.
Classically, if $E$ and $F$ are vector bundles over
a compact space $X$ whose covering dimension $\dim(X)$ is finite,
a sufficiently
large gap between their fiber ranks, relative to $\dim(X)$,
guarantees that $E$ embeds as a subbundle of~$F$.
Identifying vector
bundles with projections over $C (X)$ turns this into a comparison
problem for projections in matrix algebras over $C (X)$.
On the other hand,
if $p, q$ are projections in a finite factor with trace $\tau$ and
$\tau (p) \leq \tau (q)$, then $p$ is Murray-von Neumann equivalent to a
subprojection of $q$.
Thus, in this setting, comparison of projections is
completely determined by the trace.
So it is natural to ask how far trace determined comparison
extends to \ca{s}.
Early work in this direction goes back to Rieffel \cite{Rie83a,Rie83b},
and Blackadar \cite{Bla89} later placed
it at the center of the structure theory of simple \ca{s} under the
name Fundamental Comparability Question.

Cuntz \cite{Cun78} extended comparison from projections to positive
elements, with traces replaced by the associated dimension functions.
Later, for a unital stably finite \ca{} $A$, Toms \cite{Tom06}
introduced the radius of comparison $\rc (A)$, which measures how
large a gap between values of dimension functions is needed to force Cuntz
comparison.
In this sense, the radius of comparison captures a
noncommutative analog of covering dimension.
It implicitly played an important role in Toms's examples
\cite{Tom08} of nonisomorphic simple separable unital amenable
C*-algebras with the same Elliott invariant.
The case $\rc (A) = 0$, known as strict comparison,
lies at the heart of the Toms-Winter conjecture and helped
set the stage for major advances in the Elliott classification program
for simple separable amenable \ca{s}.

The aim of this paper is to develop a dynamical counterpart
of this comparison theory,
measuring quantitatively how a group action changes comparison
before one passes to the crossed product.
Three developments are especially important for the motivation of this
paper: the role of the radius of comparison
for \ca{s} beyond the
usual classifiable class, its connection with topological dynamics,
and its
relation to dynamical comparison.

\emph{\textbf{Beyond the classifiable \ca{s}.}}
Elliott--Li--Niu \cite{EllLiNiu24} showed that certain Villadsen-type
algebras,
although lying outside the usual
classifiable class, can nevertheless be classified using ordered
$K$-theoretic data together with the radius of comparison.
This picture does not extend to full generality;
see Hirshberg--Phillips~\cite{HrsPh5}.
More recently, Elliott--Niu \cite{EllNiu25} developed
this direction further by replacing the single numerical radius of
comparison with a comparison radius function.
Related work has also
studied how $\rc (A)$ compares with the radii of comparison
of the fixed point algebra
$A^{\alpha}$ and the crossed product $C^*(G, A, \alpha)$; see
\cite{AsvGlsPhl, Asd2, AHS25}.

\emph{\textbf{Topological dynamics.}}
The Phillips--Toms conjecture predicts that,
for a free minimal action $\af$ of a
countable amenable group $G$ on a compact metrizable space~$X$,
\[
\rc \bigl( C^*(G, X, \af) \bigr)
= \frac{1}{2}\cdot \operatorname{mdim} (\af).
\]
Thus, the radius of comparison of the crossed product
is expected to reflect the dimensional complexity
of the underlying dynamical system.
(See \cite{NiuRcMdim,HrsPhMcid} for substantial evidence
for this direction.)
Studying comparison directly at the level of the action,
before passing to the crossed product,
might shed further light on this conjecture and related problems.

\emph{\textbf{Dynamical comparison.}}
Kerr \cite{Kerr20} took a more direct approach by studying comparison
at the level of the dynamical system itself, rather than first
passing to the crossed product.
His notion of dynamical comparison plays a role analogous
to strict comparison: roughly speaking,
it asks whether one subset of the
space can be moved, piece by piece, into another using the action.
Bosa-Perera-Wu-Zacharias~\cite{BsPrWuZc}
later extended this point of
view to general C*-dynamical systems through the dynamical Cuntz
semigroup,
which incorporates the action directly into Cuntz comparison.

These developments point to a natural quantitative gap.
Strict comparison
has the radius of comparison as a numerical refinement,
whereas dynamical
comparison remains qualitative.
This leads naturally to the question:
\begin{quote}
Can dynamical comparison be measured quantitatively, before passing to
the crossed product?
\end{quote}

We introduce a dynamical radius of
comparison $\rc (A, \af)$ for a C*-dynamical system $\afGAA$,
relate it to the radius of
comparison of $A$, to comparison in the crossed product, and to
comparison phenomena arising in topological dynamics.
It is obtained from Toms's radius of comparison by replacing
Cuntz comparison with dynamical Cuntz comparison
and quasitraces with $G$-invariant quasitraces.
(See \Def{D_0817_eqrc_dfn}.)
Roughly speaking, $\rc (A, \af)$ measures how large
a gap between invariant quasitracial sizes is needed to guarantee
comparison when pieces of positive elements are allowed to move under
the action.
For the trivial action $\io \colon G \to \Aut (A)$,
assuming residual stable finiteness, one recovers
the radius of comparison of $A$: $\rc (A, \io) = \rc (A)$.
In general,
the dynamical radius of comparison genuinely captures comparison
information arising from the action itself and can differ
from the radius of comparison of both the underlying algebra
and the crossed product.
See the examples \ref{I_6712_Trv})--(\ref{I_6921_Bigger})
later in the introduction.

For commutative C*-algebras, the dynamical radius of comparison is
closely related to Kerr's topological dynamical comparison.
Let $X$ be a zero dimensional compact metrizable space, let $G$ be
a countable discrete group, and let $G$ act minimally on~$X$.
Let $\alpha \colon G \to \Aut (C (X))$
be the induced action, given by
$\alpha_g (f) (x) = f (g^{-1} x)$.
In Theorem~\ref{CorZeroDimRcIffKerrComparison},
we show that the following are equivalent:
\begin{enumerate}
\item\label{I_6920_rcCX}
$\rc (C (X), \alpha) = 0$.
\item\label{I_6920_DynX}
The action has dynamical comparison in the sense of Kerr~\cite{Kerr20}.
\end{enumerate}
Thus, vanishing of the dynamical radius of comparison
recovers Kerr's dynamical comparison.
At the same time,
even in the commutative setting, the dynamical radius can differ
from the radius of comparison of the underlying C*-algebra.
See Example~\ref{Ex_BPPositiveDynRc} for a minimal action
on a zero dimensional space for which
$\rc (C (X)) = 0$ but $\rc (C (X), \alpha) > 0$.

We next turn to the noncommutative setting and compare the
following quantities:
\begin{itemize}
\item
the dynamical radius of comparison $\rc (A, \af)$;
\item
the radius of comparison $\rc (A)$ of $A$;
\item
the radius of comparison $\rc (\CGAa)$ of the crossed product.
\end{itemize}
One of the main themes of the paper is that these
three levels of comparison can carry genuinely different information.

To relate dynamical comparison in $(A, \alpha)$
to comparison in the crossed product,
we also introduce an auxiliary radius of comparison
$\rc' (A, \alpha)$, which measures comparison of elements
of $A$ inside the crossed product.
In general, Lemma~\ref{Lem_rcp} gives
\begin{equation}\label{Eq_6920_rc_cmmp}
\rc' (A, \alpha) \leq \rc (A, \alpha)
\andeqn
\rc' (A, \alpha) \leq \rc \bigl( C^*(G, A, \alpha) \bigr).
\end{equation}
Moreover, when every quasitrace on $A$ is $G$-invariant, we have
$\rc (A, \alpha) \leq \rc (A)$.
These inequalities need not be equalities.
Even basic examples show
that the three levels of comparison do not in general collapse to a
single invariant.
In the following examples (except the last two), $G$ is finite abelian,
$\beta \colon G \to \Aut (B)$ is as given,
$A_0$ is a residually stably finite \uca,
$A = A_0 \otimes B$, and $\alpha_g = \id_{A_0} \otimes \beta_g$.
\begin{enumerate}
%
\item\label{I_6712_Trv}
Let $\bt$ be the trivial action on~$\C$.
Then
\[
\rc (A) = \rc (A, \af) = \rc' (A, \af) = \rc (\CGAa).
\]
(See Lemma~\ref{L_1412_Triv}.)
\item\label{I_6712_InnReg}
Let $\bt$ be conjugation by the regular representation on $L (l^2 (G))$.
Then
\[
\rc (A) = \rc (A, \af) = \rc' (A, \af) = \rc (\CGAa).
\]
(See Lemma~\ref{L_1412_InnReg}.)
\item\label{I_6712_CGA}
Let $\bt$ be translation on $C (G)$.
Then
\[
\rc (A, \af) = \rc' (A, \af) = \rc (\CGAa) = \dfrac{\rc (A)}{\card (G)}.
\]
(See Lemma~\ref{L_1412_CGA}.)
\item\label{I_6712_PjUn}
Take $G$ specifically to be $(\Z / 2 \Z)^2$,
and let $\bt \colon G \to \Aut (M_2)$
be conjugation by a specific choice of
a projective unitary representation on~$\C^2$ which is not unitary.
Assume a mild additional condition.
Then
\[
\rc (A) = \rc (A, \af)
\andeqn
\rc' (A, \af) = \rc (\CGAa) = \dfrac{\rc (A)}{2}.
\]
(See Lemma~\ref{L_5Z10_PjUn}.)
\item\label{I_6712_DSum}
Take $G$ specifically to be $(\Z / 2 \Z)^2$,
and take $A$ to be the direct sum of suitable choices of
an algebra as in~(\ref{I_6712_CGA}) and an algebra as in~(\ref{I_6712_PjUn}).
Then
\[
\rc (\CGAa) < \rc (A, \af) < \rc (A).
\]
(See Example~\ref{Ex_Three_Radii_Different}.)
\item\label{I_6921_Bigger}
If $G$ is not amenable, one can have $\rc (A, \af) > \rc (A)$.
(See Example~\ref{Ex_BPPositiveDynRc}.)
\end{enumerate}

We strengthen the inequalities~(\ref{Eq_6920_rc_cmmp})
and also relate $\rc (A, \af)$ to $\rc (\CGAa)$ and $\rc (A)$
for two common important classes of actions of finite groups.
For actions with forms of the Rokhlin property,
dynamical comparison is closely connected with comparison
in the crossed product.
In Theorem~\ref{WTRPMainCor},
which assumes that $A$ is simple and stably finite
and $\alpha$ has the weak tracial Rokhlin property,
we relates the various radii of comparison as follows:
\[
\rc (A, \alpha)
= \rc' (A, \alpha)
\leq \rc \bigl( C^*(G, A, \alpha) \bigr)
= \frac{\rc (A^{\alpha})}{\card (G)}
\leq \frac{\rc (A)}{\card (G)}.
\]
(This is suggested by, but is not quite as good as,
(\ref{I_6712_CGA}) above.)
In fact (Theorem~\ref{MainThm_1}), on the purely positive part,
dynamical Cuntz comparison agrees with Cuntz comparison inherited
from the crossed product.
We give a similar result when $\af$ has the Rokhlin property
but $A$ is not assumed simple.
By contrast, actions with forms of strict approximate innerness
preserve the radius of comparison of the original algebra.
If $A$ is infinite dimensional, simple, unital, and stably finite, and
$\alpha$ is weakly tracially strictly approximately inner, then
\[
\rc (A) = \rc (A, \alpha) = \rc' (A, \alpha) \leq \rc \bigl( \CGAa \bigr).
\]
(Compare with~(\ref{I_6712_InnReg}) above.)
We also get information on the purely positive parts of
the relevant Cuntz semigroups.
See Corollary~\ref{C_6829_SoftInj}.
The Rokhlin and approximately inner results therefore describe two
ways a finite group action can influence
comparison: sufficiently free actions can substantially improve it,
whereas sufficiently inner actions leave the radius unchanged at the
dynamical level.
These are not the only possibilities.
In~(\ref{I_6712_DSum}) above, the three radii are all different.
In Section~\ref{ImportantExample},
we construct a family of intermediate examples:
for $G = \mathbb{Z}/2 \mathbb{Z}$, actions on simple separable
unital AH algebras for which
\begin{equation}\label{Eq_6920_Interm}
\frac{\rc (A)}{\card (G)}
< \rc (A, \alpha)
= \rc \bigl( C^*(G, A, \alpha) \bigr)
< \rc (A).
\end{equation}

The dynamical radius also admits an intrinsic ordered semigroup
description.
We introduce an algebraic dynamical radius
$\operatorname{r}_{A, \alpha}$, defined entirely in terms of the ordered
comparison structure, and prove that for actions of discrete amenable
groups on unital residually stably finite \ca{s},
\begin{equation}\label{Eq_6920_DLim}
\operatorname{r}_{A, \alpha} = \rc (A, \alpha).
\end{equation}
Thus the dynamical radius is not just a numerical invariant extracted
from quasitraces and positive elements; it is already encoded in the
underlying ordered semigroup.
This algebraic viewpoint is useful in its own right.
In particular, it
leads to a direct limit permanence result: for an equivariant direct
system with unital injective connecting maps and residually stably
finite limit, we have
\[
\rc (A, \alpha) \leq \liminf_{n \to \infty} \rc (A_n, \alpha^{(n)}).
\]

We give a brief overview of the paper
and some standard notation and conventions.
Section~\ref{Sec_5Z10_DCmp} reviews dynamical Cuntz comparison
and the natural maps relating the fixed point algebra,
the original algebra, the dynamical Cuntz semigroup,
and the crossed product.
In Section~\ref{Sec_5Z10_Dyn_rc},
we introduce the dynamical radius of comparison $\rc (A, \alpha)$
and the auxiliary invariant $\rc' (A, \alpha)$,
establish their basic properties, and give the examples
\ref{I_6712_Trv})--(\ref{I_6712_DSum}) above.
Section~\ref{Kerrdynamcomp} relates our framework
to Kerr's dynamical comparison
for minimal actions on zero dimensional compact metrizable spaces.
Section~\ref{Sec_6712_Alg} gives an algebraic description
of the dynamical radius, which is then used in the following section
to prove~(\ref{Eq_6920_DLim}) whan $A$ is residually stably finite.
Sections~\ref{Sec_6827_Rok} and~\ref{Sec_0818_AppInn}
consider actions with forms of the Rokhlin property
and actions with forms of strict approximate innerness,
and compare $\rc (A, \af)$, $\rc (A, \af)$, $\rc (A)$,
$\rc (\CGAa)$, and $\rc (A^{\af})$.
In Section~\ref{ImportantExample}, we construct actions of
$G = \mathbb{Z}/2 \mathbb{Z}$ satisfying~(\ref{Eq_6920_Interm}).
We conclude in Section~\ref{Sec_4712_Open} with several open problems.

We collect here, for easy reference, some standard notation and terminology
which will be used throughout.

All ideals in \ca{s} are assumed closed and $2$-sided.

\begin{ntn}\label{N_6816_Diag}
Let $A$ be a \ca, and let $a_1, a_2, \ldots, a_n$ be elements of~$A$,
or of matrix algebras over~$A$, possibly of varying sizes.
The (block) diagonal matrix with diagonal entries $a_1, a_2, \ldots, a_n$
is denoted by
\[
\diag (a_1, a_2, \ldots, a_n)
\qquad {\mbox{or}} \qquad
\bigoplus_{k = 1}^n a_k,
\]
as convenient.
If the index set is not equipped with an order,
for example, if it is a finite group~$G$,
in the expression $\bigoplus_{g \in G} a_g$
we make an arbitrary choice of order.
This will occur in Cuntz (sub)equivalences, where the order does not matter.
\end{ntn}

\begin{ntn}\label{N_6816_MI}
Let $A$ be a \ca.
We let $\Mi (A) = \bigcup_{n = 1}^{\I} M_n (A)$
be the algebraic direct limit of the finite matrix algebras $M_n (A)$
under the maps $a \mapsto a \oplus 0$.

When, following Notation~\ref{N_6816_Diag},
we write $\bigoplus_{s \in S} a_s$ for elements $a_s \in \Mi (A)$,
for definiteness we interpret $a_s$ as being in $M_{n_s} (A)$
for the least choice of $n_s$ for which this is true.
Up to Cuntz equivalence, the choice of $n_s$ does not matter.
\end{ntn}

\begin{ntn}\label{N_6816_QT}
Let $A$ be a \uca.
Its tracial state space is denoted by $\T (A)$.
Quasitraces are always normalized $2$-quasitraces,
and the space of all of them is written $\QT (A)$.
The unitary group of~$A$ is denoted by $\U (A)$.
For any \ca~$A$,
we write $A_{+}$ for the set of positive elements of~$A$.
\end{ntn}

\begin{ntn}\label{N_6816_FixPt}
Let $A$ be a \ca, and let $\afGAA$ be an action of $G$ on~$A$.
We write $A^{\af}$
for the algebra of fixed points under~$\af$.
We extend this notation to objects associated to~$A$
with the actions induced by~$\af$,
so that, for example, $\Cu (A)^{\af}$ is the fixed points
of $\Cu (A)$ and, when $A$ is unital,
$\QT (A)^{\af}$ is the fixed points in $\QT (A)$.
\end{ntn}

\begin{ntn}\label{N_5Z11_MU}
For any set~$S$, let $(e_{s, t})_{s, t \in S}$
denote the standard system of matrix units in $K (l^2 (S))$
(so, when $S$ is finite, in $L (l^2 (S))$).
In particular, $S$ could be $\{ 1, 2, \ldots, n \}$,
or a discrete group.
We also use this notation in $\Mi = \Mi (\C)$.

We set $K = K (l^2 (\N))$.

If $\ph \colon A \to B$ is a \hm{} of \ca{s},
we usually use the same letter for the induced maps on matrix algebras etc.
Thus, we write $\ph \colon \Mi (A) \to \Mi (B)$,
$\ph \colon \Kt{A} \to \Kt{B}$,
$\ph \colon M_n \otimes A \to M_n \otimes B$, etc.
We follow the same convention for actions of groups on \ca{s}.
\end{ntn}

\begin{ntn}\label{N_6816_Ctd}
For $\ep \geq 0$, let $f_{\ep} \colon [0, \infty) \to [0, \infty)$
be the function $f_{\ep} (\ld) = \max (0, \, \ld - \ep) = (\ld - \ep)_{+}$.
Then, using functional calculus, if $A$ is a \ca{} and $a \in A_{+}$,
define $(a - \ep)_{+} = f_{\ep} (a)$.
\end{ntn}

We will frequently use the following by now common
but somewhat unfortunate terminology.

\begin{dfn}\label{D_6816_ResSF}
A \ca~$A$ is said to be {\emph{residually stably finite}}
if $A / J$ is stably finite for every ideal $J \subseteq A$.
\end{dfn}

This condition is much stronger than being
even residually finite dimensional in the conventional sense,
namely that there be a separating family of finite dimensional quotients.
For example, the full group \ca{} $C^* (F_{\I})$ is
residually finite dimensional,
but every separable \uca, even $\mathcal{O}_2$, is a quotient.

Most of the examples in this paper will be simple and stably finite,
or of the form $C (X, M_n)$.
These are all residually stably finite.

\subsection*{Acknowledgments}
Both authors would like to especially thank Andrew S.~Toms
for his comments on an early draft of the paper.
They also thank Jianchao Wu for his comments
on Definition~\ref{D_0908_EqCmp} during a meeting in August 2020.

Parts of this work were carried out during visits by the second
author to the Institute of Mathematics of the Czech Academy of Sciences
in 2022, by both authors to the Fields Institute in 2023,
by the second author to Purdue University in 2025,
by the first author to the University of Oregon, 
and the second author to the Institut Henri Poincar\'{e} in 2026.
The authors gratefully acknowledge the hospitality of these institutions.

\section{Dynamical Cuntz comparison for C*-algebras}\label{Sec_5Z10_DCmp}

In this section, we recall ordinary Cuntz subequivalence and formulate
dynamical Cuntz comparison for discrete group actions.
We also review several basic properties of
dynamical comparison and the associated dynamical Cuntz semigroup.
For the convenience of the reader,
we give a presentation directly in terms of \ca{s}
rather than in the abstract setting of~\cite{BsPrWuZc}.
That we get the same semigroup is in~\cite{BsPrWuZc}
is in Lemma \ref{L_6921_Agree_BPWZ}(\ref{L_6921_Agree_BPWZ_Agree}).

\subsection{Dynamical Cuntz comparison and subequivalence}

We recall the standard definitions associated with the Cuntz semigroup.
For more detailed presentations, see \cite{ArPrTm} and~\cite{GP24}.
Many aspects of the Cuntz semigroup and Cuntz
comparison have been studied extensively;
see, for example,
\cite{CowEllIva08CuInv, Cun78, Thi17}.

\begin{dfn}\label{Sec_CuSub}
\mbox{}
\begin{enumerate}
\item\label{Cuntz_def_property_a}
Let $A$ be a \ca{} and let $a, b \in \Mi (A)_{+}$.
We say that $a$ is {\emph{Cuntz subequivalent to~$b$ in~$A$}},
written $a \precsim_{A} b$,
if there exists a sequence $(v_n)_{n = 1}^{\infty}$ in $\Mi (A)$
such that $\limi{n} v_n b v_n^* = a$.
We say that $a$ and $b$ are {\emph{Cuntz equivalent in~$A$}},
written $a \sim_{A} b$,
if $a \precsim_{A} b$ and $b \precsim_{A} a$.
This relation is an equivalence relation,
and we write $\langle a \rangle_A$ for the equivalence class of~$a$.
We define $\W (A) = \Mi (A)_{+} / \sim_A$,
with the commutative semigroup operation
$\langle a \rangle_A + \langle b \rangle_A = \langle a \oplus b \rangle_A$
and the partial order
$\langle a \rangle_A  \leq \langle b \rangle_A$
if $a \precsim_{A} b$.
We write $0$ for~$\langle 0 \rangle_A$.
\item\label{D_6816_CuA}
Let $A$ be a \ca.
The \emph{Cuntz semigroup} of $A$ is $\Cu (A) = \W (\Kt{A})$.
We do not need to use $\Mi ( \Kt{A})$;
thus, equivalently, $\Cu (A) = (\Kt{A})_{+} / \sim_A$,
with $\sim_A$ being restricted to $(\Kt{A})_{+}$ in the obvious way.
Moreover, $\W (A)$ is a subsemigroup of $\Cu (A)$.
\item\label{Cuntz_def_property_e}
Let $A$ and $B$ be \ca{s}, and
let $\ph \colon A \to B$ be a \hm.
We define $\Cu (\ph) \colon \Cu (A) \to \Cu (B)$
by $\langle a \rangle_A \mapsto \langle \ph (a) \rangle_B$
for $a \in (\Kt{A})_{+}$.
We define $\W (\ph)$ similarly.
For both, we often just write $\ph_*$.
\item\label{Cuntz_def_property_6815}
Let $A$ be a \ca{} and let $a, b \in \Mi (A)_{+}$.
We say that $a$ is \emph{compactly contained} in $b$,
denoted by $a \ll_A b$, if for any increasing sequence
$(\et_n)_{n \in \N}$ in $\Cu (A)$
with $\langle b \rangle_A \leq \sup_n \et_n$,
there is $n_0 \in \N$ such that
$\langle a \rangle_A \leq \et_{n_0}$.
In $\W (A)$ and $\Cu (A)$,
we write $\langle a \rangle_A \ll \langle b \rangle_A$.
\end{enumerate}
\end{dfn}

Recall that for $a, b \in \Mi (A)_{+}$, we have $a \ll_A b$
\ifo{} there is $\ep > 0$ such that $a \precsim_A (b - \ep)_{+}$,
equivalently, $\langle a \rangle \leq \langle (b - \ep)_{+} \rangle$.


We reformulate the abstract definition of dynamical Cuntz comparison,
given in Section 8.1 of \cite{BsPrWuZc}
for actions of discrete groups on C*-algebras.
(See Corollary~8.3 of \cite{BsPrWuZc}).

\begin{dfn}[\cite{BsPrWuZc}]\label{D_0908_EqCmp}
Let $A$ be a \ca, let $G$ be a discrete group,
and let $\af \colon G \to \Aut (A)$ be an action of $G$ on~$A$.
Let $a, b \in \Mi (A)_{+}$.
We say that $a$ is {\emph{dynamically Cuntz subequivalent}} to~$b$,
written $a \precsim_{A, \af} b$, if for any $\varepsilon > 0$,
$(a - \varepsilon)_+ \ll_A b$
or there are $m, n_1, n_2, \ldots, n_m \in \N$,
and $g_{j, k} \in G$ and $x_{j, k} \in M_{\infty} (A)_{+}$
for $j = 1, 2, \ldots, m$ and $k = 1, 2, \ldots, n_j$,
such that
\begin{equation}\label{Eq_1412_Main}
\begin{split}
& (a - \varepsilon)_+ \ll_A \bigoplus_{k = 1}^{n_1} x_{1, k},
\quad
\bigoplus_{k = 1}^{n_1} \alpha_{g_{1, k}} (x_{1, k})
 \ll_A \bigoplus_{k = 1}^{n_2} x_{2, k},
\\
& \hspace*{1em} {\mbox{}}
\bigoplus_{k = 1}^{n_2} \alpha_{g_{2, k}} (x_{2, k})
 \ll_A \bigoplus_{k = 1}^{n_3} x_{3, k},
  \quad \ldots, \quad
\bigoplus_{k = 1}^{n_{m - 1}}
\alpha_{g_{m - 1, k}} (x_{m - 1, k})
 \ll_A \bigoplus_{k = 1}^{n_m} x_{m, k},
\\
& \hspace*{1em} {\mbox{}}
 \bigoplus_{k = 1}^{n_m} \alpha_{g_{m, k}} (x_{m, k}) \ll_A b.
\end{split}
\end{equation}

We further say that $a, b \in \Mi (A)_{+}$
are {\emph{dynamically Cuntz equivalent}},
written  $a \sim_{A, \af} b$,
if $a \precsim_{A, \af} b$ and $b \precsim_{A, \af} a$.
\end{dfn}

\begin{dfn}\label{D_0817_EqCmp_Orig}
Let $A$ be a C*-algebra and let $\alpha \colon G \to \Aut (A)$
be an action of a discrete group.
Given elements $a, b \in M_{\infty} (A)_{+}$, we
say that $a$ is {\emph{one step dynamically Cuntz subequivalent}}
(or {\emph{strongly dynamically Cuntz subequivalent}})
to $b$,
and write $a \precsim_{A, \af, 0} b$,
if for any $\varepsilon > 0$ there are
\[
\delta > 0,
\qquad
n \in \N,
\qquad
g_1, g_2, \ldots, g_n \in G,
\andeqn
x_1, x_2, \ldots, x_n \in M_{\infty} (A)_{+}
\]
such that
\begin{equation}\label{Eq_6920_1step}
(a - \varepsilon)_+ \precsim_A \bigoplus_{j = 1}^n x_j
\andeqn
\bigoplus_{j = 1}^n \alpha_{g_j} (x_j) \precsim_A (b - \delta)_+.
\end{equation}

We write $a \sim_{A, \alpha, 0} b$ if
$a \precsim_{A, \alpha, 0} b$ and $b \precsim_{A, \alpha, 0} a$.
\end{dfn}

Dynamical Cuntz subequivalence is close to being the
transitive closure of one step dynamical Cuntz subequivalence.
See Remark~\ref{N_6816_TransCl}.
It is not known if one step dynamical Cuntz subequivalence
is transitive.
However, this is known if $\RR (A) = 0$ and $\tsr (A) = 1$.
Once we have Lemma \ref{LemOneStepBPWZ}(\ref{L_6921_Agree_BPWZ_1step})
below, this is the second part of Corollary~8.3 of~\cite{BsPrWuZc}.

In the following lemma, (\ref{LemOneStepBPWZ_ll}) is little
more than just a restatement of~(\ref{LemOneStepBPWZ.a}).
It will be used to identify our construction with that of~\cite{BsPrWuZc}.

\begin{lem}\label{LemOneStepBPWZ}
Let $A$ be a C*-algebra and let $\alpha \colon G \to \Aut (A)$
be an action of a discrete group.
Let $a, b \in M_{\infty} (A)_+$.
Then the following are equivalent.
\begin{enumerate}
\item\label{LemOneStepBPWZ.a}
$a \precsim_{A, \alpha, 0} b$
in the sense of Definition~\ref{D_0817_EqCmp_Orig}.
\item\label{LemOneStepBPWZ_ll}
For every $\et \in \W (A)$ with $\et \ll \langle a \rangle_A$,
there are $n \in \Nz$, $\xi_1, \xi_2, \ldots,\xi_n \in \W (A)$, and
$g_1, g_2 \ldots,g_n \in G$ such that
\begin{equation}\label{Eq_6920_1st_inWA}
\et \ll \sum_{j = 1}^n \xi_j
\andeqn
\sum_{j = 1}^n (\af_{g_j})_* (\xi_j) \ll \langle b \rangle_A.
\end{equation}
\end{enumerate}
\end{lem}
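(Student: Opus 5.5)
The plan is to translate between the element-level statement (\ref{LemOneStepBPWZ.a}) and the semigroup-level statement (\ref{LemOneStepBPWZ_ll}). Two standard facts about $\W (A)$ do most of the work. First, $\et \ll \langle a \rangle_A$ \ifo{} $\et \leq \langle (a - \ep)_+ \rangle_A$ for some $\ep > 0$; this is the remark following Definition~\ref{Sec_CuSub}, applied to a representative of~$\et$. Second, $\langle (a - \ep)_+ \rangle_A \ll \langle (a - \ep')_+ \rangle_A$ whenever $0 < \ep' < \ep$, since $(a - \ep)_+ = ((a - \ep')_+ - (\ep - \ep'))_+$. We also use that $(\af_g)_*$ preserves $\leq$ and $\ll$, since $\af_g$ is an automorphism and so $(\af_g)_*$ is an order isomorphism of $\W (A)$. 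The case $n = 0$ in (\ref{LemOneStepBPWZ_ll}), meaning an empty sum equal to $0$, causes no trouble: in (\ref{LemOneStepBPWZ.a}) we may always take $x_j = 0$.

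\textbf{(\ref{LemOneStepBPWZ.a}) implies (\ref{LemOneStepBPWZ_ll}).} Let $\et \ll \langle a \rangle_A$, and choose $\ep > 0$ with $\et \leq \langle (a - 2\ep)_+ \rangle_A$. Apply (\ref{LemOneStepBPWZ.a}) with $\ep$. This gives $\dt > 0$, elements $g_j \in G$, and $x_j \in \Mi (A)_+$ with $(a - \ep)_+ \precsim_A \bigoplus_j x_j$ and $\bigoplus_j \af_{g_j} (x_j) \precsim_A (b - \dt)_+$. Put $\xi_j = \langle x_j \rangle_A$. Then
\[
\et \leq \langle (a - 2\ep)_+ \rangle_A \ll \langle (a - \ep)_+ \rangle_A \leq \sum_j \xi_j,
\]
so $\et \ll \sum_j \xi_j$. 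Also $\sum_j (\af_{g_j})_* (\xi_j) = \bigl\langle \bigoplus_j \af_{g_j} (x_j) \bigr\rangle_A \leq \langle (b - \dt)_+ \rangle_A \ll \langle b \rangle_A$, so $\sum_j (\af_{g_j})_* (\xi_j) \ll \langle b \rangle_A$.

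\textbf{(\ref{LemOneStepBPWZ_ll}) implies (\ref{LemOneStepBPWZ.a}).} Given $\ep > 0$, take $\et = \langle (a - \ep)_+ \rangle_A$. If $\ep < \| a \|$, then $\et \ll \langle (a - \ep/2)_+ \rangle_A \leq \langle a \rangle_A$. If $\ep \geq \| a \|$, then $\et = 0 \ll \langle a \rangle_A$ trivially. Now (\ref{LemOneStepBPWZ_ll}) gives classes $\xi_j$ and group elements~$g_j$. Choose representatives $x_j \in \Mi (A)_+$ with $\xi_j = \langle x_j \rangle_A$. Then $(a - \ep)_+ \precsim_A \bigoplus_j x_j$, since $\ll$ implies $\leq$. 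Finally, $\bigl\langle \bigoplus_j \af_{g_j} (x_j) \bigr\rangle_A \ll \langle b \rangle_A$ gives $\dt > 0$ with $\bigoplus_j \af_{g_j} (x_j) \precsim_A (b - \dt)_+$, again by the remark following Definition~\ref{Sec_CuSub}.

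\textbf{Main subtlety.} The only delicate point is the meaning of $\ll$ in $\W (A)$. Definition~\ref{Sec_CuSub}(\ref{Cuntz_def_property_6815}) defines $\ll$ for elements of $\W (A)$ using increasing sequences in $\Cu (A)$. The characterization by $(b - \ep)_+$ holds with that definition, and that is exactly what the argument uses. I would quote the remark after Definition~\ref{Sec_CuSub} for this and check that it is applied only to elements of $\Mi (A)_+$. The other point to check is that equivariance of $(\af_g)_*$ gives $(\af_g)_* \langle x \rangle_A = \langle \af_g (x) \rangle_A$, which is immediate from Definition~\ref{Sec_CuSub}(\ref{Cuntz_def_property_e}).
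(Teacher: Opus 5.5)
Your proposal is correct and follows the same argument as the paper. In the forward direction you choose $\ep$ with $\et \leq \langle (a - 2\ep)_+ \rangle_A$ and apply the one step condition at the smaller cutoff, where the paper uses $\ep$ and $\ep/2$; in the reverse direction you take $\et = \langle (a - \ep)_+ \rangle_A$ and turn the second compact containment into a bound by $(b - \dt)_+$, exactly as the paper does. Your additional checks (that $\et \ll \langle a \rangle_A$, and the case $n = 0$) fill in details the paper leaves implicit, though the case split on $\ep < \| a \|$ is unnecessary because $(a - \ep)_+ \ll_A a$ holds directly.
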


\begin{proof}
We prove that (\ref{LemOneStepBPWZ.a}) implies~(\ref{LemOneStepBPWZ_ll}).
Choose $\ep > 0$ such that $\et \leq \langle (a - \ep)_{+} \rangle_A$.
Choose $\dt > 0$, $n \in \N$, $g_1, g_2, \ldots, g_n \in G$,
and $x_1, x_2, \ldots, x_n \in M_{\infty} (A)_{+}$
such that~(\ref{Eq_6920_1step}) holds with $\frac{\ep}{2}$ in place of~$\ep$.
Set $\xi_j = \langle x_j \rangle_A$ for $j = 1, 2, \ldots, n$.
This gives
\[
\et \leq \langle (a - \ep)_{+} \rangle_A
  \ll \left\langle \left( a - \frac{\ep}{2} \right)_{+} \right\rangle_A
  \leq \sum_{j = 1}^n \xi_j
\aqn
\sum_{j = 1}^n (\af_{g_j})_* (\xi_j)
  \leq \langle (b - \dt)_{+} \rangle_A
  \ll \langle b \rangle_A,
\]
which is~(\ref{LemOneStepBPWZ_ll}).

For the reverse implication, assume~(\ref{LemOneStepBPWZ_ll}),
and let $\ep > 0$.
Set $\et = \langle (a - \ep)_{+} \rangle_A$.
Choose $n \in \Nz$, $\xi_1, \xi_2, \ldots,\xi_n \in \W (A)$, and
$g_1, g_2 \ldots,g_n \in G$ such that~(\ref{Eq_6920_1st_inWA}) holds.
For $j = 1, 2, \ldots, n$, choose $x_j \in M_{\infty} (A)_{+}$
such that $\xi_j = \langle x_j \rangle_A$.
By the second part of(\ref{Eq_6920_1st_inWA}), there is $\dt > 0$
such that
$\sum_{j = 1}^n (\af_{g_j})_* (\xi_j) \leq \langle (b - \dt)_{+} \rangle_A$.
Part~(\ref{LemOneStepBPWZ.a}) follows.
\end{proof}

\begin{lem}\label{R_0817_NonEqToEq}
Let $\afGAA$ be an action of a discrete group $G$ on a \ca~$A$.
Let $a, b \in \Mi (A)_{+}$.
Then:
\begin{enumerate}
%
\item\label{Item_L_6816_A_to_0}
If $a \precsim_A b$, then $a \precsim_{A, \alpha, 0} b$.
\item\label{Item_L_6816_0_to_af}
If $a \precsim_{A, \alpha, 0} b$, then $a \precsim_{A, \alpha} b$.
\item\label{Item_R_0817_NonEqToEq_AToDyn}
If $a \precsim_A b$, then $a \precsim_{A, \alpha} b$.
\item\label{Item_R_0817_NonEqToEq_Translate}
For every $g \in G$, we have
$a \sim_{A, \alpha, 0} \alpha_g (a)$
and $a \sim_{A, \alpha} \alpha_g (a)$.
\item\label{Item_R_0817_NonEqToEq_DynToCP}
If $a \precsim_{A, \alpha} b$, then $a \precsim_{C^* (G, A, \af)} b$.
\item\label{Item_R_0817_NonEqToEq_Trv}
If $\af$ is the trivial action,
then $a \precsim_{A, \alpha} b$ \ifo{} $a \precsim_{A} b$.
\end{enumerate}
\end{lem}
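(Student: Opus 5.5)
Parts (\ref{Item_L_6816_A_to_0}) through (\ref{Item_R_0817_NonEqToEq_Translate}) follow directly from the definitions; part (\ref{Item_R_0817_NonEqToEq_DynToCP}) is the only one with real content. For part (\ref{Item_L_6816_A_to_0}), given $\ep > 0$, use $(a - \ep)_+ \ll_A a \precsim_A b$. Then choose $\dt > 0$ with $(a - \ep)_+ \precsim_A (b - \dt)_+$, which is possible by the remark after Definition~\ref{Sec_CuSub}, since compact containment in $\langle b \rangle_A$ is witnessed by some $(b - \dt)_+$. Now take $n = 1$, $g_1 = 1$, and $x_1 = (a - \ep)_+$, so that (\ref{Eq_6920_1step}) holds.

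For part (\ref{Item_L_6816_0_to_af}), given $\ep > 0$, apply Definition~\ref{D_0817_EqCmp_Orig} with $\ep/2$ in place of~$\ep$. This gives $x_j$, $g_j$, and $\dt$. We then have
\[
(a - \ep)_+ \ll_A \left( a - \tfrac{\ep}{2} \right)_+ \precsim_A \bigoplus_j x_j
\andeqn
\bigoplus_j \af_{g_j} (x_j) \precsim_A (b - \dt)_+ \ll_A b.
\]
This is (\ref{Eq_1412_Main}) with $m = 1$. Here we use that $c \ll_A d \precsim_A e$ implies $c \ll_A e$, and similarly on the other side. Part (\ref{Item_R_0817_NonEqToEq_AToDyn}) is the composite of the first two parts.

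For part (\ref{Item_R_0817_NonEqToEq_Translate}), to get $a \precsim_{A, \af, 0} \af_g (a)$, take $n = 1$, $g_1 = g$, and $x_1 = (a - \ep)_+$. Then $\af_g (x_1) = (\af_g (a) - \ep)_+$, so we can take $\dt = \ep$. For the reverse direction, use $g^{-1}$ applied to $\af_g (a)$. The statement for $\sim_{A, \af}$ then follows from part (\ref{Item_L_6816_0_to_af}).

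For part (\ref{Item_R_0817_NonEqToEq_DynToCP}), let $u_g$ be the canonical unitaries in $C^* (G, A, \af)$ (multiplier unitaries if $A$ is nonunital, with $u_g x$ still in the crossed product). The key observation is that $\af_g (x) = u_g x u_g^*$. Writing $w = u_g x^{1/2}$, we get $\af_g (x) = w w^* \sim w^* w = x$ in the crossed product. Applying this entrywise with the diagonal unitary $\diag (u_{g_{j,1}}, \ldots)$, we get
\[
\bigoplus_k \af_{g_{j, k}} (x_{j, k}) \sim_{C^* (G, A, \af)} \bigoplus_k x_{j, k}.
\]
Cuntz subequivalence in $A$ passes to $C^* (G, A, \af)$ through the inclusion, and $\ll_A$ implies $\precsim_A$. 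Chaining the relations in (\ref{Eq_1412_Main}) then gives $(a - \ep)_+ \precsim_{C^*(G, A, \af)} b$ for every $\ep > 0$. In the alternative case $(a - \ep)_+ \ll_A b$, this holds directly. Since Cuntz subequivalence is closed under $(a - \ep)_+ \precsim b$ for all $\ep$ (R{\o}rdam's lemma), we conclude $a \precsim_{C^* (G, A, \af)} b$. The main care needed is the nonunital case. There, $w = u_g x^{1/2}$ lies in $\Mi (C^* (G, A, \af))$ because $A$ is an ideal-like subalgebra of the multiplier algebra action, so the partial-isometry-free argument above remains valid.

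For part (\ref{Item_R_0817_NonEqToEq_Trv}), one direction is part (\ref{Item_R_0817_NonEqToEq_AToDyn}). For the converse, when $\af$ is trivial, every $\af_{g_{j,k}} (x_{j,k})$ equals $x_{j,k}$. The chain in (\ref{Eq_1412_Main}) then collapses by transitivity of $\precsim_A$ to $(a - \ep)_+ \precsim_A b$ for all $\ep > 0$, and hence to $a \precsim_A b$. (Alternatively, this follows from part (\ref{Item_R_0817_NonEqToEq_DynToCP}) together with the fact that $A$ is a corner-free tensor factor of $A \otimes C^*(G)$; the direct argument is simpler.)
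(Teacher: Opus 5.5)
Your proof is correct and follows the paper's proof. For parts (1)--(4) it is essentially identical: the same choices $n = 1$, $g_1 = 1$ (or $g$), $x_1 = (a - \ep)_+$, the same $\ep/2$ step for (2), and (3) obtained from (1) and (2). For (5) and (6), which the paper simply calls immediate, you supply the intended arguments: $\alpha_g (x) = u_g x u_g^* \sim x$ in $C^*(G, A, \alpha)$ via $w = u_g x^{1/2}$, so the chain of dynamical relations becomes an ordinary Cuntz chain in the crossed product, and for the trivial action the chain collapses in $A$. The parenthetical ``corner-free tensor factor'' alternative in (6) is too vague to serve as an argument and is not needed, so you are right to rely on the direct one.
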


\begin{proof}
To prove (\ref{Item_L_6816_A_to_0}),
let $\ep > 0$.
Since $a \precsim_A b$, choose $\dt> 0$ such that
$(a - \ep)_+ \precsim_A(b - \dt)_+$.
Now,
in Definition~\ref{D_0817_EqCmp_Orig}, use this value of~$\dt$, take
$n = 1$, $g_1 = 1$, and $x_1 = (a - \ep)_+$.

We prove~(\ref{Item_L_6816_0_to_af}).
Let $\ep > 0$.
Apply Definition~\ref{D_0817_EqCmp_Orig}
with $\frac{\ep}{2}$ in place of~$\ep$,
getting $\delta$, $n$,
$g_1, g_2, \ldots, g_n$, and $x_1, x_2, \ldots, x_n$ as there.
Then $\bigoplus_{j = 1}^n \alpha_{g_j} (x_j) \ll_A b$.
Also,
\[
(a - \ep)_{+}
 \ll_A \left( a - \frac{\ep}{2} \right)_{+}
 \precsim_A \bigoplus_{j = 1}^n x_j,
\]
so $(a - \ep)_{+} \ll_A \bigoplus_{j = 1}^n x_j$.

Part~(\ref{Item_R_0817_NonEqToEq_AToDyn}) is immediate
from the fact that if $A \precsim_A b$ and $\ep > 0$,
then $(a - \ep)_{+} \ll_A b$.
(It also follows
from (\ref{Item_L_6816_A_to_0}) and~(\ref{Item_L_6816_0_to_af}).)

For (\ref{Item_R_0817_NonEqToEq_Translate}),
it is enough to prove $a \precsim_{A, \alpha, 0} \alpha_g (a)$
and $a \precsim_{A, \alpha} \alpha_g (a)$.
For the first, let $\ep > 0$.
In Definition~\ref{D_0817_EqCmp_Orig},
take $\dt = \ep$, $n = 1$, $g_1 = g$, and $x_1 = (a - \ep)_{+}$.
The second subequivalence now follows from~(\ref{Item_L_6816_0_to_af}).

Parts (\ref{Item_R_0817_NonEqToEq_DynToCP})
and~(\ref{Item_R_0817_NonEqToEq_Trv}) are immediate.
\end{proof}

If $G$ is finite,
by combining repetitions among the elements $g_{j, k} \in G$
and adding copies of~$0$ as needed,
we can make the relation (\ref{Eq_1412_Main}) more straightforward.

\begin{lem}\label{L_1412_UseG}
Let $A$ be a \ca, let $G$ be a finite group,
and let $\af \colon G \to \Aut (A)$ be an action of $G$ on~$A$.
Let $a, b \in \Mi (A)_{+}$.
Then $a \precsim_{A, \af} b$ \ifo{}
for any $\varepsilon > 0$,
$(a - \varepsilon)_+ \ll_A b$
or there are $m \in \N$ and $x_{j, g} \in M_{\infty} (A)_{+}$
for $j = 1, 2, \ldots, m$ and $g \in G$,
such that
\begin{equation}\label{Eq_1412_MainNG}
\begin{split}
& (a - \varepsilon)_+ \ll_A \bigoplus_{g \in G} x_{1, g},
\quad
\bigoplus_{g \in G} \alpha_{g} (x_{1, g})
 \ll_A \bigoplus_{g \in G} x_{2, g},
 \\
& \hspace*{1em} {\mbox{}}
\bigoplus_{g \in G} \alpha_{g} (x_{2, g})
 \ll_A \bigoplus_{g \in G} x_{3, g},
 \quad \ldots, \quad
\bigoplus_{g \in G}
\alpha_{g} (x_{m - 1, g})
 \ll_A \bigoplus_{g \in G} x_{m, g},
\\
& \hspace*{1em} {\mbox{}}
\bigoplus_{g \in G} \alpha_{g} (x_{m, g}) \ll_A b.
\end{split}
\end{equation}
\end{lem}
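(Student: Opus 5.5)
The reverse implication is immediate, and the forward implication is a regrouping argument. If the condition of the lemma holds, then (\ref{Eq_1412_MainNG}) is literally an instance of (\ref{Eq_1412_Main}). Enumerate $G = \{ h_1, h_2, \ldots, h_{\card (G)} \}$, take $n_j = \card (G)$ for all $j$, and set $g_{j, k} = h_k$ and $x_{j, k} = x_{j, h_k}$. The ordering of the direct sums is irrelevant up to Cuntz equivalence (Notation~\ref{N_6816_Diag}), and $\ll_A$ only depends on Cuntz classes. So it remains to show that (\ref{Eq_1412_Main}) can be converted into (\ref{Eq_1412_MainNG}) with the same $m$.

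Fix $\ep > 0$, and suppose we are given $m$, $n_j$, $g_{j, k}$, and $x_{j, k}$ as in (\ref{Eq_1412_Main}). For each $j$ and each $g \in G$, let $S_{j, g} = \{ k \colon g_{j, k} = g \}$. Define
\[
x_{j, g} = \bigoplus_{k \in S_{j, g}} x_{j, k} \in \Mi (A)_{+},
\]
with $x_{j, g} = 0$ if $S_{j, g} = \E$. Since the sets $S_{j, g}$ for $g \in G$ partition $\{ 1, 2, \ldots, n_j \}$, we get
\[
\bigoplus_{g \in G} x_{j, g} \sim_A \bigoplus_{k = 1}^{n_j} x_{j, k}
\]
by permuting the summands, which is implemented by conjugating with a permutation unitary. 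Since $\af_g$ is applied entrywise on $\Mi (A)$, we have $\af_g (x_{j, g}) = \bigoplus_{k \in S_{j, g}} \af_{g_{j, k}} (x_{j, k})$. Hence
\[
\bigoplus_{g \in G} \af_g (x_{j, g}) \sim_A \bigoplus_{k = 1}^{n_j} \af_{g_{j, k}} (x_{j, k}).
\]

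Substituting these Cuntz equivalent elements into each relation of (\ref{Eq_1412_Main}) gives (\ref{Eq_1412_MainNG}). This uses that if $c \ll_A d$, $c' \sim_A c$, and $d' \sim_A d$, then $c' \ll_A d'$, which holds because compact containment is defined via Cuntz classes in Definition~\ref{Sec_CuSub}(\ref{Cuntz_def_property_6815}). The alternative case $(a - \ep)_{+} \ll_A b$ appears unchanged in both formulations.

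The only point needing any care is the bookkeeping with padding zeros and the convention on sizes in Notation~\ref{N_6816_MI}. The obstacle is mild: adding zero summands and reordering blocks do not change Cuntz classes, since $c \oplus 0 \sim_A c$.
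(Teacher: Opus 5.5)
Your proposal is correct and follows the paper's proof: at each stage $j$ you group the summands $x_{j,k}$ according to their group element $g_{j,k}$, taking the grouped element to be $0$ when $g$ does not occur, and use that reordering and zero padding do not change Cuntz classes and hence do not affect $\ll_A$. The reverse implication is immediate by enumerating $G$, exactly as in the paper.
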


\begin{proof}
The reverse implication is immediate.
For the forward implication,
assume the relation (\ref{Eq_1412_Main})
in Definition~\ref{D_0908_EqCmp},
but using elements $h_{j, k} \in G$ instead of $g_{j, k}$
and $y_{j, k} \in M_{\infty} (A)_{+}$ instead of $x_{j, k}$.
For $j = 1, 2, \ldots, m$ and $g \in G$, define
$x_{j, g} = \bigoplus_{ \{ k \colon h_{j, k} = g\} } y_{j, k}$,
with the direct sum taken in any order,
and taken to be zero if there is no $k \in \{ 1, 2, \ldots, n_j \}$
with $h_{j, k} = g$.
Then (\ref{Eq_1412_MainNG}) holds.
\end{proof}

The equivalence below also follows from
Corollary~8.3 of \cite{BsPrWuZc}.

\begin{lem}\label{L_5Z10_SbCnds}
Let $A$ be a \ca, let $G$ be a discrete group,
and let $\af \colon G \to \Aut (A)$ be an action of $G$ on~$A$.
Let $a, b \in \Mi (A)_{+}$.
Then \tfae:
\begin{enumerate}
%
\item\label{I_5Z10_Sb}
$a \precsim_{A, \af} b$.
\item\label{I_5Z10_CC}
For every $\ep > 0$, we have $(a - \varepsilon)_+ \precsim_{A, \af} b$.
\item\label{I_5Z10_CC_andCC}
For every $\ep > 0$ there is $\dt > 0$ such that
$(a - \varepsilon)_+ \precsim_{A, \af} (b - \dt)_+$.
\end{enumerate}
\end{lem}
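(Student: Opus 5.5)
We prove the cycle (\ref{I_5Z10_Sb}) $\Rightarrow$ (\ref{I_5Z10_CC_andCC}) $\Rightarrow$ (\ref{I_5Z10_CC}) $\Rightarrow$ (\ref{I_5Z10_Sb}). Two standard facts are used throughout. First, $((a - \ep)_{+} - \ld)_{+} = (a - \ep - \ld)_{+}$. Second, compact containment is stable under the usual manipulations: if $c \ll_A d \precsim_A e$ then $c \ll_A e$, and $c \ll_A d$ \ifo{} $c \precsim_A (d - \gm)_{+}$ for some $\gm > 0$.

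For (\ref{I_5Z10_Sb}) $\Rightarrow$ (\ref{I_5Z10_CC_andCC}), let $\ep > 0$ and apply Definition~\ref{D_0908_EqCmp} to $a \precsim_{A, \af} b$ with $\frac{\ep}{2}$ in place of~$\ep$. This gives either $(a - \frac{\ep}{2})_{+} \ll_A b$, or a chain as in~(\ref{Eq_1412_Main}) whose last link is $\bigoplus_k \af_{g_{m, k}} (x_{m, k}) \ll_A b$. Only finitely many compact containments into $b$ appear: at most one in the first case and the single last link in the second. So there is $\dt > 0$ such that each such element is $\precsim_A (b - 2 \dt)_{+}$, and therefore $\ll_A (b - \dt)_{+}$.

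We then check $(a - \ep)_{+} \precsim_{A, \af} (b - \dt)_{+}$ directly from the definition. Let $\ep' > 0$. Since
\[
((a - \ep)_{+} - \ep')_{+} = (a - \ep - \ep')_{+} \precsim_A \left( a - \frac{\ep}{2} \right)_{+},
\]
the element $((a - \ep)_{+} - \ep')_{+}$ is dominated by the same first term as before. It is therefore $\ll_A$ either $(b - \dt)_{+}$ (in the first case) or $\bigoplus_k x_{1, k}$ (in the second). In the second case we reuse the same chain, now ending in $(b - \dt)_{+}$. The point is that $\dt$ depends only on $\ep$, not on $\ep'$.

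For (\ref{I_5Z10_CC_andCC}) $\Rightarrow$ (\ref{I_5Z10_CC}), we need $(b - \dt)_{+} \precsim_{A, \af} b$ and transitivity. The first holds by Lemma~\ref{R_0817_NonEqToEq}(\ref{Item_R_0817_NonEqToEq_AToDyn}). To avoid proving transitivity in general, we argue directly. Given $\ep'$, the chain for $(a - \ep)_{+} \precsim_{A, \af} (b - \dt)_{+}$ ends with $\ll_A (b - \dt)_{+} \precsim_A b$, hence $\ll_A b$. So the same chain witnesses $(a - \ep)_{+} \precsim_{A, \af} b$.

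For (\ref{I_5Z10_CC}) $\Rightarrow$ (\ref{I_5Z10_Sb}), let $\ep > 0$ and apply (\ref{I_5Z10_CC}) with $\frac{\ep}{2}$. We then apply the definition to $(a - \frac{\ep}{2})_{+} \precsim_{A, \af} b$ with tolerance $\frac{\ep}{2}$. Since $((a - \frac{\ep}{2})_{+} - \frac{\ep}{2})_{+} = (a - \ep)_{+}$, this gives exactly the required condition for $(a - \ep)_{+}$.

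The only delicate point is the choice of $\dt$ in the first implication, uniformly over all $\ep'$. It is handled by fixing the witnessing chain at tolerance $\frac{\ep}{2}$ once and then reusing it for every $\ep'$.
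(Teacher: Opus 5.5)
Your proof is correct and is essentially the paper's argument. In both, the key step is to fix the witnessing data once at tolerance $\frac{\ep}{2}$, extract $\dt$ from the single final compact containment into $b$, and reuse that same data for every smaller tolerance; your (\ref{I_5Z10_CC}) $\Rightarrow$ (\ref{I_5Z10_Sb}) is exactly the paper's. The differences are only organizational: you run the cycle (\ref{I_5Z10_Sb}) $\Rightarrow$ (\ref{I_5Z10_CC_andCC}) $\Rightarrow$ (\ref{I_5Z10_CC}) $\Rightarrow$ (\ref{I_5Z10_Sb}) instead of routing everything through (\ref{I_5Z10_CC}), and you handle the case $(a - \frac{\ep}{2})_{+} \ll_A b$ by the same $\dt$-extraction rather than splitting it off as a nonequivariant case.
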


\begin{proof}
It is easy to see that (\ref{I_5Z10_Sb})
and~(\ref{I_5Z10_CC_andCC}) both imply~(\ref{I_5Z10_CC}).

For the reverse directions, assume~(\ref{I_5Z10_CC}).
If $(a - \ep)_{+} \precsim_A b$ for arbitrarily small $\ep > 0$,
then (\ref{I_5Z10_CC_andCC}) and~(\ref{I_5Z10_Sb})
follow from standard facts in the nonequivariant case.
Otherwise, let $\ep > 0$.
We may assume that $(a - \ep)_{+} \not\precsim_A b$.
Apply (\ref{I_5Z10_CC}) with $\frac{\ep}{2}$ in place of~$\ep$.
Thus, for every $\rh > 0$,
there are $m, n_1, n_2, \ldots, n_m \in \N$,
and $g_{j, k} \in G$ and $x_{j, k} \in M_{\infty} (A)_{+}$
for $j = 1, 2, \ldots, m$ and $k = 1, 2, \ldots, n_j$,
such that (\ref{Eq_1412_Main})~holds with
$\bigl( \bigl( a - \frac{\ep}{2} \bigr)_+ - \rh \bigr)_+$
in place of $(a - \ep)_{+}$.
In particular, this relation holds for $\rh = \frac{\ep}{2}$,
which is~(\ref{Eq_1412_Main}).
Moreover, there is $\sm > 0$ such that
$\bigoplus_{k = 1}^{n_m} \alpha_{g_{m, k}} (x_{m, k})
 \precsim_A (b - \sm)_{+}$.
Setting $\dt = \frac{\sm}{2}$, we get
$\bigoplus_{k = 1}^{n_m} \alpha_{g_{m, k}} (x_{m, k})
 \ll_A (b - \dt)_{+}$.
We thus have~(\ref{I_5Z10_CC_andCC}).
\end{proof}

The following lemma relates one step dynamical Cuntz subequivalence
to dynamical Cuntz subequivalence.

\begin{lem}\label{Tran_meStepLem}
Let $\afGAA$ be an action of a discrete group $G$ on a \ca~$A$.
\begin{enumerate}
\item\label{Tran_meStepLem_a}
Let $a, b, c \in \Mi (A)_{+}$.
If $a \precsim_{A, \af} b  \precsim_{A, \af} c$, then
$a \precsim_{A, \af} c$.
\item\label{Tran_meStepLem_b}
Let $a, b \in \Mi (A)_{+}$.
Then $a \precsim_{A, \af} b$ if and only if
for every $\varepsilon > 0$,
there are $m \in \N$ and
elements $y_1, y_2, \ldots, y_m \in M_{\infty} (A)_+$ such that
\[
(a - \ep)_+ \precsim_{A, \af, 0} y_1 \precsim_{A, \af, 0} y_2
 \precsim_{A, \af, 0} \cdots \precsim_{A, \af, 0} y_{m}
 \precsim_{A, \af, 0} b.
\]
\end{enumerate}
\end{lem}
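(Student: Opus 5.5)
I will prove part (\ref{Tran_meStepLem_b}) first and then deduce part (\ref{Tran_meStepLem_a}) from it. The key dictionary is between a chain (\ref{Eq_1412_Main}) and a chain of one step subequivalences.

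\textbf{Forward direction of (\ref{Tran_meStepLem_b}).} Assume $a \precsim_{A, \af} b$ and fix $\ep > 0$. By Lemma~\ref{L_5Z10_SbCnds}, I may choose $\dt > 0$ with $(a - \ep/2)_+ \precsim_{A, \af} (b - \dt)_+$. Apply the definition to this pair with $\ep/2$, so the starting element is $(a - \ep)_+$.

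If $(a-\ep)_+ \ll_A (b-\dt)_+$, then $(a-\ep)_+ \precsim_A b$, and Lemma~\ref{R_0817_NonEqToEq}(\ref{Item_L_6816_A_to_0}) gives a chain of length one, taking $y_1 = b$.

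Otherwise I get $x_{j,k}, g_{j,k}$ as in (\ref{Eq_1412_Main}). Set $y_j = \bigoplus_k x_{j,k}$. The step $y_j \precsim_{A,\af,0} y_{j+1}$ holds by taking the pieces $x_{j,k}$ and the elements $g_{j,k}$. This needs $\bigoplus_k \af_{g_{j,k}}(x_{j,k}) \precsim_A (y_{j+1} - \sigma)_+$ for some $\sigma > 0$, which is exactly what $\ll_A$ provides. For the source, since $(y_j - \eta)_+ \precsim_A y_j$, I use the same pieces for every $\eta$.

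The first link holds because $(a-\ep)_+ \ll_A y_1$ gives $(a-\ep)_+ \precsim_A y_1$. The last link holds because the final $\ll_A$ relation is into $(b-\dt)_+$, hence into $(b-\dt')_+$ for some smaller $\dt' > 0$.

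\textbf{Reverse direction of (\ref{Tran_meStepLem_b}).} Given $\ep > 0$, take the chain for $\ep/2$. The plan is to unfold each one step relation and thread cutdowns along the chain.

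From $(a-\ep/2)_+ \precsim_{A,\af,0} y_1$, applied with a small tolerance, I get pieces $x_{1,k}$ and $\dt_1 > 0$ with
\[
(a-\ep)_+ \ll_A \bigoplus_k x_{1,k}
\andeqn
\bigoplus_k \af_{g_{1,k}}(x_{1,k}) \precsim_A (y_1 - \dt_1)_+ .
\]
Next I apply $y_1 \precsim_{A,\af,0} y_2$ with $\ep = \dt_1/2$. This gives pieces $x_{2,k}$ with $(y_1 - \dt_1/2)_+ \precsim_A \bigoplus_k x_{2,k}$. Since $(y_1-\dt_1)_+ \ll_A (y_1 - \dt_1/2)_+$, it follows that $\bigoplus_k \af_{g_{1,k}}(x_{1,k}) \ll_A \bigoplus_k x_{2,k}$.

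Continuing inductively along the chain produces (\ref{Eq_1412_Main}), with the final relation ending $\ll_A b$ because it is $\precsim_A (b-\dt)_+$. The step needing care is exactly this bookkeeping: the tolerance for each link must be chosen from the $\dt$ produced by the previous link. That is why the chain is processed left to right.

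\textbf{Part (\ref{Tran_meStepLem_a}).} Fix $\ep > 0$ and use (\ref{Tran_meStepLem_b}) for $a \precsim_{A,\af} b$, which gives
\[
(a-\ep)_+ \precsim_{A,\af,0} y_1 \precsim_{A,\af,0} \cdots \precsim_{A,\af,0} y_m \precsim_{A,\af,0} b .
\]
The main obstacle is that the chain ends at $b$, but (\ref{Tran_meStepLem_b}) for $b \precsim_{A,\af} c$ only gives chains starting at $(b-\ep')_+$.

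To handle this, I first unpack the last link $y_m \precsim_{A,\af,0} b$ with any tolerance. This gives pieces with $\bigoplus_k \af_{g_k}(x_k) \precsim_A (b-\dt)_+$. Hence $y_m \precsim_{A,\af,0} (b-\dt)_+$, using the same pieces: the target condition holds with $\dt/2$, since $(b-\dt)_+ \sim ((b-\dt/2)_+ - \dt/2)_+$.

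I then concatenate this with a chain from $(b-\dt)_+$ to $c$, which exists by (\ref{Tran_meStepLem_b}) applied to $b \precsim_{A,\af} c$ with $\ep = \dt$. The concatenated chain runs from $(a-\ep)_+$ to $c$, so (\ref{Tran_meStepLem_b}) gives $a \precsim_{A,\af} c$.
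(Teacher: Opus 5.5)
Your part~(\ref{Tran_meStepLem_b}) is correct in both directions. Threading the tolerances left to right in the reverse direction does produce a chain (\ref{Eq_1412_Main}). This differs from the paper, which proves part~(\ref{Tran_meStepLem_a}) first by concatenating chains (\ref{Eq_1412_Main}), then gets the reverse direction of part~(\ref{Tran_meStepLem_b}) from part~(\ref{Tran_meStepLem_a}) and Lemma~\ref{R_0817_NonEqToEq}(\ref{Item_L_6816_0_to_af}).

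The gap is in your part~(\ref{Tran_meStepLem_a}), at the step where you unpack $y_m \precsim_{A, \af, 0} b$ at one tolerance and conclude $y_m \precsim_{A, \af, 0} (b - \dt)_+$ with the same pieces. The relation $\precsim_{A, \af, 0}$ requires, for \emph{every} $\eta > 0$, pieces $x_k$ with $(y_m - \eta)_+ \precsim_A \bigoplus_k x_k$. Pieces obtained at a single tolerance $\eta_0$ only handle $\eta \geq \eta_0$. The $\dt$ they come with depends on $\eta_0$ and may tend to $0$ as $\eta_0 \to 0$. For an arbitrary one step link the conclusion is false. Take the trivial group, $A = C ([0, 1])$, and $y_m = b$ the function $t \mapsto t$. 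Then $y_m \precsim_{A, \af, 0} b$. However, $y_m \precsim_{A, \af, 0} (b - \dt)_+$ would force $(y_m - \eta)_+ \precsim_A (b - \dt - \dt')_+$ for all small $\eta$, which is impossible once $\eta < \dt$ by comparing open supports. Separately, knowing $\precsim_A (b - \dt)_+$ verifies the target condition for $(b - \dt/2)_+$, not for $(b - \dt)_+$.

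The repair is easy within your framework. In part~(\ref{Tran_meStepLem_a}), do not take an arbitrary chain supplied by the statement of part~(\ref{Tran_meStepLem_b}); take the specific chain built in your forward direction. There you used Lemma~\ref{L_5Z10_SbCnds} to choose $\dt$ so that (\ref{Eq_1412_Main}) lands $\ll_A (b - \dt)_+$. Each $y_j = \bigoplus_k x_{j, k}$ uses the same pieces for every tolerance, because $(y_j - \eta)_+ \precsim_A y_j$. So that chain already gives $y_m \precsim_{A, \af, 0} (b - \dt)_+$, and in the direct case it gives $(a - \ep)_+ \precsim_{A, \af, 0} (b - \dt)_+$. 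Concatenate it with a chain from $(b - \dt)_+$ to $c$ and apply your reverse direction of part~(\ref{Tran_meStepLem_b}). Alternatively, do as the paper does: concatenate the chains (\ref{Eq_1412_Main}) for $(a - \ep/2)_+ \precsim_{A, \af} (b - \dt)_+$ and for $b \precsim_{A, \af} c$ at cutoff $\dt$, using that $x \ll_A (b - \dt)_+ \ll_A y$ implies $x \ll_A y$.
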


In~(\ref{Tran_meStepLem_b}), $m$ and $y_1, y_2, \ldots, y_m$ depend on~$\ep$.

\begin{rmk}\label{N_6816_TransCl}
If we let $\precsim_{A, \af, 1}$ be the transitive closure of
$\precsim_{A, \af, 0}$,
then Lemma \ref{Tran_meStepLem}(\ref{Tran_meStepLem_b})
says that $a \precsim_{A, \af} b$
\ifo{} for every $\ep > 0$ we have
$(a - \ep)_{+} \precsim_{A, \af, 1} b$.
\end{rmk}

\begin{proof}[Proof of Lemma~\ref{Tran_meStepLem}]
We prove (\ref{Tran_meStepLem_a}).
Since
$a \precsim_{A, \af} b$, it follows from Lemma~\ref{L_5Z10_SbCnds} that
for every $\ep > 0$, there is $\dt > 0$ such that
$(a - \frac{\varepsilon}{2})_+ \precsim_{A, \af} (b - \dt)_+$.
Applying Definition~\ref{D_0908_EqCmp} with
$\ep/2$ in place of $\ep$, we have
$(a - \frac{\varepsilon}{2})_+ \precsim_{A} (b - \dt)_+$;
or there are $m, n_1, n_2, \ldots, n_m \in \N$,
and $g_{j, k} \in G$ and $x_{j, k} \in M_{\infty} (A)_{+}$
for $j = 1, 2, \ldots, m$ and $k = 1, 2, \ldots, n_j$,
such that (\ref{Eq_1412_Main}) holds with $(b - \dt)_+$ in place of~$b$.
Since $b  \precsim_{A, \af} c$, it follows from
Definition~\ref{D_0908_EqCmp}
that $b  \precsim_{A} c$ or, with $\dt$ in place of $\ep$, there are
$s, t_1, t_2, \ldots, t_s \in \N$,
and $h_{j, k} \in G$ and $y_{j, k} \in M_{\infty} (A)_{+}$
for $j = 1, 2, \ldots, s$ and $k = 1, 2, \ldots, t_j$,
such that
\[
\begin{split}
& (b - \dt)_+ \ll_A \bigoplus_{k = 1}^{t_1} y_{1, k},
\quad
\bigoplus_{k = 1}^{n_1} \alpha_{h_{1, k}} (y_{1, k})
 \ll_A \bigoplus_{k = 1}^{n_2} y_{2, k},
\\
& \hspace*{1em} {\mbox{}}
\bigoplus_{k = 1}^{t_2} \alpha_{h_{2, k}} (y_{2, k})
 \ll_A \bigoplus_{k = 1}^{t_3} y_{3, k},
 \quad \ldots, \quad
\bigoplus_{k = 1}^{n_{s - 1}}
\alpha_{h_{m - 1, k}} (x_{s - 1, k})
 \ll_A \bigoplus_{k = 1}^{n_m} y_{s, k},
\\
& \hspace*{1em} {\mbox{}}
\bigoplus_{k = 1}^{n_m} \alpha_{h_{m, k}} (y_{s, k}) \ll_A c.
\end{split}
\]
Combining this
with (\ref{Eq_1412_Main}) with $(b - \dt)_+$ in place of~$b$,
we get $a \precsim_{A, \af} c$.

Now we prove (\ref{Tran_meStepLem_b}).
The backward implication is immediate from (\ref{Tran_meStepLem_a})
and Lemma \ref{R_0817_NonEqToEq}(\ref{Item_L_6816_0_to_af}).

For the forward implication, let $\ep > 0$.
In Definition~\ref{D_0817_EqCmp_Orig},
we use $\rh$ in place of $\ep$.

If $a \precsim_A b$, choose $\dt > 0$ such that
$(a - \ep)_{+} \precsim_A (b - \dt)_{+}$.
Then, in Definition~\ref{D_0817_EqCmp_Orig},
for any $\rh > 0$ we may use the value of $\dt$ just chosen
(independently of~$\rh$),
and take $n = 1$, $g_1 = 1$, and $x_1 = (b - \dt)_{+}$.

Now suppose instead that
there are $m, n_1, n_2, \ldots, n_m \in \N$,
and $g_{j, k} \in G$ and $x_{j, k} \in M_{\infty} (A)_{+}$
for $j = 1, 2, \ldots, m$ and $k = 1, 2, \ldots, n_j$,
such that (\ref{Eq_1412_Main}) holds.
For $k = 1, 2, \ldots, m$ define $y_k = \bigoplus_{j = 1}^{n_k} x_{k, j}$,
and for convenience set $y_{m + 1} = b$.
We claim that
\[
(a - \ep)_+ \precsim_{A, \af, 0} y_1 \precsim_{A, \af, 0} y_2
 \precsim_{A, \af, 0} \cdots \precsim_{A, \af, 0} y_{m}
 \precsim_{A, \af, 0} y_{m + 1} = b.
\]
This claim will finish the proof.

To prove the claim, first use $(a - \ep)_+ \ll_A y_1$ to choose
$\dt > 0$ such that $(a - \ep)_+ \precsim_A (y_1 - \dt)_{+}$.
Reasoning as in the case $a \precsim_A b$,
we get $(a - \ep)_+ \precsim_{A, \af, 0} y_1$.
For $k = 1, 2, \ldots, m$ use
$\bigoplus_{j = 1}^{n_k} \alpha_{g_{k, j}} (x_{k, j}) \ll_A y_{k + 1}$
to choose $\dt > 0$ such that
$\bigoplus_{j = 1}^{n_k} \alpha_{g_{k, j}} (x_{k, j})
  \precsim_A (y_{k+1} - \dt)_{+}$.
Thus, in Definition~\ref{D_0817_EqCmp_Orig}, for any $\rh > 0$
we may choose this number $\dt$ (independent of~$\rh$),
take $n = n_k$, and take $g_j = g_{k, j}$ and $x_j = x_{k, j}$
for $j = 1, 2, \ldots, n_k$.
This gives
\[
(y_k - \rh)_{+} \precsim_{A} y_k = \bigoplus_{j = 1}^{n} x_j
\andeqn
\bigoplus_{j = 1}^{n} \af_{g_j} (x_j) \precsim_{A} (y_{k + 1} - \dt)_{+}.
\]
We have shown that $y_k \precsim_{A, \af, 0} y_{k + 1}$,
proving the claim.
This completes the proof.
\end{proof}

\begin{dfn}\label{DefPUE}
Let $A$ be a unital C*-algebra and let $G$ be a discrete group.
\begin{enumerate}
%
\item\label{I_6816_paue}
Let $\alpha, \bt \colon G \to \Aut (A)$ be actions of $G$ on~$A$.
We say that $\alpha$ and $\beta$
are \emph{pointwise (approximately) unitarily equivalent}
if for every $g \in G$ there is an (approximately) inner automorphism
$\ph_g \in \Aut (A)$
such that $\bt_g = \ph_g \circ \af_g$.
\item\label{I_6816_pai}
We say that an action $\alpha \colon G \to \Aut (A)$
is \emph{pointwise approximately inner}
if it is pointwise approximately unitarily equivalent to the
trivial action.
\item\label{I_6816_Str_pai}
(Definition 3.9 of~\cite{Asd2}) If $G$ is finite,
we say that $\af$ is
{\emph{strictly approximately inner}}
if for every finite set $F \subseteq A$ and every $\ep > 0$,
there are $z_g \in \U (A)$ for $g \in G$ such that:
\begin{enumerate}
\item\label{I_6816_Str_pai_cj}
$\| \af_g (a) - z_g a z_g^* \| < \ep$ for all $g \in G$ and all $a \in F$.
\item\label{I_6816_Str_pai_hom}
$\| z_g z_h - z_{g h} \| < \ep$ for all $g, h \in G$.
\end{enumerate}
\end{enumerate}
\end{dfn}

In~(\ref{I_6816_paue}), of course, if $\af$ is an action of $G$ on~$A$,
then, for $g \mapsto \ph_g \circ \af_g$ to be an action,
$g \mapsto \ph_g$ must satisfy a suitable algebraic condition.

\begin{lem}\label{L_0Y06_AppInn}
Let $A$ be a \ca, let $G$ be a discrete group,
and let $\af, \bt \colon G \to \Aut (A)$ be
pointwise approximately unitarily equivalent actions of $G$ on~$A$.
Let $a, b \in \Mi (A)_{+}$.
Then $a \precsim_{A, \af} b$ \ifo{} $a \precsim_{A, \bt} b$.
\end{lem}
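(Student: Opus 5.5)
By symmetry it suffices to prove that $a \precsim_{A, \af} b$ implies $a \precsim_{A, \bt} b$. The key observation is that approximately inner automorphisms act trivially on Cuntz classes. If $\ph$ is approximately inner and $x \in \Mi (A)_{+}$, then $\ph (x) \sim_A x$. To see this, take $x \in M_n (A)_{+}$ and unitaries $u_k \in \U (A)$ with $u_k y u_k^* \to \ph (y)$ for all $y \in A$. Then $\diag (u_k, \ldots, u_k) \, x \, \diag (u_k, \ldots, u_k)^* \to \ph (x)$ in $M_n (A)$, so $\ph (x) \precsim_A x$. Applying the same argument to $\ph^{-1}$, which is also approximately inner (use the unitaries $u_k^*$), gives $x \precsim_A \ph (x)$. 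In particular $\ph (x) \sim_A x$.

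For the main step, fix $\ep > 0$. In the first alternative of Definition~\ref{D_0908_EqCmp}, $(a - \ep)_{+} \ll_A b$, there is nothing to prove. Otherwise we have $m$, $n_j$, $g_{j, k}$ and $x_{j, k}$ with (\ref{Eq_1412_Main}) for~$\af$. We keep the same data for~$\bt$ and only need to compare $\bigoplus_k \bt_{g_{j, k}} (x_{j, k})$ with $\bigoplus_k \af_{g_{j, k}} (x_{j, k})$. By hypothesis $\bt_g = \ph_g \circ \af_g$ with $\ph_g$ approximately inner. The observation above gives
\[
\bt_{g_{j, k}} (x_{j, k}) = \ph_{g_{j, k}} \bigl( \af_{g_{j, k}} (x_{j, k}) \bigr) \sim_A \af_{g_{j, k}} (x_{j, k}).
\]
Cuntz equivalence is compatible with direct sums, so the two direct sums are Cuntz equivalent. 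Compact containment $\ll_A$ depends only on Cuntz classes, so each relation in (\ref{Eq_1412_Main}) survives when $\af$ is replaced by~$\bt$. Thus $a \precsim_{A, \bt} b$.

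There is a small point to check in the first step. Approximate innerness in a possibly nonunital $A$ should be read with unitaries in the unitization (or in $A$ when $A$ is unital, as in Definition~\ref{DefPUE}). Since $\Mi (A)$ is an ideal in $\Mi (A^{+})$, the conjugating elements $\diag (u_k, \ldots, u_k) \, x^{1/2}$, or equivalently the elements $v_k = \diag (u_k, \ldots, u_k)$ applied to $x \in \Mi (A)$, still yield Cuntz subequivalence in~$A$. For instance, write $\ph (x) = \lim v_k x v_k^*$ with $v_k x^{1/2} \in \Mi (A)$; this is the standard fact that Cuntz subequivalence can be witnessed using multipliers. This bookkeeping is the only potential obstacle, and it is routine.
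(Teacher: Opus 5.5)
Your proof is correct and is essentially the paper's argument: both rest on $\bt_g (x) = \ph_g (\af_g (x)) \sim_A \af_g (x)$, so the witnessing relations for $\af$ transfer to $\bt$. The only difference is that the paper first reduces, via Remark~\ref{N_6816_TransCl}, to one step subequivalence $\precsim_{A, \af, 0}$, whereas you substitute directly into every link of the chain in Definition~\ref{D_0908_EqCmp}, using that $\ll_A$ depends only on Cuntz classes; this avoids that reduction and also spells out why approximately inner automorphisms fix Cuntz classes, which the paper takes for granted.
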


\begin{proof}
It is enough to prove one direction.
By Remark~\ref{N_6816_TransCl},
it is enough to prove that if $a, b \in \Mi (A)_{+}$
satisfy $a \precsim_{A, \af, 0} b$, then also $a \precsim_{A, \bt, 0} b$.
So let $\ep > 0$.
Choose $\delta > 0$, $n \in \N$, $g_1, g_2, \ldots, g_n \in G$,
and $x_1, x_2, \ldots, x_n \in M_{\infty} (A)_{+}$
as in Definition~\ref{D_0817_EqCmp_Orig} for~$\af$.
For $j = 1, 2, \ldots, n$, approximate unitary equivalence
of $\af_{g_j}$ and $\bt_{g_j}$
implies that $\af_{g_j} (x_j) \sim_A \bt_{g_j} (x_j)$.
Therefore
\[
(a - \varepsilon)_+ \precsim_A \bigoplus_{j = 1}^n x_j
\andeqn
\bigoplus_{j = 1}^n \bt_{g_j} (x_j)
 \sim_A \bigoplus_{j = 1}^n \alpha_{g_j} (x_j)
 \precsim_A (b - \delta)_+,
\]
as desired.
\end{proof}

\subsection{The dynamical Cuntz semigroup}
We start with the C*-algebraic definition of the dynamical Cuntz semigroup.

\begin{dfn}\label{D_0817_EquivarW}
Let $\afGAA$ be an action of a discrete group $G$ on a \ca~$A$.
We define the {\emph{dynamical Cuntz semigroup}},
written $\W (A, {\af})$,
to be the set of equivalence classes in $\Mi (A)_{+}$
for the relation $\sim_{A, \af}$.
We write the class in $\W (A, {\af})$ of $a \in \Mi (A)_{+}$
as $\langle a \rangle_{\af}$.
We define a semigroup operation on $\W (A, {\af})$
by
\[
\langle a \rangle_{\af} + \langle b \rangle_{\af}
 = \langle a \oplus b \rangle_{\af}
\]
for $a, b \in \Mi (A)_{+}$.
We define a partial order on $\W (A, {\af})$
by $\langle a \rangle_{\af} \leq \langle b \rangle_{\af}$
exactly when $a \precsim_{A, \alpha} b$.

We set $\Cu (A, \alpha) = \W (\Kt{A}, \af)$.
\end{dfn}

Given Lemma \ref{L_6921_Agree_BPWZ}(\ref{L_6921_Agree_BPWZ_Agree})
below, the
operation and order in Definition~\ref{D_0817_EquivarW} are well
defined and make $\W (A, \af)$, and hence also $\Cu (A, \alpha)$, into a
commutative ordered semigroup.
It follows from 8.2 and~9.4 of \cite{BsPrWuZc}
that $\W (A, \af)$ and $\Cu (A, \alpha)$ functorial
for equivariant homomorphisms of \ca{s}.

We reserve the notation
$\langle a \rangle$ and $\langle a \rangle_{A}$
for classes in $\W (A)$ and $\Cu (A)$.

For $\Cu (A, \alpha)$,
one clearly does not need all of $\Mi (\Kt{A})_{+}$;
it suffices to use only elements of $(\Kt{A})_{+}$.
We do this throughout the rest of the paper without comment.

\begin{lem}\label{L_6921_Agree_BPWZ}
Let $A$ be a C*-algebra and let $\alpha \colon G \to \Aut (A)$
be an action of a discrete group.
Then:
\begin{enumerate}
\item\label{L_6921_Agree_BPWZ_GWS}
$(\W (A), \ll, G)$ is a $G$-$\W$-semigroup
in the sense of Definition~8.1 of~\cite{BsPrWuZc},
with $\ll$ corresponding to $\precsim$ in~\cite{BsPrWuZc}.
\item\label{L_6921_Agree_BPWZ_Agree}
The associated semigroup $(\W (A), \ll, G) / G$
in~8.2 of~\cite{BsPrWuZc} is exactly the semigroup $\W (A, \af)$
in our Definition~\ref{D_0817_EquivarW},
with the order $\leq_G$ in~\cite{BsPrWuZc}
corresponding to the order given by our $\precsim_{A, \af}$.
\item\label{L_6921_Agree_BPWZ_1step}
Let $a, b \in M_{\infty} (A)_+$.
Then $\langle a \rangle_A$ and $\langle b \rangle_A$
satisfy the condition in
Corollary~8.3 of \cite{BsPrWuZc} with $m = 1$
\ifo{} $a \precsim_{A, \alpha, 0} b$
in the sense of Definition~\ref{D_0817_EqCmp_Orig}.
\end{enumerate}
\end{lem}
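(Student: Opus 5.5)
The plan is to unwind the abstract definitions of \cite{BsPrWuZc} and match them term by term with the concrete relations already introduced; no new estimates are needed, only bookkeeping.

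For (\ref{L_6921_Agree_BPWZ_GWS}), I will check the axioms of Definition~8.1 of~\cite{BsPrWuZc} one at a time for $\W (A)$, taking $\ll$ as the auxiliary relation and letting $G$ act by $g \cdot \langle a \rangle_A = \langle \af_g (a) \rangle_A$. This action is well defined and is by ordered semigroup automorphisms, because $\af_g$ is an automorphism of $\Mi (A)$ preserving $\precsim_A$ and $\oplus$. Moreover $(\af_g)_*$ preserves $\ll$, since the characterization ``$\et \ll \langle b \rangle$ \ifo{} $\et \leq \langle (b - \ep)_{+} \rangle$ for some $\ep > 0$'' is transported by automorphisms, using $\af_g ((b - \ep)_{+}) = (\af_g (b) - \ep)_{+}$. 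The remaining axioms are the standard facts that make $(\W (A), \ll)$ a $\W$-semigroup in the sense of Antoine--Perera--Thiel: every element is the supremum of a $\ll$-increasing sequence $\langle (a - 1/n)_{+} \rangle$, $\ll$ is additive, $0 \ll x$, and so on. These facts I will cite rather than reprove.

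For (\ref{L_6921_Agree_BPWZ_1step}), I will compare the condition of Corollary~8.3 of~\cite{BsPrWuZc} with $m = 1$ against Lemma~\ref{LemOneStepBPWZ}(\ref{LemOneStepBPWZ_ll}). That condition says that for every $\et \ll \langle a \rangle_A$ there are finitely many $\xi_j$ and $g_j$ with $\et \ll \sum \xi_j$ and $\sum g_j \xi_j \ll \langle b \rangle_A$. This is literally condition~(\ref{LemOneStepBPWZ_ll}) once $g \xi$ is read as $(\af_g)_* (\xi)$. Lemma~\ref{LemOneStepBPWZ} then finishes the argument. The one point I will need to handle is the degenerate case $n = 0$: I will check whether \cite{BsPrWuZc} allows an empty sum or requires at least one term, and in the latter case pad the sum with $\xi = 0$.

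For (\ref{L_6921_Agree_BPWZ_Agree}), I will recall that 8.2 of~\cite{BsPrWuZc} defines $\leq_G$ on $\W (A)$ as follows: $x \leq_G y$ if every $x' \ll x$ can be joined to $y$ by a finite chain of one-step moves. Here $(\xi_{j,k}, g_{j,k})$ is a one-step move from $u$ to $v$ if $u \ll \sum_k \xi_{j,k}$ and $\sum_k g_{j,k} \xi_{j,k} \ll v$. The semigroup $(\W (A), \ll, G)/G$ is then the antisymmetrization of $\leq_G$. Writing $x' \ll \langle a \rangle_A$ as $x' \leq \langle (a - \ep)_{+} \rangle_A$ for some $\ep$, I will translate this chain condition directly into the chain~(\ref{Eq_1412_Main}) of Definition~\ref{D_0908_EqCmp}, with $\xi_{j,k} = \langle x_{j,k} \rangle_A$. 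I will also check that the alternative ``$(a - \ep)_{+} \ll_A b$'' in Definition~\ref{D_0908_EqCmp} is the zero-length chain. Alternatively, I will invoke Corollary~8.3 of~\cite{BsPrWuZc} together with (\ref{L_6921_Agree_BPWZ_1step}) and Lemma~\ref{Tran_meStepLem}(\ref{Tran_meStepLem_b}), which expresses $\precsim_{A, \af}$ as ``for all $\ep$, a finite chain of one-step relations''. Once the orders agree, the equivalence relations agree, and the addition is $\langle a \rangle + \langle b \rangle = \langle a \oplus b \rangle$ on both sides. This yields the identification, and also well-definedness of the operation and order in Definition~\ref{D_0817_EquivarW}.

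The main obstacle is purely notational fidelity in (\ref{L_6921_Agree_BPWZ_Agree}): \cite{BsPrWuZc} may quantify the interpolating elements and the placement of $\ll$ versus $\leq$ slightly differently from (\ref{Eq_1412_Main}). If so, I will pass between the two using the interpolation $\langle (a - \ep)_{+} \rangle \ll \langle (a - \ep/2)_{+} \rangle$, exactly as in the proof of Lemma~\ref{L_5Z10_SbCnds}.
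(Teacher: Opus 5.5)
Your proposal is correct and follows essentially the same route as the paper. For part~(\ref{L_6921_Agree_BPWZ_GWS}) the paper likewise cites that $(\W (A), \ll)$ is a $\W$-semigroup (Example~2.9 of~\cite{BsPrWuZc}) and treats the $G$-axioms as immediate; for part~(\ref{L_6921_Agree_BPWZ_Agree}) it compares the chain description in Corollary~8.3 of~\cite{BsPrWuZc} directly with Definition~\ref{D_0908_EqCmp}; and for part~(\ref{L_6921_Agree_BPWZ_1step}) it identifies $\prec$ with $\ll$ and invokes Lemma~\ref{LemOneStepBPWZ}, while your extra care about empty sums and the clause $(a - \ep)_{+} \ll_A b$ is harmless bookkeeping that the paper leaves implicit.
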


\begin{proof}
For~(\ref{L_6921_Agree_BPWZ_GWS}), one first proves that
$(\W (A), \ll)$ is a $\W$-semigroup
in the sense of Definition~2.7 of~\cite{BsPrWuZc}.
This is contained in Example~2.9 of~\cite{BsPrWuZc},
specifically taking $\prec$ in Definition~2.7 of~\cite{BsPrWuZc}
to be the relation $\ll$ in $\W (A)$.
Given this, the remaining conditions in Definition~8.1 of~\cite{BsPrWuZc}
are immediate.

Part~(\ref{L_6921_Agree_BPWZ_Agree}) follows by comparing
Corollary~8.3 of~\cite{BsPrWuZc}
with our Definition~\ref{D_0908_EqCmp}.

Part~(\ref{L_6921_Agree_BPWZ_1step}) is the identification of
$\prec$ in~\cite{BsPrWuZc} with $\ll$,
together with Lemma~\ref{LemOneStepBPWZ}.
\end{proof}

\begin{exa}\label{E_0817_TrivAction}
Let $G$ be any discrete group, let $A$ be a \ca,
and let $\af$ be the trivial action of $G$ on~$A$.
Then it is immediate that
$\W (A, \af) \cong \W (A)$ as ordered semigroups.
\end{exa}

In particular,
$\W (\C, \af) \not\cong \W (C^* (G, \C, \af))$
except in very special cases
(probably only when $G$ is trivial).
Thus, $\W (-, \alpha)$ does not behave like equivariant K-theory,
or like the equivariant Cuntz semigroup of~\cite{GarSan17}.

We give some basic examples in
Lemma~\ref{L_1412_Triv}, Lemma~\ref{L_1412_InnReg},
Lemma~\ref{L_1412_CGA}, and Lemma~\ref{L_5Z10_PjUn},
where we also compute the dynamical radii of comparison.
As special cases, they include, for $G$ finite abelian,
the trivial action, conjugation by the regular representation,
and translation on $C (G)$.
They also include conjugation
by one of the simplest projective unitary representations
that does not lift to a genuine unitary representation,
using the group $(\Z / 2 \Z)^2$.

\begin{dfn}\label{N_6626_DSum}
Let $G$ be a topological group, let $A_1, A_2, \ldots, A_n$ be \ca{s},
and for $j = 1, 2, \ldots, n$ let $\af^{(j)} \colon G \to \Aut (A_j)$
be an action.
Set $A = \bigoplus_{j = 1}^n A_j$.
We define the action
\[
\alpha = \af^{(1)} \oplus \af^{(2)} \oplus \cdots \oplus \af^{(n)}
  \colon G \to \Aut (A)
\]
by
\[
\alpha_g (a_1, a_2, \ldots, a_n)
= \bigl( \af_g^{(1)} (a_1), \af_g^{(2)} (a_2),
   \ldots, \af_g^{(n)} (a_n)) \bigr)
\]
for $g \in G$ and $a_j \in A_j$ for $j = 1, 2, \ldots, n$.
\end{dfn}

\begin{lem}\label{L_6626_BasicOps}
Let $\af \colon G \to \Aut (A)$ be an action of a discrete group~$G$
on a \ca~$A$.
\begin{enumerate}
%
\item\label{I_6626_BasicOps_DSum}
Suppose $A = \bigoplus_{j = 1}^n A_j$
and $\af = \bigoplus_{j = 1}^n \af^{(j)}$
as in Definition~\ref{N_6626_DSum}.
Then there are natural isomorphisms
\[
\W (A, \af) \cong \bigoplus_{j = 1}^n \W \bigl( A_j, \af^{(j)} \bigr)
\andeqn
\Cu (A, \af) \cong \bigoplus_{j = 1}^n \Cu \bigl( A_j, \af^{(j)} \bigr)
\]
which send $\langle (a_1, a_2, \ldots, a_n) \rangle_{\af}$ to
$\bigl( \langle a_1 \rangle_{\af^{(1)}},
 \langle a_2 \rangle_{\af^{(2)}},
   \ldots, \langle a_n \rangle_{\af^{(n)}} \bigr)$
when $a_j \in \Mi (A_j)_{+}$ or $a_j \in (\Kt{A}_j)_{+}$
for $j = 1, 2, \ldots, n$.
\item\label{I_6626_BasicOps_Mn}
Let $n \in \N$.
Recall (Notation~\ref{N_5Z11_MU}) that we also write $\af$
for the amplified action on $M_n (A)$.
Then (using matrix unit notation from Notation~\ref{N_5Z11_MU})
the map $a \mapsto e_{1, 1} \otimes a$
induces isomorphisms
$\W (A, \af) \to \W \bigl( M_n \otimes A, \, \af \bigr)$
and $\Cu (A, \af) \to \Cu \bigl( M_n \otimes A, \, \af \bigr)$.
\item\label{I_6626_BasicOps_aue}
Let $\bt \colon G \to \Aut (A)$ be an action which is
pointwise approximately unitarily equivalent to~$\af$
(Definition~\ref{DefPUE}(\ref{I_6816_paue})).
Then the maps $\langle a \rangle_{\bt} \mapsto \langle a \rangle_{\af}$,
for $a \in \Mi (A)_{+}$ or $a \in (\Kt{A})_{+}$,
define isomorphisms $\W (A, \bt) \cong \W (A, \af)$
and $\Cu (A, \bt) \cong \Cu (A, \af)$.
\end{enumerate}
\end{lem}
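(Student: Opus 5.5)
Each part reduces to showing that the relevant map respects the dynamical subequivalence $\precsim_{A, \af}$ in both directions. Once that holds, well-definedness, additivity, order preservation and order embedding all follow at once. In each part the map is also clearly additive and surjective. The $\Cu$ statements follow from the $\W$ statements applied to $\Kt{A}$ with the amplified action, using $\Cu (A, \af) = \W (\Kt{A}, \af)$ and the identification $\Kt{(A_1 \oplus \cdots \oplus A_n)} \cong \bigoplus_j \Kt{A_j}$. So I would work only with $\W$.

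For part~(\ref{I_6626_BasicOps_aue}) there is nothing new to do. Lemma~\ref{L_0Y06_AppInn} says $a \precsim_{A, \af} b$ \ifo{} $a \precsim_{A, \bt} b$. Hence the identity map on $\Mi (A)_{+}$ induces a bijection of equivalence classes preserving order and addition (the direct sum is the same in both).

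For part~(\ref{I_6626_BasicOps_Mn}), identify $\Mi (M_n \otimes A)$ with $\Mi (A)$ via $M_k (M_n (A)) \cong M_{kn} (A)$. The amplified action corresponds to the amplified action, and $e_{1,1} \otimes a$ corresponds to $a \oplus 0$. Ordinary Cuntz subequivalence, $(\cdot - \ep)_{+}$, and $\ll$ are unchanged under this identification. Moreover, every element of $\Mi (M_n (A))_{+}$ is in $\Mi (A)_{+}$ and hence equal to some $e_{1,1} \otimes a$ up to this identification, which gives surjectivity. The chains in~(\ref{Eq_1412_Main}) therefore correspond verbatim, so the relation $\precsim$ is preserved and reflected.

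For part~(\ref{I_6626_BasicOps_DSum}), $\Mi (A) = \bigoplus_j \Mi (A_j)$ after padding by zeros. Functional calculus, direct sums and the action all act coordinatewise. Ordinary Cuntz subequivalence and $\ll_A$ also hold \ifo{} they hold in every coordinate; for $\ll$, use the characterization by $a \precsim (b - \ep)_{+}$ and take the minimum of finitely many $\ep$. The forward direction is then immediate. If $a \precsim_{A, \af} b$, project a chain~(\ref{Eq_1412_Main}) to each coordinate. In the degenerate alternative $(a - \ep)_{+} \ll_A b$, project that instead.

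The main obstacle is the reverse direction, assembling coordinatewise chains into one chain. Given $\ep > 0$ and chains for each $j$ of possibly different lengths $m_j$ and different $n_{i}$, I would first make the lengths equal. Padding a chain of length $m_j$ up to $m = \max_j m_j$ by repeating the final step with all group elements equal to $1$ does not work directly, because of the compact containment requirement. Instead I would use Lemma~\ref{L_5Z10_SbCnds}(\ref{I_5Z10_CC_andCC}) to replace $b_j$ by some $(b_j - \dt)_{+}$. The padding step $x = (b_j - \dt)_{+}$ then satisfies $x \ll b_j$, and since the identity element satisfies $\af_1 (x) = x$, every appended step is compactly contained in the next. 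The degenerate case $(a_j - \ep)_{+} \ll b_j$ is handled the same way, as a chain of length one with $x_{1,1} = (a_j - \ep)_{+}$. Next, the numbers of summands $n_i$ are equalized by adding zero summands. Finally I would set $x_{i,k} = (x^{(1)}_{i,k}, \ldots, x^{(n)}_{i,k})$, but the group elements $g^{(j)}_{i,k}$ may differ across $j$. To handle this, I would instead index the summands at stage $i$ by pairs $(j, k)$ and use $x_{i,(j,k)} = (0, \ldots, x^{(j)}_{i,k}, \ldots, 0)$ with group element $g^{(j)}_{i,k}$. Coordinatewise comparison then gives the combined chain ending in $\ll_A b$, so $a \precsim_{A, \af} b$.
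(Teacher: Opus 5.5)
Parts (2) and (3) are fine, and for (3) your argument is exactly the paper's; the paper simply calls (1) and (2) immediate. The problem is in the reverse direction of part (1), at exactly the step you flagged as the main obstacle: your repair still relies on reflexivity of $\ll$. Suppose a chain of length $m_j$ is padded by $m - m_j \geq 2$ steps, each equal to $x = (b_j - \dt)_{+}$ with group element $1$. Then every internal appended link reads $\af_1 (x) = x \ll x$. This is false in general; for example it fails for $x$ the identity function in $C ([0, 1])$, and it holds only when $\langle x \rangle$ is a compact element. Your degenerate case has the same defect: the length-one chain with $x_{1,1} = (a_j - \ep)_{+}$ needs $(a_j - \ep)_{+} \ll (a_j - \ep)_{+}$ at its first link.

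The missing ingredient is interpolation by strictly decreasing cut-offs. First use Lemma~\ref{L_5Z10_SbCnds} to get $\dt_1 > 0$ with $(a_j - \ep/2)_{+} \precsim_{A_j, \af^{(j)}} (b_j - \dt_1)_{+}$. Then apply Definition~\ref{D_0908_EqCmp} to this pair with $\ep/2$. This gives either $(a_j - \ep)_{+} \ll (b_j - \dt_1)_{+}$, or a chain starting from $(a_j - \ep)_{+}$ and ending in $\ll (b_j - \dt_1)_{+}$. Now choose $\dt_1 > \dt_2 > \cdots > \dt_r > 0$ and pad with $(b_j - \dt_1)_{+}, \ldots, (b_j - \dt_r)_{+}$, all with group element $1$. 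This works because $(b_j - \dt_i)_{+} \ll (b_j - \dt_{i+1})_{+}$ and $(b_j - \dt_r)_{+} \ll b_j$. In the degenerate case the chain simply starts at $x_{1,1} = (b_j - \dt_1)_{+}$. With this repair your pair-indexed assembly is correct, and it also makes the separate equalization of the $n_i$ unnecessary.

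You can avoid chain surgery entirely, which is presumably why the paper calls (1) immediate. The coordinate inclusions $\iota_j \colon A_j \to A$ and projections $\pi_j \colon A \to A_j$ are equivariant. By the functoriality noted after Definition~\ref{D_0817_EquivarW}, they induce order preserving homomorphisms on $\W (-, \af)$. Since $(a_1, \ldots, a_n) = \sum_j \iota_j (a_j)$ is an orthogonal sum, we have $\langle (a_1, \ldots, a_n) \rangle_{\af} = \sum_j (\iota_j)_* \bigl( \langle a_j \rangle_{\af^{(j)}} \bigr)$. Compatibility of $\leq$ with addition in $\W (A, \af)$ then shows that $\bigl( (\pi_1)_*, \ldots, (\pi_n)_* \bigr)$ and $(\eta_j)_j \mapsto \sum_j (\iota_j)_* (\eta_j)$ are mutually inverse order preserving maps. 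On this route, the interpolation issue above is absorbed into the cited fact that $\W (A, \af)$ is an ordered semigroup.
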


\begin{proof}
Parts (\ref{I_6626_BasicOps_DSum}) and~(\ref{I_6626_BasicOps_Mn})
are immediate,
and~(\ref{I_6626_BasicOps_aue})
is immediate from Lemma~\ref{L_0Y06_AppInn}.
\end{proof}

\subsection{Natural comparison maps}\label{Subsec_NaturalComparisonMaps}
We are primarily interested in the dynamical radius of comparison
of $(G, A, \af)$
and its relations to the ordinary radii of comparison
of $A$, $C^* (G, A, \af)$, and~$A^{\af}$.
To relate these, we will use the semigroup of fixed points
$\W (A)^{\af} \S \W (A)$
(which already appears in~\cite{AsvGlsPhl}) and the semigroup
in the next definition.

\begin{ntn}\label{N_4303_NN}
Let $\afGAA$ be an action of a discrete group $G$ on a \ca~$A$.
We let $\ep_{\af} \colon A^{\af} \to A$ be the inclusion of the
fixed point algebra.
We also let $\io_{\af} \colon A \to \CGAa$ be the standard inclusion.
Following Notation~\ref{N_5Z11_MU},
we use the same notation when these maps are extended
to $n \times n$ matrices, etc.
\end{ntn}

\begin{dfn}\label{D_1412_Cu_pr}
Let $\afGAA$ be an action of a discrete group $G$ on a \ca~$A$.
Let $\io_{\af}$ be as in Notation~\ref{N_4303_NN}.
We define $\W' (A, \af) \subseteq \W ( C^* (G, A, \af) )$
and $\Cu' (A, \af) \subseteq \Cu ( C^* (G, A, \af) )$ by
\[
\W' (A, \af)
 = \bigl\{ \langle \io_{\af} (a) \rangle_{C^* (G, A, \af)}
  \colon a \in M_{\I} (A)_+ \bigr\}
\andeqn
\Cu' (A, \af)
 = \W' (\Kt{A}, \af).
\]
\end{dfn}

We will also use the fixed point subsemigroups
$\W (A)^{\af}$ and $\Cu (A)^{\af}$, as in Notation~\ref{N_6816_FixPt}.

The interesting relationships are those
between $\W (A^{\af})$ and $\W (A)$,
between $\W (A)$ and $\W (A, {\af})$,
and between $\W (A, {\af})$ and $\W (\CGAa)$
under the natural maps.
In all of these cases it is possible for the natural map
to fail to be an isomorphism.
See Remark~\ref{R_1412_MapsNotSI};
part~(\ref{Item_1412_MapsNSI_4ns}) shows that
$\W (A, {\af}) \to \W (\CGAa)$ need not be an isomorphism.
However, there are special cases in which
one or more of these maps is automatically an isomorphism.
See Lemma~\ref{L_1412_Triv}, Lemma~\ref{L_1412_InnReg},
Lemma~\ref{L_1412_CGA}, and Lemma~\ref{L_5Z10_PjUn}.

The semigroup $\W' (A, \af)$ has been introduced to help
study the map $\W (A, {\af}) \to \W (\CGAa)$,
by breaking problems involving this map into two subproblems.
The semigroup $\W (A)^{\af}$ plays a somewhat similar role for
the map $\W (A^{\af}) \to \W (A)$.

\begin{rmk}\label{R_6518_Cu_vs_W}
In the rest of this section, there are parallel results, with
the same proofs, for $\W (A, {\af})$ etc.\  and for
$\Cu (A, \af)$ etc.
Since those for $\Cu (A, \af)$ etc.\  follow from
those for $\W (A, \alpha)$ etc., we usually only state
them for $\W (A, \alpha)$ etc.
\end{rmk}

\begin{prp}\label{P_1412_NatMaps}
Let $\afGAA$ be an action of a discrete group $G$ on a \ca~$A$.
Then there are natural semigroup \hm{s}
\[
\W (A^{\af}) \stackrel{(\ep_{\af})_{\#}}{\longrightarrow} \W (A)^{\af}
  \stackrel{\nu_{\af}}{\longrightarrow} \W (A)
  \stackrel{\kp_{\af}}{\longrightarrow} \W (A, {\af})
  \stackrel{(\io_{\af})_{\#}}{\longrightarrow} \W' (A, \af)
  \stackrel{\te_{\af}}{\longrightarrow} \W ( C^* (G, A, \af) ),
\]
in which the maps are defined as follows.
Recall $\ep_{\af} \colon A^{\af} \to A$
and $\io_{\af} \colon A \to \CGAa$ from Notation~\ref{N_4303_NN}.
Then $(\ep_{\af})_{\#}$ is the corestriction of
$(\ep_{\af})_{*} \colon \W (A^{\af}) \to \W (A)$
to the subsemigroup $\W (A)^{\af} \subseteq \W (A)$,
and $(\io_{\af})_{\#}$ is defined by
\[
(\io_{\af})_{\#}(\langle a \rangle_{\af})
= \langle \io_{\af}(a) \rangle_{\CGAa}
\]
for $a \in \Mi(A)_+$.
The maps $\nu_{\af}$ and $\te_{\af}$
are the inclusions of subsemigroups.
The map $\kp_{\af}$ takes the class in $\W (A)$ of $a \in \Mi (A)_+$
to the class in $\W (A, {\af})$ of the same element.

Moreover,
\begin{equation}\label{Eq_5Z13_Comps}
\nu_{\af} \circ (\ep_{\af})_{\#} = (\ep_{\af})_{*}
\andeqn
\te_{\af} \circ (\io_{\af})_{\#} \circ \kp_{\af} = (\io_{\af})_{*}.
\end{equation}
\end{prp}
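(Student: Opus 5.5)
We check, map by map, that each is well defined, additive, and order preserving, and then verify the two identities in~(\ref{Eq_5Z13_Comps}). The maps $\nu_{\af}$ and $\te_{\af}$ are inclusions of subsemigroups, so the only thing to check for them is that the subsets are subsemigroups.

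For $\W (A)^{\af}$: each $\af_g$ induces an order preserving semigroup automorphism $(\af_g)_*$ of $\W (A)$. Its fixed points are therefore closed under addition and contain~$0$.

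For $\W' (A, \af)$: we have $\io_{\af} (a) \oplus \io_{\af} (b) = \io_{\af} (a \oplus b)$, which gives closure under addition.

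Next, $(\ep_{\af})_*$ is the usual functorial map $\W (A^{\af}) \to \W (A)$, defined as in Definition~\ref{Sec_CuSub}(\ref{Cuntz_def_property_e}). To see that it lands in $\W (A)^{\af}$, take $a \in \Mi (A^{\af})_{+}$. Then $\af_g (\ep_{\af} (a)) = \ep_{\af} (a)$ for all $g \in G$, so $(\af_g)_* \langle \ep_{\af} (a) \rangle_A = \langle \ep_{\af} (a) \rangle_A$. This means the corestriction $(\ep_{\af})_{\#}$ makes sense. It is additive and order preserving because $(\ep_{\af})_*$ is. The identity $\nu_{\af} \circ (\ep_{\af})_{\#} = (\ep_{\af})_*$ then holds by construction.

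For $\kp_{\af}$, we need that $a \precsim_A b$ implies $a \precsim_{A, \af} b$. This is Lemma~\ref{R_0817_NonEqToEq}(\ref{Item_R_0817_NonEqToEq_AToDyn}). It shows that $\kp_{\af}$ is well defined on Cuntz classes and order preserving. Additivity is immediate, since in both semigroups the sum is represented by the direct sum $a \oplus b$ (Definition~\ref{D_0817_EquivarW}). We are taking as given that the order and operation on $\W (A, \af)$ are well defined, which is Lemma~\ref{L_6921_Agree_BPWZ}.

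The step needing actual input is $(\io_{\af})_{\#}$. We must show that $a \precsim_{A, \af} b$ implies $\io_{\af} (a) \precsim_{\CGAa} \io_{\af} (b)$. This is Lemma~\ref{R_0817_NonEqToEq}(\ref{Item_R_0817_NonEqToEq_DynToCP}), stated there as immediate. If one wants the underlying reason, it is the following. In $\CGAa$, each $\io_{\af} (\af_g (x))$ is unitarily equivalent to $\io_{\af} (x)$ via the canonical unitary $u_g$. Hence every link of the chain~(\ref{Eq_1412_Main}) yields a Cuntz subequivalence in the crossed product, and transitivity gives $(a - \ep)_{+} \precsim \io_{\af} (b)$ for every $\ep > 0$. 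By the standard characterization, this gives $\io_{\af} (a) \precsim \io_{\af} (b)$. Applying this in both directions shows that $(\io_{\af})_{\#}$ is well defined on $\sim_{A, \af}$ classes and order preserving. It is additive because $\io_{\af}$ commutes with direct sums, and it is surjective onto $\W' (A, \af)$ by definition.

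Finally, for $a \in \Mi (A)_{+}$, we have
\[
\te_{\af} \bigl( (\io_{\af})_{\#} (\kp_{\af} (\langle a \rangle_A)) \bigr)
= \langle \io_{\af} (a) \rangle_{\CGAa}
= (\io_{\af})_* (\langle a \rangle_A),
\]
which gives the second identity in~(\ref{Eq_5Z13_Comps}). Naturality means compatibility with equivariant \hm{s}. It follows because every map above is induced by an element-level assignment that commutes with applying such a \hm, together with functoriality of $\W$, of $\W (-, \af)$ (from \cite{BsPrWuZc}), and of crossed products.

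I expect no real obstacle. The only substantive point is the compatibility of dynamical subequivalence with Cuntz subequivalence in the crossed product, and that is already recorded in Lemma~\ref{R_0817_NonEqToEq}.
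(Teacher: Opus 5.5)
Your proposal is correct and is essentially the paper's proof. The paper likewise says that the only nontrivial points are the well-definedness of $\kp_{\af}$ and $(\io_{\af})_{\#}$, which follow from Lemma~\ref{R_0817_NonEqToEq}(\ref{Item_R_0817_NonEqToEq_AToDyn}) and~(\ref{Item_R_0817_NonEqToEq_DynToCP}), and that naturality and the identities~(\ref{Eq_5Z13_Comps}) hold by definition. Your extra checks (that $\W (A)^{\af}$ and $\W' (A, \af)$ are subsemigroups, and that $(\ep_{\af})_*$ lands in $\W (A)^{\af}$) only make explicit what the paper leaves implicit. One small caveat: when $A$ is nonunital, $u_g$ is a multiplier rather than an element of $\CGAa$, so the Cuntz equivalence $\io_{\af} (\af_g (x)) \sim \io_{\af} (x)$ should be witnessed by $u_g \io_{\af} (x)^{1/2}$, which does lie in the crossed product.
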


\begin{proof}
The only statements about the maps that need to be checked are
that $\kp_{\af}$ and $(\io_{\af})_{\#}$ are well defined.
These are immediate from
Lemma \ref{R_0817_NonEqToEq}(\ref{Item_R_0817_NonEqToEq_AToDyn})
and Lemma \ref{R_0817_NonEqToEq}(\ref{Item_R_0817_NonEqToEq_DynToCP}).
Naturality is immediate, and the equations~(\ref{Eq_5Z13_Comps})
hold by definition.
\end{proof}

\begin{rmk}\label{R_1412_MapsSI}
The following facts
are immediate from the definitions.
\begin{enumerate}
%
\item\label{Item_1412_MapsSI_2I}
$\nu_{\af} \colon \W (A)^{\af} \to \W (A)$ is injective.
\item\label{Item_1412_MapsSI_3S}
$\kp_{\af} \colon \W (A) \to \W (A, {\af})$ is surjective.
\item\label{Item_1412_MapsSI_4surj}
$(\io_{\af})_{\#} \colon \W (A, {\af}) \to \W' (A, \af)$
is surjective.
\item\label{Item_1412_MapsSI_5inj}
$\te_{\af} \colon \W' (A, \af) \to \W ( C^* (G, A, \af) )$
is injective.
\end{enumerate}
\end{rmk}

In no other cases do we have either automatic injectivity or
automatic surjectivity.
See Remark~\ref{R_1412_MapsNotSI}.

The next two results will be used in some of our basic examples.

\begin{prp}\label{L_0817_Inn}
Let $A$ be a \ca, let $G$ be a discrete group,
and let $\af \colon G \to \Aut (A)$ be
a pointwise approximately inner action of $G$ on~$A$
(Definition \ref{DefPUE}(\ref{I_6816_pai})).
Then the map $\kp_{\af} \colon \W (A) \to \W (A, \af)$
of Proposition~\ref{P_1412_NatMaps}
is an isomorphism of ordered semigroups.
\end{prp}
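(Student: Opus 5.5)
The plan is to reduce to the trivial action, using Lemma \ref{L_0Y06_AppInn} together with Example \ref{E_0817_TrivAction} and Lemma \ref{R_0817_NonEqToEq}(\ref{Item_R_0817_NonEqToEq_Trv}). Let $\io \colon G \to \Aut (A)$ be the trivial action. By Definition \ref{DefPUE}(\ref{I_6816_pai}), $\af$ is pointwise approximately unitarily equivalent to $\io$. So Lemma \ref{L_0Y06_AppInn} gives, for all $a, b \in \Mi (A)_{+}$,
\[
a \precsim_{A, \af} b \iff a \precsim_{A, \io} b .
\]
By Lemma \ref{R_0817_NonEqToEq}(\ref{Item_R_0817_NonEqToEq_Trv}), the right-hand side is equivalent to $a \precsim_A b$. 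Hence
\[
a \precsim_{A, \af} b \iff a \precsim_A b \qquad \text{for all } a, b \in \Mi (A)_{+}.
\]

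From this, the properties of $\kp_{\af}$ follow directly.
\begin{enumerate}
\item \emph{Order embedding.} By the displayed equivalence, $\langle a \rangle_A \leq \langle b \rangle_A$ \ifo{} $\kp_{\af} (\langle a \rangle_A) \leq \kp_{\af} (\langle b \rangle_A)$.
\item \emph{Injectivity.} Applying the equivalence in both directions shows that $\sim_{A, \af}$ and $\sim_A$ coincide. So $\kp_{\af}$ is injective.
\item \emph{Surjectivity.} This is Remark \ref{R_1412_MapsSI}(\ref{Item_1412_MapsSI_3S}).
\item \emph{Additivity.} Both semigroup operations are given by $\oplus$ on representatives, and $\kp_{\af}$ sends $\langle a \rangle_A$ to $\langle a \rangle_{\af}$. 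So $\kp_{\af}$ is a semigroup \hm, as already noted in Proposition \ref{P_1412_NatMaps}.
\end{enumerate}
Thus $\kp_{\af}$ is a bijective semigroup \hm{} that preserves and reflects order, i.e.\ an isomorphism of ordered semigroups.

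The only point that needs care is that Lemma \ref{L_0Y06_AppInn} applies to $(\af, \io)$ in the present generality. Its proof uses only that each $\af_g$ is approximately inner. That gives $\af_g (x) \sim_A x$ for $x \in \Mi (A)_{+}$, even though $A$ need not be unital: $\af_g$ is approximately implemented by conjugation by unitaries in $\widetilde{A}$, and Cuntz equivalence in $\widetilde{A}$ of elements of $\Mi (A)_{+}$ is the same as Cuntz equivalence in $A$. I expect no real obstacle here; if desired, one can avoid Lemma \ref{L_0Y06_AppInn} and check directly that, with all $g_{j,k}$ effectively trivial, the chain in (\ref{Eq_1412_Main}) collapses to $(a - \ep)_{+} \precsim_A b$ for every $\ep > 0$, hence $a \precsim_A b$.
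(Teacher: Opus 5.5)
Your proposal is correct and follows essentially the same route as the paper, which handles the trivial action directly and then transfers the result to $\af$ via Lemma~\ref{L_6626_BasicOps}(\ref{I_6626_BasicOps_aue}), itself an immediate consequence of Lemma~\ref{L_0Y06_AppInn}. Your remark that the argument only needs approximate implementation by unitaries of the unitization is a sensible clarification, since Definition~\ref{DefPUE} is formally stated only for unital $A$.
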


\begin{proof}
This is obviously true if $\af$ is the trivial action.
The general case then follows from
Lemma \ref{L_6626_BasicOps}(\ref{I_6626_BasicOps_aue}).
\end{proof}

\begin{prp}\label{P_4307_AppInnActions}
Let $A$ be a \uca, let $G$ be a finite group,
and let $\af \colon G \to \Aut (A)$
be an action of $G$ on~$A$ which is strictly approximately inner
(Definition \ref{DefPUE}(\ref{I_6816_Str_pai})).
Then the map
$(\io_{\af})_{\#} \colon \W (A, \alpha) \to \W' (A, \alpha)$
of Proposition~\ref{P_1412_NatMaps} is injective.
\end{prp}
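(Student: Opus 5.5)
The plan is to reduce the statement to a purely nonequivariant comparison in $A$, and then use the approximately implementing unitaries to push comparisons in $\CGAa$ back down into~$A$.

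\emph{Reduction.} Definition~\ref{DefPUE}(\ref{I_6816_Str_pai}), condition~(\ref{I_6816_Str_pai_cj}), shows that each $\af_g$ is approximately inner. So $\af$ is pointwise approximately inner, and Proposition~\ref{L_0817_Inn} shows that $\kp_{\af}$ is an order isomorphism. In particular, for $a, b \in \Mi (A)_{+}$ we have $a \precsim_{A, \af} b$ \ifo{} $a \precsim_A b$. Injectivity of $(\io_{\af})_{\#}$ will follow once we know that $(\io_{\af})_{\#}$ is an order embedding. Concretely, it suffices to prove:
\[
\io_{\af} (a) \precsim_{\CGAa} \io_{\af} (b)
\quad \Longrightarrow \quad
a \precsim_A b .
\]
Applying this in both directions then gives injectivity.

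\emph{Approximate homomorphism into $A$.} Let $(u_g)_{g \in G}$ be the canonical unitaries in $\CGAa$. Since $G$ is finite, every element of $M_N (\CGAa)$ is a finite sum $\sum_{g \in G} c_g u_g$ with $c_g \in M_N (A)$. Given a finite set $F \subseteq M_N (A)$ and $\rh > 0$, choose $z_g \in \U (A)$ as in Definition~\ref{DefPUE}(\ref{I_6816_Str_pai}) for the entries of~$F$, and amplify them to $1_N \otimes z_g$. Define the linear map
\[
\ps \Bigl( \sum_{g} c_g u_g \Bigr) = \sum_g c_g z_g .
\]
Using $z_g c z_g^* \approx \af_g (c)$ and $z_g z_h \approx z_{g h}$, one checks that $\ps (x y) \approx \ps (x) \ps (y)$ and $\ps (x^*) = \ps (x)^*$ up to an error controlled by $\rh$. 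This holds for $x, y$ in the span of $F \cdot \{ u_g \}$, with the error depending on $\rh$, $\card (G)$, and the norms of elements of~$F$. Moreover $\ps$ restricts to the identity on $\io_{\af} (M_N (A))$.

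\emph{Transfer of the comparison.} Let $\ep > 0$. From $\io_{\af} (a) \precsim \io_{\af} (b)$, choose $v \in M_N (\CGAa)$ with $\| v b v^* - a \| < \ep$. Perturbing $v$ slightly, we may assume $v = \sum_g c_g u_g$. Expanding,
\[
v b v^* = \sum_{g, h} c_g \, \af_g (b) \, \af_{g h^{-1}} (c_h^*) \, u_{g h^{-1}},
\]
so $v b v^*$ lies in the span of a finite set determined by $b$ and the~$c_g$. Take $F$ to consist of $b$, the $c_g$, the $c_g^*$, and the coefficients appearing above, and choose $\rh$ small enough. Then
\[
\ps (v) \, b \, \ps (v)^* \approx \ps (v b v^*) \approx \ps (a) = a ,
\]
and the total error is less than $2 \ep$. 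The standard Kirchberg--R{\o}rdam lemma says that $\| a - w b w^* \| < \dt$ implies $(a - \dt)_{+} \precsim_A b$. Applying it with $w = \ps (v)$ gives $(a - 2 \ep)_{+} \precsim_A b$ for every $\ep > 0$, hence $a \precsim_A b$.

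The main obstacle is the bookkeeping in the approximate multiplicativity of $\ps$. One has to verify that
\[
\ps (v) \, b \, \ps (v)^* = \sum_{g, h} c_g z_g b z_h^* c_h^*
\]
is close to $\ps (v b v^*)$. This comes down to estimating $z_g b z_h^*$ against $\af_g (b) \, \af_{g h^{-1}} (\cdot) \, z_{g h^{-1}}$ term by term, using both conditions of Definition~\ref{DefPUE}(\ref{I_6816_Str_pai}). These estimates only involve finitely many fixed elements once $v$ is chosen, so they can be made arbitrarily small. Unitality of $A$ is used to make sense of the $z_g$.
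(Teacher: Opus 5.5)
Your argument is correct, but it takes a different route from the paper. The paper's proof is two lines. By Proposition~\ref{L_0817_Inn} it suffices to show that $(\io_{\af})_{\#} \circ \kp_{\af}$ is injective. Since $\te_{\af} \circ (\io_{\af})_{\#} \circ \kp_{\af} = (\io_{\af})_{*}$, this follows from Theorem~3.18 of \cite{Asd2}, which gives injectivity of $\W (\io_{\af})$ for strictly approximately inner actions.

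You instead reprove the needed fact from scratch. The approximately multiplicative map $\psi \bigl( \sum_g c_g u_g \bigr) = \sum_g c_g z_g$ pulls a witness of $\io_{\af} (a) \precsim_{\CGAa} \io_{\af} (b)$ back to a witness in~$A$, so $\io_{\af}$ reflects Cuntz subequivalence. This is self-contained and gives the slightly stronger statement that $(\io_{\af})_{\#}$ is an order embedding. The cost is the estimates that the paper avoids by citation.

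A few small corrections, none of which affect the conclusion:
\begin{itemize}
\item $\psi$ is not contractive. Using the contractive coefficient maps $x \mapsto E (x u_g^*)$, one only gets $\| \psi (x) \| \leq \card (G) \| x \|$. So $\| \psi (v b v^*) - a \| < \card (G) \ep$, and the total error is $(\card (G) + 1) \ep$ rather than $2 \ep$. Alternatively, choose $v$ with $\| v b v^* - a \| < \ep / \card (G)$.
\item No perturbation of $v$ is needed, because for finite $G$ every element of $M_N (\CGAa)$ already has the form $\sum_g c_g u_g$.
\item The appeal to Proposition~\ref{L_0817_Inn} is more than you need. The implication $a \sim_A b \Rightarrow a \sim_{A, \af} b$ holds for every action, so reflection of Cuntz subequivalence alone gives injectivity.
\end{itemize}
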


\begin{proof}
By Proposition~\ref{L_0817_Inn}, it is enough to show that
$(\io_{\af})_{\#} \circ \kp_{\af}$ is injective.
This follows from Theorem~3.18 of \cite{Asd2},
which implies that
$\te_{\af} \circ (\io_{\af})_{\#} \circ \kp_{\af}$ is injective.
\end{proof}

\section{The dynamical radius of comparison
 and basic examples}\label{Sec_5Z10_Dyn_rc}

\indent
In this section, we define the dynamical radius of comparison
$\rc (A, \af)$ using dynamical Cuntz comparison and $G$-invariant
quasitraces.
We also introduce the auxiliary radius
$\rc' (A, \af)$, obtained by comparing positive elements of $A$ inside
the crossed product.
We establish basic inequalities and permanence
properties for these invariants and compute them in several elementary
examples.
These examples show that both equality and strict inequality
can occur among $\rc (A)$, $\rc (A, \af)$, $\rc' (A, \af)$, and
$\rc (\CGAa)$.

The definitions and some of the basics
make sense even if $G$ is not amenable.
The later difficulty is that one might have $\QT (A)^{\af} = \E$
when $G$ is not amenable.

\begin{lem}\label{L_0817_QTG}
Let $\afGAA$ be an action of a discrete group $G$ on a \ca~$A$.
Let $a, b \in \Mi (A)_{+}$ satisfy $a \precsim_{A, \af} b$,
and let $\ta \in \QT (A)^{\alpha}$.
Then $d_{\ta} (a) \leq d_{\ta} (b)$.
\end{lem}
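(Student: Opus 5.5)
We use the standard properties of the dimension function $d_{\ta}(x) = \lim_{n \to \infty} \ta (x^{1/n})$ for $\ta \in \QT (A)$, extended to $\Mi (A)_{+}$. Three facts are needed. First, $d_{\ta}$ is monotone for ordinary Cuntz subequivalence $\precsim_A$, hence also for $\ll_A$. Second, it is additive on direct sums. Third, $d_{\ta} (a) = \sup_{\ep > 0} d_{\ta} ((a - \ep)_{+})$, which holds by lower semicontinuity, since $(a - \ep)_{+}^{1/n} \to$ the support projection monotonically in the appropriate sense. In addition, $\af$-invariance of $\ta$ gives $d_{\ta} (\af_g (x)) = d_{\ta} (x)$ for all $g \in G$ and $x \in \Mi (A)_{+}$. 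This holds because $\af_g$ commutes with functional calculus, so $\ta (\af_g (x)^{1/n}) = \ta (\af_g (x^{1/n})) = \ta (x^{1/n})$.

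Fix $\ep > 0$. By Definition~\ref{D_0908_EqCmp}, either $(a - \ep)_{+} \ll_A b$ or there is a chain as in~(\ref{Eq_1412_Main}). In the first case, $d_{\ta} ((a - \ep)_{+}) \leq d_{\ta} (b)$ directly. In the second case, we propagate the estimate through the chain:
\[
d_{\ta} ((a - \ep)_{+})
 \leq \sum_{k = 1}^{n_1} d_{\ta} (x_{1, k})
 = \sum_{k = 1}^{n_1} d_{\ta} (\af_{g_{1, k}} (x_{1, k}))
 \leq \sum_{k = 1}^{n_2} d_{\ta} (x_{2, k})
 \leq \cdots
 \leq \sum_{k = 1}^{n_m} d_{\ta} (\af_{g_{m, k}} (x_{m, k}))
 \leq d_{\ta} (b).
\]
Each inequality comes from monotonicity of $d_{\ta}$ under $\ll_A$, since $\ll_A$ implies $\precsim_A$, together with additivity on direct sums. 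Each equality uses invariance under $\af_g$.

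Taking the supremum over $\ep > 0$ then gives $d_{\ta} (a) \leq d_{\ta} (b)$. All steps are routine. The only point needing care is that the standard facts (monotonicity, additivity, the supremum formula) hold for normalized $2$-quasitraces on $\Mi (A)$, not just for traces. These are standard from Blackadar--Handelman and the usual treatment of dimension functions, and we quote them rather than reprove them.
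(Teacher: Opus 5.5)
Your proof is correct and uses the same ingredients as the paper: monotonicity and additivity of $d_{\ta}$ under $\precsim_A$, the invariance $d_{\ta} \circ \af_g = d_{\ta}$, and the formula $d_{\ta} (a) = \sup_{\ep > 0} d_{\ta} ((a - \ep)_{+})$. The only difference is organizational: the paper first proves the inequality for one step subequivalence $\precsim_{A, \af, 0}$ (Definition~\ref{D_0817_EqCmp_Orig}) and then chains via Lemma~\ref{Tran_meStepLem}(\ref{Tran_meStepLem_b}), whereas you telescope directly through the chain of $\ll_A$ relations in Definition~\ref{D_0908_EqCmp}, which is slightly more direct since $\ll_A$ already implies $\precsim_A$.
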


\begin{proof}
We first claim that if $c, d \in M_{\infty} (A)_+$
satisfy $c \precsim_{A, \alpha, 0} d$,
then $d_{\tau} (c) \leq d_{\tau} (d)$.
To prove the claim, let $\varepsilon > 0$.
Choose $\delta > 0$, $n \in \N$, $g_1, g_2, \ldots, g_n \in G$,
and $x_1, x_2, \ldots, x_n \in M_{\infty} (A)_{+}$
as in Definition~\ref{D_0817_EqCmp_Orig}.
Then
\[
d_{\ta} ((c - \varepsilon)_+)
 \leq \sum_{j = 1}^n d_{\ta} (x_j)
 = \sum_{j = 1}^n d_{\ta} (\af_{g_j} (x_j))
 \leq d_{\ta} ((d - \dt)_+)
 \leq d_{\ta} (d).
\]
Since $d_{\tau} (c) = \sup_{\ep > 0} d_{\ta} ((c - \varepsilon)_+)$,
the claim follows by taking the supremum over all $\ep > 0$.

To prove the lemma, let $\varepsilon > 0$.
By Lemma~\ref{Tran_meStepLem}(\ref{Tran_meStepLem_b}), there are
$m \in \N$ and $y_1, y_2, \ldots, y_m \in M_{\infty} (A)_+$ such that
\[
(a - \varepsilon)_+
\precsim_{A, \alpha, 0} y_1
\precsim_{A, \alpha, 0} \cdots
\precsim_{A, \alpha, 0} y_m
\precsim_{A, \alpha, 0} b.
\]
Repeated application of the claim gives
$d_{\ta} ((a - \varepsilon)_+) \leq d_{\tau} (b)$.
Taking the supremum over $\varepsilon > 0$ gives
$d_{\tau} (a) \leq d_{\tau} (b)$, as desired.
\end{proof}

\begin{cor}\label{C_2528_dt_wd}
Let $\afGAA$ be an action of a discrete group $G$ on a \uca~$A$.
Let $\ta \in \QT (A)^{\alpha}$.
Then $d_{\ta}$ induces a well defined state on $\W (A, {\af})$,
which we also write as $d_{\ta} \colon \W (A, \af) \to [0, \I)$.
\end{cor}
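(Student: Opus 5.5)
The plan is to define $d_{\ta} (\langle a \rangle_{\af}) = d_{\ta} (a)$ for $a \in \Mi (A)_{+}$, where $d_{\ta} (a) = \lim_{n \to \infty} \ta (a^{1/n})$ is the usual dimension function associated with the quasitrace~$\ta$ (extended to $\Mi (A)$ in the standard way). We then check that this map is well defined, additive, order preserving, and normalized appropriately.

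\emph{Well definedness.} Suppose $\langle a \rangle_{\af} = \langle b \rangle_{\af}$, that is, $a \sim_{A, \af} b$. Then $a \precsim_{A, \af} b$ and $b \precsim_{A, \af} a$. Lemma~\ref{L_0817_QTG} applied in both directions gives $d_{\ta} (a) \leq d_{\ta} (b)$ and $d_{\ta} (b) \leq d_{\ta} (a)$. So $d_{\ta} (a) = d_{\ta} (b)$, and the formula does not depend on the choice of representative.

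\emph{State properties.} Additivity follows from the definition of the semigroup operation in Definition~\ref{D_0817_EquivarW} together with the standard identity $d_{\ta} (a \oplus b) = d_{\ta} (a) + d_{\ta} (b)$ for quasitraces on $\Mi (A)$. We also have $d_{\ta} (0) = 0$. For order preservation, suppose $\langle a \rangle_{\af} \leq \langle b \rangle_{\af}$. By definition this means $a \precsim_{A, \af} b$, so Lemma~\ref{L_0817_QTG} gives $d_{\ta} (a) \leq d_{\ta} (b)$. Since $A$ is unital and $\ta$ is normalized, $d_{\ta} (\langle 1 \rangle_{\af}) = 1$. Values lie in $[0, \I)$ because each $a \in M_n (A)_{+}$ satisfies $d_{\ta} (a) \leq n$.

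There is no real obstacle here: all the substantive work, namely handling the multi-step chains in Definition~\ref{D_0908_EqCmp} through one step subequivalences and using $G$-invariance of~$\ta$, is already done in Lemma~\ref{L_0817_QTG}. The only remaining point is the compatibility $d_{\ta} = d_{\ta} \circ \kp_{\af}$ with the ordinary $\W (A)$ state, and this holds by construction.
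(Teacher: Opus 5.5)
Your proposal is correct and matches the paper's proof: well-definedness (via $\precsim_{A, \af}$ in both directions) and order preservation both follow directly from Lemma~\ref{L_0817_QTG}. The remaining state properties (additivity, $d_{\ta} (0) = 0$, normalization at $\langle 1 \rangle_{\af}$) are the standard facts for $d_{\ta}$ that the paper treats as not needing proof.
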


\begin{proof}
The only parts which need proof are that $d_{\ta}$ preserves the
order $\leq$ and that if $a, b \in \Mi (A)_{+}$
satisfy $a \sim_{A, \af} b$, then $d_{\ta} (a) = d_{\ta} (b)$.
These are clear from Lemma~\ref{L_0817_QTG}.
\end{proof}

One may ask when the converse of Lemma~\ref{L_0817_QTG} holds.
This suggests Definition \ref{D_0817_eqrc_dfn}(\ref{rc_dfn_a}) below.

\begin{dfn}\label{D_2528_af_full}
Let $\afGAA$ be an action of a discrete group $G$ on a \ca~$A$.
\begin{enumerate}
%
\item\label{I_6817_Iafb}
For $b \in M_{\infty} (A)$, we denote by $I_{\af} (b)$
the closed $\alpha$-invariant ideal of $M_{\infty} (A)$ generated by $b$.
That is,
\[
I_{\af} (b)
= \overline{\operatorname{span}
 \bigl( \bigl\{ x \alpha_g (b) y
   \colon \mbox{$x, y \in M_{\infty} (A)$ and $g \in G$} \bigr\}  \bigr) }.
\]
If $G = \{ 1 \}$, we just write $I (b)$.
\item\label{I_6817_af_full}
An element $b \in M_{\infty} (A)$ is {\emph{$\af$-full}} if
$I_{\af} (b) = M_{\infty} (A)$.
\end{enumerate}
\end{dfn}

\begin{dfn}\label{D_0817_eqrc_dfn}
Let $\afGAA$ be an action of a discrete group $G$ on a \uca~$A$.
\begin{enumerate}
\item\label{rc_dfn_a}
Let $r \in [0, \I)$.
We say that $A$ has {\emph{dynamical $r$-comparison}} if whenever
$a, b \in M_{\infty} (A)_{+}$ satisfy $a \in I_{\af} (b)$
and $d_{\ta} (a) + r < d_{\ta} (b)$ for all $\ta \in \QT (A)^{\alpha}$,
then $a \precsim_{A, \alpha} b$.
\item\label{rc_dfn_b}
The {\emph{dynamical radius of comparison}} of~$\af$
(of~$A$ when $\af$ is understood) is
\[
\rc (A, \af)
 = \inf \big( \big\{ r \in [0, \I) \colon
    {\mbox{$A$ has dynamical $r$-comparison}} \big\} \big)
\]
if it exists, and $\infty$ otherwise.
\item\label{I_6817_rc_dfn_strict}
We say that $(A, \af)$ has {\emph{dynamical strict comparison}}
if $\rc (A, \af) = 0$.
\end{enumerate}
\end{dfn}

We give basic examples at the end of this section.

\begin{rmk}\label{R_DynRcAmenableCase}
In this paper, we will primarily consider actions
$\alpha \colon G \to \Aut (A)$
of discrete amenable groups on residually stably finite unital
\ca{s} (as in Definition~\ref{D_6816_ResSF}).
In this setting, the additional assumption $a \in I_{\af} (b)$
in Definition~\ref{D_0817_eqrc_dfn}(\ref{rc_dfn_a}) is not needed.
See Lemma~\ref{L_6920_No_aIb} below.
Without residual stable finiteness,
Definition~\ref{D_0817_eqrc_dfn} is not very useful
without residual stable finiteness.
See the discussion before Definition~3.6 in \cite{ASA25}
and Lemma~3.9 in \cite{ASA25} for more details.
\end{rmk}

\begin{rmk}\label{R_6921_rc_Ntn}
The conventional radius of comparison of $A$,
in Definition~6.1 in \cite{Tom06}, is denoted by $\rc (A)$.
We warn that Definition~6.1 of~\cite{Tom06} does not
contain the condition $a \in I (b)$.
In general, this makes $\rc (A)$ larger than it really should be.
\end{rmk}

For completeness, we include a fairly direct proof
of the following fact.

\begin{lem}\label{L_6920_No_aIb}
Let $A$ be a residually stably finite \uca.
Then the definition of $\rc (A)$ is unchanged if,
in the definition of $r$-comparison,
one only requires that if whenever
$a, b \in M_{\infty} (A)_{+}$ satisfy $a \in I (b)$
and $d_{\ta} (a) + r < d_{\ta} (b)$ for all $\ta \in \QT (A)$,
then $a \precsim_{A} b$.
\end{lem}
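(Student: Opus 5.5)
The task is to show: if $a, b \in \Mi (A)_{+}$ satisfy $d_{\ta} (a) + r < d_{\ta} (b)$ for all $\ta \in \QT (A)$, then automatically $a \in I (b)$, provided $r \ge 0$ and $A$ is residually stably finite. Once this is known, the two versions of $r$-comparison (with and without the hypothesis $a \in I (b)$) impose exactly the same conditions. Hence the sets of admissible $r$ coincide, and the infima agree. One direction is trivial anyway: the version without $a \in I(b)$ is a stronger requirement. So the whole content is the automatic ideal membership.

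To prove it, I will argue by contradiction. Suppose $a \notin I (b)$. Set $J = I (b) \cap A$, a closed ideal of $A$; then $I (b) = M_{\infty} (J)$ (closure understood). Let $\pi \colon A \to A / J$ be the quotient map. Then $\pi (b) = 0$ but $\pi (a) \neq 0$. Note $J \neq A$: otherwise $I(b)$ would be everything and would contain $a$.

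Since $A$ is residually stably finite, $A/J$ is a nonzero unital stably finite \ca. By Blackadar--Handelman (stably finite unital \ca{s} admit quasitraces, in fact a normalized $2$-quasitrace), $\QT (A/J) \neq \E$. Pick $\sm \in \QT (A/J)$ and set $\ta = \sm \circ \pi \in \QT (A)$. Then $d_{\ta} (b) = d_{\sm} (\pi (b)) = 0$, while $d_{\ta} (a) \geq 0$. So $d_{\ta} (a) + r \geq 0 = d_{\ta} (b)$, contradicting the strict inequality. This gives $a \in I (b)$.

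The only nonroutine ingredient, and the main point to check, is the existence of a quasitrace on a nonzero unital stably finite \ca{}. This is the Blackadar--Handelman theorem, and it is exactly where residual stable finiteness enters; without it $A/J$ could be, say, $\mathcal{O}_2$, which has no quasitraces. A minor bookkeeping point is that $I(b)$ lives in $\Mi(A)$, so I will pass to $M_n(A)$ for $n$ large enough to contain $a$ and $b$, and use $M_n(J)$ there.
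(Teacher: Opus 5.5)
Your proof is correct and is essentially the paper's argument: the paper observes that the hypothesis forces $b$ to be full, citing Lemma~2.13 of~\cite{ASA25}, and that lemma rests on exactly your reasoning (a non-full $b$ gives a proper ideal, hence a nonzero stably finite unital quotient, hence by Blackadar--Handelman a quasitrace which pulls back to some $\tau$ with $d_{\tau}(b) = 0$). You unpack that lemma directly, and because you only use pointwise positivity of $d_{\tau}(b)$ rather than the uniform bound $\inf_{\tau} d_{\tau}(b) \geq r > 0$ the paper relies on, your version also covers $r = 0$.
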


\begin{proof}
Let $\widetilde{\rc} (A)$ be the number gotten by defining
$r$-comparison as in the statement of the lemma.
Clearly $\widetilde{\rc} (A) \leq \rc (A)$.
(This direction does not need residual stable finiteness.)
For the reverse, it suffices to consider $r$-comparison when $r > 0$.
So let $r \in (0, \I)$, and assume that,
whenever $a, b \in M_{\infty} (A)_{+}$ satisfy $a \in I (b)$
and $d_{\ta} (a) + r < d_{\ta} (b)$ for all $\ta \in \QT (A)$,
then $a \precsim_{A} b$.
Now let $a, b \in M_{\infty} (A)_{+}$ satisfy
$d_{\ta} (a) + r < d_{\ta} (b)$ for all $\ta \in \QT (A)$.
Then in fact $b$ is full by Lemma~2.13 in \cite{ASA25},
so necessarily $a \in I (b)$.
\end{proof}

\begin{rmk}\label{R_5Z10_Not_rsf}
In Definition~\ref{D_0817_eqrc_dfn},
if $\af$ is the trivial action
and $A$ is residually stably finite, then $\rc (A, \af) = \rc (A)$.
This follows from Example~\ref{E_0817_TrivAction}.
\end{rmk}

\begin{dfn}\label{rc_dfn_auxi}
Let $A$ be a  unital C*-algebra.
Let $\alpha \colon G \to \Aut (A)$
be an action of a discrete group $G$ on $A$.
\begin{enumerate}
\item\label{rc_dfn_auxi_a}
Let $r \in [0, \I)$.
We say that $(A, \alpha)$ has
{\emph{$r$-comparison in the crossed product}} if whenever
$a, b \in M_{\infty} (A)_{+}$ with $a \in I_{\af} (b)$
satisfy
$d_{\ta} (a) + r < d_{\ta} (b)$
for all $\ta \in \QT (A)^{\alpha}$,
then $a \precsim_{C^* (G, A, \af)} b$.
\item\label{rc_dfn_auxi_b}
The {\emph{radius of comparison of~$(A, \alpha)$
in the crossed product}} is
\[
\rc' (A, \alpha)
 = \inf \big( \big\{ r \in [0, \I) \colon
    {\mbox{$(A, \alpha)$
       has $r$-comparison in the crossed product}} \big\} \big)
\]
if it exists, and $\infty$ otherwise.
\end{enumerate}
\end{dfn}

\begin{lem}\label{Lem_rcp}
Let $A$ be a unital C*-algebra,
let $G$ be a discrete group,
and let $\af \colon G \to \Aut (A)$ be an action of $G$ on~$A$.
Then:
\begin{enumerate}
\item\label{Lem_rcp_a}
$\rc' (A, \alpha) \leq \rc (A, \af)$.
\item\label{Lem_rcp_b}
If all quasitraces on~$A$ are $G$-invariant,
then $\rc (A, \af) \leq \rc (A)$.
\item\label{Lem_rcp_c}
$\rc' (A, \alpha) \leq \rc (C^* (G, A, \alpha))$.
\end{enumerate}
\end{lem}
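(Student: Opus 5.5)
All three parts go the same way. For each $r$ we show that the relevant $r$-comparison property implies the other one. Then we compare the sets of admissible $r$ and take infima. If a set is empty the corresponding radius is $\infty$, and the inequality holds trivially.

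For (\ref{Lem_rcp_a}), suppose $A$ has dynamical $r$-comparison. Let $a, b \in \Mi (A)_{+}$ satisfy $a \in I_{\af} (b)$ and $d_{\ta} (a) + r < d_{\ta} (b)$ for all $\ta \in \QT (A)^{\af}$. By hypothesis, $a \precsim_{A, \af} b$. Lemma~\ref{R_0817_NonEqToEq}(\ref{Item_R_0817_NonEqToEq_DynToCP}) then gives $a \precsim_{\CGAa} b$. So $(A, \af)$ has $r$-comparison in the crossed product, and therefore $\rc' (A, \af) \leq \rc (A, \af)$.

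For (\ref{Lem_rcp_b}), assume $\QT (A)^{\af} = \QT (A)$ and that $A$ has $r$-comparison in Toms's sense (Definition~6.1 of~\cite{Tom06}, with no ideal condition). Let $a, b$ satisfy $a \in I_{\af} (b)$ and $d_{\ta} (a) + r < d_{\ta} (b)$ for all $\ta \in \QT (A)^{\af}$. Since every quasitrace is invariant, this inequality holds for all $\ta \in \QT (A)$. Hence $a \precsim_A b$, and Lemma~\ref{R_0817_NonEqToEq}(\ref{Item_R_0817_NonEqToEq_AToDyn}) gives $a \precsim_{A, \af} b$. The ideal hypothesis $a \in I_{\af} (b)$ is simply not used here. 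This is harmless because Toms's definition is the stronger requirement. If one uses the version of $\rc (A)$ with the condition $a \in I (b)$, a little care is needed, since $I_{\af} (b)$ may be larger than $I (b)$. I would state the argument for Toms's definition as written, as Remark~\ref{R_6921_rc_Ntn} indicates.

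For (\ref{Lem_rcp_c}), suppose $C = \CGAa$ has $r$-comparison. Let $a, b \in \Mi (A)_{+}$ with $a \in I_{\af} (b)$ and $d_{\ta} (a) + r < d_{\ta} (b)$ for all $\ta \in \QT (A)^{\af}$. We need to check that $d_{\sm} (a) + r < d_{\sm} (b)$ for every $\sm \in \QT (C)$. For such $\sm$, the restriction $\sm|_A$ is a normalized quasitrace on~$A$. It is $G$-invariant because $\af_g$ is implemented by the unitaries $u_g \in C$ and quasitraces are unitarily invariant. Also $d_{\sm} (\io_{\af} (x)) = d_{\sm|_A} (x)$ for $x \in \Mi (A)_{+}$. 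So the strict inequality transfers, and hence $a \precsim_C b$. If the definition of $r$-comparison for~$C$ requires $a \in I (b)$ in~$C$, this follows from $a \in I_{\af} (b)$, since $x \af_g (b) y = x u_g b u_g^* y$ lies in the ideal of~$C$ generated by~$b$.

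The only step needing real verification is this restriction argument. One must check that the restriction of a $2$-quasitrace on $C$ (extended to matrices) is a $2$-quasitrace on $A$, and that its invariance follows from unitary invariance. Both are standard.
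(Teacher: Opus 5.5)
Your proposal is correct and matches the paper's proof essentially step for step. In each part you fix $r$ and transfer the comparison property, then take infima. For (\ref{Lem_rcp_a}) and (\ref{Lem_rcp_b}) the transfer is via Lemma~\ref{R_0817_NonEqToEq}(\ref{Item_R_0817_NonEqToEq_DynToCP}) and (\ref{Item_R_0817_NonEqToEq_AToDyn}); for (\ref{Lem_rcp_c}) it is via restricting quasitraces on $\CGAa$ to $G$-invariant quasitraces on $A$. Your side remarks on the ideal condition are consistent with the paper's comment after the lemma that the results hold as stated because $\widetilde{\rc}(B) \leq \rc(B)$. These are the use of Toms's definition of $\rc(A)$, and the observation that $I_{\af}(b)$ lies in the ideal of $\CGAa$ generated by $b$.
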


In (\ref{Lem_rcp_b}) and~(\ref{Lem_rcp_c}),
without residual stable finiteness,
one should really use $\widetilde{\rc} (A)$
and $\widetilde{\rc} (\CGAa)$ as in the proof of Lemma~\ref{L_6920_No_aIb}.
However, the results as stated are still true,
because $\widetilde{\rc} (B) \leq \rc (B)$ for any \uca~$B$.

\begin{proof}[Proof of Lemma~\ref{Lem_rcp}]
We prove (\ref{Lem_rcp_a}).
Let $r \in [0, \I)$.
Suppose that $A$ has dynamical $r$-comparison.
Let $a, b \in M_{\I} (A)_{+}$ with $a \in I_{\af} (b)$
satisfy
$d_{\ta} (a) + r < d_{\ta} (b)$ for all $\ta \in \QT (A)^{\alpha}$.
Since $A$ has dynamical $r$-comparison, we get $a \precsim_{A, \alpha} b$.
Then $a \precsim_{C^* (G, A, \alpha)} b$
by Lemma \ref{R_0817_NonEqToEq}(\ref{Item_R_0817_NonEqToEq_DynToCP}).
This shows that $\rc' (A, \alpha) \leq r$.
Taking the infimum over $r \in [0, \I)$
such that $A$ has dynamical $r$-comparison,
we get $\rc' (A, \alpha) \leq \rc (A, \af)$.

For~(\ref{Lem_rcp_b}), let $r \in [0, \I)$,
and suppose that $A$ has $r$-comparison.
Let $a, b \in M_{\I} (A)_{+}$ with $a \in I_{\af} (b)$
satisfy
\begin{equation}\label{Eq_6817_comp}
d_{\ta} (a) + r < d_{\ta} (b)
\end{equation}
for all $\ta \in \QT (A)^{\alpha}$.
The hypothesis implies that (\ref{Eq_6817_comp}) holds
for all $\ta \in \QT (A)$.
Therefore $a \precsim_{A} b$.
Then $a \precsim_{A, \alpha} b$ by
Lemma \ref{R_0817_NonEqToEq}(\ref{Item_R_0817_NonEqToEq_AToDyn}).
This shows that $\rc (A, \af) \leq r$.
As in the proof of~(\ref{Lem_rcp_a}),
it follows that $\rc (A, \af) \leq \rc (A)$.

We prove (\ref{Lem_rcp_c}).
Let $r \in [0, \I)$.
Suppose that $\CGAa$ has $r$-comparison.
Let $a, b \in M_{\I} (A)_{+}$ with $a \in I_{\af} (b)$
satisfy
\begin{equation}\label{Eq1_20200424}
d_{\ta} (a) + r < d_{\ta} (b)
\end{equation}
for all $\ta \in \QT (A)^{\alpha}$.
Since every quasitrace on~$\CGAa$ restricts to a
$G$-invariant quasitrace on~$A$,
(\ref{Eq1_20200424}) holds for all $\ta \in \QT (\CGAa)$.
Since $\CGAa$ has $r$-comparison, we get $a \precsim_{\CGAa} b$.
This shows that $\rc' (A, \alpha) \leq r$.
Again as in the proof of~(\ref{Lem_rcp_a}),
it follows that $\rc' (A, \alpha) \leq \rc (\CGAa)$.
\end{proof}

Example~\ref{Ex_BPPositiveDynRc} shows that
Lemma~\ref{Lem_rcp}(\ref{Lem_rcp_b}) can fail
without the assumption that every quasitrace on $A$
is $\alpha$-invariant even when $\rc (A) = 0$.
(The group in this example is not amenable.)

The following corollary
is an immediate consequence of Lemma~\ref{Lem_rcp}(\ref{Lem_rcp_b}).

\begin{cor}\label{Cor_dyn_strict_from_rc_zero}
Let $A$ be a residually stably finite unital C*-algebra,
let $G$ be a discrete group,
and let $\alpha \colon G \to \Aut (A)$ be an action of $G$ on $A$.
Assume that all quasitraces on $A$ are $G$-invariant.
If $A$ has strict comparison, that is, $\rc (A) = 0$, then
$A$ has dynamical strict comparison, that is, $\rc (A, \alpha) = 0$.
\end{cor}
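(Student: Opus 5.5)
The plan is to deduce this directly from Lemma~\ref{Lem_rcp}(\ref{Lem_rcp_b}), together with the observation that radii of comparison are nonnegative. By hypothesis every quasitrace on $A$ is $G$-invariant, so $\QT (A)^{\af} = \QT (A)$. Lemma~\ref{Lem_rcp}(\ref{Lem_rcp_b}) then gives $\rc (A, \af) \leq \rc (A)$. Since $\rc (A) = 0$ and $\rc (A, \af)$ is defined as an infimum over a subset of $[0, \I)$, we get $0 \leq \rc (A, \af) \leq 0$, hence $\rc (A, \af) = 0$. By Definition~\ref{D_0817_eqrc_dfn}(\ref{I_6817_rc_dfn_strict}), this says exactly that $(A, \af)$ has dynamical strict comparison.

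Residual stable finiteness enters only in interpreting the hypothesis $\rc (A) = 0$. Toms's definition of $\rc (A)$ (Remark~\ref{R_6921_rc_Ntn}) omits the condition $a \in I (b)$. By Lemma~\ref{L_6920_No_aIb}, for residually stably finite $A$ the two versions agree. The remark after Lemma~\ref{Lem_rcp} notes that part~(\ref{Lem_rcp_b}) holds with either version, because $\widetilde{\rc} (A) \leq \rc (A)$ always. So there is no ambiguity in what ``strict comparison'' means here.

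A minor point is that $\rc (A) = 0$ is an infimum, so $A$ need not have $0$-comparison itself, only $r$-comparison for every $r > 0$. This causes no trouble, because the proof of Lemma~\ref{Lem_rcp}(\ref{Lem_rcp_b}) works for each $r$ separately. If $A$ has $r$-comparison, then $A$ has dynamical $r$-comparison, so $\rc (A, \af) \leq r$ for every $r > 0$. I do not expect any step to be a real obstacle; the argument is a bookkeeping check that the definitions line up.
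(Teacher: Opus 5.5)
Your proposal is correct and is exactly the paper's argument. The paper states the corollary as an immediate consequence of Lemma~\ref{Lem_rcp}(\ref{Lem_rcp_b}), which gives $0 \leq \rc (A, \af) \leq \rc (A) = 0$. Your extra remarks on the ideal condition, and on $\rc (A) = 0$ being an infimum, are accurate bookkeeping. They are not needed beyond what that lemma already handles.
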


\begin{rmk}\label{Cor_dyn_strict_from_rc_zero_Self}
As an interesting class of examples satisfying the hypotheses of
Corollary~\ref{Cor_dyn_strict_from_rc_zero},
let $(A, \ta)$ be a selfless C*-probability space
in the sense of Robert (Definition~2.1 in \cite{RobSelf25}).
Assume that $A$ is unital and residually stably finite.
Then, by Theorem~3.1 in \cite{RobSelf25}
and Corollary~\ref{Cor_dyn_strict_from_rc_zero},
$A$ has dynamical strict comparison
for every action of a discrete amenable group $G$ on $A$.
\end{rmk}

\begin{lem}\label{DirExtLemma}
Let $\af \colon G \to \Aut (A)$ be an action of a discrete group~$G$
on a residually stably finite \uca~$A$.
\begin{enumerate}
%
\item\label{DirExtLemma_DSum}
Suppose $A = \bigoplus_{j = 1}^n A_j$
and $\af = \bigoplus_{j = 1}^n \af^{(j)}$
as in Definition~\ref{N_6626_DSum}.
Then
\[
\rc (A, \af)
 = \max \bigl( \rc \bigl( A_1, \af^{(1)} \bigr),
     \, \rc \bigl( A_2, \af^{(2)} \bigr),
     \, \ldots, \, \rc \bigl( A_n, \af^{(n)} \bigr) \bigr).
\]
\item\label{DirExtLemma_Mn}
Let $n \in \N$,
and let $\af^{(n)} \colon G \to \Aut (M_n \otimes A)$ be
$\af_{g}^{(n)} = \id_{M_n} \otimes \af_g$ for $g \in G$.
Then
$\rc \bigl( M_n \otimes A, \, \af^{(n)} \bigr)
 = \frac{1}{n} \cdot \rc (A, \alpha)$.
\item\label{DirExtLemma_aue}
Let $\bt \colon G \to \Aut (A)$ be an action which is
pointwise approximately unitarily equivalent to~$\af$
(Definition~\ref{DefPUE}(\ref{I_6816_paue})).
Then $\rc (A, \bt) = \rc (A, \af)$.
\end{enumerate}
\end{lem}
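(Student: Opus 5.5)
For all three parts I will combine the isomorphisms of Lemma~\ref{L_6626_BasicOps} with a description of how invariant quasitraces, the invariant ideals $I_{\af}(b)$, and the functions $d_{\ta}$ behave under each construction. Dynamical $r$-comparison is a statement about the relation $\precsim_{A, \af}$, about $\QT(A)^{\af}$, and about $I_{\af}(b)$. So in each part it is enough to show that these three pieces of data correspond, with $d_{\ta}$ rescaled where needed.

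Part~(\ref{DirExtLemma_aue}) is the easiest, and I will do it first. By Lemma~\ref{L_0Y06_AppInn}, the relations $\precsim_{A, \af}$ and $\precsim_{A, \bt}$ coincide. Approximately inner automorphisms fix every quasitrace: they are continuous and fix $d_\ta$ after conjugation by unitaries, and a quasitrace is unitarily invariant. Hence $\ta \circ \af_g = \ta \circ \bt_g$, so $\QT(A)^{\af} = \QT(A)^{\bt}$. For the ideal condition, an ideal is invariant under $\bt_g = \ph_g \circ \af_g$ \ifo{} it is invariant under $\af_g$, because closed ideals are invariant under approximately inner automorphisms. Therefore $I_{\af}(b) = I_{\bt}(b)$. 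The definitions then agree verbatim.

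For part~(\ref{DirExtLemma_DSum}), the invariant quasitraces on $A = \bigoplus A_j$ are the convex combinations of invariant quasitraces on the summands; this uses that a $2$-quasitrace on a direct sum splits. The invariant ideal $I_\af(b)$ is the direct sum of the ideals $I_{\af^{(j)}}(b_j)$. By Lemma~\ref{L_6626_BasicOps}(\ref{I_6626_BasicOps_DSum}), $a \precsim_{A,\af} b$ \ifo{} $a_j \precsim_{A_j, \af^{(j)}} b_j$ for all $j$. For the inequality $\leq \max$: suppose $d_\ta(a) + r < d_\ta(b)$ for all $\ta \in \QT(A)^{\af}$. Testing against quasitraces supported on a single summand gives the same strict inequality on each $A_j$, so each $A_j$ with $r$ above its radius yields $a_j \precsim b_j$. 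For the inequality $\geq$: given $a_j, b_j$ in one summand violating dynamical $r$-comparison there, I extend to $A$ by putting a large full invariant element in the other coordinates. For instance, I take $a_i = 0$ and $b_i = 1_{M_N(A_i)}$ with $N > r$, so the trace inequality holds on every summand, every convex combination satisfies it, and comparison fails in coordinate $j$. The main obstacle is the case where some $\QT(A_i)^{\af^{(i)}}$ is empty, or where $b_j$ is not full. Residual stable finiteness and the ideal condition should handle this: on a summand with no invariant quasitraces one needs comparison for free, so I would check that case separately. It should follow from the convention that there the condition is vacuous only if $a_i \in I(b_i)$.

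For part~(\ref{DirExtLemma_Mn}), the normalized invariant quasitraces on $M_n \otimes A$ are exactly the maps $\ta_n = \frac{1}{n}(\mathrm{Tr} \otimes \ta)$ with $\ta \in \QT(A)^{\af}$. Identifying $\Mi(M_n \otimes A) = \Mi(A)$, one gets $d_{\ta_n}(x) = \frac{1}{n} d_\ta(x)$. The relation $\precsim_{M_n\otimes A, \af^{(n)}}$ is the same as $\precsim_{A,\af}$ under this identification, by Lemma~\ref{L_6626_BasicOps}(\ref{I_6626_BasicOps_Mn}), and the invariant ideals also correspond. Thus $d_{\ta_n}(a) + r < d_{\ta_n}(b)$ is equivalent to $d_\ta(a) + nr < d_\ta(b)$. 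It follows that $M_n \otimes A$ has dynamical $r$-comparison \ifo{} $A$ has dynamical $nr$-comparison, which gives the factor $\frac{1}{n}$.
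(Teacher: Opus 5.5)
Your proposal is correct and follows essentially the same route as the paper: transport the relation through Lemma~\ref{L_6626_BasicOps} and identify the invariant quasitraces in each of the three situations, and you additionally handle the invariant ideals $I_{\af} (b)$, which the paper leaves implicit. The case you flag as the main obstacle in part~(\ref{DirExtLemma_DSum}) is not actually one. Your padding with $a_i = 0$ and $b_i = 1_{M_N (A_i)}$ for $N > r$ already works uniformly when some $\QT (A_i)^{\af^{(i)}}$ is empty or $b_j$ is not full, since summands carrying no weight contribute nothing and $a \in I_{\af} (b)$ only requires $a_j \in I_{\af^{(j)}} (b_j)$; so no separate check is needed, and residual stable finiteness is not used there.
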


\begin{proof}
All three parts are immediate from the corresponding parts
of Lemma~\ref{L_6626_BasicOps},
in particular examining $\langle 1_A \rangle_{A, \af}$,
together with the following identifications of the
spaces of quasitraces, restricted to the invariant quasitraces.
In~(\ref{DirExtLemma_DSum}), identify $\QT (A_j)$ as a set of quasitraces
on~$A$ in the obvious way.
Then $\QT (A)$ is the set of all
$\sum_{j = 1}^n \ld_j \ta_j$
with $\ld_j \geq 0$ and $\ta_j \in \QT (A_j)$ for $j = 1, 2, \ldots, n$
and $\sum_{j = 1}^n \ld_j = 1$.
For~(\ref{DirExtLemma_Mn}), sending $\ta \in \QT (A)$ to the
normalization of its extension to $M_n \otimes A$
is a bijection from $\QT (A)$ to $\QT (M_n \otimes A)$.
For~(\ref{DirExtLemma_aue}), every quasitrace
is invariant under inner automorphisms,
hence, by continuity,
also invariant under approximately inner automorphisms.
So $\QT (A)^{\af} = \QT (A)^{\bt}$.
\end{proof}

We give four basic examples, illustrating presumably extreme cases,
but in which $A$ or $C^* (G, A, \af)$ is usually not simple.
They will provide counterexamples to the automatic injectivity
and surjectivity statements not in Remark~\ref{R_1412_MapsSI}.
Residual stable finiteness is needed because of the discrepancy
discussed in Remark~\ref{R_6921_rc_Ntn},
and suffices by Lemma~\ref{L_6920_No_aIb}.

\begin{ntn}\label{N_5Z11_fcns}
We write $C (G, A)$ even when $G$ is finite,
so that this algebra consists of all functions from $G$ to~$A$,
or, equivalently, is $\bigoplus_{g \in G} A$.
We use function notation for its elements.
For finite~$G$, we similarly write $C (G, \W (A))$
for the set of all functions from $G$ to $\W (A)$,
which is the product of copies of $\W (A)$ indexed by~$G$.
We use analogous notation for $\Cu (A)$ in place of $\W (A)$, etc.
\end{ntn}

The logical notation for $C (G, \W (A))$ would be $\W (A)^G$,
but this invites confusion with the set of fixed points.

\begin{lem}\label{L_1412_Triv}
Let $A$ be a residually stably finite unital \ca,
let $G$ be a finite abelian group,
and let $\af \colon G \to \Aut (A)$ be the trivial action of $G$ on~$A$.
In Proposition~\ref{P_1412_NatMaps}, the maps
\[
\W (A^{\af}) \stackrel{(\ep_{\af})_{\#}}{\longrightarrow} \W (A)^{\af}
  \stackrel{\nu_{\af}}{\longrightarrow} \W (A)
  \stackrel{\kp_{\af}}{\longrightarrow} \W (A, {\af})
  \stackrel{(\io_{\af})_{\#}}{\longrightarrow} \W' (A, \af)
\]
are all isomorphisms.
Under the usual identification of $C^* (G, A, \af)$ with
$C \bigl( {\widehat{G}}, A \bigr)$,
and corresponding identification
of $\W ( C^* (G, A, \af) )$
with $C \bigl( {\widehat{G}}, \W (A) \bigr)$,
the map $(\io_{\af})_{*} \colon \W (A) \to \W ( C^* (G, A, \af) )$
sends $\et \in \W (A)$
to the function $\mu$ on $\widehat{G}$ given by $\mu (\om) = \et$
for all $\om \in {\widehat{G}}$.
The analogous statements hold with $\Cu (-)$ in place of $\W (-)$.

Moreover,
\begin{equation}\label{Eq_6614_rc_Triv}
\rc (A^{\af}) = \rc (A) = \rc (A, \af)
 = \rc' (A, \af) = \rc (C^* (G, A, \af)).
\end{equation}
\end{lem}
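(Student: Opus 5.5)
Since $\af$ is trivial, $A^{\af} = A$ and $\ep_{\af} = \id_A$, so $(\ep_{\af})_{\#}$ is the identity of $\W (A)$. Also $\W (A)^{\af} = \W (A)$, so $\nu_{\af}$ is the identity as well. Example~\ref{E_0817_TrivAction} (equivalently Lemma \ref{R_0817_NonEqToEq}(\ref{Item_R_0817_NonEqToEq_Trv})) says $a \precsim_{A, \af} b$ \ifo{} $a \precsim_A b$, so $\kp_{\af}$ is an order isomorphism. The map $(\io_{\af})_{\#}$ is surjective by Remark \ref{R_1412_MapsSI}(\ref{Item_1412_MapsSI_4surj}). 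For injectivity, and for the order, we must show that $\io_{\af} (a) \precsim_{\CGAa} \io_{\af} (b)$ implies $a \precsim_A b$. This will follow from the explicit description of $(\io_{\af})_{*}$ below.

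We use the standard isomorphism $\CGAa = C^* (G) \otimes A \cong C \bigl( \widehat{G}, A \bigr)$, valid since $G$ is finite abelian and the action is trivial. Under it, $u_g \mapsto (\om \mapsto \om (g) 1_A)$, so $\io_{\af} (a)$ becomes the constant function with value $a$. The algebra $C \bigl( \widehat{G}, A \bigr)$ is a finite direct sum $\bigoplus_{\om \in \widehat{G}} A$. Cuntz comparison in a finite direct sum (and in matrices over it) is coordinatewise, which gives $\W \bigl( C ( \widehat{G}, A ) \bigr) \cong C \bigl( \widehat{G}, \W (A) \bigr)$, and likewise for $\Cu$ after tensoring with~$K$. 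Hence $(\io_{\af})_{*} (\et)$ is the constant function with value $\et$. Constant functions compare \ifo{} their values compare, so $\te_{\af} \circ (\io_{\af})_{\#} \circ \kp_{\af}$ is an order embedding. Since $\te_{\af}$ is an inclusion and $\kp_{\af}$ is an isomorphism, $(\io_{\af})_{\#}$ is an order isomorphism onto its image $\W' (A, \af)$. The $\Cu$ statements follow in the same way (Remark~\ref{R_6518_Cu_vs_W}).

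Now we turn to the radii. We have $\rc (A^{\af}) = \rc (A)$ trivially, and $\rc (A) = \rc (A, \af)$ is Remark~\ref{R_5Z10_Not_rsf}. Since every quasitrace is invariant under the trivial action, $\QT (A)^{\af} = \QT (A)$, and $I_{\af} (b) = I (b)$. Comparison of $\io_{\af} (a)$ and $\io_{\af} (b)$ in $\CGAa$ is equivalent to comparison in $A$, by the order embedding above. Therefore the definition of $r$-comparison in the crossed product coincides verbatim with the (ideal-restricted) $r$-comparison for $A$, and so $\rc' (A, \af) = \widetilde{\rc} (A) = \rc (A)$ by Lemma~\ref{L_6920_No_aIb}.

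Finally, $\CGAa \cong \bigoplus_{\om \in \widehat{G}} A$, and $\rc$ of a finite direct sum of copies of the same residually stably finite algebra is $\rc (A)$. This can be seen either from Lemma~\ref{DirExtLemma}(\ref{DirExtLemma_DSum}) applied with trivial group, or directly. The direct argument: quasitraces on the sum are convex combinations $\sum_\om \ld_\om \ta_\om$, and comparison is coordinatewise. Given $r$-comparison of $A$, an inequality $d_\ta (a) + r < d_\ta (b)$ for all such $\ta$ implies the corresponding inequality in each coordinate by taking point masses, and conversely. The direct sum is also residually stably finite, so Lemma~\ref{L_6920_No_aIb} applies there. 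The only step needing care is this identification of quasitraces and the handling of the ideal condition, which residual stable finiteness removes.
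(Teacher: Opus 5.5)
Your proposal is correct and follows essentially the same route as the paper. The first two maps are identities, $\kappa_\alpha$ is an isomorphism because the action is trivial, and identifying $C^*(G, A, \alpha)$ with $C(\widehat{G}, A)$ turns $(\iota_\alpha)_*$ into the constant-function map, which is injective, so $(\iota_\alpha)_{\#}$ is an isomorphism given its automatic surjectivity. The last radius equality comes from the direct sum formula. The only difference is that you spell out $\operatorname{rc}'(A,\alpha) = \operatorname{rc}(A)$ via the order embedding and Lemma~\ref{L_6920_No_aIb}, which the paper simply calls immediate.
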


Thus, the maps in Proposition~\ref{P_1412_NatMaps} are
\[
\W (A) \longrightarrow \W (A)
  \longrightarrow \W (A)
  \longrightarrow \W (A)
  \longrightarrow \W (A)
  \longrightarrow C \bigl( {\widehat{G}}, \W (A) \bigr).
\]

\begin{proof}[Proof of Lemma~\ref{L_1412_Triv}]
We prove the statements for $\W ( - )$;
the proofs for $\Cu ( - )$ are the same.
Since the action is trivial, obviously the maps
$\W (A^{\af}) \to \W (A)$
and $\W (A)^{\af} \to \W (A)$ are isomorphisms.
Lemma~\ref{L_0Y06_AppInn} implies that $\W (A) \to \W (A, {\af})$
is an isomorphism.
If we identify $C^* (G, A, \af)$
with $C \bigl( {\widehat{G}}, A \bigr)$ in the standard way,
$\io_{\af}$ becomes that map sending $a \in A$ to the
constant function on ${\widehat{G}}$ with value~$a$.
It follows that
$(\io_{\af})_{*} \colon \W (A) \to \W ( C^* (G, A, \af) )$
is as claimed.
In particular, $(\io_{\af})_{*}$ is injective.
So $(\io_{\af})_{\#} \colon \W (A, {\af}) \to \W' (A, \af)$
is injective.
Remark \ref{R_1412_MapsSI}(\ref{Item_1412_MapsSI_4surj})
shows that this map is surjective,
so it is an isomorphism.

The equalities in~(\ref{Eq_6614_rc_Triv}) are now immediate,
using Lemma \ref{DirExtLemma}(\ref{DirExtLemma_DSum}) for the
last one.
\end{proof}

\begin{lem}\label{L_1412_InnReg}
Let $A_0$ be a residually stably finite unital \ca,
let $G$ be a finite abelian group,
and set
$A = L \bigl( l^2 \bigl( {\widehat{G}} \bigr) \bigr) \otimes A_0$.
Let $g \to z_g$ be the representation of $G$
on $l^2 \bigl( {\widehat{G}} \bigr)$
given by $z_g = \sum_{\om \in {\widehat{G}}} \om (g) e_{\om, \om}$,
and let $\af \colon G \to \Aut (A)$ be the action of $G$ on~$A$
given by $\af_g = \Ad (z_g \otimes 1)$ for $g \in G$.
In Proposition~\ref{P_1412_NatMaps}, the maps
\[
\W (A)^{\af}
  \stackrel{\nu_{\af}}{\longrightarrow} \W (A)
  \stackrel{\kp_{\af}}{\longrightarrow} \W (A, {\af})
  \stackrel{(\io_{\af})_{\#}}{\longrightarrow} \W' (A, \af)
\]
are all isomorphisms.
The map
$\dt \colon C \bigl( {\widehat{G}}, A_0 \bigr)
 \to L \bigl( l^2 \bigl( {\widehat{G}} \bigr) \bigr) \otimes A_0$,
given by $\dt (a) = \sum_{\om \in \widehat{G}} e_{\om, \om} \otimes a (\om)$,
is an isomorphism
from $C \bigl( {\widehat{G}}, A_0 \bigr)$ to $A^{\af}$,
and induces the map
$(\et_{\om})_{\om \in {\widehat{G}}}
 \mapsto \sum_{\om \in {\widehat{G}}} \et_{\om}$
from $\W (A^{\af}) \cong C \bigl( {\widehat{G}}, \W (A_0) \bigr)$
to $\W (A)$.
Under the usual identification of $C^* (G, A, \af)$ with
$C \bigl( {\widehat{G}}, A \bigr)$,
and corresponding identification
of $\W ( C^* (G, A, \af) )$ with $C \bigl( {\widehat{G}}, \W (A) \bigr)$,
the map $(\io_{\af})_{*} \colon \W (A) \to \W ( C^* (G, A, \af) )$
sends $\et \in \W (A)$
to the function $\mu$ on $\widehat{G}$ given by $\mu (\om) = \et$
for all $\om \in {\widehat{G}}$.
The analogous statements hold with $\Cu (-)$ in place of $\W (-)$.

Moreover,
\[
\rc (A^{\af}) = \rc (A_0)
\aqn
\rc (A) = \rc (A, \af) = \rc' (A, \af) = \rc (C^* (G, A, \af))
  = \frac{\rc (A_0)}{\card (G)}.
\]
\end{lem}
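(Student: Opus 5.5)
Since each $z_g \otimes 1$ is a unitary in $A$, the action $\af$ is inner. It is therefore pointwise approximately inner, and by Proposition~\ref{L_0817_Inn} the map $\kp_{\af}$ is an isomorphism of ordered semigroups. Inner automorphisms act trivially on $\W (A)$, so $\W (A)^{\af} = \W (A)$ and $\nu_{\af}$ is the identity. For $(\io_{\af})_{\#}$, Remark~\ref{R_1412_MapsSI} gives surjectivity, so injectivity is all that remains. This will follow once $(\io_{\af})_{*}$ is shown to send $\et$ to the constant function with value $\et$. That map is injective, and since $\te_{\af} \circ (\io_{\af})_{\#} \circ \kp_{\af} = (\io_{\af})_{*}$ with $\kp_{\af}$ an isomorphism, $(\io_{\af})_{\#}$ is injective too.

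For the crossed product, I use that $\af$ is implemented by a genuine unitary representation $g \mapsto w_g = z_g \otimes 1$. Then $\CGAa \cong A \otimes C^* (G) \cong C ( \widehat{G}, A )$ via $a u_g \mapsto a w_g^* \otimes g$; this is the ``usual identification'' of the statement. Under it, $a \in A$ goes to $a \otimes 1$, i.e.\ the constant function $\om \mapsto a$, which gives the claimed formula for $(\io_{\af})_{*}$ on $\W$ and on $\Cu$.

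For the fixed point algebra, $A^{\af}$ is the commutant of $\{ z_g \otimes 1 \}$ in $A$. The $z_g$ are diagonal, and their joint eigenvalues $\om (g)$ separate the characters $\om$. Hence the commutant of $\{ z_g \}$ in $L ( l^2 ( \widehat{G} ) )$ is the diagonal algebra, and $A^{\af} = \dt \bigl( C ( \widehat{G}, A_0 ) \bigr)$. Under $\W (M_n \otimes A_0) \cong \W (A_0)$, the class of $\sum_{\om} e_{\om, \om} \otimes a (\om)$ is $\sum_{\om} \langle a (\om) \rangle$, since $\bigoplus_{\om} a (\om)$ and the diagonal element are unitarily conjugate by a permutation. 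This gives the stated map $\W (A^{\af}) \to \W (A)$.

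For the radii, Lemma~\ref{DirExtLemma}(\ref{DirExtLemma_DSum}) gives $\rc (A^{\af}) = \rc (C ( \widehat{G}, A_0 )) = \rc (A_0)$, a direct sum of copies with trivial action. By Lemma~\ref{DirExtLemma}(\ref{DirExtLemma_Mn}) with $n = \card (G) = \card ( \widehat{G} )$ and the trivial action, $\rc (A) = \rc (A_0) / \card (G)$. All quasitraces are invariant under the inner action, so Lemma~\ref{DirExtLemma}(\ref{DirExtLemma_aue}) applied with the trivial action, together with Remark~\ref{R_5Z10_Not_rsf}, gives $\rc (A, \af) = \rc (A)$. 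For the crossed product, $C ( \widehat{G}, A )$ is a direct sum of copies of $A$, so $\rc (\CGAa) = \rc (A)$.

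It remains to get $\rc' (A, \af)$. Lemma~\ref{Lem_rcp} gives $\rc' \leq \rc (A, \af)$, and I need the reverse. If $a, b \in \Mi (A)_{+}$ satisfy $\iota(a) \precsim \iota(b)$ in $\CGAa$, then evaluating at one point $\om$ gives $a \precsim_A b$, hence $a \precsim_{A, \af} b$. Also $\QT (A)^{\af} = \QT (A)$. So $r$-comparison in the crossed product is exactly $r$-comparison of $A$ (with the $a \in I (b)$ clause, harmless by Lemma~\ref{L_6920_No_aIb}, since $A$ is residually stably finite), and $\rc' (A, \af) = \rc (A)$.

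The main obstacle I expect is making sure the identification $\CGAa \cong C ( \widehat{G}, A )$ sends $\io_{\af}$ to the constant embedding, since the untwisting by $w_g$ must fix $A$. With $a \mapsto a \otimes 1$ and $u_g \mapsto w_g \otimes \lambda_g$, covariance checks directly. Residual stable finiteness of $A$ and of the crossed product, which is needed to apply these lemmas, follows because both are matrices over, or finite direct sums of, $A_0$.
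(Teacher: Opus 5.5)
Your proof is correct and follows essentially the paper's route. The paper notes that, since $\af$ is pointwise unitarily equivalent to the trivial action, everything not involving $A^{\af}$ reduces to Lemma~\ref{L_1412_Triv} (plus $\rc (M_n (A_0)) = \rc (A_0) / n$ and the evident isomorphism $\dt$), and you have in effect unpacked that lemma directly for this action, including the untwisting of the crossed product that the paper leaves implicit. One small slip: your first formula $a u_g \mapsto a w_g^* \otimes g$ should read $a u_g \mapsto a w_g \otimes \lambda_g$, which is the covariant pair you actually check at the end.
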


The representation $g \to z_g$ is unitarily equivalent to
the regular representation of $G$ on $l^2 (G)$.
The lemma says that the maps in Proposition~\ref{P_1412_NatMaps} are
\[
C \bigl( {\widehat{G}}, \W (A_0) \bigr)
  \longrightarrow \W (A_0)
  \longrightarrow \W (A_0)
  \longrightarrow \W (A_0)
  \longrightarrow \W (A_0)
  \longrightarrow C \bigl( {\widehat{G}}, \W (A_0) \bigr).
\]

In Theorem~\ref{Prp_Rep_1},
we will show that some features of this situation
generalize to actions satisfying
suitable forms of approximate representability.
We do not know whether all features of this situation carry over.

\begin{proof}[Proof of Lemma~\ref{L_1412_InnReg}]
We only consider $\W ( - )$.
The proofs for $\Cu ( - )$ are the same.

Set $n = \card (G)$.
Using Lemma \ref{L_6626_BasicOps}(\ref{I_6626_BasicOps_aue})
and Lemma \ref{DirExtLemma}(\ref{DirExtLemma_aue}),
one sees that everything not involving $A^{\af}$ is the
same as for the trivial action.
Therefore the claimed results follow from
Lemma~\ref{L_1412_Triv} and $\rc (M_n (A_0)) = \frac{1}{n} \rc (A_0)$.

It is immediate that $\dt$ is an isomorphism.
The identification of $(\ep_{\af})_{\#}$ and $\rc (A^{\af})$
then follow.
\end{proof}

\begin{lem}\label{L_1412_CGA}
Let $A_0$ be a residually stably finite unital \ca,
let $G$ be a finite group, and set $A = C (G, A_0)$.
Let $\af \colon G \to \Aut (A)$ be the action of $G$ on~$A$
given by $\af_g (a) (h) = a (g^{-1} h)$
for $a \in C (G, A_0)$ and $g, h \in G$.
Then the map $\mu \colon A_0 \to C (G, A_0)$,
given by $\mu (a) (g) = a$ for all $a \in A_0$ and $g \in G$,
is an isomorphism from $A_0$ to $A^{\af}$.
In Proposition~\ref{P_1412_NatMaps}, the maps
\[
\W (A^{\af}) \stackrel{(\ep_{\af})_{\#}}{\longrightarrow} \W (A)^{\af}
\andeqn
\W (A, {\af})
  \stackrel{(\io_{\af})_{\#}}{\longrightarrow} \W' (A, \af)
  \stackrel{\te_{\af}}{\longrightarrow} \W ( C^* (G, A, \af) )
\]
are all isomorphisms.

The maps
\[
\W (A^{\af}) \stackrel{(\ep_{\af})_{\#}}{\longrightarrow} \W (A)^{\af},
\qquad
\W (A)^{\af}
   \stackrel{\nu_{\af}}{\longrightarrow} \W (A),
\andeqn
\W (A)
   \stackrel{\kp_{\af}}{\longrightarrow} \W (A, \af),
\]
can be described as follows.
First use the obvious identification of $\W (A) = \W (C (G, A_0))$
with $C (G, \W (A_0))$.
Then $\W (A)^{\af}$ is the set of constant functions in $C (G, \W (A_0))$,
$\nu_{\af}$ becomes the inclusion, and
$(\ep_{\af})_{\#}$ sends $\et \in \W (A_0)$ to the constant function
with value~$\et$.
Further, set $L = L (l^2 (G))$, and use matrix unit notation
from Notation~\ref{N_5Z11_MU}.
There is an isomorphism
$\ph \colon C^* (G, A, \af) \to L \otimes A_0$
such that, for $a \in A = C (G, A_0)$, we have
$\ph (\io_{\af} (a) ) = \sum_{g \in G} e_{g, g} \otimes a (g)$.
Identify $\W (L \otimes A_0)$ with $\W (A_0)$ in the standard way.
Then
\[
\ph_* \circ \te_{\af} \circ (\ep_{\af})_{\#} \circ \kp_{\af} \colon
  \W (A) \to \W ( C^* (G, A, \af) )
\]
sends $\ld \in C (G, \W (A_0))$ to
$\sum_{g \in G} \ld (g) \in \W (A_0)$.

The analogous statements hold with $\Cu (-)$ in place of $\W (-)$.

Moreover,
%
\[
\rc (A^{\af}) = \rc (A) = \rc (A_0)
\aqn
\rc (A, \af) = \rc' (A, \af) = \rc (C^* (G, A, \af))
 = \frac{\rc (A_0)}{\card (G)}.
\]
\end{lem}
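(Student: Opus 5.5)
The approach is to make everything explicit through the isomorphism $\ph \colon C^* (G, A, \af) \to L \otimes A_0$. This is the standard Green/Takai-type identification for translation actions: the regular covariant representation sends $a \in C (G, A_0)$ to $\sum_{g} e_{g, g} \otimes a (g)$ and $u_h$ to the permutation unitary $\sum_g e_{h g, g} \otimes 1$. A dimension count and simplicity of the matrix part show it is an isomorphism. The fixed point statement is immediate: $\af_g (a) = a$ for all $g$ forces $a$ to be constant, so $\mu$ is an isomorphism onto $A^{\af}$. With $\W (A) = C (G, \W (A_0))$ coordinatewise, $\af$ acts on $\W (A)$ by translation. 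Hence $\W (A)^{\af}$ is the constant functions, $\nu_{\af}$ is the inclusion, and $(\ep_{\af})_{\#} = (\mu)_*$ sends $\et$ to the constant function $\et$; this map is bijective, so it is an isomorphism. The formula for $\ph_* \circ \te_{\af} \circ (\io_{\af})_{\#} \circ \kp_{\af}$ follows from the formula for $\ph \circ \io_{\af}$ and the identification $\W (L \otimes A_0) \cong \W (A_0)$, under which a diagonal matrix goes to the sum of its entries. Call this map $\Sigma \colon \W (A) \to \W (A_0)$, so $\Sigma (\ld) = \sum_g \ld (g)$.

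The heart of the proof is the claim that for $a, b \in \Mi (A)_{+}$,
\[
a \precsim_{A, \af} b
\quad \Longleftrightarrow \quad
\textstyle\bigoplus_{g} a (g) \precsim_{A_0} \textstyle\bigoplus_{g} b (g).
\]
For the forward direction, use Lemma \ref{R_0817_NonEqToEq}(\ref{Item_R_0817_NonEqToEq_DynToCP}) to get $a \precsim_{\CGAa} b$, and then apply $\ph$.

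For the converse, for $c \in \Mi (A_0)_{+}$ let $\dt_1 c \in \Mi (A)_{+}$ be the function equal to $c$ at $1$ and $0$ elsewhere. Write $a = \bigoplus_h x_h$ up to Cuntz equivalence in $A$, where $x_h$ is $a$ cut down to the coordinate $h$. Then $\bigoplus_h \af_{h^{-1}} (x_h) \sim_A \dt_1 \bigl( \bigoplus_h a (h) \bigr)$. Using Definition~\ref{D_0817_EqCmp_Orig} with $\ep$-cutdowns, this gives $a \precsim_{A, \af, 0} \dt_1 \bigl( \bigoplus_h a (h) \bigr)$, and symmetrically
\[
\dt_1 \Bigl( \textstyle\bigoplus_h b (h) \Bigr) \precsim_{A, \af, 0} b.
\]
The hypothesis gives $\dt_1 \bigl( \bigoplus a (h) \bigr) \precsim_A \dt_1 \bigl( \bigoplus b (h) \bigr)$. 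Chaining these via Lemma \ref{R_0817_NonEqToEq} and Lemma \ref{Tran_meStepLem}(\ref{Tran_meStepLem_a}) gives $a \precsim_{A, \af} b$.

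Consequently $\kp_{\af}$ descends to an order isomorphism $\W (A, \af) \cong \W (A_0)$ induced by $\Sigma$. Surjectivity holds because every class in $\W (A_0)$ is $\Sigma$ of some $\dt_1 c$. The composite $\ph_* \circ \te_{\af} \circ (\io_{\af})_{\#}$ is this isomorphism, so $(\io_{\af})_{\#}$ is injective, hence bijective by Remark~\ref{R_1412_MapsSI}. Then $\te_{\af}$ is surjective, and, being injective, it is an isomorphism. The $\Cu$ versions are identical.

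For the radii, set $n = \card (G)$. Lemma \ref{DirExtLemma}(\ref{DirExtLemma_DSum}), applied to the trivial action and combined with Remark~\ref{R_5Z10_Not_rsf}, gives $\rc (A) = \rc (A_0)$; then $\rc (A^{\af}) = \rc (A_0)$ via $\mu$. For the crossed product, $\rc (L \otimes A_0) = \rc (A_0)/n$ by the matrix scaling. For the dynamical radius, the $G$-invariant quasitraces on $A$ are exactly $\ta (a) = \frac{1}{n} \sum_g \ta_0 (a (g))$ for $\ta_0 \in \QT (A_0)$. Thus $d_{\ta} (a) = \frac{1}{n} d_{\ta_0} \bigl( \bigoplus_g a (g) \bigr)$.

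Since $A$ is residually stably finite, the ideal condition can be dropped as in Lemma~\ref{L_6920_No_aIb}; I expect this transfer to the equivariant setting to be the main point requiring care. Granting it, the order isomorphism $\W (A, \af) \cong \W (A_0)$ above shows that dynamical $r$-comparison for $(A, \af)$ is equivalent to $n r$-comparison for $A_0$. Hence $\rc (A, \af) = \rc (A_0)/n$.

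Finally, $\W' (A, \af)$ carries the same order as $\W (A, \af)$ and $\QT (\CGAa)$ restricts to $\QT (A)^{\af}$. So the same computation gives $\rc' (A, \af) = \rc (A_0)/n$, consistent with Lemma~\ref{Lem_rcp}.
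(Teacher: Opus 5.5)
Your proposal is correct and follows essentially the paper's route: identify $\CGAa$ with $L \otimes A_0$, show that the class $\kp_{\af} (\ld)$ depends only on $\sum_{g \in G} \ld (g)$, and then read off the isomorphisms and the radii; your argument that translates each coordinate piece to the identity coordinate supplies, and also gives order reflection for, the step the paper only asserts. The ideal-condition issue you flag needs no equivariant transfer: it only enters the inequality $\rc (A_0) \leq \card (G) \cdot \rc (A, \af)$, and there it is handled by Lemma~\ref{L_6920_No_aIb} applied to $A_0$ itself, since $x \in I (y)$ in $\Mi (A_0)$ implies $\dt_1 x \in I (\dt_1 y) \subseteq I_{\af} (\dt_1 y)$.
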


Thus, the maps in Proposition~\ref{P_1412_NatMaps} are
\[
\W (A_0) \longrightarrow \W (A_0)
  \longrightarrow C (G, \W (A_0))
  \longrightarrow \W (A_0)
  \longrightarrow \W (A_0)
  \longrightarrow \W (A_0).
\]
In Theorem~\ref{WTRPMainCor},
we will show that some features of this situation
generalize to actions satisfying suitable forms of the Rokhlin property.
We do not know whether all features of this situation carry over.

\begin{proof}[Proof of Lemma~\ref{L_1412_CGA}]
We only consider $\W ( - )$.
The proofs for $\Cu ( - )$ are the same.

It is obvious that $\mu$ is an isomorphism.
Using $\mu_*$, identify $\W (A^{\af})$ with $\W (A_0)$.
Identify $\W (A)$ with $C (G, \W (A_0))$ as in the statement.
Clearly the action of $G$ on $\W (A)$ is translation on~$G$,
and for $\et \in \W (A_0)$, $(\ep_{\af})_{\#} (\et)$
is the constant function with value~$\et$.
Therefore $\W (A)^{\af}$ is the set of constant functions
in $\W (A)$, which we identify with $\W (A_0)$,
so $(\ep_{\af})_{\#}$ is an isomorphism,
and $\nu_{\af}$ is as claimed.

We next examine $\W (A, {\af})$, $\W' (A, \af)$,
and $\W ( C^* (G, A, \af) )$.
It is well known that there is an isomorphism
$\ph \colon C^* (G, A, \af) \to L \otimes A_0$ as in the statement.
Therefore we can identify $\W ( C^* (G, A, \af) )$
with $\W (A_0)$, with
$\langle 1_{C^* (G, A, \af)} \rangle
 = \card (G) \langle 1_{A_0} \rangle$.
It follows that $(\io_{\af})_*$ is surjective and that,
for $\et, \ld \in C (G, \W (A_0))$, we have
$(\io_{\af})_* (\ld) = (\io_{\af})_* (\et)$ \ifo{}
\begin{equation}\label{Eq_6614_io_af_sp}
\sum_{g \in G} \ld (g) = \sum_{g \in G} \et (g).
\end{equation}
If (\ref{Eq_6614_io_af_sp}) holds, then
$(\kp_{\af}) (\ld) = (\kp_{\af}) (\et)$.
Since $\kp_{\af}$ is surjective
(Remark~\ref{R_1412_MapsSI}(\ref{Item_1412_MapsSI_3S})), and
$(\io_{\af})_* = \te_{\af} \circ (\io_{\af})_{\#} \circ \kp_{\af}$,
it follows that $\te_{\af} \circ (\io_{\af})_{\#}$ is injective.
Surjectivity of $(\io_{\af})_*$ now implies that
$\te_{\af} \circ (\io_{\af})_{\#}$ is an isomorphism.
Injectivity of $\te_{\af}$
(Remark~\ref{R_1412_MapsSI}(\ref{Item_1412_MapsSI_5inj}))
then shows that $\te_{\af}$ and $(\io_{\af})_{\#}$ are isomorphisms.
The claim about $\W (A, {\af})$, $\W' (A, \af)$,
and $\W ( C^* (G, A, \af) )$ now follows.

We have $\rc (A^{\af}) = \rc (A_0)$ because $A^{\af} \cong A_0$.
That $\rc (A) = \rc (A_0)$ follows from
Lemma~\ref{DirExtLemma}(\ref{DirExtLemma_DSum}).
The equalities
$\rc (A, \af) = \rc' (A, \af)
 = \rc (C^* (G, A, \af)) = \rc (L \otimes A_0)$
follow from the isomorphisms just proved, and
$\rc (L \otimes A_0) = \frac{\rc (A_0)}{\card (G)}$
by Lemma~\ref{DirExtLemma}(\ref{DirExtLemma_Mn}).
\end{proof}

\begin{lem}\label{L_5Z10_PjUn}
Let $A_0$ be a residually stably finite unital C*-algebra
and let $A = M_2 \otimes A_0$.
Let $G = (\mathbb{Z}/2 \mathbb{Z})^2$ with generators $h_1$ and $h_2$.
Let $y_1, y_2 \in \U (M_2)$ be given by
\[
y_1 = \begin{pmatrix} 1 & 0 \\ 0 & -1 \end{pmatrix}
\andeqn
y_2 = \begin{pmatrix} 0 & 1 \\ 1 & 0 \end{pmatrix}.
\]
Let $z_1, z_2 \in \U (A)$ be $z_1 = y_1 \otimes 1$
and $z_2 = y_2 \otimes 1$.
Then there is an action $\af \colon G \to \Aut (A)$ such that
\[
\af_1 = \id_A,
\qquad
\af_{h_1} = \Ad (z_1),
\qquad
\af_{h_2} = \Ad (z_2),
\andeqn
\af_{h_1 h_2} = \Ad ( z_1 z_2).
\]
The fixed point algebra is $A^{\af} = \C \cdot 1 \otimes A_0$.
There is an isomorphism
\[
\ph \colon \CGAa \to M_2 \otimes A = M_2 \otimes M_2 \otimes A_0
\]
such that $(\ph \circ \io_{\af}) (a) = 1_{M_2} \otimes a$
for all $a \in A$.

In Proposition~\ref{P_1412_NatMaps}, the semigroups
$\W (A^{\af})$, $\W (A)^{\af}$, $\W (A)$, $\W (A, {\af})$,
and $\W ( C^* (G, A, \af) )$ are all isomorphic to $\W (A_0)$,
and $\W' (A, \af)$ is isomorphic to the subsemigroup
$2 \W (A_0) \subseteq \W (A_0)$.
With respect to standard choices of these isomorphisms,
the maps $\W (A)^{\af} \to \W (A) \to \W (A, {\af})$ are the identity,
$(\ep_{\af})_{\#} \colon \W (A^{\af}) \to \W (A)^{\af}$
and $(\io_{\af})_{\#} \colon \W (A, {\af}) \to \W' (A, \af)$
are multiplication by~$2$,
and $\te_{\af} \colon \W' (A, \af) \to \W ( C^* (G, A, \af) )$
is the inclusion of $2 \W (A_0)$ in $\W (A_0)$.

The statements about $\W ( \cdot )$ etc.\  also all hold for
$\Cu ( \cdot )$ etc.

We have
\[
\rc (A^{\af}) = \rc (A_0),
\quad
\rc (A) = \rc (A, \af) = \frac{\rc (A_0)}{2},
\aqn
\rc (\CGAa) = \frac{\rc (A_0)}{4}.
\]
Also, $\rc' (A, \af) \leq \frac{1}{4} \rc (A_0)$.
If $2 \W (A_0) = \W (A_0)$, then $\rc' (A, \af) = \frac{1}{4} \rc (A_0)$.
\end{lem}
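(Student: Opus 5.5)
The plan is to realize $\CGAa$ concretely as a matrix algebra over $A$, and then read off every semigroup statement and every radius from that picture together with Lemma~\ref{DirExtLemma} and Proposition~\ref{L_0817_Inn}.

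\emph{Existence of the action and the fixed points.} We have $y_1^2 = y_2^2 = 1$ and $y_1 y_2 = - y_2 y_1$. Hence $\Ad (z_1)$ and $\Ad (z_2)$ are commuting involutions, so the stated formulas define an action of $(\Z / 2 \Z)^2$. The fixed point algebra is the relative commutant of $\{ y_1, y_2 \} \otimes 1$ in $M_2 \otimes A_0$. Since $y_1$ and $y_2$ generate $M_2$, this commutant is $\C \cdot 1 \otimes A_0$.

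\emph{The crossed product.} Write $y_g \in \U (M_2)$ for $y_1$, $y_2$, $y_1 y_2$, $1$ according to $g$. The map $g \mapsto y_g$ is only a projective representation. However, $g \mapsto w_g = y_g \otimes y_g \otimes 1_{A_0}$ is a genuine unitary representation in $M_2 \otimes M_2 \otimes A_0$, because the signs cancel. Moreover $\Ad (w_g) (1 \otimes a) = 1 \otimes \af_g (a)$. So the covariant pair $(a \mapsto 1 \otimes a, \; g \mapsto w_g)$ gives a \hm{} $\ph \colon \CGAa \to M_2 \otimes A$ with $\ph \circ \io_{\af} (a) = 1 \otimes a$.

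For surjectivity, note that the image contains $1 \otimes A$ and the elements $w_g (1 \otimes z_g^*) = y_g \otimes 1$. These generate $M_2 \otimes A$.

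For injectivity, use that $G$ is finite, so every element of $\CGAa$ has the form $\sum_g a_g u_g$. Its image is $\sum_g (y_g \otimes 1)(1 \otimes a_g z_g)$. The $y_g$ are orthogonal for the trace on $M_2$, so applying $\mathrm{tr} \otimes \id$ after multiplying by $y_h^* \otimes 1$ recovers each $a_h z_h$. Hence the map is injective.

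\emph{The semigroups.} Identify $\W (A) = \W (M_2 \otimes A_0)$ with $\W (A_0)$, and $\W (M_2 \otimes M_2 \otimes A_0)$ with $\W (A_0)$, via corners.
\begin{itemize}
\item The action is inner, so $\W (A)^{\af} = \W (A)$, and $\kp_{\af}$ is an isomorphism by Proposition~\ref{L_0817_Inn}.
\item The inclusion $A_0 \cong A^{\af} \to M_2 \otimes A_0$, $a \mapsto 1 \otimes a$, is multiplication by $2$ on $\W (A_0)$.
\item Similarly $\ph_* \circ (\io_{\af})_*$ is $x \mapsto 1_{M_2} \otimes x$, which is again multiplication by $2$. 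So $\W' (A, \af) = 2 \W (A_0)$, and $\te_{\af}$ is the inclusion.
\end{itemize}
The $\Cu$ versions are identical, replacing $M_2 \otimes A_0$ by its stabilization.

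\emph{The radii.} By Lemma~\ref{DirExtLemma}(\ref{DirExtLemma_Mn}), $\rc (A) = \rc (A_0) / 2$ and $\rc (\CGAa) = \rc (M_4 \otimes A_0) = \rc (A_0) / 4$. Since $\af$ is pointwise inner, Lemma~\ref{DirExtLemma}(\ref{DirExtLemma_aue}) and Remark~\ref{R_5Z10_Not_rsf} give $\rc (A, \af) = \rc (A)$. The bound $\rc' (A, \af) \leq \rc (\CGAa)$ is Lemma~\ref{Lem_rcp}(\ref{Lem_rcp_c}).

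For the reverse inequality when $2 \W (A_0) = \W (A_0)$, I expect the main obstacle to be normalization bookkeeping, so I would proceed as follows.
\begin{enumerate}
\item Note that $\QT (A)^{\af} = \QT (A) \cong \QT (A_0)$. Also, under $\ph$, restriction gives a bijection $\QT (\CGAa) \to \QT (A)$ which preserves normalized values, since $\io_{\af} (1) = 1$.
\item Take $c, d \in M_{\I} (\CGAa)_{+}$ witnessing failure of $r$-comparison in $\CGAa$ for some $r < \rc (A_0) / 4$.
\item Using $\W' (A, \af) = \W (\CGAa)$, choose $a, b \in \Mi (A)_{+}$ with $\io_{\af} (a) \sim c$ and $\io_{\af} (b) \sim d$.
\item Then $d_{\ta} (a) = d_{\ta} (c)$ for corresponding quasitraces, so the same gap inequality holds for $a$ and $b$ over $\QT (A)^{\af}$, while $a \not\precsim_{\CGAa} b$.
\item Fullness of $b$, which is needed for $a \in I_{\af} (b)$, comes from residual stable finiteness as in Lemma~\ref{L_6920_No_aIb}.
\end{enumerate}
This shows $\rc' (A, \af) \geq r$ for every such $r$, which gives equality.
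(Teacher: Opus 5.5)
Your proposal is correct and follows essentially the paper's route. Your representation $g \mapsto y_g \otimes y_g \otimes 1$ is exactly the paper's $v$: $y_1 \otimes y_1 = \diag (y_1, -y_1)$, and $y_2 \otimes y_2$ is the off-diagonal matrix used there. The semigroup and radius computations use the same ingredients, namely innerness, Proposition~\ref{L_0817_Inn}, Lemma~\ref{DirExtLemma}, and Lemma~\ref{Lem_rcp}.

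There are two small differences. The paper proves bijectivity of $\varphi$ from irreducibility of the covariant representation of $(G, M_2, \gamma)$ plus a dimension count, then tensors with $\mathrm{id}_{A_0}$; you instead check surjectivity and injectivity directly, the latter via trace-orthogonality of the $y_g$. You also write out the transfer-of-witnesses argument, including fullness of $b$, for $\rc' (A, \alpha) \geq \rc (A_0)/4$, which the paper simply says ``follows'' once $\theta_{\alpha}$ is an isomorphism.
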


Thus, the maps in Proposition~\ref{P_1412_NatMaps} are
\begin{equation}\label{Eq_6628_Label}
\W (A_0) \stackrel{2 \cdot{\mbox{}}}{\longrightarrow} \W (A_0)
  \stackrel{=}{\longrightarrow} \W (A_0)
  \stackrel{=}{\longrightarrow} \W (A_0)
  \stackrel{2 \cdot{\mbox{}}}{\longrightarrow} 2 \W (A_0)
  \longrightarrow \W (A_0),
\end{equation}
and similarly with $\Cu$ in place of~$\W$.

We don't know whether $\rc' (A, \af) = \frac{1}{4} \rc (A_0)$
in general.
There seem to be semigroups for which this fails,
but we don't know of examples which can be $\W (A_0)$
for some \ca~$A_0$.

\begin{proof}[Proof of Lemma~\ref{L_5Z10_PjUn}]
We only consider $\W ( - )$.
The proofs for $\Cu ( - )$ are the same.

Since $y_1^2 = y_2^2 = 1$
and $y_2 y_1 = - y_1 y_2$ is a scalar multiple of $y_1 y_2$,
there is a well defined action $\gm \colon G \to \Aut (M_2)$
such that $\gm_{h_j} = \Ad (y_j)$ for $j = 1, 2$.
The action $\af$ exists
because it is $g \mapsto \gm_g \otimes \id_{A_0}$.

The definition of $A$ gives an isomorphism $\W (A) \to \W (A_0)$
which sends $\langle 1_{A} \rangle$ to $2 \langle 1_{A_0} \rangle$.
It also implies that $\rc (A) = \frac{1}{2} \rc (A_0)$.
One easily checks that $(M_2)^{\gm} = \C \cdot 1_{M_2}$.
The claimed identification of $A^{\af}$ follows.
It implies that $\rc (A^{\af}) = \rc (A_0)$,
and yields an isomorphism $\W (A^{\af}) \to \W (A_0)$ which
sends $\langle 1_{A} \rangle$ to $\langle 1_{A_0} \rangle$.
Moreover, our isomorphisms identify
$\nu_{\af} \circ (\ep_{\af})_{\#}$ with multiplication by~$2$.
Since $\af$ is pointwise inner,
we have $\W (A)^{\af} = \W (A)$.
Thus $\W (A)^{\af} \cong \W (A_0)$,
and under our identifications $\nu_{\af}$ is the identity.
It follows that $(\ep_{\af})_{\#}$ is multiplication by~$2$.

It follows from Lemma \ref{L_6626_BasicOps}(\ref{I_6626_BasicOps_aue})
and Example~\ref{E_0817_TrivAction} that
$\kp_{\af} \colon \W (A) \to \W (A, {\af})$ is an isomorphism.
Therefore $\W (A, {\af}) \cong \W (A_0)$.
Moreover, $\rc (A, \af) = \rc (A)$ by
Lemma \ref{DirExtLemma}(\ref{DirExtLemma_aue})
and Lemma~\ref{L_1412_Triv}.

We next find~$\ph$.
Write $M_2 \otimes M_2$ as $M_2 (M_2)$.
Let $\ps \colon M_2 \to M_2 (M_2)$ be $\ps (x) = \diag (x, x)$
for $x \in M_2$.
One checks that there is
a unitary representation $v$ of $G$ in $M_2 (M_2)$
such that
\[
v (h_1) = \begin{pmatrix} y_1 & 0 \\ 0 & - y_1 \end{pmatrix}
\andeqn
v (h_2) = \begin{pmatrix} 0 & y_2 \\  y_2 & 0 \end{pmatrix},
\]
and that, identifying $M_2 (M_2)$ with $L (\C^4)$,
$(v, \ps)$ is a covariant representation of $(G, M_2)$.
One further easily checks that this representation is irreducible.
Therefore the associated \hm{}
$\ph_0 \colon C^* (G, M_2, \gm) \to M_2 (M_2)$ is surjective.
A dimension count now shows $\ph_0$ is an isomorphism.
Now $\ph = \ph_0 \otimes \id_{A_0}$ is the map claimed in the statement.

It follows that $\W (\CGAa) \cong \W (A_0)$ in such a way that,
using our isomorphism $\W (A) \cong \W (A_0)$,
the map $(\io_{\af})_* \colon \W (A) \to \W (\CGAa)$ becomes
multiplication by~$2$.
Since $\kp_{\af}$ is an isomorphism,
$(\io_{\af})_{\#}$ is surjective
(Remark \ref{R_1412_MapsSI}(\ref{Item_1412_MapsSI_4surj})),
and $\te_{\af}$
is injective (Remark \ref{R_1412_MapsSI}(\ref{Item_1412_MapsSI_5inj})),
we get the identification of
$\W' (A, \af)$ with $2 \W (A_0) \S \W (A_0)$,
with $\langle 1_A \rangle$ corresponding to $4 \langle 1_{A_0} \rangle$.

We have  $\rc' (A, \af) \leq \frac{1}{4} \rc (A_0)$
by Lemma \ref{Lem_rcp}(\ref{Lem_rcp_c}).
If $2 \W (A_0) = \W (A_0)$, then $\te_{\af}$ is an isomorphism,
and $\rc' (A, \af) = \frac{1}{4} \rc (A_0)$ follows.
\end{proof}

\begin{rmk}\label{R_1412_MapsNotSI}
Our examples demonstrate the following facts
about the maps of Proposition~\ref{P_1412_NatMaps}.
We give the statements for $\W (-)$, but they also hold for $\Cu (-)$.
\begin{enumerate}
%
\item\label{Item_1412_MapsNSI_1I}
The map $(\ep_{\af})_{\#} \colon \W (A^{\af}) \to \W (A)^{\af}$
need not be injective (Lemma~\ref{L_1412_InnReg}).
\item\label{Item_5Z14_MapsNSI_epaf}
The map $(\ep_{\af})_{\#} \colon \W (A^{\af}) \to \W (A)^{\af}$
need not be surjective.

To see this, in Lemma~\ref{L_5Z10_PjUn} take $A_0 = \C$.
Then $\W (A^{\af}) \cong \W (A)^{\af} \cong \W (A_0) \cong \Z_{\geq 0}$
and $(\ep_{\af})_{\#}$ is multiplication by~$2$.
\item\label{Item_1412_MapsNSI_2S}
The map $\nu_{\af} \colon \W (A)^{\af} \to \W (A)$
need not be surjective (Lemma~\ref{L_1412_CGA}).
\item\label{Item_1412_MapsNSI_3 ni}
The map $\kp_{\af} \colon \W (A) \to \W (A, {\af})$
need not be injective (Lemma~\ref{L_1412_CGA}).
\item\label{Item_5Z14_MapsNSI_1I}
The map
$(\io_{\af})_{\#} \colon \W (A, \af) \to \W' (A, \af)$
need not be injective.

To see this, by Lemma~\ref{L_5Z10_PjUn}
(see~(\ref{Eq_6628_Label})), it is enough to find a stably finite
\ca~$A_0$ such that multiplication by~$2$ is not injective on $\W (A_0)$.
Choose $A_0$ to be a simple unital AH~algebra
of stable rank one with
\[
K_0 (A_0) \cong \Q \oplus \Z / 2 \Z
\aqn
K_0 (A_0)_+
 = \{ 0 \} \cup \bigl\{(t, l) \in \Q \oplus \Z / 2 \Z \colon t > 0 \bigr\},
\]
with $K_1 (A_0) = 0$, and a unique tracial state.
Write $\Z / 2 \Z = \{ 0, 1 \}$.
Choose projections $p, q \in M_{\infty} (A_0)$ such that
$[p] = (1, 0)$ and $[q] = (1, 1)$.
Then
\[
[p] \neq [q] \andeqn 2 [p] = 2 [q].
\]
Thus $[p] \neq [q]$ in $V (A_0)$.
Since $A_0$ has stable rank one, projections satisfy
cancellation, and therefore the natural map
$V (A_0) \to K_0 (A_0)_+$ is injective.
Hence $2 [p] = 2 [q]$ in $V (A_0)$.
This implies that $[p]\neq [q]$ and $2 [p] = 2 [q]$ in $\W (A_0)$.
Therefore multiplication by~$2$ is not injective on $\W (A_0)$.
%
\item\label{Item_1412_MapsNSI_4ns}
The map $\te_{\af} \colon \W' (A, \af) \to \W ( C^* (G, A, \af) )$
need not be surjective (Lemma~\ref{L_1412_Triv}).
\end{enumerate}
\end{rmk}

\begin{exa}\label{Ex_6628_Dscrp}
There is an action $\afGAA$ of a finite group~$G$
on a simple \uca~$A$ such that $\CGAa$ is simple
and $\rc (A, \af) \neq \rc (\CGAa)$.
In Lemma~\ref{L_5Z10_PjUn} take $A_0$
to be any simple residually stably finite \uca{}
such that $\rc (A_0) \in (0, \I)$.
\end{exa}

\begin{exa}\label{Ex_6628_Diff}
There are a finite group~$G$, a simple \uca~$A$,
and $\af, \bt \colon G \to \Aut (A)$
such that $\rc (A, \af) = \rc (A, \bt)$
but $\rc (\CGAa) \neq \rc (C^* (G, A, \bt))$.
To see this, let $\afGAA$ be as in Example~\ref{Ex_6628_Dscrp},
let $\bt$ be the trivial action,
and use Lemma~\ref{L_1412_Triv} and Lemma~\ref{L_5Z10_PjUn}.
\end{exa}

\begin{exa}\label{Ex_Three_Radii_Different}
We exhibit a \uca~$A$ and an action $\afGAA$ of a finite group~$G$
on~$A$ such that
\begin{equation}\label{Eq_6920_3Ineq}
\rc \bigl( C^* (G, A, \af) \bigr) < \rc (A, \af) < \rc (A).
\end{equation}

Fix $\eta \in (0, \frac14)$.
Choose residually stably finite unital C*-algebras $B_0$ and $C_0$
such that
\[
\rc (B_0) = 3 \eta \andeqn \rc (C_0) = 4 \eta.
\]
(For example, the existence of such algebras follows
from Corollary~5.20 of \cite{Asd2}.)
Set $G = (\Z / 2 \Z)^2$.
Define algebras and actions as follows.
\begin{itemize}
\item
Set $B = C (G, B_0)$
and let $\bt \colon G \to \Aut (B)$
be the translation action given in Lemma~\ref{L_1412_CGA}.
\item
Set $C = M_2 \otimes C_0$
and let $\gm\colon G \to \Aut (C)$ be the action in Lemma~\ref{L_5Z10_PjUn}.
\item
Set $A = B \oplus C$ and let $\af = \bt\oplus \gm$.
\end{itemize}
By Lemma~\ref{L_1412_CGA}, we have
\[
\rc (B) = \rc (B_0) = 3 \eta
\andeqn
\rc (B, \bt)
= \rc \bigl( C^* (G, B, \bt) \bigr)
= \frac{\rc (B_0)}{\card (G)}
= \frac{3 \eta}{4}.
\]
Lemma~\ref{L_5Z10_PjUn} gives
\[
\rc (C) = \rc (C, \gm) = \frac{\rc (C_0)}{2} = 2 \eta
\andeqn
\rc \bigl( C^* (G,C, \gm) \bigr) = \frac{\rc (C_0)}{4} = \et.
\]
Then
\[
\rc (A)
= \max \bigl( \rc (B), \, \rc (C) \bigr)
= 3 \eta.
\]
By Lemma~\ref{DirExtLemma}(\ref{DirExtLemma_DSum}),
\[
\rc (A, \af)
= \max \bigl( \rc (B, \bt), \, \rc (C, \gm) \bigr)
= 2 \eta.
\]
Since
$C^* (G, A, \af) \cong C^* (G, B, \bt) \oplus C^* (G,C, \gm)$,
we get
\[
\rc \bigl( C^* (G, A, \af) \bigr)
 = \max \left( \rc \bigl( C^* (G, B, \bt) \bigr), \,
  \rc \bigl( C^* (G,C, \gm) \bigr) \right)
 = \eta.
\]
So the quantities in~(\ref{Eq_6920_3Ineq}) are $\et$, $2 \et$, and $3 \et$.
\end{exa}

\section{Dynamical radius of comparison and Kerr's dynamical comparison}
\label{Kerrdynamcomp}

In this section, we first relate
Kerr's dynamical comparison \cite{Kerr20} for actions on compact spaces~$X$
to one step dynamical Cuntz comparison in $C (X)$
(without using matrices).
We then combine this relation with results of Melleray~\cite{Melleray25}
on clopen type semigroups to show that,
for minimal actions of countable discrete groups
on zero dimensional compact metrizable spaces,
the dynamical radius of comparison of $C (X)$
is zero if and only if the action
has dynamical comparison in the sense of Kerr.

Example~\ref{Ex_BPPositiveDynRc} then uses
an action of the free group $F_{\infty}$ on infinitely many generators,
constructed by Boldrini and Prasad,
to show that one can have $\rc (A, \af) > \rc (A)$,
even when $\rc (A) = 0$.

Kerr's work is part of a broader line of research
connecting comparison with regularity in topological dynamics;
see, for example,
\cite{AustadBonicke,GWY17,KerNar,KopsacheilisWinter,Nar22,LT22}.

Throughout this section, we will use the following notation.

\begin{ntn}\label{6821_X_ntn}
\begin{enumerate}
%
\item\label{I_6821_X_ntn_CX}
If $X$ is a \chs{} and $\af$ is an action of $G$ on~$X$,
we use the same letter for the corresponding actions
of $G$ on spaces of functions on~$X$,
given on $C (X)$ by $\alpha_g (f) (x) = f (g^{-1} x)$ for
$g \in G$, $f \in C (X)$, and $x \in X$,
and similarly on $M_n (C (X)) = C (X, M_n)$
and also on $C (X, \Nz)$.
\item\label{I_6821_X_ntn_Ta_mu}
Let $X$ be a \chs{} and let $\mu$ be a Radon probability measure on~$X$
(a Borel measure when $X$ is metrizable).
The associated tracial state on $C (X)$ is denoted by $\ta_{\mu}$.
As usual, we also write $\ta_{\mu}$ for the (unnormalixed)
extension to $M_n (C (X))$.
\item\label{I_6821_X_ntn_OpenSupp}
For $c \in M_{\infty} (C (X))$, the {\emph{open support}} of~$c$ is
\[
\supp_0 (c) = \{x \in X \colon c (x) \neq 0 \}.
\]
\end{enumerate}
\end{ntn}

We next recall Kerr's definition of dynamical comparison.

\begin{dfn}\label{D_KerrComparison}
Let $G$ be a countable discrete group
and let $X$ be a compact metrizable $G$-space.
\begin{enumerate}
\item\label{D_6816_Measures}
We denote by $M_G (X)$ the set of $G$-invariant Borel
probability measures on~$X$.
\item\label{D_KerrComparison_a}
(Definition~3.1 in \cite{Kerr20}.)
Let $R, S \subseteq X$.
We write $R \prec S$
if, for every closed set $F \subseteq R$, there exist open sets
$U_1, U_2, \ldots, U_n \subseteq X$ and elements
$g_1, g_2, \ldots, g_n \in G$ such that:
\begin{enumerate}
\item\label{D_6816_Cover}
$F \subseteq \bigcup_{j = 1}^n U_j$.
\item\label{D_6816_Disj}
The sets $g_1 U_1, g_2 U_2, \ldots, g_n U_n$
are pairwise disjoint subsets of~$S$.
\end{enumerate}
\item\label{D_KerrComparison_b}
(Definition~3.2 in \cite{Kerr20}.)
The action of $G$ on~$X$
is said to have \emph{dynamical comparison in the sense of Kerr}
if, whenever $R, S \subseteq X$ are nonempty
open sets satisfying $\mu (R) < \mu (S)$ for every $\mu \in M_G (X)$,
then $R \prec S$.
\end{enumerate}
\end{dfn}

Transitivity of $\prec$ is proved after
Definition 3.1 in \cite{Kerr20}.

\begin{lem}\label{LemKerrOneStep}
Let $X$ be a compact metrizable space, let $G$ be
a countable discrete group, and let $\af$ be an action of $G$ on~$X$.
Let $a, b \in C (X)_+$.
Then the following are equivalent:
\begin{enumerate}
\item\label{LemKerrOneStep.a}
$a \precsim_{C (X), \alpha, 0} b$.
\item\label{LemKerrOneStep.b}
$\supp_0 (a ) \prec \supp_0 (b)$
in the sense of
Definition \ref{D_KerrComparison}(\ref{D_KerrComparison_b}).
\setcounter{TmpEnumi}{\value{enumi}}
\end{enumerate}
\end{lem}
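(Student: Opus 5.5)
The plan is to prove both directions separately, using Lemma~\ref{LemOneStepBPWZ} only implicitly. Throughout I use the standard fact about Cuntz comparison in $C (X)$ and its matrix algebras: for $c \in M_n (C (X))_+$ and a scalar function $f \in C (X)_+$, one has $f \precsim c$ whenever $\supp_0 (f)$ has compact closure contained in the set where $c$ has rank at least~$1$. Conversely, if $c \precsim d$ then $\supp_0 (c) \subseteq \supp_0 (d)$. I also use that $\supp_0 ((a - \ep)_+) = \{ x \colon a (x) > \ep \}$, whose closure is contained in the compact set $\{ a \geq \ep \} \subseteq \supp_0 (a)$.

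For (\ref{LemKerrOneStep.b}) $\Rightarrow$ (\ref{LemKerrOneStep.a}), fix $\ep > 0$ and let $F = \{ a \geq \ep \}$, a closed subset of $\supp_0 (a)$. By hypothesis there are open sets $U_1, \ldots, U_n$ covering $F$ and elements $g_j$ with $g_j U_j$ pairwise disjoint and contained in $\supp_0 (b)$. I would shrink each $U_j$ to an open $V_j$ with $\overline{V_j} \subseteq U_j$ and $F \subseteq \bigcup_j V_j$, which is possible by normality. Then I pick a partition-of-unity type family $x_j \in C (X)_+$ with $\supp_0 (x_j) \subseteq U_j$ and $x_j > 0$ on $\overline{V_j}$. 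This gives $\sum_j x_j > 0$ on $F$, so $(a - \ep)_+ \precsim \sum_j x_j \precsim \bigoplus_j x_j$. The translates $\af_{g_j} (x_j)$ have open supports $g_j \supp_0 (x_j)$, which are disjoint. Hence $\bigoplus_j \af_{g_j} (x_j) \sim \sum_j \af_{g_j} (x_j) =: y$, with $\supp_0 (y) \subseteq \supp_0 (b)$. To get $y \precsim (b - \dt)_+$ for some $\dt > 0$, I need $\overline{\supp_0 (y)}$ to be compact inside $\supp_0 (b)$. I therefore choose $x_j$ with $\overline{\supp_0 (x_j)} \subseteq U_j$ (shrink $U_j$ once more). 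Then $K = \bigcup_j g_j \overline{\supp_0 (x_j)}$ is compact in $\supp_0 (b)$, so $b \geq 2 \dt$ on $K$ for some $\dt > 0$, and $y \precsim (b - \dt)_+$.

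For (\ref{LemKerrOneStep.a}) $\Rightarrow$ (\ref{LemKerrOneStep.b}), let $F \subseteq \supp_0 (a)$ be closed. By compactness $a \geq 2 \ep$ on $F$ for some $\ep > 0$. Take $\dt$, $g_j$ and $x_j \in M_{k_j} (C (X))_+$ as in Definition~\ref{D_0817_EqCmp_Orig}. Set $U_j = \supp_0 (x_j)$, which are open. From $(a - \ep)_+ \precsim \bigoplus_j x_j$ we get $F \subseteq \supp_0 ((a - \ep)_+) \subseteq \bigcup_j U_j$. The main obstacle is disjointness. From $\bigoplus_j \af_{g_j} (x_j) \precsim (b - \dt)_+$ with $b$ scalar, pointwise rank comparison shows that at each point $x$ at most one $\af_{g_j} (x_j)$ is nonzero, since the rank of the left side at $x$ is at most the rank of $(b - \dt)_+ (x)$, which is at most $1$. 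So the sets $g_j U_j$ are pairwise disjoint and lie in $\{ b > \dt \} \subseteq \supp_0 (b)$. This uses that Cuntz subequivalence in $M_\infty (C (X))$ implies pointwise rank inequality, which follows by lower semicontinuity of rank from $v_m d v_m^* \to c$. For $x_j$ matrix valued the supports are still open, so Kerr's condition is met.
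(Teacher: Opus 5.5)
Your proposal is correct and follows the paper's proof essentially step for step. For disjointness you use pointwise rank comparison (the sum of the ranks of the $\alpha_{g_j}(x_j)(x)$ is at most the rank of the scalar $(b-\delta)_+(x)$, which is at most one). In the other direction you use a shrink-plus-Urysohn choice of scalar functions $x_j$ with $\overline{\supp_0 (x_j)} \subseteq U_j$, together with a uniform positive lower bound for $b$ on the compact set $\bigcup_j g_j \overline{\supp_0 (x_j)}$.

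One caution: your preliminary ``standard fact'' is false for matrix-valued $c$. For example, $1 \not\precsim p$ for the Bott projection $p$, even though $p$ has rank one everywhere. You only ever apply that fact with $c$ scalar, where it does hold, so the argument is unaffected.
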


\begin{proof}
We first record two elementary observations.
\begin{enumerate}
\setcounter{enumi}{\value{TmpEnumi}}
\item\label{EqPointwiseRank}
Let $c, d \in \Mi (C (X))_+$.
If $c \precsim_{C (X)} d$,
then $\rank (c (x)) \leq \rank (d (x))$ for all $x \in X$.
\item\label{EqSupportTranslate}
We have
$\supp_0 (\alpha_g (c)) = g\supp_0 (c)$
for every $c \in M_{\infty} (C (X))_+$ and $g \in G$.
\setcounter{TmpEnumi}{\value{enumi}}
\end{enumerate}

Suppose that $a \precsim_{C (X), \alpha, 0} b$.
Let $F \subseteq \supp_0 (a)$
be closed and set $\eta = \inf_{x \in F} a (x)$.
Since $F$ is compact and $a (x) > 0$ for every $x \in F$,
we have $\eta > 0$.
Choose $\varepsilon \in (0, \eta)$.
Then
\begin{equation}\label{EqFInCutSupport}
F \subseteq \supp_0 ((a - \varepsilon)_+).
\end{equation}

Since $a \precsim_{C (X), \alpha, 0} b$,
there exist $\delta > 0$, elements
$c_1, c_2, \ldots, c_n \in M_{\infty} (C (X))_+$,
and elements $g_1, g_2, \ldots, g_n \in G$ such that
\begin{equation}\label{EqOneStepFirst}
(a - \varepsilon)_+ \precsim_{C (X)} \bigoplus_{j = 1}^n c_j
\andeqn
\bigoplus_{j = 1}^n \alpha_{g_j} (c_j) \precsim_{C (X)} (b - \delta)_+.
\end{equation}
For $j = 1, 2, \ldots, n$, set $V_j = \supp_0 (c_j)$.
Then $V_j$ is open.
We claim that:
\begin{enumerate}
\setcounter{enumi}{\value{TmpEnumi}}
\item\label{EqVCoverF}
$F \subseteq \bigcup_{j = 1}^n V_j$.
\item\label{pairwisedisjoint}
The sets $g_1 V_1, g_2 V_2, \ldots, g_n V_n$
are pairwise disjoint subsets of $\supp_0 (b)$.
\setcounter{TmpEnumi}{\value{enumi}}
\end{enumerate}
This will show that $\supp_0 (a) \prec \supp_0 (b)$.

To prove (\ref{EqVCoverF}), let $x \in F$.
By \eqref{EqFInCutSupport}, we have
$(a (x) - \varepsilon)_+> 0$.
Since $a$ is scalar valued, $\rank ((a (x) - \varepsilon)_+) = 1$.
Using this, \eqref{EqOneStepFirst} and \eqref{EqPointwiseRank},
we obtain
\[
1 = \rank ((a (x) - \varepsilon)_+)
  \leq \rank \left( \bigoplus_{j = 1}^n c_j (x) \right)
  = \sum_{j = 1}^n \rank (c_j (x)).
\]
Therefore there is $j \in \{ 1, 2, \ldots, n \}$
such that $c_j (x) \neq 0$, whence $x \in V_j$.
So $F \subseteq \bigcup_{j = 1}^n V_j$.
This is~(\ref{EqVCoverF}).

Now we prove (\ref{pairwisedisjoint}).
By the second part of \eqref{EqOneStepFirst}
and \eqref{EqPointwiseRank}, for every $x \in X$,
\[
\sum_{j = 1}^n \rank \big(\alpha_{g_j} (c_j) (x) \big)
= \rank \left( \bigoplus_{j = 1}^n \alpha_{g_j} (c_j) (x) \right)
\leq \rank ((b (x) - \delta)_+)
\leq 1.
\]
Therefore the sets $\supp_0 ( \af_{g_j} (c_j))$ are disjoint.
By~(\ref{EqSupportTranslate}), the sets $g_j V_j$ are disjoint.
Clearly they are contained in $\bigl\{ x \in X \colon b (x) > \dt \bigr\}$,
which is a subset of $\supp_0 (b)$.
This completes the proof of the claim
and therefore the proof that
(\ref{LemKerrOneStep.a}) implies~(\ref{LemKerrOneStep.b}).

To prove that
(\ref{LemKerrOneStep.b}) implies~(\ref{LemKerrOneStep.a}),
suppose that $\supp_0 (a) \prec \supp_0 (b)$.
Let $\varepsilon > 0$.
We may assume that $(a - \varepsilon)_+ \neq 0$.
Set $F= \{x \in X \colon a (x) \geq \varepsilon \}$.
Then $F$ is compact and $F \subseteq \supp_0 (a)$.
By Definition \ref{D_KerrComparison}(\ref{D_KerrComparison_a}),
there exist open sets $W_1, W_2, \ldots, W_n \subseteq X$
and elements $g_1, g_2, \ldots, g_n \in G$ such that:
\begin{enumerate}
\setcounter{enumi}{\value{TmpEnumi}}
\item\label{EqWCoverF}
$F \subseteq \bigcup_{j = 1}^nW_j$.
\item\label{EqWInsideSupportb}
The sets $g_1 W_1, g_2 W_2, \ldots, g_n W_n$
are disjoint subsets of $\supp_0 (b)$.
\setcounter{TmpEnumi}{\value{enumi}}
\end{enumerate}

Since $X$ is compact Hausdorff,
we may choose nonempty open sets $V_1, V_2, \ldots, V_n$
such that $F \subseteq \bigcup_{j = 1}^n V_j$ and
$\overline{V_j} \subseteq W_j$ for $j = 1, 2, \ldots, n$.
By Urysohn's lemma, for each $j$ there exists
$h_j \in C (X)_+$ such that
\[
0 \leq h_j \leq1,
\qquad
\overline{V_j} \subseteq \bigl\{ x \in X \colon h_j (x) = 1 \bigr\},
\andeqn
\overline{\supp_0 (h_j)} \subseteq W_j.
\]
Choose $\dt > 0$ such that
\begin{equation}\label{Eq_8616_Choose_dt}
\dt < \min_{1 \leq j \leq n}
 \inf \big( \bigl\{ b (x)
      \colon x \in g_j \cdot \overline{\supp_0 (h_j)} \bigr\} \bigr).
\end{equation}
We claim that:
\begin{enumerate}
\setcounter{enumi}{\value{TmpEnumi}}
\item\label{EqFirstReverse}
$(a - \varepsilon)_+ \precsim_{C (X)} \bigoplus_{j = 1}^n h_j$.
\item\label{EqFirstReverse.b}
$\bigoplus_{j = 1}^n \alpha_{g_j} (h_j) \precsim_{C (X)} (b - \delta)_+$.
\end{enumerate}
This will show that $a \precsim_{C (X), \alpha, 0} b$.

For~(\ref{EqFirstReverse}), we observe that $\sum_{j = 1}^n h_j (x) \geq 1$
whenever $(a (x) - \ep)_{+} > 0$.
Therefore $(a - \ep)_{+} \precsim_{C (X)} \sum_{j = 1}^n h_j$.
Also $\sum_{j = 1}^n h_j \precsim_{C (X)} \bigoplus_{j = 1}^n h_j$.
Thus $(a - \varepsilon)_+ \precsim_{C (X)} \bigoplus_{j = 1}^n h_j$,
as desired.

To prove (\ref{EqFirstReverse.b}), set $f = \sum_{j = 1}^n \af_{g_j} (h_j)$.
Since the elements $\af_{g_j} (h_j)$ are pairwise orthogonal,
we have $\bigoplus_{j = 1}^n \af_{g_j} (h_j) \sim_{C (X)} f$.
For $x \in X$, if $f (x) \neq 0$, then by~(\ref{Eq_8616_Choose_dt})
we have $b (x) > \dt$,
so $(b (x) - \dt)_{+} > 0$.
Thus $\supp_0 (f) \subseteq \supp_0 ((b - \dt)_{+})$,
whence $f \precsim_{C (X)} (b - \dt)_{+}$.
This finishes~(\ref{EqFirstReverse.b}) and completes the proof.
\end{proof}

When $\dim (X) = 0$, we use this result to show that zero
dynamical radius of comparison implies Kerr's dynamical comparison.
Recall that the $G$-invariant probability measures on~$X$
correspond exactly to the $G$-invariant quasitraces on $C (X)$

\begin{prp}\label{P_DynRcZeroImpKerrComp}
Let $X$ be a zero dimensional compact metrizable space, let $G$ be
a countable discrete group, and let $\af$ be a minimal
action of $G$ on~$X$.
If $\rc (C (X), \alpha) = 0$,
then the action of $G$ on~$X$ has dynamical comparison
in the sense of
Definition \ref{D_KerrComparison}(\ref{D_KerrComparison_b}).
\end{prp}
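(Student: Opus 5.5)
The plan is to verify Kerr's condition directly. We reduce it to dynamical Cuntz comparison between two characteristic functions of clopen sets, and then convert that comparison back into a statement about sets.

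Let $R, S \subseteq X$ be nonempty open sets with $\mu (R) < \mu (S)$ for all $\mu \in M_G (X)$, and let $F \subseteq R$ be closed. Since $\dim (X) = 0$ and $F$ is compact, choose a clopen set $K$ with $F \subseteq K \subseteq R$. Then $\mu (K) < \mu (S)$ for all $\mu \in M_G (X)$.

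\emph{Step 1: a uniform gap.} The function $\mu \mapsto \mu (S) - \mu (K)$ is lower semicontinuous on the weak* compact set $M_G (X)$, because $S$ is open and $K$ is clopen. It is also strictly positive, so it has a minimum $c > 0$; we may assume $M_G (X) \neq \E$, since otherwise the hypothesis is vacuous in the relevant sense and this case is handled separately. Write $S$ as an increasing union of clopen sets $L_k$. The functions $\mu \mapsto \mu (L_k)$ are continuous and increase to $\mu (S)$, which is lower semicontinuous. A Dini-type compactness argument then gives $k$ with $\mu (K) + \frac{c}{2} < \mu (L_k)$ for all $\mu \in M_G (X)$. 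Set $L = L_k$, $a = 1_K$, and $b = 1_L$.

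\emph{Step 2: apply the hypothesis.} By minimality, the translates of the nonempty open set $L$ cover $X$. Finitely many suffice by compactness, so $b$ is $\af$-full and $a \in I_{\af} (b)$. Dynamical $r$-comparison for one $r$ implies it for all larger $r$, so $\rc (C (X), \af) = 0$ gives dynamical $\frac{c}{4}$-comparison. Since $G$-invariant quasitraces on $C (X)$ are exactly the $\ta_{\mu}$ with $\mu \in M_G (X)$, we conclude $1_K \precsim_{C (X), \af} 1_L$.

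\emph{Step 3: back to sets.} It remains to deduce $K \prec L$, which gives $F \prec S$ as required, since $F \subseteq K$ and $L \subseteq S$. Because $a$ is a projection, $(a - \ep)_{+}$ is a scalar multiple of $a$ for small $\ep > 0$. Lemma~\ref{Tran_meStepLem}(\ref{Tran_meStepLem_b}) then gives a chain $1_K \precsim_{C (X), \af, 0} y_1 \precsim_{C (X), \af, 0} \cdots \precsim_{C (X), \af, 0} y_m \precsim_{C (X), \af, 0} 1_L$ with $y_j \in \Mi (C (X))_{+}$. Kerr's relation is transitive, and Lemma~\ref{LemKerrOneStep} handles scalar-valued entries. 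The difficulty is that the $y_j$ and the auxiliary elements $x_j$ are matrix valued.

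To handle this, I would extend Lemma~\ref{LemKerrOneStep} to a version on $X \times \N$ with $G$ acting on the first coordinate. To $c \in M_n (C (X))_{+}$ associate the rank data $x \mapsto \rank (c (x))$. Using zero dimensionality, replace each $(c - \dt)_{+}$ up to Cuntz comparison by a projection-valued function that is locally constant on a clopen partition. A one-step comparison between such elements then yields a finite clopen decomposition of the corresponding subsets of $X \times \{1, \ldots, n\}$, with pieces moved by group elements. The argument follows the rank-counting proof of Lemma~\ref{LemKerrOneStep}, with pointwise ranks in place of supports. Composing these decompositions along the chain gives $K \times \{1\} \prec L \times \{1\}$ in $X \times \N$, and since each piece is moved by the action on $X$, projecting to the first coordinate gives $K \prec L$ in $X$. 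Equivalently, the chain gives $[K] \leq [L]$ in the clopen type semigroup, and Melleray's results identify this with Kerr subequivalence for clopen sets.

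I expect Step 3 to be the main obstacle. The essential point is that the intermediate elements must be projectionized uniformly at each stage without losing the strict inequalities $\ll$. That is exactly where zero dimensionality, and if needed Melleray's type semigroup, is used.
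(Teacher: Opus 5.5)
Your argument is correct, but Step~3 takes a genuinely different route from the paper. The paper takes $a = \ch_U$ and any $b$ with $\supp_0 (b) = S$, so it needs no clopen approximation $L$ of $S$ and no Dini argument. It obtains $a \precsim_{C (X), \af} b$ exactly as in your Step~2, and then collapses the chain in one stroke. Since $C (X)$ has real rank zero and stable rank one, $\W (C (X))$ has almost refinement. Corollary~8.3 of \cite{BsPrWuZc} together with Lemma~\ref{LemOneStepBPWZ} then upgrades $a \precsim_{C (X), \af} b$ to $a \precsim_{C (X), \af, 0} b$. Lemma~\ref{LemKerrOneStep} finishes the proof. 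That lemma already allows matrix valued auxiliary elements $x_j$ (its proof counts pointwise ranks), so only the intermediate $y_j$ are a genuine obstacle.

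Your route instead handles the chain by hand in the clopen type semigroup $T (\af)$. At each link you cut down and apply Lemma~\ref{LemZeroDimProjectionInterpolation}. This produces projections $P_j$, $e_{j, i}$, $Q_j$ with $P_j \precsim \bigoplus_i e_{j, i}$ and $\bigoplus_i \af_{g_{j, i}} (e_{j, i}) \precsim Q_j$, hence $[f_{P_j}] \leq [f_{Q_j}]$ by pointwise rank inequalities. You must choose the cut-down at link $j + 1$ after the $\dt_j$ produced at link $j$, so that $Q_j \precsim (y_{j + 1} - \dt_j / 2)_{+} \precsim P_{j + 1}$. Transitivity in $T (\af)$ is trivial because its order is algebraic. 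This all works.

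The final conversion needs no vague appeal to Melleray. From $[\ch_K] \leq [\ch_L]$, Lemma~\ref{LemTypeSemigroupToOneStep} gives $\ch_K \precsim_{C (X), \af, 0} \ch_L$, and Lemma~\ref{LemKerrOneStep} then gives $K \prec L$. Alternatively, argue directly: the functions $a_j$ extracted there are $\{ 0, 1 \}$-valued, so they are indicators of a clopen partition of $K$ whose translates are disjoint subsets of $L$.

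The case $M_G (X) = \E$ that you deferred is immediate. Take any nonempty clopen $L \subseteq S$; the trace condition is then vacuous.

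As for what each approach buys: the paper's proof is short, but it relies on an external structural fact, namely that refinement forces dynamical subequivalence to be one step. Yours is self-contained and uses only the zero dimensional tools the paper develops for the converse, Proposition~\ref{PropKerrComparisonImpliesDynRcZero}. In effect it reproves that collapse in this commutative setting, at the cost of some bookkeeping.
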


\begin{proof}
Let $R, S \subseteq X$ be nonempty open sets such that
$\mu (R) < \mu (S)$ for all $\mu \in M_G (X)$.
We must prove that $R \prec S$.
Let $F \subseteq R$ be closed.
Since $X$ is zero dimensional and
$F$ is compact, there is a compact open set $U \subseteq X$ such that
$F \subseteq U \subseteq R$.
Set $a = \ch_U \in C (X)_+$.
Since $X$ is metrizable, there is $b \in C (X)_+$ such that
$\supp_0 (b) = S$.
Since the action of $G$ on~$X$ is minimal and $b \neq 0$,
it follows that $b$ is $\af$-full
(Definition~\ref{D_2528_af_full}(\ref{I_6817_af_full})).
In particular, $a \in I_{\af}(b)$.
Now we divide the argument into two cases.

\emph{\textbf{Case 1:}} $M_G (X) = \varnothing$.
We have $\QT (C (X))^{\af} = \varnothing$.
Since $\rc (C (X), \af) = 0$, there is $r > 0$
such that $C (X)$ has dynamical $r$-comparison.
The condition $d_{\tau}(a) + r < d_{\tau}(b)$
for all $\tau \in \QT (C (X))^{\af}$ is vacuous.
Since $a \in I_{\af}(b)$,
dynamical $r$-comparison gives $a \precsim_{C (X), \af} b$.

\emph{\textbf{Case 2:}} $M_G (X) \neq \varnothing$.
For $\ta \in \QT (C (X))^{\af}$,
let $\mu_{\ta}$ be the corresponding element of $M_G (X)$.
Since $U \subseteq R$, we have
\[
d_{\tau} (a)
 = \mu_{\ta} (U) \leq \mu_{\ta} (R) < \mu_{\ta} (S) = d_{\tau} (b).
\]
The function $\ta \mapsto d_{\tau} (a)$ is \ct{} on $\QT (C (X))^{\af}$,
and $\ta \mapsto d_{\tau} (b)$
is lower semi\ct{} on $\QT (C (X))^{\af}$,
so $\ta \mapsto d_{\tau} (b) - d_{\tau} (a)$ is lower semi\ct.
Since $\QT (C (X))^{\af}$ is compact, it follows that
\[
\et = \inf \bigl( \bigl\{ d_{\tau} (b) - d_{\tau} (a) \colon
    \ta \in \QT (C (X))^{\af} \bigr\} \bigr)
\]
satisfies $\et > 0$.
Because $\rc (C (X), \alpha) = 0$, there is $r \in (0, \eta)$
such that $C (X)$ has dynamical $r$-comparison.
Then $d_{\tau} (a) + r < d_{\tau} (b)$
for all $\tau \in \QT (C (X))^{\alpha}$.
Dynamical $r$-comparison therefore gives $a \precsim_{C (X), \alpha} b$.

Thus, in either case,
$a \precsim_{C (X), \alpha} b$.
Since $X$ is zero dimensional, $C (X)$ has real rank zero and
stable rank one.
Hence $\W (C (X))$ has the almost refinement
property; see 7.8 of~\cite{BsPrWuZc}.
Therefore Corollary~8.3 of \cite{BsPrWuZc}
and Lemma~\ref{LemOneStepBPWZ} imply that
$a \precsim_{C (X), \alpha, 0} b$.

By Lemma~\ref{LemKerrOneStep}, $\supp_0 (a) \prec \supp_0 (b)$.
Since $\supp_0 (a) =U$ and $\supp_0 (b) =S$, we get $U\prec S$.
Since $F\subseteq U$, the relation $U\prec S$ gives the required
subequivalence for $F$.
Since $F\subseteq R$ was arbitrary,
Definition~\ref{D_KerrComparison}(\ref{D_KerrComparison_a}) implies
that $R\prec S$.
\end{proof}

We next prove the converse of
Proposition~\ref{P_DynRcZeroImpKerrComp}.
The main ingredient is the clopen type semigroup
of Melleray~\cite{Melleray25},
and especially Proposition~2.5 of~\cite{Melleray25}.
A related type semigroup approach
to dynamical comparison for actions on compact Hausdorff spaces
has been developed by Ma in \cite{Ma21}.
We briefly recall the facts from~\cite{Melleray25} which will
be used below.
Throughout Section~2 of~\cite{Melleray25}, and throughout the rest
of this section, $X$ is a zero dimensional compact Hausdorff space,
$G$ is a countable discrete group, and an action of $G$ on~$X$ is fixed.

We recall the clopen type semigroup associated with a
zero dimensional dynamical system.
In~\cite{Melleray25}, the compact open subsets of~$Y$
in Definition~\ref{D_6921_Mell} are
called ``bounded'', and the condition is stated as closed, open,
and $E \cap (X \times \{ n \}) = \varnothing$
for all sufficiently large~$n$.

\begin{dfn}[Definitions~2.1 and~2.2 of~\cite{Melleray25},
  and the discussion surrounding these definitions]\label{D_6921_Mell}
Let $X$ be a zero dimensional compact Hausdorff space, let $G$ be
a countable discrete group, and let $\af$ be an action of $G$ on~$X$.
Let $\mathcal{S}$ denote the permutation group of $\Nz$,
set $Y = X \times \Nz$, set $\widetilde{G} = G \times \mathcal{S}$,
and let $\widetilde{G}$ act on $Y$ in the obvious way.
\begin{enumerate}
%
\item\label{I_6921_Mell_ED}
Two compact open subsets $E, F \subseteq Y$ are called
\emph{equidecomposable} if there are a compact open partition
$E = E_1 \amalg E_2 \amalg \cdots \amalg E_m$
and $h_1, h_2, \ldots, h_m \in \widetilde{G}$
such that
\[
F = h_1 E_1 \amalg h_2 E_2 \amalg \cdots \amalg h_m E_m.
\]
\item\label{I_6921_Mell_Add}
We write $[E]$ for the equidecomposability class of $E$ and set
\[
T (\alpha)
 = \bigl\{ [E] \colon {\mbox{$E \subseteq Y$ is compact open}} \bigr\}.
\]
The sum $[E] + [F]$ is defined by
choosing disjoint compact open representatives
$\widetilde{E}$ and $\widetilde{F}$ of $[E]$ and $[F]$
and defining $[E] + [F] = [\widetilde{E} \amalg \widetilde{F}]$.
\item\label{I_6921_Mell_Order}
The resulting commutative semigroup is equipped with the algebraic
preorder, given by
$\et \leq \xi$ \ifo{} there is $\gm \in T (\af)$ such that
$\et + \gm = \xi$.
The preordered semigroup $(T (\alpha), +, \leq)$ is called the
\emph{clopen type semigroup} of~$\af$.
\end{enumerate}
\end{dfn}

As observed after Definition~2.2 of~\cite{Melleray25},
$T (\af)$ is indeed a commutative semigroup with zero element
$0 = [ \E ]$.

We will mainly use the following equivalent description of $T (\alpha)$,
which is given at the beginning of Section~2.2 of~\cite{Melleray25}.

\begin{lem}\label{L_6921_EqType}
For $a, b \in C (X, \Nz)$, write $a \sim b$ if there are
$m \in \Nz$, $c_1, c_2, \ldots,c_m \in C (X, \Nz)$, and
$g_1, g_2, \ldots,g_m \in G$ such that
(following Notation \ref{6821_X_ntn}(\ref{I_6821_X_ntn_CX}))
\[
a = \sum_{j = 1}^m c_j
\andeqn
b = \sum_{j = 1}^m \af_{g_j} (c_j).
\]
Then $C (X, \Nz)/{\sim}$, with the operation derived from addition
in $C (X, \Nz)$, is naturally isomorphic to $T (\alpha)$.
\end{lem}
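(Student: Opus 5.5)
We construct mutually inverse maps between $C (X, \Nz)/{\sim}$ and $T (\alpha)$. First we check that $\sim$ is an equivalence relation compatible with addition, so that the quotient is a commutative semigroup. Reflexivity follows by taking $m = 1$, $c_1 = a$, $g_1 = 1$. Symmetry follows by replacing $c_j$ with $\af_{g_j} (c_j)$ and $g_j$ with $g_j^{-1}$. Compatibility with addition follows by concatenating the two lists of $c_j$ and $g_j$.

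Transitivity is the first real step. Suppose
\[
a = \sum_j c_j, \quad b = \sum_j \af_{g_j} (c_j) = \sum_k d_k, \quad e = \sum_k \af_{h_k} (d_k).
\]
Since everything is in $C (X, \Nz)$ and $X$ is zero dimensional, we refine to a common decomposition. We write each $c_j$ and $d_k$ as a finite sum of characteristic functions of compact open sets. Then we partition $X$ into finitely many clopen sets on which all the functions $\af_{g_j} (c_j)$ and $d_k$ are constant. On each such set, the value $b (x) = \sum_j \af_{g_j} (c_j) (x) = \sum_k d_k (x)$ is a finite integer. We split $b$ into unit pieces $\ch_P$ and match them bijectively between the two decompositions. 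This gives functions $c_{j,k} \in C (X, \Nz)$ with $\sum_k c_{j,k} = \af_{g_j} (c_j)$ and $\sum_j c_{j,k} = d_k$. Then $a = \sum_{j,k} \af_{g_j^{-1}} (c_{j,k})$ and $e = \sum_{j,k} \af_{h_k} (c_{j,k})$, using the group elements $h_k g_j$ applied to $\af_{g_j^{-1}} (c_{j,k})$. This combinatorial refinement is the step I expect to need the most care. It is routine in the clopen setting, so I would write it as a pointwise-integer matching on a common clopen partition.

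Next we define $\Phi \colon T (\alpha) \to C (X, \Nz)/{\sim}$ by sending a compact open $E \subseteq Y = X \times \Nz$ to its counting function $a_E (x) = \card \bigl( \{ n \colon (x, n) \in E \} \bigr)$. This function is continuous and $\Nz$-valued because $E$ is compact open and meets only finitely many levels. For well definedness, suppose $E = \coprod_i E_i$ and $F = \coprod_i h_i E_i$ with $h_i = (g_i, \sm_i)$. Permutations of $\Nz$ do not change counting functions, so $a_{h_i E_i} = \af_{g_i} (a_{E_i})$. Hence $a_E = \sum_i a_{E_i}$ and $a_F = \sum_i \af_{g_i} (a_{E_i})$, so $a_E \sim a_F$. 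Disjoint unions go to sums, so $\Phi$ is additive.

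For the inverse $\Psi$, send $a \in C (X, \Nz)$ to $E_a = \{ (x, n) \colon n < a (x) \}$, which is compact open. Then $a_{E_a} = a$, so $\Phi$ is surjective. Also every compact open $E$ is equidecomposable with $E_{a_E}$. To see this, we partition $X$ into clopen sets on which the fibers of $E$ are constant finite sets, and use a permutation of $\Nz$ on each piece to move the fiber to $\{ 0, \ldots, a_E (x) - 1 \}$. This gives $\Psi \circ \Phi = \id$ at the level of classes.

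It remains to show that $\Psi$ respects $\sim$, which gives injectivity of $\Phi$. Suppose $a = \sum_j c_j$ and $b = \sum_j \af_{g_j} (c_j)$. Stack the sets $E_{c_j}$ disjointly by shifting levels with suitable permutations, obtaining a compact open set equidecomposable with $E_a$. Apply $(g_j, \text{shift})$ to each piece to obtain a set equidecomposable with $E_b$. The level shifts use the fact that the partitions are clopen, so each piece is moved by a single group element on a clopen region. Finally, $\Phi$ carries the sum in $T (\alpha)$ to the sum in the quotient, so $\Phi$ is an isomorphism of semigroups. Since both orders are algebraic, they correspond as well, and the construction is natural.
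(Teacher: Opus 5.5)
Your proof is correct. It differs from the paper in that the paper proves nothing here: it takes this description of $T (\alpha)$ from the beginning of Section~2.2 of~\cite{Melleray25}. What you have is a self-contained replacement for that citation.

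The citation is shorter and keeps the paper in Melleray's conventions, so that results such as his Proposition~2.15 can be quoted directly. Your argument instead makes the isomorphism explicit: $[E] \mapsto [a_E]$, with inverse $[a] \mapsto [E_a]$, where $a_E$ is the fiber-counting function. This is exactly the identification the paper uses without comment later, in the formula $\mu ([a]_{T (\alpha)}) = \int_X a \, d \mu$ of~(\ref{EqTypeStateIntegral}) and in Lemma~\ref{LemTypeSemigroupToOneStep}, where the class of the rank function of a projection is manipulated through decompositions in $C (X, \Nz)$.

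You can also drop the step you flagged as needing the most care. The refinement argument for transitivity of $\sim$ is correct but unnecessary, for three reasons:
\begin{enumerate}
\item Your well-definedness computation for $\Phi$ shows that if $E$ and $F$ are equidecomposable, then $a_E \sim a_F$.
\item Your final step shows that $a \sim b$ implies $E_a$ and $E_b$ are equidecomposable.
\item You have $a_{E_a} = a$.
\end{enumerate}
Together these give $a \sim b$ if and only if $E_a$ and $E_b$ are equidecomposable. So $\sim$ inherits transitivity from equidecomposability. You already rely on that transitivity in the injectivity step, and at the level of sets it needs only the easy refinement $E_i \cap h_i^{-1} F_k$.

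Finally, zero dimensionality of $X$ plays no role in your argument. Level sets of functions in $C (X, \Nz)$ are clopen, and compact open subsets of $X \times \Nz$ are finite unions of sets $U \times \{ n \}$ with $U$ clopen, for any compact Hausdorff~$X$.
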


Accordingly, for $a \in C (X, \Nz)$ we write
$[a]_{T (\alpha)}$ for the corresponding element of $T (\alpha)$.

The normalized states on $T (\alpha)$ are denoted by $\mathcal{M} (\alpha)$
in \cite[Definition~2.6]{Melleray25}.
The paragraph immediately after
Definition~2.6 identifies $\mathcal{M} (\alpha)$ with our $M_G (X)$.
Moreover, for $\mu \in M_G (X)$ and $a \in C (X, \Nz)$, one has
\begin{equation}\label{EqTypeStateIntegral}
\mu ([a]_{T (\alpha)}) = \int_X a \, d \mu.
\end{equation}

The following definition, from~\cite{Melleray25},
differs from Kerr's definition
(Definition~\ref{D_KerrComparison}(\ref{D_KerrComparison_b})),
in that it uses only compact open subsets of~$X$.
For the argument below, we only need the fact that
dynamical comparison in the sense of
Definition~\ref{D_KerrComparison}(\ref{D_KerrComparison_b})
implies dynamical comparison in
the sense of Definition~\ref{D_6921_Mell_dcp}.
This implication
is proved in Lemma~\ref{LemKerrImpliesMellerayComparison}.

\begin{dfn}\label{D_6921_Mell_dcp}[Definition~2.14 of \cite{Melleray25}]
Let $X$ be a zero dimensional compact metrizable space, let $G$ be
a countable discrete group, and let $\af$ be an action of $G$ on~$X$.
Then $\af$ has \emph{dynamical comparison in the sense of Melleray}
if for any two nonempty compact open subsets $E, F \subset X$ such that
$\mu (E) < \mu (F)$ for all $\mu \in \cM (\af)$,
we have $[E] \leq [F]$ in $T (\af)$.
\end{dfn}

\begin{lem}\label{LemKerrImpliesMellerayComparison}
Let $X$ be a zero dimensional compact metrizable space, let $G$ be
a countable discrete group, and let $\af$ be an action of $G$ on~$X$.
Suppose that $\af$ has dynamical comparison in the sense of Kerr
(Definition~\ref{D_KerrComparison}(\ref{D_KerrComparison_b})).
Then:
\begin{enumerate}
%
\item\label{I_KerrMell_DynC}
$\af$ has dynamical comparison in the sense of Melleray
(Definition~\ref{D_6921_Mell_dcp}).
\item\label{I_KerrMell_Order}
If $\et, \rh \in T (\af)$ are nonzero, and $\mu (\et) < \mu (\rh)$
for all $\mu \in M_G (X)$, then $\et \leq \rh$.
\end{enumerate}
\end{lem}

\begin{proof}
We prove~(\ref{I_KerrMell_DynC}).
Let $E, F \subseteq X$ be nonempty compact open sets such that
for all $\mu \in M_G (X)$ we have $\mu (E) < \mu (F)$.
Since $\af$ has dynamical comparison in the sense of
Definition \ref{D_KerrComparison}(\ref{D_KerrComparison_b}),
we have $E \prec F$.
Take the set $F$ in
Definition \ref{D_KerrComparison}(\ref{D_KerrComparison_a}) to be~$E$.
Thus there are open sets
$U_1, U_2, \ldots,U_n \subseteq X$ and elements
$g_1, g_2, \ldots,g_n \in G$ such that
$E \subseteq \bigcup_{l = 1}^n U_l$
and $g_1 U_1, g_2  U_2, \ldots, g_n U_n$
are pairwise disjoint subsets of $F$.

We claim that there is a finite compact open partition
$E = P_1 \amalg P_2 \amalg \cdots \amalg P_m$
such that, for $j = 1, 2, \ldots, m$, there is
$l (j) \in \{1, 2, \ldots,n \}$ satisfying $P_j \subseteq U_{l (j)}$.
To prove the claim, for every $x \in E$, choose $l (x)$ such that
$x \in U_{l (x)}$.
Since $X$ is zero dimensional, there is a compact open
set $V_x$ satisfying $x \in V_x \subseteq U_{l (x)}$.
Compactness of $E$ gives finitely many such compact open sets
$V_1, V_2, \ldots,V_k$ which cover $E$.
The claim is proved by using the sets by
\[
E \cap V_1, \qquad
(E \cap V_2) \setminus V_1, \qquad
\ldots, \qquad
(E \cap V_k) \setminus \bigcup_{j = 1}^{k - 1} V_j
\]
and deleting any of these that are empty.

Put $h_j = g_{l (j)}$.
We claim that the sets $h_1 P_1, h_2 P_2, \ldots, h_m P_m$
are pairwise disjoint subsets of~$F$.
Indeed, if $h_{j_1} = h_{j_2}$, then $h_{j_1} P_{j_1}$ and
$h_{j_2} P_{j_2}$ are disjoint because $P_{j_1}$ and $P_{j_2}$ are.
If $h_{j_1} \neq h_{j_2}$, then $l (j_1) \neq l (j_2)$.
So $h_{j_1} P_{j_1}$ and $h_{j_2} P_{j_2}$
are contained in the disjoint sets
$g_{l (j_1)} U_{l (j_1)}$ and $g_{l (j_2)} U_{l (j_2)}$.
This proves the claim.

The claim shows that $E$ is equidecomposable with the compact open subset
$P = \coprod_{j = 1}^m h_j P_j \subseteq F$.
By Definition~\ref{D_6921_Mell}(\ref{I_6821_X_ntn_OpenSupp}), we have
$[E]_{T (\alpha)} = [P]_{T (\alpha)} \leq [F]_{T (\alpha)}$.
Thus the action has dynamical comparison in the sense of Melleray.
This proves~(\ref{I_KerrMell_DynC}).

Part~(\ref{I_KerrMell_Order}) follows directly
from (\ref{I_KerrMell_DynC}), \cite[Proposition~2.15]{Melleray25},
and $\mathcal{M} (\alpha) = M_G (X)$.
\end{proof}

\begin{lem}\label{LemZeroDimProjectionInterpolation}
Let $X$ be a zero dimensional compact metrizable space, let
$c \in M_n (C (X))_+$, and suppose that $0 < \ld < \rh$.
Then there is a projection
$p \in M_n (C (X))$ such that
\[
(c - \rh)_+ \in p M_n (C (X)) p
\andeqn
p \in \overline{ (c - \ld)_+ M_n (C (X)) (c - \ld)_+}.
\]
\end{lem}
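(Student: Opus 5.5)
The plan is to build $p$ pointwise from the spectral data of $c$, using zero dimensionality to make all the choices locally constant. Fix $\sigma$ with $\ld < \sigma < \rh$. For each $x \in X$, let $q_x \in M_n$ be the spectral projection of $c (x)$ for the interval $(\sigma, \infty)$. We want a projection $p$ that agrees with such a spectral projection near each point and satisfies two conditions. First, $p (y)$ dominates the range projection of $(c (y) - \rh)_+$. Second, $p (y)$ lies under the spectral projection of $c (y)$ for $(\ld, \infty)$, with a uniform gap.

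\emph{Local step.} Around a point $x$, choose $\sigma_x \in (\ld, \rh)$ not in the spectrum of $c (x)$. This is possible because the spectrum is finite, with at most $n$ points. By upper semicontinuity of the spectrum, there is a compact open neighborhood $V_x$ of $x$ on which $\sigma_x$ stays at distance $\geq \eta_x > 0$ from the spectrum of $c (y)$, where we also arrange $\eta_x < \min (\sigma_x - \ld, \, \rh - \sigma_x)$. On $V_x$, set $p (y) = \ch_{(\sigma_x, \infty)} (c (y))$. This is continuous on $V_x$ by holomorphic functional calculus, since $\ch_{(\sigma_x, \infty)}$ agrees near the spectrum with a continuous function $f_x$. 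By compactness and zero dimensionality, refine finitely many $V_x$ to a clopen partition $W_1, \ldots, W_k$, and define $p$ piecewise. Then $p$ is a continuous projection in $M_n (C (X))$, and on each $W_i$ we have $p = f_i (c)$ for a continuous $f_i$ with $f_i = 0$ on $[0, \ld + \eta_i]$ and $f_i = 1$ on $[\rh - \eta_i, \infty)$, after modifying $f_i$ off the spectrum.

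\emph{Verification.} Since $p = 1$ on the spectral part above $\rh$, on each $W_i$ we get $p (c - \rh)_+ = (c - \rh)_+ = (c - \rh)_+ p$. Hence $(c - \rh)_+ \in p M_n (C (X)) p$. For the second membership, set $g_i (t) = f_i (t) / (t - \ld)_+$ for $t > \ld$ and $g_i = 0$ otherwise. This $g_i$ is continuous because $f_i$ vanishes on $[0, \ld + \eta_i]$. Write $h_i (t) = \sqrt{g_i (t) \, f_i (t)}$, so that on $W_i$ we have $p = (c - \ld)_+^{1/2} \, g_i (c) \, (c - \ld)_+^{1/2}$. Better, $p = (c - \ld)_+ \, k_i (c) \, (c - \ld)_+$ with $k_i (t) = f_i (t) / (t - \ld)_+^2$, which is continuous. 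Patching over the clopen sets using $\ch_{W_i}$ gives a single $k \in M_n (C (X))$ with $p = (c - \ld)_+ \, k \, (c - \ld)_+$. This shows $p \in \overline{(c - \ld)_+ M_n (C (X)) (c - \ld)_+}$.

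The main obstacle is the local continuity step. We need $\sigma_x$ to avoid the spectrum of $c (y)$ uniformly on a neighborhood, so that the spectral projection varies continuously. This follows from continuity of eigenvalues (spectral semicontinuity) for continuous matrix-valued functions. The rest is bookkeeping with clopen partitions.
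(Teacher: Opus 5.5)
Your proof is correct and follows essentially the same route as the paper. Both arguments pick a point of $(\lambda, \rho)$ outside $\spec (c (x))$ and use continuity of the spectrum together with zero dimensionality to get a clopen partition on whose pieces the spectral projection $\chi_{(\sigma_i, \infty)} (c)$ is continuous; that projection is then written by functional calculus as an expression sandwiched by $(c - \lambda)_+$. Your factorization $p = (c - \lambda)_+ k (c - \lambda)_+$ with $k_i (t) = f_i (t) / (t - \lambda)_+^2$ is, if anything, cleaner than the paper's: there, $h_j (\eta) = \eta^{-1/2}$ should read $(\eta - \lambda)^{-1/2}$ for $h_j (c_j) (c_j - \lambda)_+ h_j (c_j)$ to be the projection.
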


In particular,
$(c - \rh)_+ \precsim_{C (X)} p \precsim_{C (X)} (c - \ld)_+$.

\begin{proof}
For each $x \in X$, choose
$\gm_x \in (\ld, \rh) \setminus \spec (c (x))$.
By continuity, there is a neighborhood $U_x$ of $x$ such that
$\gm_x \notin \spec (c (y))$
for all $y \in U_x$.
Since $X$ is compact and zero dimensional,
there is a finite compact open partition
$X = V_1 \amalg V_2 \amalg \cdots \amalg V_m$
such that, for each $j$, there is $\gm_j \in (\ld, \rh)$ satisfying
$\gm_j \notin \spec (c (y))$ for all $y \in V_j$.
Since $M_n (C (X)) \cong \bigoplus_{j = 1}^m M_n (C (V_j))$
via $a \mapsto ( a |_{V_j})_{1 \leq j \leq m}$,
it is enough to prove the result for $c_j = c |_{V_j}$ in $M_n (C (V_j))$.

We have $\gm_j \not\in \spec (c_j)$,
so $p_j = \ch_{ (\gm_j, \I)} (c_j)$
is a well defined \pj{} in $M_n (C (V_j))$.
Since $\gm_j < \rh$, we have $p_j (c_j - \rh)_{+} p_j = (c_j - \rh)_{+}$.
Define $h_j \colon [0, \I) \SM \{ \gm_j \} \to [0, \I)$ by
\[
h (\et)
 = \begin{cases}
   \et^{- 1/2} & \hspace*{1em} \et > \gm_j
        \\
   0 & \hspace*{1em} \et < \gm_j.
\end{cases}
\]
Then $h_j$ is \ct{} on $\spec (c_j)$
and $p = h_j (c_j) (c_j - \ld)_{+} h_j (c_j)$,
which is easily seen to be in
$\overline{ (c - \ld)_+ M_n (C (X)) (c - \ld)_+}$.
\end{proof}

We now translate comparison in Melleray's type semigroup into
one step dynamical Cuntz comparison of projections.

\begin{lem}\label{LemTypeSemigroupToOneStep}
Let $X$ be a zero dimensional compact metrizable space, let $G$ be
a countable discrete group, and let $\af$ be an action of $G$ on~$X$.
For any \pj{} $p \in M_{\infty} (C (X))$, the function
$f_p (x) = \rank (p (x))$, for $x \in X$ is in $C (X, \Nz)$.
If $p, q \in M_{\infty} (C (X))$ are nonzero projections and
$[f_p]_{T (\alpha)} \leq [f_q]_{T (\alpha)}$,
then there are projections
$e_1, e_2, \ldots, e_m \in M_{\infty} (C (X))$ and elements
$g_1, g_2, \ldots, g_m \in G$ such that
\begin{equation}\label{EqProjectionOneStepData1}
p \sim_{C (X)} \bigoplus_{j = 1}^m e_j
\andeqn
\bigoplus_{j = 1}^m \alpha_{g_j} (e_j) \precsim_{C (X)} q.
\end{equation}
\end{lem}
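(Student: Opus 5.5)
First, $f_p$ is continuous with values in $\Nz$. The map $x \mapsto p(x)$ is norm continuous into $M_n$, and for projections $\|P - Q\| < 1$ forces $\rank P = \rank Q$. So $f_p$ is locally constant, and since $X$ is compact it takes finitely many values.

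For the main claim, the plan is to reduce everything to a standard form in which each projection is Murray--von Neumann equivalent to a diagonal of characteristic functions of clopen sets. Since $X$ is zero dimensional, a vector bundle over $X$ is trivial on each piece of a finite clopen partition: choose a clopen partition on which $f_p$ is constant and on which $p$ is close to a constant projection. Then on each piece $V$ with rank $k$, $p|_V$ is unitarily equivalent to a constant rank $k$ projection. Consequently
\[
p \sim_{C(X)} \bigoplus_{i = 1}^{N} \ch_{P_i},
\]
where $P_i = \{ x \colon f_p(x) \geq i \}$ is clopen and $N = \max f_p$. Note that $f_p = \sum_i \ch_{P_i}$. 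The same decomposition works for $q$, and more generally for any $e \in C(X, \Nz)$: the projection $\bigoplus_i \ch_{\{e \geq i\}}$ has rank function $e$. Two projections with the same rank function are Murray--von Neumann equivalent, by the same local triviality argument, and Cuntz subequivalence of projections over $X$ holds whenever the rank functions are pointwise ordered.

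Next, unpack $[f_p]_{T(\af)} \leq [f_q]_{T(\af)}$. By Definition~\ref{D_6921_Mell}(\ref{I_6921_Mell_Order}) there is $r \in C(X, \Nz)$ with $[f_p] + [r] = [f_q]$, that is, $f_p + r \sim f_q$ in the sense of Lemma~\ref{L_6921_EqType}. So there are $c_1, \ldots, c_m \in C(X, \Nz)$ and $g_j \in G$ such that
\[
f_p + r = \sum_j c_j \andeqn f_q = \sum_j \af_{g_j}(c_j).
\]
The difficulty is that $f_p$ itself need not be a sum of some of the $c_j$. I will handle this by refinement in $C(X, \Nz)$, which is a lattice-ordered semigroup with Riesz decomposition: if $f_p \leq \sum_j c_j$ pointwise, then $f_p = \sum_j d_j$ with $0 \leq d_j \leq c_j$. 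To construct the $d_j$ explicitly, take $d_1 = \min(f_p, c_1)$, $d_2 = \min(f_p - d_1, c_2)$, and so on; each is continuous and $\Nz$-valued.

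Now let $e_j$ be a projection with rank function $d_j$. Then $\bigoplus_j e_j$ has rank function $f_p$, so it is equivalent to $p$. Also $\af_{g_j}(e_j)$ has rank function $\af_{g_j}(d_j) \leq \af_{g_j}(c_j)$, so
\[
\bigoplus_j \af_{g_j}(e_j)
\]
has rank function at most $\sum_j \af_{g_j}(c_j) = f_q$, and is therefore Cuntz subequivalent to $q$. This gives~(\ref{EqProjectionOneStepData1}).

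The main obstacle is the step asserting that projections over a zero dimensional compact metrizable space are classified up to Murray--von Neumann equivalence, and ordered under Cuntz subequivalence, by their rank functions. I would prove this directly using clopen partitions and the fact that nearby projections are unitarily equivalent, via the unitary built from $pq + (1-p)(1-q)$. On each small clopen piece, $p$ is then conjugate to the constant projection $p(x_0)$, which in turn is unitarily equivalent in $M_n$ to a standard diagonal projection.
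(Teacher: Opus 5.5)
Your proposal is correct and follows the paper's proof essentially step for step. You reduce to pointwise rank inequalities, using that projections over a zero dimensional space are compared by their rank functions; you unpack $[f_p]_{T(\af)} \leq [f_q]_{T(\af)}$ via Lemma~\ref{L_6921_EqType}; and you extract $f_p = \sum_j d_j$ with $d_j \leq c_j$ by the same greedy $\min$ recursion. The only difference is that you sketch the rank-function classification of projections (via clopen partitions and the unitary from $pq + (1 - p)(1 - q)$), which the paper simply asserts is ``not hard to see.''
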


\begin{proof}
The first statement holds
because the rank of a continuous projection is locally constant.

We prove the second statement.
Since $X$ is zero dimensional, it is not hard to see that
\pj{s} $r, s \in \Mi (C (X))$ satisfy $r \precsim_{C (X)} s$
\ifo{} $\rank (r (x)) \leq \rank (s (x))$ for all $x \in X$.
Therefore it suffices to construct \pj{s}
$e_1, e_2, \ldots,e_m \in M_{\infty}(C (X))$ and elements
$g_1, g_2, \ldots,g_m \in G$ such that for all $x \in X$, we have
\begin{equation}\label{Eq_6922_Goal}
\rank (p (x)) = \sum_{j = 1}^m \rank (e_j (x))
\aqn
\sum_{j = 1}^m \rank (e_j (g_j^{-1} x)) \leq \rank (q (x)).
\end{equation}

By Definition \ref{D_6921_Mell}(\ref{I_6921_Mell_Order}), there is
$d \in C (X, \Nz)$ such that $[f_p + d]_{T (\alpha)} = [f_q]_{T (\alpha)}$.
Apply Lemma~\ref{L_6921_EqType}, getting $m \in \Nz$,
$c_1, c_2, \ldots,c_m \in C (X, \Nz)$, and
$g_1, g_2, \ldots,g_m \in G$ such that
\begin{equation}\label{EqTypeDecomp1}
\sum_{j = 1}^m c_j = f_p + d
\andeqn
\sum_{j = 1}^m \af_{g_j} (c_j) = f_q.
\end{equation}
We extract from the functions $c_j$ the parts which are needed to
represent just $f_p$.
Set $b_0 =  f_p$.
For $j = 1, 2, \ldots, m$, recursively define $a_j, b_j \in C (X, \Nz)$
by, for $x \in X$,
\begin{equation}\label{Eq_6922_ajbj}
a_j (x) = \min ( c_j (x), \, b_{j - 1} (x))
\andeqn
b_j (x) = b_{j - 1} (x) - a_j (x).
\end{equation}
An easy induction argument proves that for $k = 1, 2, \ldots, m$,
\begin{equation}\label{Eq_6922_Star}
f_p = b_k + \sum_{j = 1}^k a_j.
\end{equation}

We claim that, for $k = 1, 2, \ldots, m$,
\begin{equation}\label{Eq_6922_StSt}
b_k \leq \sum_{j = k + 1}^m c_j.
\end{equation}
The proof is by induction on~$k$.
Since $b_0 = f_p$, this is true for $k = 0$ by~(\ref{EqTypeDecomp1}).
Suppose (\ref{Eq_6922_StSt}) holds for~$k$.
Use the definitions~(\ref{Eq_6922_ajbj}) at the first step,
and~(\ref{EqTypeDecomp1}) for~$k$ at the second step, to get
\[
b_{k + 1} (x)
 = \max \bigl( b_k (x) - c_{k + 1} (x), \, 0 \bigr)
 \leq \max \left( \sum_{j = k + 2}^m c_j (x), \, 0 \right)
 = \sum_{j = k + 2}^m c_j (x).
\]
This completes the induction and proves the claim.

Putting $k = m$ in~(\ref{Eq_6922_StSt}) and using $b_m \geq 0$,
we get $b_m = 0$.
So~(\ref{Eq_6922_Star}) for $k = m$ yields
\begin{equation}\label{Eq_6922_Exact}
f_p = \sum_{j = 1}^m a_j.
\end{equation}
Clearly also $a_j \leq c_j$ for all~$j$,
so $\af_{g_j} (a_j) \leq \af_{g_j} (c_j)$.
By the second part of~(\ref{EqTypeDecomp1}),
\begin{equation}\label{EqTranslatedRankBound}
\sum_{j = 1}^m \af_{g_j} (a_j) \leq f_q.
\end{equation}

Choose \pj{s} $e_1, e_2, \ldots,e_m \in M_{\infty}(C (X))$
such that for $j = 1, 2, \ldots, m$ and $x \in X$ we have
$\rank (e_j (x)) = a_j (x)$.
Then (\ref{Eq_6922_Goal})
follows from (\ref{Eq_6922_Exact}) and~(\ref{EqTranslatedRankBound}).
\end{proof}

\begin{lem}\label{LemUniformProjectionBelowPositive}
Let $X$ be a zero dimensional compact metrizable space, let $G$ be
a countable discrete group, let $\af$ be an action of $G$ on~$X$,
and suppose that $M_G (X) \neq \varnothing$.
Let $p \in \Mi (C (X))$ be a projection and let
$b \in \Mi (C (X))_+$.
Recalling Notation \ref{6821_X_ntn}(\ref{I_6821_X_ntn_Ta_mu}),
assume that $d_{\tau_{\mu} } (p) < d_{\tau_{\mu} } (b)$
for every $\mu \in M_G (X)$.
Then there are $\ld > 0$ and a projection $q \in \Mi (C (X))$ such that
$(b - 2 \ld)_+ \precsim_{C (X)} q \precsim_{C (X)} (b - \ld)_+$ and
$d_{\tau_{\mu} } (p) < d_{\tau_{\mu} } (q)$ for every $\mu \in M_G (X)$.
\end{lem}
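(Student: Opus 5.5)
The plan is to use compactness of $M_G (X)$ together with continuity and semicontinuity of the relevant rank functions. First, the function $\mu \mapsto d_{\ta_{\mu}} (p) = \int_X \rank (p (x)) \, d \mu (x)$ is continuous on $M_G (X)$ in the weak* topology, because $\rank (p (\cdot))$ is a locally constant integer valued continuous function (Lemma~\ref{LemTypeSemigroupToOneStep}). For $\ld > 0$, the function $\mu \mapsto d_{\ta_{\mu}} ((b - \ld)_+)$ is lower semicontinuous, being a supremum of continuous functions of the form $\mu \mapsto \ta_{\mu} (h ((b - \ld)_+))$. Moreover, $d_{\ta_{\mu}} (b) = \sup_{\ld > 0} d_{\ta_{\mu}} ((b - \ld)_+)$, and this supremum is increasing as $\ld \downarrow 0$.

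The key step is to find $\ld_0 > 0$ such that $d_{\ta_{\mu}} (p) < d_{\ta_{\mu}} ((b - 3 \ld_0)_+)$ for every $\mu \in M_G (X)$. For each $\mu$, the hypothesis gives some $\ld_{\mu} > 0$ with $d_{\ta_{\mu}} (p) < d_{\ta_{\mu}} ((b - 3 \ld_{\mu})_+)$. By lower semicontinuity of $\nu \mapsto d_{\ta_{\nu}} ((b - 3 \ld_{\mu})_+) - d_{\ta_{\nu}} (p)$, this strict inequality persists on an open neighborhood $W_{\mu}$ of $\mu$. Since $M_G (X)$ is weak* compact (and nonempty by assumption), finitely many such neighborhoods cover it. Take $\ld_0$ to be the minimum of the corresponding $\ld_{\mu}$; monotonicity in $\ld$ then gives the claim. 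This is the step that needs the most care, but it is a standard compactness argument.

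Next, apply Lemma~\ref{LemZeroDimProjectionInterpolation} with $c = b$ (viewed in some $M_n (C (X))$), with $\ld_0$ in place of $\ld$ and $2 \ld_0$ in place of $\rh$. This yields a projection $q$ with
\[
(b - 2 \ld_0)_+ \precsim_{C (X)} q \precsim_{C (X)} (b - \ld_0)_+ .
\]
Set $\ld = \ld_0$. Then $(b - 3 \ld)_+ \precsim_{C (X)} (b - 2 \ld)_+ \precsim_{C (X)} q$, and since $d_{\ta}$ is monotone under Cuntz subequivalence, we obtain
\[
d_{\ta_{\mu}} (p) < d_{\ta_{\mu}} ((b - 3 \ld)_+) \leq d_{\ta_{\mu}} (q)
\]
for every $\mu \in M_G (X)$, which completes the argument.

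The margin $3 \ld$ rather than $2 \ld$ is not actually needed, since one could use $(b - 2 \ld)_+$ directly. It is kept only for safety, so that the strict inequality is witnessed below the projection $q$.
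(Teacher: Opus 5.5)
Your proof is correct and follows the paper's argument: compactness of $M_G (X)$ and lower semicontinuity give a uniform cutoff $\lambda$ with $d_{\tau_{\mu}} (p) < d_{\tau_{\mu}} ((b - c \lambda)_+)$ for all $\mu$, and then Lemma~\ref{LemZeroDimProjectionInterpolation} produces $q$. The only difference is in the uniform-cutoff step. You prove it directly with an open-cover argument. The paper instead quotes a Dini-type lemma (Lemma~6.13 of~\cite{Phl40}) for the increasing lower semicontinuous functions $\mu \mapsto d_{\tau_{\mu}} ((b - \frac{1}{k})_+)$. It then takes $\lambda = \frac{1}{2N}$, so the extra $3 \lambda$ margin in your version is not needed.
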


\begin{proof}
We use the weak* topology on $M_G (X)$, so that it is compact Hausdorff.

For $k \in \mathbb{Z}_{> 0}$, define
$f_k \colon M_G (X) \to [0, \infty)$ by
$f_k (\mu) = d_{\tau_{\mu} } \bigl( \bigl( b - \frac{1}{k} \bigr)_+ \bigr)$.
Also define $r \colon M_G (X) \to [0, \infty)$ by
$r (\mu) = d_{\tau_{\mu} } (p)$.
The functions $\ta \to d_{\tau} \bigl( \bigl( b - \frac{1}{k} \bigr)_+ \bigr)$
are lower semi\ct{} on $\T (C (X))$,
so the functions $f_k$ are lower semi\ct{} on $M_G (X)$.
Similarly, $r$ is \ct.
For $\mu \in M_G (X)$,
we have $f_1 (\mu) \leq f_2 (\mu) \leq \cdots$ for every $\mu \in M_G (X)$,
and $\lim_{k \to \infty} f_k (\mu) = d_{\tau_{\mu} } (b) > r (\mu)$.
Applying Lemma~6.13 of \cite{Phl40} to the compact Hausdorff space $M_G (X)$,
the increasing sequence $(f_k)_{k \in \N}$, and the continuous
function $r$, we obtain $N \in \mathbb{Z}_{> 0}$ such that
$d_{\tau_{\mu} } (p)
 < d_{\tau_{\mu} } \bigl( \bigl( b - \frac{1}{N} \bigr)_+ \bigr)$
for every $\mu \in M_G (X)$.

Set $\ld = \frac{1}{2 N}$.
By Lemma~\ref{LemZeroDimProjectionInterpolation}, applied to $b$
with $\ld$ as given and $\rh = 2 \ld$,
there is a projection $q \in \Mi (C (X))$ such that
$(b - 2 \ld)_+ \precsim_{C (X)} q \precsim_{C (X)} (b - \ld)_+$.
Consequently, for every $\mu \in M_G (X)$,
\[
d_{\tau_{\mu} } (p)
 < d_{\tau_{\mu} } \left( \left( b - \frac{1}{N} \right)_+ \right)
 = d_{\tau_{\mu} } \bigl((b - 2 \ld)_+ \bigr)
 \leq d_{\tau_{\mu} } (q).
\]
This proves the lemma.
\end{proof}

We can now prove the converse of Proposition~\ref{P_DynRcZeroImpKerrComp}
when $G$ is a countable discrete group.

\begin{prp}\label{PropKerrComparisonImpliesDynRcZero}
Let $X$ be a zero dimensional compact metrizable space, let $G$ be
a countable discrete group, and let $\af$ be an action of $G$ on~$X$.
Suppose that $\af$ has dynamical comparison in the
sense of Kerr, as in
Definition \ref{D_KerrComparison}(\ref{D_KerrComparison_b}).
Let $a, b \in \Mi (C (X))_{+}$ satisfy $a \in I_{\af} (b)$
and $d_{\ta} (a) < d_{\ta} (b)$ for all $\ta \in \QT (C (X))^{\af}$.
Then $a \precsim_{C (X), \alpha, 0} b$.
\end{prp}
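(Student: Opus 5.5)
The plan is to reduce the statement to comparison of projections, pass to Melleray's clopen type semigroup $T (\af)$, and come back using Lemma~\ref{LemTypeSemigroupToOneStep}. First I dispose of the trivial case $a = 0$, in which $a \precsim_{C (X), \af, 0} b$ holds with $n = 1$ and $x_1 = 0$. So assume $a \neq 0$. Since $a \in I_{\af} (b)$, we then have $b \neq 0$; this is the only place the ideal hypothesis enters.

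Fix $\ep > 0$ and assume $(a - \ep)_{+} \neq 0$, since otherwise there is nothing to do. Apply Lemma~\ref{LemZeroDimProjectionInterpolation} to $a$ with $\ld = \frac{\ep}{2}$ and $\rh = \ep$. This gives a nonzero projection $p \in \Mi (C (X))$ with
\[
(a - \ep)_{+} \precsim_{C (X)} p \precsim_{C (X)} \left( a - \frac{\ep}{2} \right)_{+}.
\]
Consequently $d_{\ta} (p) \leq d_{\ta} (a) < d_{\ta} (b)$ for all $\ta \in \QT (C (X))^{\af}$.

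Next I produce a nonzero projection $q$ and some $\ld > 0$ with $q \precsim_{C (X)} (b - \ld)_{+}$ and $\mu ([f_p]_{T (\af)}) < \mu ([f_q]_{T (\af)})$ for all $\mu \in M_G (X)$.
\begin{enumerate}
\item If $M_G (X) \neq \E$, apply Lemma~\ref{LemUniformProjectionBelowPositive} to $p$ and $b$. It gives $\ld > 0$ and a projection $q$ with $q \precsim_{C (X)} (b - \ld)_{+}$ and $d_{\ta_{\mu}} (p) < d_{\ta_{\mu}} (q)$ for all $\mu$. This forces $q \neq 0$. By (\ref{EqTypeStateIntegral}), $\mu ([f_p]_{T (\af)}) = \int f_p \, d \mu = d_{\ta_{\mu}} (p)$, and similarly for $q$, which gives the required strict inequality of states.
\item If $M_G (X) = \E$, choose $\ld > 0$ so small that $(b - 2 \ld)_{+} \neq 0$, which is possible because $b \neq 0$. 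Lemma~\ref{LemZeroDimProjectionInterpolation} then gives a projection $q$ with $(b - 2 \ld)_{+} \precsim_{C (X)} q \precsim_{C (X)} (b - \ld)_{+}$, so $q \neq 0$. In this case the state condition is vacuous.
\end{enumerate}

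In both cases $[f_p]_{T (\af)}$ and $[f_q]_{T (\af)}$ are nonzero elements of $T (\af)$. Lemma~\ref{LemKerrImpliesMellerayComparison}(\ref{I_KerrMell_Order}) therefore gives $[f_p]_{T (\af)} \leq [f_q]_{T (\af)}$. Lemma~\ref{LemTypeSemigroupToOneStep} then provides projections $e_1, e_2, \ldots, e_m$ and elements $g_1, g_2, \ldots, g_m \in G$ such that
\[
p \sim_{C (X)} \bigoplus_{j = 1}^m e_j
\andeqn
\bigoplus_{j = 1}^m \af_{g_j} (e_j) \precsim_{C (X)} q \precsim_{C (X)} (b - \ld)_{+}.
\]
Combining this with $(a - \ep)_{+} \precsim_{C (X)} p$ verifies Definition~\ref{D_0817_EqCmp_Orig} with $\dt = \ld$, $n = m$, and $x_j = e_j$.

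I expect the main obstacle to be the hypotheses of Lemma~\ref{LemKerrImpliesMellerayComparison}(\ref{I_KerrMell_Order}), specifically the nonzero requirement and the degenerate case $M_G (X) = \E$. That is where $a \in I_{\af} (b)$ is needed, through $b \neq 0$. I also need to check that the application of Melleray's Proposition~2.15 behind that lemma does not require invariant measures to exist; if it does, the case $M_G (X) = \E$ must be handled separately. In that case I would apply Kerr comparison directly to compact open sets covering the supports of the projections, in the style of the proof of Lemma~\ref{LemKerrImpliesMellerayComparison}(\ref{I_KerrMell_DynC}).
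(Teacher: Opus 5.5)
Your proposal is correct and follows essentially the same route as the paper. Both arguments produce a projection $p$ via Lemma~\ref{LemZeroDimProjectionInterpolation} and a projection $q$ via Lemma~\ref{LemUniformProjectionBelowPositive}, or via Lemma~\ref{LemZeroDimProjectionInterpolation} when $M_G (X) = \E$ (using $a \in I_{\af} (b)$ only to get $b \neq 0$), and then conclude through Lemma~\ref{LemKerrImpliesMellerayComparison}(\ref{I_KerrMell_Order}) and Lemma~\ref{LemTypeSemigroupToOneStep}. The paper also applies Lemma~\ref{LemKerrImpliesMellerayComparison}(\ref{I_KerrMell_Order}) with a vacuous state condition in the case $M_G (X) = \E$, so your caveat about Melleray's Proposition~2.15 concerns a point the paper likewise takes for granted.
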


\begin{proof}
We know that
$\QT (C (X))^{\alpha} = \{ \tau_{\mu} \colon \mu \in M_G (X) \}$.

Let $a, b \in \Mi (C (X))_{+}$ be as in the statement.
We verify the condition of Definition~\ref{D_0908_EqCmp}.
So let $\varepsilon > 0$.
If $(a - \varepsilon)_+ = 0$, the conclusion is immediate,
so assume that $(a - \varepsilon)_+ \neq 0$.

Apply Lemma~\ref{LemZeroDimProjectionInterpolation} to $a$ with
$\ld = \frac{\varepsilon}{2}$ and  $\rh = \varepsilon$.
We obtain a projection $p \in \Mi (C (X))$, necessarily nonzero, satisfying
\begin{equation}\label{EqMainSandwichA}
(a - \varepsilon)_+
 \precsim_{C (X)} p
 \precsim_{C (X)} \left( a - \frac{\varepsilon}{2} \right)_+.
\end{equation}
For every $\mu \in M_G (X)$, we have
\begin{equation*}
d_{\tau_{\mu} } (p)
\leq d_{\tau_{\mu} } \left( \left( a - \frac{\varepsilon}{2} \right)_+ \right)
\leq d_{\tau_{\mu} } (a)
< d_{\tau_{\mu} } (b).
\end{equation*}

We divide the rest of the argument into two cases.

\emph{\textbf{Case 1:} $M_G (X) \neq \varnothing$.}
By Lemma~\ref{LemUniformProjectionBelowPositive}, there are $\dt > 0$
and a projection $q \in \Mi (C (X))$ such that
\begin{equation}\label{EqMainSandwichB}
(b - 2 \dt)_+ \precsim_{C (X)} q \precsim_{C (X)} (b - \dt)_+
\end{equation}
and, for all $\mu \in M_G (X)$,
\begin{equation}\label{Eq_6922_Part_2}
d_{\tau_{\mu} } (p) < d_{\tau_{\mu} } (q).
\end{equation}
In particular, $q \neq 0$.
For any \pj{} $e \in \Mi (C (X))$, define $f_e \in C (X, \Z)$ by
\begin{equation}\label{Eq_6924_fpdfn}
f_e (x) = \rank (e (x))
\end{equation}
for $x \in X$.
It is easy to check that for every $c \in \Mi (C (X))_+$
and any Borel probability measure $\mu$ on~$X$, we have
\[
d_{\tau_{\mu} } (c) = \int_X \rank (c (x)) \, d \mu (x).
\]
Using this and~(\ref{EqTypeStateIntegral}), for all $\mu \in M_G (X)$, we have
\[
\mu ([f_p]_{T (\alpha)}) = \int_X f_p \, d \mu = d_{\tau_{\mu} } (p)
\andeqn
\mu ([f_q]_{T (\alpha)}) = \int_X f_q \, d \mu = d_{\tau_{\mu} } (q).
\]
Thus~(\ref{Eq_6922_Part_2}) gives
$\mu ([f_p]_{T (\alpha)}) < \mu ([f_q]_{T (\alpha)})$
for all $\mu \in M_G (X)$.
By Lemma \ref{LemKerrImpliesMellerayComparison}(\ref{I_KerrMell_Order}),
$[f_p]_{T (\alpha)} \leq [f_q]_{T (\alpha)}$.
Lemma~\ref{LemTypeSemigroupToOneStep} therefore provides $m \in \Nz$,
projections $e_1, e_2, \ldots,e_m$ and elements
$g_1, g_2, \ldots,g_m \in G$ such that
\[
p \sim_{C (X)} \bigoplus_{j = 1}^m e_j
\andeqn
\bigoplus_{j = 1}^m \alpha_{g_j}(e_j) \precsim_{C (X)} q.
\]
Together with~(\ref{EqMainSandwichA}) and~(\ref{EqMainSandwichB}),
this gives
\[
(a - \varepsilon)_+ \precsim_{C (X)} \bigoplus_{j = 1}^m e_j
\andeqn
\bigoplus_{j = 1}^m \alpha_{g_j}(e_j) \precsim_{C (X)} (b - \dt)_+.
\]
Thus $a \precsim_{C (X), \alpha, 0} b$.

\emph{\textbf{Case 2:}
$M_G (X) = \varnothing$.}
Since $(a - \varepsilon)_+ \neq 0$, we have $a \neq 0$.
Since $a \in I_{\alpha} (b)$, it follows that $b \neq 0$.

Choose $\dt$ such that $0 < \dt < \frac{\| b \|}{2}$.
Then $(b - 2 \dt)_+ \neq 0$.
Apply Lemma~\ref{LemZeroDimProjectionInterpolation} to $b$, with
$\ld = \dt$ and $\rh = 2 \dt$,
to obtain a projection $q$, necessarily nonzero, satisfying
\[
(b - 2 \dt)_+ \precsim_{C (X)} q \precsim_{C (X)} (b - \dt)_+.
\]
Define $f_p, f_q \in C (X, \Z)$ as in~(\ref{Eq_6924_fpdfn}).
The elements $[f_p]_{T (\alpha)}$ and $[f_q]_{T (\alpha)}$
are both nonzero.
Since $M_G (X) = \varnothing$, the condition
$\mu ([f_p]_{T (\alpha)}) < \mu ([f_q]_{T (\alpha)})$
for all $\mu \in M_G (X)$ is vacuous.
Lemma \ref{LemKerrImpliesMellerayComparison}(\ref{I_KerrMell_Order})
therefore gives $[f_p]_{T (\alpha)} \leq [f_q]_{T (\alpha)}$.
Applying Lemma~\ref{LemTypeSemigroupToOneStep} and then arguing
as in Case~1, we again get $a \precsim_{C (X), \alpha, 0} b$.
\end{proof}

\begin{thm}\label{CorZeroDimRcIffKerrComparison}
Let $X$ be a zero dimensional compact metrizable space, let $G$ be
a countable discrete group, and let $\af$ be a minimal action of $G$ on~$X$.
Then the following are equivalent:
\begin{enumerate}
\item\label{I_6922_ZDKerr_rc0}
$\rc (C (X), \alpha) = 0$.
\item\label{I_6922_ZDKerr_DComp}
$\af$ has dynamical comparison in the sense of Kerr
(Definition \ref{D_KerrComparison}(\ref{D_KerrComparison_b})).
\end{enumerate}
\end{thm}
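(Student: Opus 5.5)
The implication (\ref{I_6922_ZDKerr_rc0}) $\Rightarrow$ (\ref{I_6922_ZDKerr_DComp}) is exactly Proposition~\ref{P_DynRcZeroImpKerrComp}, so nothing more is needed there. The work is in the converse, and I would derive it from Proposition~\ref{PropKerrComparisonImpliesDynRcZero}. Minimality is not needed for this direction.

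Assume $\af$ has dynamical comparison in the sense of Kerr. To show $\rc (C (X), \af) = 0$, it suffices to show that $C (X)$ has dynamical $r$-comparison for every $r > 0$; in fact I would show it for $r = 0$ as well. So fix $r \geq 0$ and let $a, b \in \Mi (C (X))_{+}$ satisfy $a \in I_{\af} (b)$ and $d_{\ta} (a) + r < d_{\ta} (b)$ for all $\ta \in \QT (C (X))^{\af}$. Then certainly $d_{\ta} (a) < d_{\ta} (b)$ for all such~$\ta$. This holds vacuously if $\QT (C (X))^{\af} = \E$, a case Proposition~\ref{PropKerrComparisonImpliesDynRcZero} already handles, since its Case~2 only uses $a \in I_{\af} (b)$.

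Proposition~\ref{PropKerrComparisonImpliesDynRcZero} then gives $a \precsim_{C (X), \af, 0} b$, and Lemma~\ref{R_0817_NonEqToEq}(\ref{Item_L_6816_0_to_af}) upgrades this to $a \precsim_{C (X), \af} b$. Hence $C (X)$ has dynamical $r$-comparison for every $r \geq 0$, and so the infimum in Definition~\ref{D_0817_eqrc_dfn}(\ref{rc_dfn_b}) is $0$.

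I expect no real obstacle. The only points to check are that the hypotheses match exactly. First, Proposition~\ref{PropKerrComparisonImpliesDynRcZero} is stated for arbitrary elements of $\Mi (C (X))_{+}$, not just scalar functions, which is what the definition of dynamical $r$-comparison requires. Second, the direction (\ref{I_6922_ZDKerr_rc0}) $\Rightarrow$ (\ref{I_6922_ZDKerr_DComp}) uses minimality, through $\af$-fullness of $b$ in Proposition~\ref{P_DynRcZeroImpKerrComp}, and this is supplied by the hypothesis of the theorem.
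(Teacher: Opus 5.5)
Your proposal is correct and is essentially the paper's argument: one direction is Proposition~\ref{P_DynRcZeroImpKerrComp}, and for the other you combine Proposition~\ref{PropKerrComparisonImpliesDynRcZero} with Lemma~\ref{R_0817_NonEqToEq}(\ref{Item_L_6816_0_to_af}) to get dynamical $0$-comparison. Your assignment of the two propositions to the two directions is the right one; the paper's written proof has the labels (\ref{I_6922_ZDKerr_rc0}) and (\ref{I_6922_ZDKerr_DComp}) interchanged.
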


\begin{proof}
That (\ref{I_6922_ZDKerr_DComp}) implies~(\ref{I_6922_ZDKerr_rc0})
is Proposition~\ref{P_DynRcZeroImpKerrComp}.
To see that (\ref{I_6922_ZDKerr_rc0}) implies~(\ref{I_6922_ZDKerr_DComp}),
observe that Proposition~\ref{PropKerrComparisonImpliesDynRcZero},
together with $a \precsim_{C (X), \alpha, 0} b$ implies
$a \precsim_{C (X), \alpha} b$
(Lemma \ref{R_0817_NonEqToEq}(\ref{Item_L_6816_0_to_af}))
shows that $\af$ has $0$-comparison.
\end{proof}

We conclude this section with the following example,
based on recent work in \cite{BP26}.
It shows that $\rc (C (X), \alpha)$
need not vanish even when $\rc (C (X)) = 0$.

\begin{exa}\label{Ex_BPPositiveDynRc}
Let $X$ be the Cantor set,
let $F_{\infty}$ be the free group on infinitely many generators,
and let $F_{\infty}$ act on $X$
via one of the topologically free minimal actions
constructed in Corollary~6.11 of
\cite{BP26} which preserves the Bernoulli
$\left( \frac{1}{2}, \frac{1}{2} \right)$-measure
and does not have dynamical comparison.
Let $\alpha \colon F_{\infty} \to \Aut (C (X))$ be the induced action.
Then $\rc (C (X)) = 0$.
However, Proposition~\ref{P_DynRcZeroImpKerrComp}
implies that $\rc (C (X), \alpha) > 0$.
\end{exa}

\section{The algebraic dynamical radius of
 comparison}\label{Sec_6712_Alg}

In this section, we give an algebraic formulation of the dynamical
radius of comparison.
We prove that the algebraic and
C*-algebraic definitions of the dynamical radius of comparison
agree for actions of
discrete amenable groups on residually stably finite unital
C*-algebras.

The following is Definition~2.1 of~\cite{BR92}.

\begin{dfn}\label{D_2528_PS}
\begin{enumerate}
\item\label{I_2528_PS_PS}
A \emph{preordered semigroup} is an abelian semigroup $S$ with
a transitive relation $<$ such that if $x, y \in S$ satisfy
$x < y$, then $x + z < y + z$ for all $z \in S$.
As usual, we write $x \leq y$ if $x < y$ or
$x = y$; $\leq$ is then a reflexive transitive relation.
\item\label{I_2528_PS_OU}
An order unit in a preordered semigroup $S$ is an element $u$
such that, for
every $x \in S$, we have both $x \leq x + u$
and there is $n \in \N$ with $x \leq n u$ and $u \leq x + n u$.
\item\label{I_2528_PS_Sc}
A scaled preordered semigroup $(S, < , u)$ is a preordered semigroup
$(S, <)$ with distinguished order unit~$u$.
\item\label{I_2528_PS_State}
A state on a scaled preordered semigroup $(S, < , u)$
is a homomorphism $f$ from $S$ to the additive group $\mathbb{R}$
which preserves the order $\leq$ and such that $f (u) = 1$.
Denote the set of all states on $(S, <, u)$ by $\Delta (S, <, u)$,
or by $\Delta (S)$ when the order and order unit are understood.
\end{enumerate}
\end{dfn}

\begin{rmk}\label{R_2528_Red}
In~(\ref{I_2528_PS_OU}), if $S$ has a zero element,
then $x \leq x + u$ can be simplified to $u \geq 0$.
If in addition $z \geq 0$ for all $z \in S$,
then $u \leq x + n u$ is automatic.
\end{rmk}

\begin{rmk}\label{MyRemark}
If $u$ is an order unit in the preordered semigroup $S$
and $u \leq v$, then $v$ is also an order unit.
(For the last condition, use $n + 1$ instead of~$n$,
and observe that
$v \leq u + v \leq (x + n u) + v \leq x + (n + 1) v$.)
\end{rmk}

\begin{lem}[Lemma 2.8 of \cite{BR92}]\label{Lem2_8}
Let $(S, <, u)$ be a scaled preordered semigroup, and
let $x, y \in S$.
Then $f (x) < f (y)$ for every $f \in \Delta (S, <, u)$
if and only if there are $n \in \N$
and $z \in S$ with $n x + z + u \leq n y + z$.
\end{lem}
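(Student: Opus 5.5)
The reverse implication is elementary, so I would prove it first. Suppose $n x + z + u \leq n y + z$ for some $n \in \N$ and $z \in S$, and let $f \in \Delta (S, <, u)$. Since $f$ is an additive map to $\mathbb{R}$ preserving $\leq$, we get $n f (x) + f (z) + 1 \leq n f (y) + f (z)$. Because the values lie in $\mathbb{R}$, we may cancel $f (z)$, which gives $f (x) + \frac{1}{n} \leq f (y)$. In particular $f (x) < f (y)$.

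For the forward implication, I would argue by contraposition with a Hahn--Banach (Goodearl--Handelman type) state extension argument. Assume that for no $n$ and $z$ do we have $n x + z + u \leq n y + z$. Define a preorder on $S$, or better on its Grothendieck-type group, by adjoining the relation $n y \preceq n x + u$ stably. Concretely, consider the set $P$ of differences realized by the preorder, $P = \{ (a, b) \colon a + w \leq b + w \text{ for some } w \in S \}$. Pass to the ordered abelian group $K$ generated by $S$ modulo stable equivalence, with positive cone $K_+$ generated by these differences. The class of $u$ is an order unit there by Definition~\ref{D_2528_PS}(\ref{I_2528_PS_OU}).

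The hypothesis says $[n x + u - n y] \notin K_+$ for every $n$. Equivalently, $u$ is not dominated by $n (y - x)$ in the stable sense. I would then use the standard lemma (Goodearl, Ordered Abelian Groups, or Lemma 2.7 of \cite{BR92}) that, for an element $h$ in a partially ordered group with order unit $u$, if $u \not\leq n h$ for all $n$, then there is a state $f$ with $f (h) \leq 0$. Apply this with $h = [y] - [x]$. The resulting state on $K$ pulls back to a state on $(S, <, u)$: it is additive, order preserving, and normalized at~$u$. It satisfies $f (y) \leq f (x)$, contradicting the assumption.

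The main obstacle is producing the state with $f (h) \leq 0$. I would do this by considering the subgroup $\Z u + \Z h$ and defining a positive functional on it. Namely, send $u \mapsto 1$ and $h \mapsto t$, where $t = \inf \{ p / q \colon q h \leq p u \}$. The failure of $u \leq n h$ for all $n$ forces $t \leq 0$ (when $t$ is chosen appropriately). I would then extend this functional to all of $K$ using the Goodearl--Handelman extension theorem for states, which needs only that $u$ is an order unit. Some care is also needed because $S$ is only preordered and possibly noncancellative. This is why the extra $z$ appears in the statement, and why one must work with $w$-stabilized relations when forming $K_+$.
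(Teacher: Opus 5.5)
The paper gives no proof of this lemma; it is quoted from \cite{BR92}. Your reverse implication is correct. Your strategy for the forward implication is the standard one and can be made to work. That strategy is: form the Grothendieck group $K$ of $S$ with cone $K_+ = \{ [b] - [a] \colon a + w \leq b + w \text{ for some } w \in S \}$, note that $[u]$ is an order unit there because of the conditions in Definition~\ref{D_2528_PS}(\ref{I_2528_PS_OU}), extend a state, and pull it back along $s \mapsto [s]$.

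The step you identify as the main obstacle, however, is wrong as written. With $h = [y] - [x]$, your number $t = \inf \{ p / q \colon q h \leq p u \}$ is the \emph{largest} value a state can take at~$h$. The hypothesis $u \not\leq n h$ for all $n$ does not make it nonpositive. For example, in $\Z^2$ with the coordinatewise order, $u = (1, 1)$ and $h = (1, -1)$ satisfy $u \not\leq n h$ for every $n$, yet your $t$ equals~$1$. You need the smallest value instead:
\[
t = \sup \bigl\{ p / q \colon p \in \Z, \ q \in \N, \ p u \leq q h \bigr\}.
\]
This is finite because $-N u \leq h \leq N u$ for some $N$. If $p u \leq q h$ with $p \geq 1$, then $u \leq p u \leq q h$ since $u \geq 0$, which is excluded; hence $t \leq 0$.

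Next check that $k u + m h \mapsto k + m t$ is a well defined positive homomorphism on $\Z u + \Z h$, treating the cases $m > 0$, $m = 0$, and $m < 0$ separately. The case $m < 0$ needs $p / q \leq k / |m|$ whenever $p u \leq q h$ and $|m| h \leq k u$. This check uses $u \not\leq 0$ in~$K$, which your hypothesis supplies: if $u \leq 0$, then $h \geq -N u \geq u$, contradicting the case $n = 1$. The extension to $K$ then goes through. Since $K$ is only preordered, either use the extension theorem in that generality or first divide out $K_+ \cap (-K_+)$.

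Two smaller points. First, the element that must lie outside $K_+$ is $n h - [u]$, not $[n x + u - n y]$ as you wrote. Second, the equivalence ``$n h - [u] \in K_+$ if and only if $n x + z + u \leq n y + z$ for some $z \in S$'' deserves its one-line proof. That proof uses that stabilized pairs are closed under addition and that equality in $K$ is itself stabilized. Alternatively, you can skip the hand construction and quote the classical theorem: in a partially ordered abelian group with order unit $u$, $f (h) > 0$ for all states $f$ if and only if $n h \geq u$ for some $n \in \N$.
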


\begin{lem}[Lemma 2.3 of \cite{BR92}]\label{Lem2_3}
Let $(S, <, u)$ be a scaled preordered semigroup,
let $x, y \in S$, and assume that $y$ is an order unit.
If there is $z \in S$ such that $x + z + u \leq y + z$,
then there is $n \in \N$ with $n x + u \leq n y$.
\end{lem}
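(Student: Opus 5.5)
Let $z \in S$ satisfy $x + z + u \leq y + z$. The plan is to cancel the auxiliary element $z$ by iterating this inequality, absorbing copies of $z$ into multiples of $y$, and using that $y$ is an order unit. Since $y$ is an order unit, there is $k \in \N$ with $z \leq k y$. This will be the only use of the hypothesis on~$y$.

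\textbf{Step 1 (iteration).} By induction on $m \in \N$, we show that
\[
m x + z + m u \leq m y + z .
\]
The case $m = 1$ is the hypothesis. For the inductive step, add $x + u$ to both sides of the case $m$, using translation invariance of $\leq$ (which holds for $<$, and trivially for $=$). This gives
\[
(m + 1) x + z + (m + 1) u \leq m y + (x + z + u).
\]
Now add $m y$ to the hypothesis $x + z + u \leq y + z$ to obtain $m y + x + z + u \leq (m + 1) y + z$. Transitivity of $\leq$ then gives the case $m + 1$.

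\textbf{Step 2 (removing $z$).} Adding $k y$ to both sides of the case $m$ and using $z \leq k y$ on the right gives
\[
m x + z + m u + k y \leq m y + z + k y \leq (m + k) y + k y .
\]
This alone does not cancel $z$ on the left. To do that, I would use $u \leq x + n u$ style order-unit properties to absorb $k y$ on the left into copies of $u$. More precisely, since $y$ is an order unit, Definition~\ref{D_2528_PS}(\ref{I_2528_PS_OU}) gives $j$ with $u \leq z + j y$ (taking $x := z$ there) and $y \leq \ldots$. The cleaner plan is the following. Choose $m$ large, say $m = 2k'$ with $k'$ large, and split $m u = u + (m - 1) u$. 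Then use the order unit property of $u$: there is $l$ with $k y \leq l u$, so $k y$ on the right side is bounded by $l u$. We then obtain
\[
m x + z + m u \leq m y + z \leq m y + k y ,
\]
and we must discard $z$ on the left. Since $S$ need not have $z \geq 0$, we instead keep $z$ and compare with $0$-free forms. Concretely, we aim for
\[
m x + (m - l) u + (z + l u) \leq (m + k) y ,
\]
and then use that $u$ being an order unit gives $z + l u \geq u$ for suitable $l$ (this is the condition $u \leq z + n u$ in the definition).

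\textbf{Step 3 (the multiple).} We combine Step 2 into an inequality of the form $m x + u \leq (m + k) y$, or more crudely $N x + u \leq N y$ after replacing $m$ by a multiple and reabsorbing the extra $k y$ using $m u \geq k y$-type bounds. To match the $x$ and $y$ multiples, take $n = m$. We use the extra $(m - l) u$ on the left against the surplus $k y \leq l' u$ on the right, arranged by moving terms via translation. Since there are no inverses, this is done by choosing $m$ large enough that the surplus $u$'s dominate.

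\textbf{Main obstacle.} The main obstacle is the bookkeeping in this last absorption step. There is no cancellation and no positivity in $S$, so every move must be phrased as adding the same element to both sides and applying transitivity. I expect to mimic exactly the proof of Lemma~2.3 in \cite{BR92} for this part.
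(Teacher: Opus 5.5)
Your Step~1 is correct. So is the inequality you reach in Step~2: from $z \leq k y$ and $u \leq z + l u$ (the order unit property of $u$ applied to $z$) you get $m x + (m - l + 1) u \leq (m + k) y$ for $m > l$. (Earlier in Step~2 you wrote $u \leq z + j y$ as coming from the order unit property of $y$; that property gives $y \leq z + j y$ instead.)

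The genuine gap is Step~3, where the whole difficulty lies, and the mechanism you propose there cannot work. The bound $k y \leq l' u$ controls a term on the \emph{larger} side from above, so it can only make that side bigger. Likewise, surplus copies of $u$ on the smaller side cannot be played off against copies of $y$ on the larger side, because without cancellation the only moves are adding a common element and chaining. Taking $m$ large does not help by itself. As long as you only introduce $z$, apply the hypothesis to each copy of $x$, and then remove $z$ via $z \leq k y$, you end with $k$ more copies of $y$ than the copies of $x$ you started with.

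The missing idea is to raise the multiple of $x$ on the left rather than lower the multiple of $y$ on the right. Use the order unit property of $u$ once more, now applied to $x$: choose $p \in \N$ with $x \leq p u$. The $k$ extra copies of $x$ are then dominated by $k p u$, which fits inside your surplus of $u$'s once $m \geq k p + l$:
\[
(m + k) x + u \leq m x + (k p + 1) u \leq m x + (m - l + 1) u \leq (m + k) y .
\]
The middle step just adds copies of $u$, which is allowed because $w \leq w + u$ for all $w$. So $n = m + k$ works.

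Equivalently, take $z \leq k y$, $u \leq z + j u$, $x \leq p u$, and $n = j + k + k p$. The single chain
\[
n x + u \leq n x + z + j u \leq (j + k p)(x + u) + z \leq (j + k p) y + z \leq n y
\]
proves the lemma. The second step replaces $k x$ by $k p u$, the third is your Step~1 iterated $j + k p$ times, and the last uses $z \leq k y$.
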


\begin{ntn}\label{N_lsc_states}
Let $A$ be a unital C*-algebra.
We denote by $\Delta_{\mathrm{lsc}} (A)$
the set of all states $f \colon \W (A) \to [0, \infty)$
such that $f (0) = 0$, $f ( \langle 1_A \rangle_A) = 1$, and
$f (\langle a \rangle_A) = \sup_{\ep > 0} f ( \langle (a - \ep)_+ \rangle_A)$
for all $a \in M_{\infty} (A)_+$.

Let $G$ be a discrete group and let
$\alpha \colon G \to \Aut (A)$ be an action.
We denote by $\Delta_{\mathrm{lsc}} (A, \alpha)$
the set of all states $f \colon \W (A, \alpha) \to [0, \infty)$
satisfying the same conditions
as in the definition of $\Delta_{\mathrm{lsc}} (A)$.
\end{ntn}

\begin{lem}\label{MyLemma1}
Let $\afGAA$ be an action of a discrete group $G$ on a \uca~$A$.
Then
$\left( \W (A, \alpha), \leq, \langle 1_A \rangle_{\alpha} \right)$
is a scaled preordered semigroup in the sense of
Definition \ref{D_2528_PS}(\ref{I_2528_PS_Sc}).
Moreover, the map $\ta \to d_{\ta}$
defines a bijection from $\QT (A)^{\af}$
to $\Delta_{\mathrm{lsc}} (A, \alpha)$.
\end{lem}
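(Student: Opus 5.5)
The plan has two parts: first check that $\bigl( \W (A, \alpha), \leq, \langle 1_A \rangle_{\alpha} \bigr)$ is a scaled preordered semigroup, and then identify $\Delta_{\mathrm{lsc}} (A, \alpha)$ with $\QT (A)^{\af}$ by reducing to the known nonequivariant bijection between $\QT (A)$ and $\Delta_{\mathrm{lsc}} (A)$ (Blackadar--Handelman). The reduction goes through the surjection $\kp_{\af} \colon \W (A) \to \W (A, \af)$ of Proposition~\ref{P_1412_NatMaps}.

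\emph{Scaled preordered semigroup.} By Lemma~\ref{L_6921_Agree_BPWZ}, $\W (A, \af)$ is a commutative semigroup. Its relation $\leq$ is reflexive, and it is transitive by Lemma~\ref{Tran_meStepLem}(\ref{Tran_meStepLem_a}). For compatibility with addition, I would take $a \precsim_{A, \af} b$ and $c \in \Mi (A)_{+}$ and show $a \oplus c \precsim_{A, \af} b \oplus c$. Add $c$ as an extra summand with $g = 1$ at each stage of the chain (\ref{Eq_1412_Main}), using $(a \oplus c - \ep)_+ = (a - \ep)_+ \oplus (c - \ep)_+$ and the fact that $\ll_A$ is compatible with direct sums. (Alternatively, cite the $G$-$\W$-semigroup structure from~\cite{BsPrWuZc}.) Every element satisfies $\langle x \rangle_{\af} \geq 0$, and $\langle 1_A \rangle_{\af} \geq 0$ in particular. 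If $x \in M_n (A)_{+}$, then $x \precsim_A \| x \| 1_{M_n (A)} \sim_A 1_A^{\oplus n}$, and by Lemma~\ref{R_0817_NonEqToEq}(\ref{Item_R_0817_NonEqToEq_AToDyn}) this gives $\langle x \rangle_{\af} \leq n \langle 1_A \rangle_{\af}$. By Remark~\ref{R_2528_Red}, this makes $\langle 1_A \rangle_{\af}$ an order unit.

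\emph{The map is well defined.} Let $\ta \in \QT (A)^{\af}$. By Corollary~\ref{C_2528_dt_wd}, $d_{\ta}$ is an additive, order preserving map on $\W (A, \af)$ with $d_{\ta} (0) = 0$ and $d_{\ta} (\langle 1_A \rangle_{\af}) = 1$. The lower semicontinuity condition $d_{\ta} (a) = \sup_{\ep > 0} d_{\ta} ((a - \ep)_+)$ is a standard property of dimension functions. So $d_{\ta} \in \Delta_{\mathrm{lsc}} (A, \af)$.

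\emph{Injectivity and surjectivity.} Composition with $\kp_{\af}$ gives a map $\Delta_{\mathrm{lsc}} (A, \af) \to \Delta_{\mathrm{lsc}} (A)$. This map is injective, because $\kp_{\af}$ is surjective (Remark~\ref{R_1412_MapsSI}). The composition condition for lower semicontinuity holds because the class of $(a - \ep)_+$ is the same in both semigroups. Injectivity of $\ta \mapsto d_{\ta}$ follows from the nonequivariant statement that $d_{\ta}$ determines $\ta$. For surjectivity, let $f \in \Delta_{\mathrm{lsc}} (A, \af)$. Then $f \circ \kp_{\af} \in \Delta_{\mathrm{lsc}} (A)$, so by Blackadar--Handelman (see also~\cite{ArPrTm, GP24}) there is $\ta \in \QT (A)$ with $f \circ \kp_{\af} = d_{\ta}$. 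It remains to show that $\ta$ is $G$-invariant. Lemma~\ref{R_0817_NonEqToEq}(\ref{Item_R_0817_NonEqToEq_Translate}) gives $\langle \af_g (a) \rangle_{\af} = \langle a \rangle_{\af}$, so $d_{\ta \circ \af_g} (a) = d_{\ta} (\af_g (a)) = d_{\ta} (a)$ for all $a \in \Mi (A)_{+}$. Since $d_{\ta}$ determines $\ta$, we get $\ta \circ \af_{g}^{- 1} = \ta$, that is, $\ta \in \QT (A)^{\af}$.

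The main obstacle is this last identification: invoking the correspondence between quasitraces and lower semicontinuous dimension functions, including its injectivity, which one recovers via $\ta (a) = \int_0^{\I} d_{\ta} ((a - t)_+) \, dt$. The equivariant steps themselves are routine bookkeeping.
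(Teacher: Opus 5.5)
Your proposal is correct and follows essentially the same route as the paper: you reduce to the Blackadar--Handelman bijection $\QT (A) \to \Delta_{\mathrm{lsc}} (A)$ via the surjection $\kappa_{\alpha} \colon \W (A) \to \W (A, \alpha)$, and you get $G$-invariance of the resulting quasitrace from $\langle \alpha_g (a) \rangle_{\alpha} = \langle a \rangle_{\alpha}$, which is the paper's identity $\kappa_{\alpha} \circ \W (\alpha_g) = \kappa_{\alpha}$. One small repair is needed in your direct check that $a \precsim_{A, \alpha} b$ implies $a \oplus c \precsim_{A, \alpha} b \oplus c$: you cannot literally append $c$ at every stage of the chain, since that requires $c \ll_A c$, which fails in general (for example for $c (t) = t$ in $C ([0, 1])$); instead append $(c - \varepsilon_j)_+$ at stage $j$ with $\varepsilon > \varepsilon_1 > \cdots > \varepsilon_m > 0$, or simply cite the structure from \cite{BsPrWuZc} as the paper implicitly does.
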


\begin{proof}
It is clear that the conditions
in Definition \ref{D_2528_PS}(\ref{I_2528_PS_PS}) hold,
with $\leq$ in place of~$<$,
and that $\langle 1_A \rangle_{\af}$ satisfies the
conditions in Definition \ref{D_2528_PS}(\ref{I_2528_PS_OU}).

It remains to prove the last statement.
To avoid confusion, in this proof only,
for $\ta \in \QT (A)^{\af}$ we will denote the induced
map $\W (A, \alpha) \to [0, \I)$ by $d_{\ta}^{\af}$,
and reserve $d_{\ta}$ for the induced map $\W (A) \to [0, \I)$.
It follows from Corollary~\ref{C_2528_dt_wd} that, for every
$\tau \in \QT (A)^{\alpha}$, the map $d_{\tau}^{\alpha}$ is a state.
Clearly $d_{\tau}^{\alpha} (0) = 0$ and
$d_{\tau}^{\alpha} (\langle 1_A \rangle_{\af}) = 1$.
Moreover, for every $a \in M_{\infty} (A)_+$,
\[
d_{\tau}^{\alpha} (\langle a \rangle_{\af})
= d_{\tau} (a)
= \sup_{\ep > 0} d_{\tau} ((a - \ep)_+)
= \sup_{\ep > 0}
d_{\tau}^{\alpha} (\langle (a - \ep)_+ \rangle_{\af}).
\]
Hence $d_{\tau}^{\alpha}\in \Delta_{\mathrm{lsc}} (A, \alpha)$.
Therefore $\tau \mapsto d_{\tau}^{\alpha}$ defines a map
\[
\QT (A)^{\alpha} \to \Delta_{\mathrm{lsc}} (A, \alpha).
\]
It was shown by Blackadar and Handelman (Theorem II.2.2 of~\cite{BH82})
that the map $\ta \mapsto d_{\ta}$
defines a bijection from $\QT (A)$ to $\Delta_{\mathrm{lsc}} (A)$.

Recall from Proposition~\ref{P_1412_NatMaps} the semigroup \hm{}
$\kp_{\af} \colon \W (A) \to \W (A, {\af})$ given by
$\kp_{\af} ( \langle a \rangle_A ) = \langle a \rangle_{\af}$.
Clearly $\kp_{\af}$ is order preserving, $\kp_{\af} (0) = 0$,
and $\kp_{\af} (\langle 1_A \rangle_A ) = \langle 1_A \rangle_{\af}$.
Moreover, for every $\ta \in \QT (A)^{\af}$,
we have $d_{\ta}^{\af} \circ \kp_{\af} = d_{\ta}$.

To prove injectivity of $\ta \mapsto d_{\ta}^{\af}$,
let $\sm, \ta \in \QT (A)^{\af}$,
and suppose that $d_{\sm}^{\af} = d_{\ta}^{\af}$.
Then
\[
d_{\sm} = d_{\sm}^{\af} \circ \kp_{\af}
        = d_{\ta}^{\af} \circ \kp_{\af}
        = d_{\ta}.
\]
So $\sm = \ta$ since
$\QT (A) \to \Delta_{\mathrm{lsc}} (A)$ is injective.

For surjectivity, let
$f \in \Delta_{\mathrm{lsc}} (A, \alpha)$.
Then $f \circ \kp_{\af}$ is clearly a state on $\W (A)$, with
\[
(f \circ \kp_{\af}) (0) = 0
\andeqn
(f \circ \kp_{\af}) ( \langle 1_A \rangle_A)
 = f (\langle 1_A \rangle_{\af}) = 1.
\]
Moreover, for $a \in M_{\infty} (A)_+$, we have
$\kp_{\af} ( \langle (a - \ep)_{+} \rangle_A)
   = \langle (a - \ep)_{+} \rangle_{\af}$
for all $\ep > 0$, so
\[
(f \circ \kp_{\af}) (\langle a \rangle_A)
= f (\langle a \rangle_{\af})
= \sup_{\ep > 0}
f (\langle (a - \ep)_+ \rangle_{\af})
= \sup_{\ep > 0}
(f \circ \kp_{\af}) ( \langle (a - \ep)_+ \rangle_A).
\]
Thus $f \circ \kp_{\af} \in \Delta_{\mathrm{lsc}} (A)$.
By the Blackadar--Handelman correspondence,
there is $\ta \in \QT (A)$ such that $f \circ \kp_{\af} = d_{\ta}$.
We now claim that $\ta \in \QT (A)^{\af}$.
Let $g \in G$.
By construction, $\kp_{\af} \circ \W (\af_g) = \kp_{\af}$.
Therefore
\[
d_{\ta \circ \af_g}
 = d_{\ta} \circ \W (\af_g)
 = f \circ \kp_{\af} \circ \W (\af_g)
 = f \circ \kp_{\af}
 = d_{\ta}.
\]
It follows from injectivity of
$\QT (A) \to \Delta_{\mathrm{lsc}} (A)$
that $\ta \circ \af_g = \ta$.
Since $g \in G$ was arbitrary, we have
$\tau \in \QT (A)^{\alpha}$.
Finally, for every $a \in M_{\infty} (A)_+$,
\[
f (\langle a \rangle_{\af})
= (f \circ \kp_{\af}) (\langle a \rangle_A)
= d_{\tau} (\langle a \rangle_A)
= d_{\tau}^{\alpha} (\langle a \rangle_{\af}).
\]
Therefore
$f= d_{\tau}^{\alpha}$.
This proves surjectivity and completes the proof.
\end{proof}

\begin{cor}\label{C_2528_T_ex}
Let $A$ be a unital exact C*-algebra, let $G$ be a discrete group,
and let $\alpha \colon G \to \Aut (A)$ be an action of $G$ on~$A$.
Then the map $\ta \mapsto d_{\ta}$
defines a bijection
from $\T (A)^{\af}$ to $\Delta_{\mathrm{lsc}} (A, \alpha)$.
\end{cor}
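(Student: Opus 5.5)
The plan is to combine Lemma~\ref{MyLemma1} with Haagerup's theorem that every normalized $2$-quasitrace on a unital exact \ca{} is a trace. By Lemma~\ref{MyLemma1}, the map $\ta \mapsto d_{\ta}$ is already a bijection from $\QT (A)^{\af}$ onto $\Delta_{\mathrm{lsc}} (A, \alpha)$. So it suffices to show that $\QT (A)^{\af} = \T (A)^{\af}$ as sets, with the map $\ta \mapsto d_{\ta}$ being the same map on both.

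The steps, in order, are as follows.

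First, every tracial state is a normalized $2$-quasitrace, so $\T (A) \subseteq \QT (A)$. Also, $G$-invariance means the same thing in both sets, namely $\ta \circ \af_g = \ta$ for all $g \in G$. Hence $\T (A)^{\af} \subseteq \QT (A)^{\af}$.

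Second, exactness of $A$ together with Haagerup's theorem (quasitraces on unital exact \ca{s} are traces) gives $\QT (A) = \T (A)$. Intersecting with the $G$-invariant elements then yields $\QT (A)^{\af} = \T (A)^{\af}$.

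Third, for $\ta \in \T (A)^{\af}$, the dimension function $d_{\ta}$ on $\W (A, \af)$ defined via the trace coincides with the one used in Lemma~\ref{MyLemma1}. This holds because both are given by $d_{\ta} (a) = \lim_n \ta (a^{1/n})$ on $\Mi (A)_{+}$, with $\ta$ extended to matrices. The bijection therefore transports verbatim.

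There is no real obstacle beyond citing Haagerup's result correctly. The only point needing care is that Haagerup's theorem concerns bounded $2$-quasitraces on unital exact algebras, which matches the normalization convention of Notation~\ref{N_6816_QT}. One should also note that the conclusion includes the degenerate case $\T (A)^{\af} = \E$, where both sides are empty.
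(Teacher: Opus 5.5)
Your proposal is correct and is the paper's argument: the paper deduces the corollary directly from Lemma~\ref{MyLemma1} together with Haagerup's theorem (Theorem~5.11 of~\cite{Ha14}) that quasitraces on unital exact \ca{s} are traces, which gives $\QT (A)^{\af} = \T (A)^{\af}$. Your extra checks, namely the inclusion $\T (A) \subseteq \QT (A)$ and that $d_{\ta}$ is the same function in both settings, are routine details the paper leaves implicit.
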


\begin{proof}
This follows from Lemma~\ref{MyLemma1} and the fact that
all quasitraces on exact \ca{s} are traces (Theorem~5.11 of~\cite{Ha14}).
\end{proof}

\begin{lem}\label{L_2528_Full_OU}
Let $\afGAA$ be an action of a discrete group $G$ on a \uca~$A$.
Let $a \in \Mi (A)_{+}$.
If $a$ is $\af$-full
(Definition \ref{D_2528_af_full}(\ref{I_6817_af_full})),
then $\langle a \rangle_{\af}$ is an order unit in $\W (A, \alpha)$.
\end{lem}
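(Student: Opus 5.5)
The plan is to check the conditions of Definition~\ref{D_2528_PS}(\ref{I_2528_PS_OU}) for $u = \langle a \rangle_{\af}$ in $\W (A, \af)$. Since $\W (A, \af)$ has a zero element and every element dominates $0$, Remark~\ref{R_2528_Red} reduces this to a single claim: for every $x = \langle c \rangle_{\af}$ there is $n \in \N$ with $\langle c \rangle_{\af} \leq n \langle a \rangle_{\af}$. By Remark~\ref{MyRemark} it suffices to treat $x = \langle 1_A \rangle_{\af}$, since any $c \in M_k (A)_{+}$ satisfies $c \precsim_A \| c \| \cdot 1_{M_k (A)}$. That gives $\langle c \rangle_{\af} \leq k \langle 1_A \rangle_{\af}$, and then $k \langle 1_A \rangle_{\af} \leq k N \langle a \rangle_{\af}$.

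So the key step is to find $N$ with $1_A \precsim_{A, \af} \bigoplus^{N} a$. Because $a$ is $\af$-full, $1_A \in I_{\af} (a)$, so $1_A$ lies in the closed linear span of elements $x \af_g (a) y$. Taking a finite sum $s = \sum_{j = 1}^{m} x_j \af_{g_j} (a) y_j$ with $\| 1 - s \| < \frac{1}{2}$, we can arrange by the standard argument that $1_A$ lies in the algebraic ideal generated by the finitely many $\af_{g_j} (a)$, using invertibility in the unital algebra $M_k (A)$. From this we aim to show that $1_A \precsim_A \bigoplus_{j = 1}^{m} \af_{g_j} (a)$ in $\W (A)$. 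This is the usual fact that if a positive element $e$ lies in the ideal generated by $b_1, \ldots, b_m$, then $(e - \ep)_{+} \precsim_A \bigoplus_j b_j^{\oplus M}$ for some $M$; since $1_A$ is a projection, we can take $\ep$ small and get $1_A \precsim_A \bigoplus_j \af_{g_j} (a)^{\oplus M}$. Concretely, write $1 = \sum_j z_j^{*} \af_{g_j} (a) w_j$ and use $1 \leq$ a multiple of $\sum_j w_j^{*} \af_{g_j} (a) w_j$ (after symmetrizing via polarization and the inequality $z^{*} b w + w^{*} b z \leq z^{*} b z + w^{*} b w$). Then apply $\sum_j v_j^{*} b_j v_j \precsim \bigoplus_j b_j$.

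Next, by Lemma~\ref{R_0817_NonEqToEq}(\ref{Item_R_0817_NonEqToEq_AToDyn}) this becomes $1_A \precsim_{A, \af} \bigoplus_{j} \af_{g_j} (a)^{\oplus M}$. By Lemma~\ref{R_0817_NonEqToEq}(\ref{Item_R_0817_NonEqToEq_Translate}) each $\af_{g_j} (a) \sim_{A, \af} a$. Since addition is well defined on $\W (A, \af)$ (Lemma~\ref{L_6921_Agree_BPWZ}), we get $\langle 1_A \rangle_{\af} \leq m M \langle a \rangle_{\af}$, and we take $N = mM$. Transitivity (Lemma~\ref{Tran_meStepLem}(\ref{Tran_meStepLem_a})) combines these steps.

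The main obstacle is the purely nonequivariant step of turning membership $1_A \in I_{\af} (a)$ into a Cuntz subequivalence $1_A \precsim_A \bigoplus \af_{g_j} (a)^{\oplus M}$. The first thing to try is the standard argument above: use a finite sum close to $1$, then invertibility to get an exact identity, and then the operator inequality $\sum_j w_j^{*} b_j w_j \precsim \bigoplus_j b_j$.
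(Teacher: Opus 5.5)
Your proposal is correct and follows essentially the same route as the paper. You reduce to showing $\langle 1_A \rangle_{\alpha} \leq N \langle a \rangle_{\alpha}$, use $\alpha$-fullness to get a finite sum close to $1$, turn this into $1 \precsim_A \bigoplus_k \alpha_{g_k}(a)$ (with some multiplicity), and finish with $\alpha_g(a) \sim_{A, \alpha} a$. The only difference is cosmetic: the paper approximates $1$ by $c = \sum_k x_k \alpha_{g_k}(a^{1/2}) y_k$ and compares the invertible element $c^* c$ directly with $\sum_k y_k^* \alpha_{g_k}(a) y_k$, which avoids your inversion-and-symmetrization step and gives multiplicity one.
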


\begin{proof}
There is $n \in \N$ such that $a \in M_n (A)_{+}$.
We may replace $A$ with $M_n (A)$, and thus assume that $n = 1$.

By Remark~\ref{R_2528_Red}, it suffices to show that
for $\et \in \W (A, \af)$ there is $n \in \N$ such that
$n \langle a \rangle_{\af} \geq \et$.
Clearly it is enough to do this for the classes
$\et = \langle 1_{M_k (A)} \rangle_{\af}$ for $k \in \N$.
Since $\langle 1_{M_k (A)} \rangle_{\af} = k \langle 1_A \rangle_{\af}$,
we need only do the case $k = 1$.

The definition of $\af$-fullness clearly also implies
that $A$ is the smallest closed
$\af$-invariant ideal in $A$ which contains~$a$.
This is also true for $a^{1/2}$ in place of~$a$.
Since $A$ is unital, there are
\[
n \in \N,
\qquad
x_1, x_2, \ldots, x_n, y_1, y_2, \ldots, y_n \in A,
\andeqn
g_1, g_2, \ldots, g_n \in G
\]
such that $c = \sum_{k = 1}^n x_k \af_{g_k} (a^{1/2}) y_k$
satisfies $\| c - 1 \| < \frac{1}{2}$.
We may assume that
$\| x_1 \|, \, \| x_2 \|, \, \ldots, \, \| x_n \| \leq 1$.
The element $c$ is invertible, so $c^* c$ is as well.
Therefore, using Lemma~1.12 of~\cite{Phl40} at the second step
and $\| x_k \| \leq 1$ at the third step,
\[
\begin{split}
1
& \sim_A c^* c
  \precsim_A \sum_{k = 1}^n
      y_k^* \af_{g_k} (a^{1/2}) x_k^* x_k \af_{g_k} (a^{1/2}) y_k
\\
& \leq \sum_{k = 1}^n y_k^* \af_{g_k} (a) y_k
  \precsim_A \bigoplus_{k = 1}^n y_k^* \af_{g_k} (a) y_k
  \precsim_A \bigoplus_{k = 1}^n \af_{g_k} (a)
  \sim_{A, \af} \bigoplus_{k = 1}^n a.
\end{split}
\]
Thus $\langle 1 \rangle_{\af} \leq n \langle a \rangle_{\af}$.
\end{proof}

\begin{dfn}\label{D_5Z10}
Let $\afGAA$ be an action of a discrete amenable group $G$ on a \uca~$A$.
\begin{enumerate}
\item\label{PropR_rCmp}
Let $r \in (0, \I)$.
We say that $(G, A, \af)$ (or just $A$ when no confusion can arise)
{\emph{has algebraic dynamical $r$-comparison}}
if whenever
$n, m \in \N$ satisfy $\tfrac{m}{n} > r$ and
$a, b \in \Mi (A)_{+}$ satisfy
\[
(n + 1) \langle a \rangle_{\alpha} + m \langle 1_A \rangle_{\alpha}
 \leq n \langle b \rangle_{\alpha},
\]
then $\langle a \rangle_{\alpha} \leq \langle b \rangle_{\alpha}$.
\item\label{PropR_rad}
The {\emph{algebraic dynamical radius of comparison}} of~$\af$
(of~$A$ when $\af$ is understood) is
\[
\arc_{A, \af}
 = \inf \big( \big\{ r \in (0, \I) \colon
    {\mbox{$A$ has algebraic dynamical $r$-comparison}} \big\} \big)
\]
if it exists, and $\infty$ otherwise.
\end{enumerate}
\end{dfn}

\begin{rmk}\label{R_6915_trv}
If $G$ is the trvial group or, more generally, if $\af$ is the trivial action,
then $\arc_{A, \af}$ is just the number $r_{A}$ defined after
Definition 3.2.2 of~\cite{BRTW12}.
\end{rmk}

We now prepare to compare
$\arc_{A, \af}$ and $\rc (A, \af)$ and determine when they agree.

\begin{lem}\label{LemDynamicalStateRegularization}
Let $\afGAA$ be an action of a discrete group $G$ on a \uca~$A$.
Let $f \in \Delta \bigl( \W (A, \alpha) \bigr)$.
For $a \in M_{\infty} (A)_+$, define
\[
\widetilde{f} (\langle a \rangle_{\af})
 = \sup_{\ep > 0} f \bigl( \langle (a - \ep)_+ \rangle_{\af}\bigr).
\]
Then $\widetilde{f}$ is a well defined state on $\W (A, \alpha)$ and
$\widetilde{f} \in \Delta_{\mathrm{lsc}} (A, \alpha)$.
Moreover, for every $a \in M_{\infty} (A)_+$ and every $\dt > 0$,
\[
f \bigl( \langle (a - \dt)_+ \rangle_{\af}\bigr)
 \leq \widetilde{f} (\langle a \rangle_{\af})
 \leq f (\langle a \rangle_{\af}).
\]
\end{lem}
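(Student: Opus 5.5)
We prove this the way one proves the nonequivariant statement, in which a state on $\W (A)$ is regularized to a lower semicontinuous one. The only extra input is that the order in $\W (A, \af)$ is compatible with the cut-downs $(a - \ep)_{+}$. This compatibility is exactly Lemma~\ref{L_5Z10_SbCnds}.

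\textbf{Well definedness and monotonicity.} Suppose $a \precsim_{A, \af} b$ and $\ep > 0$. By Lemma~\ref{L_5Z10_SbCnds}(\ref{I_5Z10_CC_andCC}) there is $\dt > 0$ with $(a - \ep)_{+} \precsim_{A, \af} (b - \dt)_{+}$. Since $f$ is order preserving,
\[
f ( \langle (a - \ep)_{+} \rangle_{\af} )
 \leq f ( \langle (b - \dt)_{+} \rangle_{\af} )
 \leq \widetilde{f} ( \langle b \rangle_{\af} ).
\]
Taking the supremum over $\ep$ gives $\widetilde{f} (\langle a \rangle_{\af}) \leq \widetilde{f} (\langle b \rangle_{\af})$. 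Applying this in both directions shows that $\widetilde{f}$ depends only on the class $\langle a \rangle_{\af}$, and that it preserves the order.

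\textbf{The displayed inequalities.} The upper bound $\widetilde{f} (\langle a \rangle_{\af}) \leq f (\langle a \rangle_{\af})$ holds because $(a - \ep)_{+} \precsim_A a$. By Lemma~\ref{R_0817_NonEqToEq}(\ref{Item_R_0817_NonEqToEq_AToDyn}) this implies $(a - \ep)_{+} \precsim_{A, \af} a$, and $f$ is order preserving. The lower bound $f (\langle (a - \dt)_{+} \rangle_{\af}) \leq \widetilde{f} (\langle a \rangle_{\af})$ is immediate from the definition of $\widetilde{f}$ as a supremum.

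\textbf{Normalization.} We have $\widetilde{f} (0) = 0$. For the unit, $(1 - \ep)_{+} = (1 - \ep) 1$ is Cuntz equivalent to $1$ when $\ep < 1$, so $\widetilde{f} (\langle 1_A \rangle_{\af}) = f (\langle 1_A \rangle_{\af}) = 1$.

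\textbf{Lower semicontinuity.} Using $((a - \ep)_{+} - \dt)_{+} = (a - \ep - \dt)_{+}$, we get
\[
\sup_{\ep > 0} \widetilde{f} ( \langle (a - \ep)_{+} \rangle_{\af} )
 = \sup_{\ep, \dt > 0} f ( \langle (a - \ep - \dt)_{+} \rangle_{\af} )
 = \widetilde{f} ( \langle a \rangle_{\af} ).
\]

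\textbf{Additivity (the main obstacle).} We use
\[
((a \oplus b) - \ep)_{+} = (a - \ep)_{+} \oplus (b - \ep)_{+}
\]
and additivity of $f$ to get
\[
f ( \langle ((a \oplus b) - \ep)_{+} \rangle_{\af} )
 = f ( \langle (a - \ep)_{+} \rangle_{\af} ) + f ( \langle (b - \ep)_{+} \rangle_{\af} ).
\]
Each term on the right is nondecreasing as $\ep \downarrow 0$, by monotonicity of $f$ and the fact that $(a - \ep)_{+} \precsim_A (a - \ep')_{+}$ for $\ep' < \ep$. The supremum over $\ep$ of a sum of two nondecreasing nets is the sum of their suprema. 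Hence $\widetilde{f} (\langle a \oplus b \rangle_{\af}) = \widetilde{f} (\langle a \rangle_{\af}) + \widetilde{f} (\langle b \rangle_{\af})$. This step needs care only in checking that the cut-downs are taken with the same $\ep$ in each summand and that both nets are monotone, so that the supremum splits. It is routine given the identity above.

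Together, these steps show that $\widetilde{f}$ is a state on $\W (A, \af)$ lying in $\Delta_{\mathrm{lsc}} (A, \af)$ and satisfying the stated inequalities.
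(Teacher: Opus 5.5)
Your proof is correct and follows the paper's own argument: well-definedness and monotonicity come from Lemma~\ref{L_5Z10_SbCnds} exactly as in the paper, and the normalization and displayed inequalities are handled the same way. You also write out the additivity check (via $((a \oplus b) - \varepsilon)_+ = (a-\varepsilon)_+ \oplus (b-\varepsilon)_+$ and splitting the supremum of two monotone nets) and the lower semicontinuity check (via $((a-\varepsilon)_+ - \delta)_+ = (a - \varepsilon - \delta)_+$), which the paper leaves to the reader; both of these are correct.
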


\begin{proof}
We first show that $\widetilde{f}$ is well defined and order preserving.
Let $a, b \in M_{\infty} (A)_+$ with $a \precsim_{A, \alpha} b$ and
let $\ep > 0$.
By Lemma~\ref{L_5Z10_SbCnds}, there exists $\dt > 0$ such that
$(a - \ep)_+ \precsim_{A, \alpha} (b - \dt)_+$.
Using this and the fact that $f$ is order preserving, we get
\[
f \bigl( \langle (a - \ep)_+ \rangle_{\af}\bigr)
 \leq f \bigl( \langle (b - \dt)_+ \rangle_{\af}\bigr)
 \leq \widetilde{f} (\langle b \rangle_{\af}).
\]
Taking the supremum over $\ep > 0$ gives
$\widetilde{f} (\langle a \rangle_{\af})
  \leq \widetilde{f} (\langle b \rangle_{\af})$.
In particular, if $a \sim_{A, \alpha} b$, then
$\widetilde{f} (\langle a \rangle_{\af})
 = \widetilde{f} (\langle b \rangle_{\af})$.
Clearly $\widetilde{f} (0) = 0$
and $\widetilde{f} (\langle 1_A \rangle_{\af}) = 1$.
The last statement is now immediate from the definition.
We leave to the reader to verify that $\widetilde{f}$
is additive and lower semicontinuous.
\end{proof}

The following is the dynamical version of Lemma~2.13 in \cite{ASA25}.

\begin{lem}\label{GeneralResiduallyAndFullEquivalent}
Let $A$ be a unital residually stably finite C*-algebra,
let $G$ be a discrete amenable group, and let
$\alpha \colon G \to \Aut (A)$ be an action of $G$ on $A$.
Let $a \in \Mi (A)_+$.
Then $a$ is $\alpha$-full
(Definition \ref{D_2528_af_full}(\ref{I_6817_af_full}))
if and only if $\inf_{\tau \in \QT (A)^{\alpha}} d_{\tau} (a) > 0$.
\end{lem}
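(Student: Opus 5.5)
For the forward direction, assume $a$ is $\alpha$-full. By Lemma~\ref{L_2528_Full_OU}, $\langle a \rangle_{\af}$ is an order unit in $\W (A, \af)$. So there is $n \in \N$ with $\langle 1_A \rangle_{\af} \leq n \langle a \rangle_{\af}$. For $\ta \in \QT (A)^{\af}$, Corollary~\ref{C_2528_dt_wd} makes $d_{\ta}$ an order preserving state on $\W (A, \af)$. Hence $1 = d_{\ta} (1_A) \leq n d_{\ta} (a)$, and so $\inf_{\ta} d_{\ta} (a) \geq 1/n > 0$. This direction needs neither amenability nor residual stable finiteness. If $\QT (A)^{\af}$ were empty the infimum would be $+\infty$, but we will see it is nonempty.

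For the converse, suppose $a$ is not $\af$-full. We will produce $\ta \in \QT (A)^{\af}$ with $d_{\ta} (a) = 0$. Set $J = I_{\af} (a) \cap A$. More precisely, $I_{\af} (a) = M_\infty (J)$ for a closed $\af$-invariant ideal $J \subsetneq A$, because ideals of $M_\infty (A)$ come from ideals of $A$ and invariance passes down. Since $J \neq A$ and $A$ is residually stably finite, $A / J$ is a nonzero unital stably finite \ca. By Haagerup's result as used in Blackadar--Handelman, it has a normalized $2$-quasitrace, so $\QT (A / J) \neq \E$.

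The action $\af$ descends to an action $\bar{\af}$ of $G$ on $A / J$ because $J$ is invariant. This induces an affine continuous action on the compact convex set $\QT (A / J)$ via $\ta \mapsto \ta \circ \bar{\af}_{g^{-1}}$. Here we use weak* topology on the quasitraces evaluated on $\Mi$; affineness and compactness are standard, as in Blackadar--Handelman. Since $G$ is amenable, the Markov--Kakutani/Day fixed point theorem gives $\sigma \in \QT (A / J)^{\bar{\af}}$. Let $\pi \colon A \to A / J$ be the quotient map, and set $\ta = \sm \circ \pi$. Then $\ta \in \QT (A)^{\af}$, and $\ta$ vanishes on $M_\infty (J) \ni a$. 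Therefore $d_{\ta} (a) = \lim_n \ta (a^{1/n}) = 0$, and the infimum is $0$.

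The main obstacle is the fixed point step. It needs $\QT (A / J)$ to be a nonempty compact convex subset of a locally convex space on which $G$ acts by continuous affine maps. Nonemptiness uses stable finiteness of the quotient, which is exactly where residual stable finiteness enters. Applied with $J = 0$, the same argument also shows $\QT (A)^{\af} \neq \E$, which settles the vacuous-infimum edge case in the forward direction.
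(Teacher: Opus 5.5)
Your proof is correct and follows the paper's route. The paper adapts Lemma~2.13 of \cite{ASA25}, with the only new ingredient being that $\QT (A / J)^{\overline{\af}} \neq \E$ for every proper $\af$-invariant ideal $J$, which it attributes to residual stable finiteness plus amenability; your Day fixed point argument makes exactly this explicit, and your forward direction via Lemma~\ref{L_2528_Full_OU} and Corollary~\ref{C_2528_dt_wd} is the evident dynamical version of the nonequivariant argument. One citation slip: the existence of a normalized quasitrace on a nonzero unital stably finite \ca{} is a theorem of Blackadar and Handelman, whereas Haagerup's theorem is the different statement that quasitraces on unital exact \ca{s} are traces.
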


\begin{proof}
The proof is almost the same as the proof of Lemma~2.13 in \cite{ASA25}.
We need to know that if $I \subseteq A$ is an $\af$-invariant ideal,
and $\overline{\af}$ is the induced action on $A / I$,
then $\QT (A / I)^{\overline{\af}} \neq \E$.
This follows from residual stable finiteness and amenability of~$G$.
\end{proof}

The next lemma is a dynamical version of
Proposition~3.2.1 of \cite{BRTW12}.

\begin{lem}\label{L_2529_To_alg_rc}
Let $A$ be a unital C*-algebra,
let $G$ be a discrete amenable group,
and let $\alpha \colon G \to \Aut (A)$ be an action of $G$ on~$A$.
Let $r > 0$.
\begin{enumerate}
\item\label{Lem1}
If $A$ has dynamical $r$-comparison,
then $A$ has algebraic dynamical $r$-comparison.
\item\label{Lem2}
If $A$ is residually stably finite
and has algebraic dynamical $r$-comparison, then for every $\ep > 0$,
$A$ has dynamical $(r + \ep)$-comparison.
\setcounter{TmpEnumi}{\value{enumi}}
\end{enumerate}
\end{lem}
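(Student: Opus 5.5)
For part~(\ref{Lem1}), I will follow the proof of Proposition~3.2.1 of~\cite{BRTW12}, with quasitraces replaced by $\QT (A)^{\af}$. Assume $A$ has dynamical $r$-comparison. Let $n, m \in \N$ with $\frac{m}{n} > r$, and let $a, b \in \Mi (A)_{+}$ satisfy
\[
(n + 1) \langle a \rangle_{\af} + m \langle 1_A \rangle_{\af} \leq n \langle b \rangle_{\af}.
\]
For $\ta \in \QT (A)^{\af}$, Corollary~\ref{C_2528_dt_wd} gives $(n + 1) d_{\ta} (a) + m \leq n d_{\ta} (b)$. Dividing by~$n$ yields
\[
d_{\ta} (a) + r < d_{\ta} (a) + \tfrac{m}{n} \leq d_{\ta} (b) - \tfrac{1}{n} d_{\ta} (a) \leq d_{\ta} (b).
\]
It remains to check $a \in I_{\af} (b)$. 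The displayed inequality gives $\langle 1_A \rangle_{\af} \leq n \langle b \rangle_{\af}$. Dynamical subequivalence only moves elements by $\af_g$ and Cuntz subequivalence inside~$A$, so $\langle 1 \rangle_{\af} \leq n \langle b \rangle_{\af}$ forces $1 \in I_{\af} (b)$. To see this, compare with the invariant ideal $I_{\af}(b)$: pass to the quotient by $I_{\af}(b)$, where the image of $b$ vanishes, so $1$ must vanish too. Hence $b$ is $\af$-full, and dynamical $r$-comparison gives $a \precsim_{A, \af} b$. Note that when $\QT(A)^{\af} = \E$ the trace condition is vacuous, and only fullness is used, so this case is covered as well.

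For part~(\ref{Lem2}), assume residual stable finiteness and algebraic dynamical $r$-comparison, and let $\ep > 0$. Let $a \in I_{\af} (b)$ with $d_{\ta} (a) + r + \ep < d_{\ta} (b)$ for all $\ta \in \QT (A)^{\af}$. By Lemma~\ref{L_5Z10_SbCnds} it suffices to show $(a - \dt)_{+} \precsim_{A, \af} b$ for every $\dt > 0$. Fix $\dt$ and set $a' = (a - \dt)_{+}$. The plan is to verify, for the scaled preordered semigroup $(\W (A, \af), \leq, \langle 1_A \rangle_{\af})$ of Lemma~\ref{MyLemma1}, that for suitable $n, m$ with $\frac{m}{n} > r$ we have
\[
f \bigl( (n + 1) \langle a' \rangle_{\af} + m \langle 1 \rangle_{\af} \bigr) < f (n \langle b' \rangle_{\af})
\]
for \emph{every} state $f \in \Delta (\W (A, \af))$, where $b' = (b - \sm)_+$ for some small $\sm > 0$. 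The key tool is Lemma~\ref{LemDynamicalStateRegularization}. Given any state~$f$, we have $f (\langle a' \rangle_{\af}) \leq \widetilde{f} (\langle a \rangle_{\af})$ and $\widetilde{f} (\langle b' \rangle_{\af}) \leq f (\langle b' \rangle_{\af})$. Since $\widetilde{f} = d_{\ta}$ for some $\ta \in \QT (A)^{\af}$ by Lemma~\ref{MyLemma1}, the hypothesis can be transferred to all states. This requires a uniform gap, which I get by compactness of $\QT (A)^{\af}$ and a Dini-type argument as in Lemma~\ref{LemUniformProjectionBelowPositive} (Lemma~6.13 of~\cite{Phl40}). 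That argument produces $\sm > 0$ with $d_{\ta} (a) + r + \ep/2 < d_{\ta} ((b - 2\sm)_{+})$ uniformly. I will use $\widetilde{f}(\langle b'\rangle) \geq f(\langle (b - 2\sm)_+\rangle)$ in the appropriate direction, and choose $n$ large and $m$ with $r < \frac{m}{n} < r + \ep/4$. Here $d_\ta(a)$ is bounded because $a \in M_k(A)$, so the extra $\frac{1}{n} d_\ta(a)$ term is absorbed.

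Having established strict inequality on all states, Lemma~\ref{Lem2_8} gives $N x + z + u \leq N y + z$, with $x = (n+1)\langle a'\rangle + m\langle 1\rangle$ and $y = n\langle b'\rangle$. The element $y$ is an order unit: $b$ is $\af$-full by Lemma~\ref{GeneralResiduallyAndFullEquivalent}, since $d_\ta(b) > r + \ep$, and $b'$ is full too for small $\sm$ because $d_\ta(b')$ is uniformly positive. Lemma~\ref{L_2528_Full_OU} then makes $\langle b' \rangle_{\af}$ an order unit. Lemma~\ref{Lem2_3} then removes $z$, giving $N' x + u \leq N' y$. Setting $n_1 = N' n$ and $m_1 = N' m$, this reads $(n_1 + N') \langle a' \rangle + m_1 \langle 1 \rangle \leq n_1 \langle b' \rangle$, which dominates $(n_1 + 1)\langle a'\rangle + m_1 \langle 1 \rangle$. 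Since $\frac{m_1}{n_1} = \frac{m}{n} > r$, algebraic dynamical $r$-comparison gives $a' \precsim_{A,\af} b' \precsim_{A,\af} b$.

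The main obstacle is the transfer from $\QT (A)^{\af}$ to arbitrary (non-lower-semicontinuous) states, which requires the cut-downs $(a - \dt)_+$ and $(b - \sm)_+$. The case $\QT (A)^{\af} = \E$ is excluded because residual stable finiteness and amenability make $\QT (A)^{\af} \neq \E$, as in the proof of Lemma~\ref{GeneralResiduallyAndFullEquivalent}. Then $\Delta \neq \E$ via regularization, though the strict inequality on all states is what we need regardless.
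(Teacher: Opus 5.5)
Part~(1) is correct. Dividing $(n+1) d_\tau(a) + m \leq n\, d_\tau(b)$ by $n$ is slightly simpler than the paper's passage to $kn+1$, $km$, $kn$. Your quotient argument is also valid: an equivariant quotient map preserves $\precsim_{A,\alpha}$, and $x \precsim_{B,\beta} 0$ forces $x = 0$. So $b$ is $\alpha$-full and $a \in I_\alpha(b)$. The paper gets $a \in I_\alpha(b)$ directly from $\langle a \rangle_\alpha \leq n \langle b \rangle_\alpha$.

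In part~(2) your skeleton is the paper's:
\begin{itemize}
\item regularize a state $f$ to $\widetilde{f} = d_\tau$ with $\tau \in \QT(A)^\alpha$;
\item apply Lemma~\ref{Lem2_8};
\item remove $z$ by Lemma~\ref{Lem2_3}, using that $b$ is $\alpha$-full;
\item finish with algebraic comparison.
\end{itemize}
Building the coefficient $n+1$ into $x$, with $n$ large so that $d_\tau(a)/n \leq k/n$ is absorbed, is a valid substitute for the paper's final multiplication by $n_3$.

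The step that fails is the uniform gap. You claim a Dini-type argument gives $\sigma > 0$ with $d_\tau(a) + r + \varepsilon/2 < d_\tau((b - 2\sigma)_+)$ for all $\tau \in \QT(A)^\alpha$. The argument behind Lemma~\ref{LemUniformProjectionBelowPositive} needs the left-hand function to be continuous; there it is $d_\tau(p)$ for a projection. Here $\tau \mapsto d_\tau(a)$ is only lower semicontinuous, and the claim is false.

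For a counterexample, take $G$ trivial and $A = C([0,1], M_6)$, with $a(x) = \diag(1, x, x, 0, 0, 0)$ and $b(x) = \diag(1,1,1,x,x,x)$. Then $d_\tau(b) - d_\tau(a) \geq \frac{1}{3}$ for every $\tau$, so the hypothesis holds whenever $r + \varepsilon < \frac{1}{3}$. But for $\tau$ the point mass at $x = \sigma$, you get $d_\tau(a) = \frac{1}{2} = d_\tau((b - 2\sigma)_+)$ for every $\sigma \in (0, \frac{1}{2})$.

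Thus neither the strict inequality on all states for $b' = (b - \sigma)_+$ nor the fullness of $b'$ is justified. Indeed, if $\sigma > \delta$, the state inequality fails at point masses in $(\delta, \sigma]$.

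The fix is to drop both the cut-down of $b$ and the uniformity. Lemma~\ref{Lem2_8} only needs $f(x) < f(y)$ for each state separately. For a given $f$ with $\widetilde{f} = d_\tau$, you have $f(\langle b \rangle_\alpha) \geq \widetilde{f}(\langle b \rangle_\alpha) = d_\tau(b)$, which already goes the right way. So the pointwise hypothesis at that single $\tau$ suffices, and only $a$ needs to be cut down. With $b' = b$ your argument goes through, and it is essentially the paper's proof.
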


\begin{proof}
We first prove~(\ref{Lem2}).
Assume that $A$ has algebraic
dynamical $r$-comparison, and let $\ep > 0$.
Suppose that $a, b \in \Mi (A)_+$ satisfy $a \in I_{\af} (b)$ and
\begin{equation}\label{EQ1}
d_{\ta} (a) + (r + \ep) < d_{\ta} (b)
\end{equation}
for every $\ta \in \QT (A)^{\alpha}$.
We must prove that $a \precsim_{A, \af} b$.

Choose $m, n \in \N$ such that
\begin{equation}\label{EQ2}
r < \frac{m}{n} < r + \ep.
\end{equation}
Let $\dt > 0$.
Set
\[
a_{\dt} = (a - \dt)_+,
\qquad
\et_{\dt} = n \langle a_{\dt} \rangle_{\af} + m \langle 1_A \rangle_{\af},
\andeqn
\mu = n \langle b \rangle_{\af}.
\]

We claim that
\begin{enumerate}
\setcounter{enumi}{\value{TmpEnumi}}
\item\label{EqAllStates}
$f (\et_{\dt}) < f (\mu)$ for all $f \in \Delta \left( \W (A, \af) \right)$.
\end{enumerate}
To prove the claim, let $f \in \Delta(W (A, \alpha))$, and
let $\widetilde{f} \in \Delta_{\mathrm{lsc}} (A, \alpha)$
be as in Lemma~\ref{LemDynamicalStateRegularization}.
By Lemma~\ref{MyLemma1}, there exists
$\tau \in \QT (A)^{\alpha}$ such that $\widetilde{f} = d_{\tau}^{\alpha}$.
Using Lemma~\ref{LemDynamicalStateRegularization}, we get
\begin{equation}\label{30_07_26_Eq1}
f (\langle a_{\dt} \rangle_{\af})
\leq \widetilde{f} (\langle a \rangle_{\af})
= d_{\tau}^{\alpha} (\langle a \rangle_{\af})
= d_{\tau} (a)
\end{equation}
and
\begin{equation}\label{30_07_26_Eq2}
d_{\tau} (b)
 = d_{\tau}^{\alpha} (\langle b \rangle_{\af})
 = \widetilde{f} (\langle b \rangle_{\af})
 \leq f (\langle b \rangle_{\af}).
\end{equation}
Using $f (\langle 1_A \rangle_{\af}) = 1$ at the second step,
(\ref{30_07_26_Eq1})
at the third step, (\ref{EQ2}) at the fourth step,
(\ref{EQ1}) at the fifth step,
and (\ref{30_07_26_Eq2}) at the sixth step, we get
\[
\begin{split}
f (\et_{\dt})
& = n f (\langle a_{\dt} \rangle_{\af})
     + m f (\langle 1_A \rangle_{\af})
  = n f (\langle a_{\dt} \rangle_{\af}) +m
\\
& \leq n d_{\tau} (a) +m
  < n d_{\tau} (a) +n (r+ \ep)
  < n d_{\tau} (b)
  \leq n f (\langle b \rangle_{\af})
  = f (\mu).
\end{split}
\]
This proves (\ref{EqAllStates}).

Now we apply Lemma~\ref{Lem2_8} to the scaled preordered
semigroup $\W (A, \af)$ with $\langle 1_A \rangle_{\af}$ in place of $u$,
with $\et_{\dt}$ in place of $x$, and with $\mu$ in place of $y$, to choose
$n_1 \in \N$ and $\xi \in \W (A, \af)$ such that
\begin{equation}\label{EqStateSeparation}
n_1 \et_{\dt} + \xi + \langle 1_A \rangle_{\af}
\leq n_1 \mu + \xi.
\end{equation}
By~(\ref{EQ1}),
we have $\inf_{\ta \in \QT (A)^{\af}} d_{\ta} (b) \geq r + \ep$.
Therefore $b$ is $\af$-full
by Lemma~\ref{GeneralResiduallyAndFullEquivalent}.
So $\langle b \rangle_{\af}$ is an order unit in $\W (A, \af)$
by Lemma~\ref{L_2528_Full_OU}.
Since
$\langle b \rangle_{\af} \leq n_1 n \langle b \rangle_{\af} = n_1 \mu$,
Remark~\ref{MyRemark} shows that $n_1 \mu$
is also an order unit in $\W (A, \af)$.
Apply Lemma~\ref{Lem2_3} with $n_1 \et_{\dt}$ in place of $x$,
with $n_1 \mu$ in place of $y$,
with $\langle 1_A \rangle_{\af}$ in place of $u$,
and with $\xi$ in place of $z$,
to choose $n_2 \in \N$ such that
\[
n_2 n_1 \et_{\dt}+ \langle 1_A \rangle_{\af}
\leq n_2 n_1 \mu.
\]
Applying the definitions of $\et_{\dt}$ and $\mu$, we obtain
\begin{equation}\label{EqExpandedCancellation}
n_2 n_1 n \langle a_{\dt} \rangle_{\af}
 + n_2 n_1 m \langle 1_A \rangle_{\af}
 + \langle 1_A \rangle_{\af}
\leq n_2 n_1 n \langle b \rangle_{\af}.
\end{equation}

Choose $n_3 \in \N$ such that
\begin{equation}\label{EqAUnitBound}
\langle a_{\dt} \rangle_{\af}
\leq n_3 \langle 1_A \rangle_{\af}.
\end{equation}
Multiplying (\ref{EqExpandedCancellation}) by $n_3$
and using (\ref{EqAUnitBound}), we get
\[
\bigl( n_3 n_2 n_1 n + 1 \bigr)
\langle a_{\dt} \rangle_{\af}
 + n_3 n_2 n_1 m \langle 1_A \rangle_{\af}
\leq n_3 n_2 n_1 n \langle b \rangle_{\af}.
\]
Since
\[
\frac{n_3 n_2 n_1 m}{n_3 n_2 n_1 n} = \frac{m}{n} > r,
\]
algebraic dynamical $r$-comparison implies that
$\langle a_{\dt} \rangle_{\af} \leq \langle b \rangle_{\af}$,
that is, $(a - \dt)_+ \precsim_{A, \af} b$.
Since $\dt > 0$ was arbitrary,
it follows from Lemma~\ref{L_5Z10_SbCnds} that
$a \precsim_{A, \af} b$.
This completes the proof of~(\ref{Lem2}).

We now prove (\ref{Lem1}).
Let $A$ have dynamical $r$-comparison.
Let $a, b \in \Mi (A)_{+}$ and let $n, m \in \N$ satisfy
\begin{equation}\label{Eq5}
\frac{m}{n} > r
\andeqn
(n + 1) \langle a \rangle_{\alpha} + m \langle 1_A \rangle_{\alpha}
  \leq n \langle b \rangle_{\alpha}.
\end{equation}
%
Clearly,
$\langle a \rangle_{\alpha} \leq n \langle b \rangle_{\alpha}$,
so $a \in I_{\af}(b)$.
Choose $k \in \N$ such that $\frac{k m}{k n + 1} > r$.
Using~(\ref{Eq5}) and $\langle a \rangle_{\alpha} \geq 0$, we get
\[
(k n + 1) \langle a \rangle_{\alpha} + k m \langle 1_A \rangle_{\alpha}
 \leq k (n + 1) \langle a \rangle_{\alpha}
      + k m \langle 1_A \rangle_{\alpha}
 \leq k n \langle b \rangle_{\alpha}.
\]
Using this and Lemma~\ref{L_0817_QTG}, we get,
for all $\tau \in \QT (A)^{\alpha}$,
\[
d_{\tau} (a) + r
 < d_{\tau} (a) + \frac{k m}{k n + 1}
 \leq \frac{k n}{k n + 1} d_{\tau} (b)
 < d_{\tau} (b).
\]
Since $A$ has dynamical $r$-comparison,
we have $\langle a \rangle_{\alpha} \leq \langle b \rangle_{\alpha}$.
\end{proof}

The next theorem is the equivariant version of
Proposition 3.2.3 of~\cite{BRTW12}.

\begin{thm}\label{T_2529_rc_is_alg}
Let $A$ be a residually stably finite unital C*-algebra,
let $G$ be a discrete amenable  group,
and let $\af \colon G \to \Aut (A)$ be an action of $G$ on~$A$.
Then $\operatorname{r}_{A, \af} = \operatorname{rc} (A, \alpha)$.
\end{thm}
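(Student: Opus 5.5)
The plan is to deduce the equality directly from the two parts of Lemma~\ref{L_2529_To_alg_rc}, comparing the two infima in Definitions~\ref{D_0817_eqrc_dfn} and~\ref{D_5Z10}. One bookkeeping point comes first. The set of $r$ for which $A$ has dynamical $r$-comparison is upward closed: if $s > r$ and $d_{\ta} (a) + s < d_{\ta} (b)$ for all $\ta \in \QT (A)^{\af}$, then also $d_{\ta} (a) + r < d_{\ta} (b)$. The same holds for algebraic dynamical $r$-comparison, since $\frac{m}{n} > s$ implies $\frac{m}{n} > r$. Also, $\rc (A, \af)$ is an infimum over $[0, \I)$, whereas $\arc_{A, \af}$ is an infimum over $(0, \I)$. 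Because dynamical $0$-comparison implies dynamical $r$-comparison for all $r > 0$, the infimum defining $\rc (A, \af)$ is unchanged if we restrict to $r \in (0, \I)$.

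\emph{Inequality $\arc_{A, \af} \leq \rc (A, \af)$.} If $\rc (A, \af) = \I$ there is nothing to prove. Otherwise, let $r > \rc (A, \af)$ with $r > 0$. By upward closure, $A$ has dynamical $r$-comparison. Lemma~\ref{L_2529_To_alg_rc}(\ref{Lem1}), which needs only amenability of~$G$ and unitality, gives algebraic dynamical $r$-comparison, so $\arc_{A, \af} \leq r$. Letting $r$ decrease to $\rc (A, \af)$ gives the inequality.

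\emph{Inequality $\rc (A, \af) \leq \arc_{A, \af}$.} If $\arc_{A, \af} = \I$ there is nothing to prove. Otherwise, let $r > 0$ be such that $A$ has algebraic dynamical $r$-comparison. Residual stable finiteness lets us apply Lemma~\ref{L_2529_To_alg_rc}(\ref{Lem2}), so $A$ has dynamical $(r + \ep)$-comparison for every $\ep > 0$. Hence $\rc (A, \af) \leq r + \ep$ for all $\ep > 0$, so $\rc (A, \af) \leq r$. Taking the infimum over such $r$ gives $\rc (A, \af) \leq \arc_{A, \af}$.

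There is no real obstacle, because all the substantive work sits in Lemma~\ref{L_2529_To_alg_rc}, which rests on the state-separation results (Lemmas~\ref{Lem2_8} and~\ref{Lem2_3}) and on the identification of lower semicontinuous states with invariant quasitraces (Lemma~\ref{MyLemma1}). The only care needed is the handling of the endpoint $r = 0$ and of infinite values described above. One should also note that the condition $a \in I_{\af} (b)$ in Definition~\ref{D_0817_eqrc_dfn} causes no mismatch. In the proof of part~(\ref{Lem1}), $a \in I_{\af} (b)$ is derived from the algebraic inequality. In part~(\ref{Lem2}), it is assumed as a hypothesis and is in any case automatic, since $b$ is $\af$-full by Lemma~\ref{GeneralResiduallyAndFullEquivalent}.
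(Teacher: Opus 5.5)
Your proposal is correct and follows the paper's route. The paper also derives the theorem immediately from both parts of Lemma~\ref{L_2529_To_alg_rc} together with Definitions~\ref{D_0817_eqrc_dfn} and~\ref{D_5Z10}. Your remarks on upward closure, the endpoint $r = 0$, and the infinite cases simply spell out the bookkeeping the paper leaves implicit.
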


\begin{proof}
This is immediate from Lemma~\ref{L_2529_To_alg_rc},
Definition~\ref{D_5Z10} and Definition~\ref{D_0817_eqrc_dfn}.
\end{proof}

\section{Equivariant direct limits}

In this section, we prove lower semicontinuity
of the dynamical radius of comparison
under equivariant direct limits with injective connecting maps
when the direct limit algebra is residually stably finite.
The algebraic characterization obtained in the previous section
is a crucial step in passing comparison relations
from the limit algebra to sufficiently large finite stages.

We begin with two finite stage approximation lemmas
and then prove the main direct limit theorem.
\begin{lem}\label{LemFiniteStageCompactContainment}
Let $A$ be a \ca, let $x, y \in \Mi (A)_{+}$ satisfy $x \ll_A y$,
and let $\ep > 0$.
Then there are $v \in \Mi (A)$ and $\dt > 0$
such that whenever $B$ is a \ca,
$\psi \colon B \to A$ is an injective homomorphism,
and $\widetilde{x}, \widetilde{y} \in \Mi (B)_{+}$
and $\widetilde{v} \in \Mi (B)$ satisfy
\begin{equation}\label{Eq_FiniteStageApprox}
\| \psi (\widetilde{x}) - x \| < \dt,
\qquad
\| \psi (\widetilde{y}) - y \| < \dt,
\andeqn
\| \psi (\widetilde{v}) - v \| < \dt.
\end{equation}
then $(\widetilde{x}- \ep)_+ \ll_B \widetilde{y}$.
\end{lem}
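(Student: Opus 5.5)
Since $x \ll_A y$, we first choose $\rho > 0$ with $x \precsim_A (y - \rho)_+$; as $\ep > 0$ is given, we may also pick $v \in \Mi (A)$ with
\[
\bigl\| v (y - \rho)_+ v^* - x \bigr\| < \frac{\ep}{4}.
\]
We fix this $v$ and aim to find $\dt > 0$ depending only on $x$, $y$, $v$, $\ep$, $\rho$. The target conclusion is $(\widetilde{x} - \ep)_+ \precsim_B (\widetilde{y} - \rho/2)_+$, which gives $(\widetilde{x} - \ep)_+ \ll_B \widetilde{y}$.

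The key steps, in order, are as follows. First, the map $c \mapsto (c - \rho)_+$ is norm continuous on bounded sets of positive elements, uniformly in the algebra; this holds because $f_\rho$ is approximable by polynomials on $[0, \| y \| + 1]$. Hence, for $\dt$ small, $\| (\widetilde{y} - \rho)_+ \|$ is bounded and
\[
\bigl\| \psi ((\widetilde{y} - \rho)_+) - (y - \rho)_+ \bigr\| = \bigl\| (\psi (\widetilde{y}) - \rho)_+ - (y - \rho)_+ \bigr\|
\]
is small. Since $\psi$ is an injective homomorphism, it is isometric. Combining this with the bounds on $\widetilde{v}$ and $\widetilde{x}$, we get
\[
\bigl\| \widetilde{v} (\widetilde{y} - \rho)_+ \widetilde{v}^* - \widetilde{x} \bigr\| = \bigl\| \psi \bigl( \widetilde{v} (\widetilde{y} - \rho)_+ \widetilde{v}^* - \widetilde{x} \bigr) \bigr\| < \frac{\ep}{2}
\]
for $\dt$ chosen small in terms of $\| v \|$, $\| y \|$, and the modulus of continuity of $f_\rho$.

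Second, we apply the standard fact (Kirchberg--R{\o}rdam, e.g.\ Lemma~1.5 of \cite{Phl40} type) that if $\| c - d \| < \ep$ for positive $c, d$, then $(c - \ep)_+ \precsim d$. This yields
\[
(\widetilde{x} - \ep)_+ \precsim_B \widetilde{v} (\widetilde{y} - \rho)_+ \widetilde{v}^* \precsim_B (\widetilde{y} - \rho)_+.
\]
Third, $(\widetilde{y} - \rho)_+ \precsim_B (\widetilde{y} - \rho/2)_+$, so $(\widetilde{x} - \ep)_+ \ll_B \widetilde{y}$ by the characterization recalled after Definition~\ref{Sec_CuSub}. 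Matrix sizes cause no trouble: we take $\widetilde{x}$, $\widetilde{y}$, $\widetilde{v}$ in a common $M_k (B)$ with $\psi$ amplified.

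The only delicate point is the uniform continuity of functional calculus. It must be independent of $B$, which is why we work through $\psi$ being isometric rather than any property of $B$ itself. The polynomial approximation argument handles this, given the a priori bound $\| \widetilde{y} \| \leq \| y \| + 1$.
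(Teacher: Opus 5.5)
Your proposal is correct and follows essentially the same route as the paper: choose $\rho$ and $v$ with $\| v (y - \rho)_+ v^* - x \|$ small, use uniform continuity of $c \mapsto (c - \rho)_+$ on bounded sets together with the fact that the injective homomorphism $\psi$ is isometric to get $\| \widetilde{v} (\widetilde{y} - \rho)_+ \widetilde{v}^* - \widetilde{x} \| < \ep$, and then conclude $(\widetilde{x} - \ep)_+ \precsim_B (\widetilde{y} - \rho)_+ \ll_B \widetilde{y}$. The only differences are cosmetic: you use $\ep/4$ and $\ep/2$ where the paper uses $\ep/3$ and $\ep$, and you pass explicitly through $(\widetilde{y} - \rho/2)_+$.
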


\begin{proof}
Since $x \ll_A y$, there exists $\rho > 0$ such that
$x \precsim_A (y - \rho)_+$.
Choose $v \in M_{\infty} (A)$ such that
\begin{equation}\label{Eq_FiniteStageWitness}
\left\| v (y - \rho)_+ v^* - x \right\| < \frac{\ep}{3}.
\end{equation}
Set
\begin{equation}\label{Eq_6819_StSt}
M = \max ( \| x \|, \|y \|, \|v\|, 1 )
\andeqn
\mu = \frac{\ep}{3 M (M + 1)}.
\end{equation}
Let $f \colon [0, M + 1] \to [0, \infty)$
be the continuous function given by $f (\ld) = (\ld - \rho)_+$.
Use uniform continuity of continuous functional calculus
on bounded sets to choose  $\dt_0 > 0$ such that
whenever $C$ is a \ca{} and $c, d \in C_+$ satisfy
\[
\|c \|, \| d \| \leq M + 1 \andeqn \| c - d \| < \dt_0,
\]
then
\begin{equation}\label{Eq_FiniteStageFC}
\| (c - \rho)_+ - (d - \rho)_+ \| < \mu.
\end{equation}
Set
\begin{equation}\label{Eq_6819_St}
\dt = \min \left( 1, \dt_0, \frac{\ep}{6 (M + 1)^2} \right).
\end{equation}

Let $B$ be a \ca,
let $\psi \colon B \to A$ be an injective homomorphism,
and suppose that
$\widetilde{x}, \widetilde{y} \in M_{\infty} (B)_+$ and
$\widetilde{v} \in M_{\infty} (B)$ satisfy~(\ref{Eq_FiniteStageApprox}).
Since $\dt \leq 1$, we have
\begin{equation}\label{Eq_6819_leq_M_1}
\| \psi (\widetilde{v}) \|
< M + 1
\andeqn
\| \psi (\widetilde{y}) \|
< M + 1.
\end{equation}
Since
$\| \psi (\widetilde{y}) - y \| < \dt \leq \dt_0$, it follows from
(\ref{Eq_FiniteStageFC}) that
\begin{equation}\label{Eq_FiniteStageFCApplication}
\left\| (\psi (\widetilde{y}) - \rho)_+ - (y - \rho)_+ \right\| < \mu.
\end{equation}
Using (\ref{Eq_FiniteStageApprox}), (\ref{Eq_6819_leq_M_1}),
(\ref{Eq_FiniteStageFCApplication}),
and~(\ref{Eq_FiniteStageWitness}) at the second step,
and (\ref{Eq_6819_St}) and~(\ref{Eq_6819_StSt})
at the second last step, we obtain
\[
\begin{split}
& \| \psi ( \widetilde{v} (\widetilde{y} - \rho)_+ \widetilde{v}^*
      - \widetilde{x} ) \|
\\
& \hspace*{2em} {\mbox{}}
\leq
\| \psi (\widetilde{v}) - v\| \|(\psi (\widetilde{y}) - \rho)_+ \|
  \| \psi (\widetilde{v}) \|
    +  \|v\| \| (\psi (\widetilde{y}) - \rho)_+
        - (y - \rho)_+ \|
      \| \psi (\widetilde{v}) \|
\\
& \hspace*{4em} {\mbox{}}
  + \|v\| \|(y - \rho)_+ \| \| \psi (\widetilde{v})^* - v^*\|
  + \| v (y - \rho)_+ v^* - x \|
  + \| x - \psi (\widetilde{x}) \|
\\
& \hspace*{2em} {\mbox{}}
 < \dt (M + 1)^2 + M \mu (M + 1) + M^2 \dt + \frac{\ep}{3} + \dt
\\
& \hspace*{2em} {\mbox{}}
 < 2 (M + 1)^2 \dt + \mu M (M + 1) + \frac{\ep}{3}
 \leq  \frac{\ep}{3} +  \frac{\ep}{3} +  \frac{\ep}{3}
 = \ep.
\end{split}
\]
Since $\psi$ is injective, it follows that
$\left\| \widetilde{v} (\widetilde{y} - \rho)_+ \widetilde{v}^*
     - \widetilde{x} \right\|
< \ep$.
Using this at the first step, we get
\[
(\widetilde{x} - \ep)_+
 \precsim_B \widetilde{v} (\widetilde{y} - \rho)_+ \widetilde{v}^*
 \precsim_B (\widetilde{y} - \rho)_+
 \ll_B \widetilde{y},
\]
as required.
\end{proof}

\begin{lem}\label{LemFiniteStageDynamicalComparison}
Let $G$ be a discrete  group.
Let $\left((A_k)_{k \in \N}, (\ph_{l, k})_{k \leq l} \right)$
be an equivariant direct system of unital \ca{s} $A_k$
with actions $\alpha^{(k)} \colon G \to \Aut (A_k)$,
in which $\ph_{l, k} \colon A_k \to A_l$
is unital and injective whenever $k \leq l$.
Set $A = \dirlim A_j$
with the direct limit action $\alpha \colon G \to \Aut (A)$,
and, for $k \in \N$, let $\ph_{\infty, k} \colon A_k \to A$
be the standard map to the direct limit.

Let $k \in \N$, let $a, b \in M_{\infty} (A_k)_+$, and suppose
$\ph_{\infty, k} (a) \precsim_{A, \alpha} \ph_{\infty, k} (b)$.
Then for every $\ep > 0$ there is $l \geq k$ such that
\[
\ph_{l, k}\bigl((a - \ep)_+ \bigr)
 \precsim_{A_l, \alpha^{(l)}} \ph_{l, k} (b).
\]
\end{lem}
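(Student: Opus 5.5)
We would prove Lemma~\ref{LemFiniteStageDynamicalComparison} by pulling back to a finite stage the finitely many compact containments witnessing the dynamical subequivalence, using Lemma~\ref{LemFiniteStageCompactContainment} for each one.

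\textbf{Step 1: a witness in the limit.} Set $x = \ph_{\infty, k} (a)$ and $y = \ph_{\infty, k} (b)$, and fix $\ep > 0$. By Definition~\ref{D_0908_EqCmp} applied with $\frac{\ep}{2}$, either $(x - \frac{\ep}{2})_+ \ll_A y$, or there are a chain~(\ref{Eq_1412_Main}) with group elements $g_{j, i}$ and elements $x_{j, i} \in \Mi (A)_+$.

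\textbf{Step 2: the easy alternative.} Suppose $(x - \frac{\ep}{2})_+ \ll_A y$. Apply Lemma~\ref{LemFiniteStageCompactContainment} to this relation with tolerance $\frac{\ep}{2}$, obtaining $v$ and $\dt$. Since $\bigcup_l \ph_{\infty, l} (\Mi (A_l))$ is dense, choose $l$ and $\widetilde{v} \in \Mi (A_l)$ with $\| \ph_{\infty, l} (\widetilde{v}) - v \| < \dt$. Take $\widetilde{x} = (\ph_{l, k} (a) - \frac{\ep}{2})_+$ and $\widetilde{y} = \ph_{l, k} (b)$; these map exactly onto $(x - \frac{\ep}{2})_+$ and $y$. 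Since $\ph_{\infty, l}$ is injective, the lemma gives $(\ph_{l, k} (a) - \ep)_+ \ll_{A_l} \ph_{l, k} (b)$. This is an ordinary Cuntz subequivalence, hence dynamical by Lemma~\ref{R_0817_NonEqToEq}(\ref{Item_R_0817_NonEqToEq_AToDyn}).

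\textbf{Step 3: the chain alternative.} Here there are $m + 1$ compact containments: $(x - \frac{\ep}{2})_+ \ll_A X_1$, then $\bigoplus_i \af_{g_{j, i}} (x_{j, i}) \ll_A X_{j + 1}$, and finally $\bigoplus_i \af_{g_{m, i}} (x_{m, i}) \ll_A y$, where $X_j = \bigoplus_i x_{j, i}$. Lemma~\ref{LemFiniteStageCompactContainment} only produces $(\widetilde{x} - \eta)_+ \ll \widetilde{y}$, so each step loses a cutdown, and this loss must be absorbed. First use Lemma~\ref{L_5Z10_SbCnds}, or compact containment directly, to replace each $x_{j, i}$ in the right-hand sides by $(x_{j, i} - \sm)_+$ for a small $\sm > 0$, while keeping every containment. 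Then apply Lemma~\ref{LemFiniteStageCompactContainment} to each of the $m + 1$ relations with tolerance $\sm$, obtaining witnesses $v_j$ and constants $\dt_j$, and let $\dt$ be their minimum. Choose $l \geq k$ large enough that there are positive elements $\widetilde{x}_{j, i} \in \Mi (A_l)_+$ and elements $\widetilde{v}_j$ with $\| \ph_{\infty, l} (\widetilde{x}_{j, i}) - x_{j, i} \|$ and $\| \ph_{\infty, l} (\widetilde{v}_j) - v_j \|$ small. To get positive approximants, approximate $x_{j, i}^{1/2}$ by some $c$ and take $c^* c$.

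Equivariance of the connecting maps gives $\ph_{\infty, l} \circ \af^{(l)}_g = \af_g \circ \ph_{\infty, l}$. Hence $\ph_{\infty, l} (\af^{(l)}_g (\widetilde{x}_{j, i}))$ is close to $\af_g (x_{j, i})$, because automorphisms are isometric. Also, $(\cdot - \sm)_+$ commutes with $\af_g$ and is norm continuous. So the hypotheses of Lemma~\ref{LemFiniteStageCompactContainment} hold at stage $l$, and we obtain
\[
\Bigl( \bigoplus_i \af^{(l)}_{g_{j, i}} (\widetilde{x}_{j, i}) - \sm \Bigr)_+ \ll_{A_l} \bigoplus_i (\widetilde{x}_{j + 1, i} - \sm)_+ ,
\]
with the obvious analogues for the first and last relations.

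\textbf{Step 4: reassembling a chain at stage $l$, the main obstacle.} The left side above is a cutdown of $\bigoplus_i \af^{(l)}_{g_{j, i}} ((\widetilde{x}_{j, i} - \sm)_+)$, not that element itself, so the chain does not close up directly. We would set $x'_{j, i} = (\widetilde{x}_{j, i} - 2 \sm)_+$ at stage $l$. Since the cutdown of a direct sum is the direct sum of the cutdowns, $\bigoplus_i \af^{(l)}_{g_{j, i}} (x'_{j, i})$ is Cuntz below $\bigl( \bigoplus_i \af^{(l)}_{g_{j, i}} (\widetilde{x}_{j, i}) - \sm \bigr)_+$. That element is compactly contained in $\bigoplus_i (\widetilde{x}_{j + 1, i} - \sm)_+$, and we need this in turn to sit below $\bigoplus_i x'_{j + 1, i}$ up to a further cutdown. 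Getting these thresholds consistent is the delicate point. We would handle it by choosing a decreasing sequence of thresholds $\sm_1 > \sm_2 > \cdots > \sm_{m + 1}$ in advance, one for each level of the chain, so that each level's cutdown only needs to dominate the next level's. Equivalently, we could first rewrite the chain in $A$ so that each right-hand side is compactly contained in the next with explicit gaps, using Lemma~\ref{Tran_meStepLem} and one step relations $\precsim_{A, \af, 0}$. We then conclude with Lemma~\ref{Tran_meStepLem}(\ref{Tran_meStepLem_b}) at stage $l$: the pulled-back relations are one step relations there, and together they give
\[
\ph_{l, k} \bigl( (a - \ep)_+ \bigr) \precsim_{A_l, \af^{(l)}} \ph_{l, k} (b).
\]
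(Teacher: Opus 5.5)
Your Steps~1--3 follow the paper's proof. You split according to Definition~\ref{D_0908_EqCmp}, pull back each of the finitely many compact containments with Lemma~\ref{LemFiniteStageCompactContainment}, get positive approximants as $c^* c$ with $c$ close to a square root, and use $\ph_{\infty, l} \circ \af^{(l)}_g = \af_g \circ \ph_{\infty, l}$. Step~4, however, rests on a false claim, and what you build on it does not work as written. With $x'_{j, i} = (\widetilde{x}_{j, i} - 2 \sm)_+$, compact containment in $\bigoplus_i (\widetilde{x}_{j + 1, i} - \sm)_+$ gives no subequivalence to $\bigoplus_i x'_{j + 1, i}$, so that chain really does break. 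The proposed repair by decreasing thresholds is never made precise.

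In fact nothing needs repairing. Put $t_{j, i} = (\widetilde{x}_{j, i} - \sm)_+$. Continuous functional calculus commutes with automorphisms and with block diagonal sums, so
\[
\Bigl( \bigoplus_i \af^{(l)}_{g_{j, i}} (\widetilde{x}_{j, i}) - \sm \Bigr)_+
 = \bigoplus_i \af^{(l)}_{g_{j, i}} (t_{j, i})
\]
exactly, not merely up to a further cutdown. This gives the chain at stage~$l$ directly:
\begin{itemize}
\item your Step~3 display is exactly $\bigoplus_i \af^{(l)}_{g_{j, i}} (t_{j, i}) \ll_{A_l} \bigoplus_i t_{j + 1, i}$;
\item the last link is $\bigoplus_i \af^{(l)}_{g_{m, i}} (t_{m, i}) \ll_{A_l} \ph_{l, k} (b)$;
\item the first link gives $\bigl( \ph_{l, k} (a) - \frac{\ep}{2} - \sm \bigr)_+ \ll_{A_l} \bigoplus_i t_{1, i}$, hence $\ph_{l, k} ((a - \ep)_+) \ll_{A_l} \bigoplus_i t_{1, i}$ as long as $\sm \leq \frac{\ep}{2}$.
\end{itemize}
Every $(\ph_{l, k} ((a - \ep)_+) - \lambda)_+$ is then also compactly contained in $\bigoplus_i t_{1, i}$. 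So this is directly a chain as in Definition~\ref{D_0908_EqCmp}, with no need for Lemma~\ref{Tran_meStepLem}.

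The only real constraint is that the tolerance used on the link leaving level~$j$ not exceed the cutdown defining the new level~$j$ elements. A single $\sm$ below $\frac{\ep}{2}$ and below all the gaps in the limit containments already satisfies it. This is exactly the mechanism of the paper. It uses level dependent $\ep_h$, chosen with a factor $2$ of room in $A$, applies Lemma~\ref{LemFiniteStageCompactContainment} to link~$h$ with tolerance $\ep_h$, and sets $t_{h, j} = (z_{h, j} - \ep_h)_+$.
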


\begin{proof}
Let $\ep > 0$.

First suppose
$\ph_{\infty, k} (a) \precsim_A \ph_{\infty, k} (b)$.
Then
\[
\ph_{\infty, k} \left( \left(a - \frac{\ep}{2} \right)_+ \right)
 \ll_A \ph_{\infty, k} (b).
\]
Apply Lemma~\ref{LemFiniteStageCompactContainment}
to this compact containment relation,
with $\frac{\ep}{2}$ in place of $\ep$,
getting $v \in M_{\infty} (A)$ and $\dt > 0$.
Choose $l \geq k$ and
$\widetilde{v} \in \Mi (A_l)$ such that
$\| \ph_{\infty, l} (\widetilde{v}) - v \| < \dt$.
Set
\[
\widetilde{x}
 = \ph_{l, k} \left(\left (a - \frac{\ep}{2} \right)_+ \right)
\andeqn
\widetilde{y} = \ph_{l, k} (b).
\]
Then
\[
\ph_{\infty, l} (\widetilde{x})
 = \ph_{\infty, k} \left( \left(a - \frac{\ep}{2} \right)_+ \right)
\andeqn
\ph_{\infty, l} (\widetilde{y}) = \ph_{\infty, k} (b).
\]
The conclusion of Lemma~\ref{LemFiniteStageCompactContainment}
now implies that
$\ph_{l, k}\bigl((a - \ep)_+ \bigr) \ll_{A_l} \ph_{l, k} (b)$.
It follows that
$\ph_{l, k} ((a - \ep)_+ ) \precsim_{A_l, \alpha^{(l)}} \ph_{l, k} (b)$,
as desired.

We now assume the other condition in Definition~\ref{D_0908_EqCmp}.
To simplify notation, in this part of the proof,
for $x = (x_1, x_2, \ldots, x_n) \in \Mi (A)^n$
and $g = (g_1, g_2, \ldots, g_n) \in G^n$, we abbreviate
\[
\diag (x) = \diag (x_1, x_2, \ldots, x_n)
\aqn
\af_g (x)
 = \bigl( \af_{g_1} (x_1), \, \af_{g_2} (x_2),
    \, \ldots, \,  \af_{g_n} (x_n) \bigr).
\]
Thus, for example,
\[
\diag (\af_g (x))
 = \diag \bigl( \af_{g_1} (x_1), \, \af_{g_2} (x_2),
    \, \ldots, \,  \af_{g_n} (x_n) \bigr).
\]
We use analogous abbreviations involving $A_k$ and $\af^{(k)}$.
Thus, if we apply the condition in Definition~\ref{D_0908_EqCmp}
with $\frac{\ep}{4}$ in place of~$\ep$,
it says that there are $m, n_1, n_2, \ldots, n_m \in \N$,
and $g_{h, j} \in G$ and $x_{h, j} \in M_{\infty} (A)_{+}$
for $h = 1, 2, \ldots, m$ and $j = 1, 2, \ldots, n_h$,
such that, with
\[
g_h = (g_{h, 1}, g_{h, 2}, \ldots, g_{h, n_h})
\andeqn
x_h = (x_{h, 1}, x_{h, 2}, \ldots, x_{h, n_h}),
\]
we have
\begin{equation}\label{EqFiniteStageDynChainFirst}
\ph_{\infty, k} \left( \left( a - \frac{\ep}{4} \right)_+ \right)
\ll_A \diag (x_1),
\end{equation}
\begin{equation}\label{EqFiniteStageDynChainMiddle}
\diag (\af_{g_h} (x_h)) \ll_A \diag (x_{h + 1})
\end{equation}
for $h = 1, 2, \ldots, m - 1$,
and
\begin{equation}\label{EqFiniteStageDynChainLast}
\diag (\af_{g_m} (x_m)) \ll_A \ph_{\infty, k} (b).
\end{equation}

Set $\ep_0 = \frac{\ep}{2}$.
Since
\[
\ph_{\infty, k} ( (a - \ep_0)_+ )
\ll_A \ph_{\infty, k} \left( \left( a - \frac{\ep}{4} \right)_+ \right)
\ll_A \diag (x_1),
\]
there exists $\ep_1 > 0$ such that
\begin{equation}\label{EqFiniteStageDynFirstBuffered}
\ph_{\infty, k} ( (a - \ep_0 )_+ )
\precsim_A (\diag (x_1) - 2 \ep_1)_+.
\end{equation}
Similarly, for $h = 1, 2, \ldots, m - 1$, the relation
(\ref{EqFiniteStageDynChainMiddle}) allows us to choose
$\ep_{h + 1} > 0$ such that
\begin{equation}\label{EqFiniteStageDynMiddleBuffered}
\diag ( \af_{g_h} (x_h)) \precsim_A (\diag (x_{h + 1}) - 2 \ep_{h + 1})_+.
\end{equation}
Set
\begin{equation}\label{Eq_6827_cd0}
c_0 = \ph_{\infty, k} ( (a - \ep_0)_+ )
\andeqn
d_0 = (\diag (x_1) - \ep_1)_+.
\end{equation}
For $h = 1, 2, \ldots, m - 1$, set
\begin{equation}\label{Eq_6827_cd_h}
c_h = \diag ( \af_{g_h} (x_h))
\andeqn
d_h = (\diag (x_{h + 1}) - \ep_{h + 1})_+.
\end{equation}
Finally set
\begin{equation}\label{Eq_6827_cd__m}
c_m = \diag ( \af_{g_m} (x_m))
\andeqn
d_m = \ph_{\infty, k} (b).
\end{equation}
For $h = 0, 1, \ldots, m$, by
(\ref{EqFiniteStageDynFirstBuffered}),
(\ref{EqFiniteStageDynMiddleBuffered}), and
(\ref{EqFiniteStageDynChainLast}), we have
\begin{equation}\label{EqFiniteStageDynCompactLinks}
c_h \ll_A d_h.
\end{equation}
Apply Lemma~\ref{LemFiniteStageCompactContainment} with $x = c_h$,
$y = d_h$, and with $\ep_h$ in place of $\ep$,
getting $v_h \in M_{\infty} (A)$ and $\dt_h > 0$
such that the following holds.
\begin{enumerate}
%
\item\label{I_6820_UseLemma}
Whenever $D$ is a C*-algebra,
$\psi \colon D \to A$ is an injective homomorphism, and
$r, s \in M_{\infty} (D)_+$ and $w \in M_{\infty} (D)$ satisfy
\[
\| \psi (r) - c_h \| < \dt_h,
\quad
\| \psi (s) - d_h \| < \dt_h,
\aqn
\| \psi (w) - v_h \| < \dt_h,
\]
then
$(r - \ep_h)_+ \ll_D s$.
\end{enumerate}
For $h = 1, 2, \ldots, m$, choose $\rh_h > 0$ such that
whenever $z \in \Mi (A)_+$ satisfies $\| z - \diag (x_h) \| < \rh_h$,
one has
\begin{equation}\label{EqFiniteStageDynFunctionalTolerance}
\left\| (z - \ep_h)_+ - (\diag (x_h) - \ep_h)_+ \right\| < \dt_{h - 1}.
\end{equation}

Set
\begin{equation}\label{EqFiniteStageDynGlobalTolerance}
\et = \min \bigl( \{\dt_h \colon 0 \leq h \leq m\}
\cup \{\rh_h \colon 1 \leq h \leq m\} \bigr) > 0.
\end{equation}
For $h = 1, 2, \ldots, m$ and $j = 1, 2, \ldots, n_h$, choose
$\gm_{h, j} > 0$ such that
\begin{equation}\label{EqFiniteStageDynKappa}
\gm_{h, j} \left( 2 \|x_{h, j}^{1/2}\| + \gm_{h, j} \right)
< \et.
\end{equation}

Choose $l \geq k$, elements
$y_{h, j} \in \Mi (A_l)$ for
$h = 1, 2, \ldots, m$ and $j = 1, 2, \ldots, n_h$,
and elements $\widetilde{v}_h \in \Mi (A_l)$ for $h = 0, 1, \ldots, m$,
such that
\begin{equation}\label{EqFiniteStageDynSquareRootApprox}
\left\| \ph_{\infty, l} (y_{h, j}) - x_{h, j}^{1/2} \right\|
< \gm_{h, j}
\end{equation}
for all $h$ and $j$, and
\begin{equation}\label{EqFiniteStageDynWitnessApproxAtl}
\left\| \ph_{\infty, l} (\widetilde{v}_h) - v_h \right\| < \et
\end{equation}
for all~$h$.
For $h = 1, 2, \ldots, m$ and $j = 1, 2, \ldots, n_h$, set
\[
z_{h, j} = (y_{h, j})^* y_{h, j}
\in \Mi (A_l)_+.
\]
Then
\begin{equation}\label{EqFiniteStageDynPositiveApproxAtl}
\begin{split}
\left\| \ph_{\infty, l} (z_{h, j}) - x_{h, j} \right\|
& = \left\| \ph_{\infty, l} (y_{h, j})^*
          \ph_{\infty, l} (y_{h, j})
   - x_{h, j}^{1/2} x_{h, j}^{1/2} \right\|
\\
& \leq
\left\| \ph_{\infty, l} (y_{h, j}) - x_{h, j}^{1/2} \right\|
      \left\| \ph_{\infty, l} (y_{h, j}) \right\|
\\
& \hspace*{3em} + \left\| x_{h, j}^{1/2} \right\|
      \left\| \ph_{\infty, l} (y_{h, j}) - x_{h, j}^{1/2} \right\|
\\
& < \gm_{h, j}
    \left( 2 \left\| x_{h, j}^{1/2} \right\| + \gm_{h, j} \right)
  < \et.
\end{split}
\end{equation}
Further set $t_{h, j} = (z_{h, j} - \ep_h)_+$.

For $h = 1, 2, \ldots, m$, define
\[
z_h = (z_{h, 1}, z_{h, 2}, \ldots, z_{h, n_h})
\andeqn
t_h = (t_{h, 1}, t_{h, 2}, \ldots, t_{h, n_h}).
\]
Thus,
\begin{equation}\label{Eq_6827_NNSr}
(\diag (z_h) - \ep_h)_+
= \diag (t_h)
= \bigoplus_{j = 1}^{n_h} (z_{h, j} - \ep_h)_+.
\end{equation}
Then
\begin{equation}\label{EqFiniteStageDynActionCutdown}
(\diag (\af^{(l)}_{g_h} (z_h)) - \ep_h )_+ = \diag (\af^{(l)}_{g_h} (t_h))
\end{equation}
and, by~(\ref{EqFiniteStageDynPositiveApproxAtl}),
\begin{equation}\label{EqFiniteStageDynDirectSumApprox}
\left\| \ph_{\infty, l} (\diag (z_h)) - \diag (x_h) \right\|
 = \max_{1 \leq j \leq n_h} \| \ph_{\infty, l} (z_{h, j}) - x_{h, j} \|
 < \et.
\end{equation}
By equivariance,
\begin{equation}\label{EqFiniteStageDynTranslatedApprox}
\left\| \ph_{\infty, l}
 \left( \diag \left( \af^{(l)}_{g_h} (z_h) \right) \right)
  - \diag (\af_{g_h} (x_h)) \right\|
 < \et
 \leq \dt_h.
\end{equation}
Moreover, since $\et \leq \rh_h$, equations
(\ref{Eq_6827_NNSr}),
(\ref{EqFiniteStageDynFunctionalTolerance}), and
(\ref{EqFiniteStageDynDirectSumApprox}) give
\begin{equation}\label{EqFiniteStageDynCutdownApprox}
\begin{split}
& \left\| \ph_{\infty, l} (\diag (t_h))
    - (\diag (x_h) - \ep_h)_+ \right\|
\\
& \hspace*{3em} {\mbox{}}
 = \bigl\| ( \ph_{\infty, l} (\diag (x_h)) - \ep_h )_+
      - (\diag (z_h)- \ep_h)_+ \bigr\|
 < \dt_{h - 1}.
\end{split}
\end{equation}

We now transfer the comparison chain to $A_l$.
For the first link, in~(\ref{I_6820_UseLemma}) take
\[
D = A_l,
\quad
\ps = \ph_{\I, l},
\quad
r = \ph_{l, k} ( (a - \ep_0)_+ ),
\quad
s = \diag (t_1),
\aqn
w = \widetilde{v}_0.
\]
Then
$\ph_{\infty, l} (r) = c_0 = \ph_{\infty, k} ( (a - \ep_0)_+ )$.
Moreover, by (\ref{Eq_6827_cd0}),
(\ref{EqFiniteStageDynCutdownApprox}),
(\ref{EqFiniteStageDynWitnessApproxAtl}),
and (\ref{EqFiniteStageDynGlobalTolerance}),
\[
\left\| \ph_{\infty, l} (s) - d_0 \right\| < \dt_0
\andeqn
\left\| \ph_{\infty, l} (w) - v_0 \right\| < \et \leq \dt_0.
\]
Therefore, using the conclusion of~(\ref{I_6820_UseLemma})
at the second step,
\begin{equation}\label{EqFiniteStageDynFirstLinkAtl}
\ph_{l, k} ( (a - \ep)_+ )
 = ( \ph_{l, k} ( (a - \ep_0)_+ ) - \ep_0 )_+
 \ll_{A_l} \diag (t_1).
\end{equation}
Next, let $h \in \{ 1, 2, \ldots, m - 1 \}$.
In~(\ref{I_6820_UseLemma}) take
\[
D = A_l,
\quad
\ps = \ph_{\I, l},
\quad
r = \diag (\alpha_{g_h}^{(l)} (z_h)),
\quad
s = \diag (t_{h + 1}),
\aqn
w = \widetilde{v}_h.
\]
By (\ref{Eq_6827_cd_h}), (\ref{EqFiniteStageDynTranslatedApprox}),
(\ref{EqFiniteStageDynCutdownApprox}),
and~(\ref{EqFiniteStageDynWitnessApproxAtl}), we have
\[
\| \ph_{\infty, l} (r) - c_h \| < \dt_h,
\quad
\| \ph_{\infty, l} (s) - d_h \| < \dt_h,
\aqn
\| \ph_{\infty, l} (w) - v_h \| < \et \leq \dt_h.
\]
Therefore
\[
\left( \diag \bigl( \alpha^{(l)}_{g_h} (z_h) \bigr) - \ep_h \right)_+
  \ll_{A_l} \diag( t_{h + 1})
\]
by the conclusion of~(\ref{I_6820_UseLemma}), so
\begin{equation}\label{EqFiniteStageDynMiddleLinksAtl}
\diag \bigl( \alpha^{(l)}_{g_h} (t_h) \bigr) \ll_{A_l} \diag(t_{h + 1}).
\end{equation}
by~(\ref{EqFiniteStageDynActionCutdown}).

For the final link, in~(\ref{I_6820_UseLemma}) take
\[
D = A_l,
\quad
\ps = \ph_{\I, l},
\quad
r = \diag (\alpha^{(l)}_{g_m} (z_m)),
\quad
s = \ph_{l, k} (b),
\aqn
w = \widetilde{v}_m.
\]
By (\ref{Eq_6827_cd__m}),
(\ref{EqFiniteStageDynTranslatedApprox}),
and (\ref{EqFiniteStageDynWitnessApproxAtl}),
\[
\| \ph_{\infty, l} (r) - c_m \| < \dt_m
\andeqn
\| \ph_{\infty, l} (w) - v_m \| < \et \leq \dt_m.
\]
Furthermore,
$\ph_{\infty, l} (s) = \ph_{\infty, k} (b) = d_m$.
Therefore, by (\ref{EqFiniteStageDynActionCutdown})
and the conclusion of~(\ref{I_6820_UseLemma}),
\begin{equation}\label{EqFiniteStageDynFinalLinkAtl}
\diag (\alpha^{(l)}_{g_m} (t_m))
 = \left( \diag (\alpha^{(l)}_{g_m} (z_m)) - \ep_m \right)_+
 \ll_{A_l} \ph_{l, k} (b).
\end{equation}

Combining
(\ref{EqFiniteStageDynFirstLinkAtl}),
(\ref{EqFiniteStageDynMiddleLinksAtl}), and
(\ref{EqFiniteStageDynFinalLinkAtl}), we obtain
\[
\ph_{l, k} ((a - \ep)_+ ) \ll_{A_l} \diag (t_1),
\]
\[
\diag (\af^{(l)}_{g_h} (t_h)) \ll_{A_l} \diag (t_{h + 1})
\]
for $h = 1, 2, \ldots, m - 1$, and
\[
\diag (\af^{(l)}_{g_m} (t_m)) \ll_{A_l} \ph_{l, k} (b).
\]
For every $\lambda > 0$,
\[
\left( \ph_{l, k} ((a - \ep)_+) - \lambda \right)_+
\ll_{A_l} \ph_{l, k} ((a - \ep)_+)
\ll_{A_l} \diag (t_1).
\]
This shows that
$\ph_{l, k} ((a - \ep)_+ ) \precsim_{A_l, \alpha^{(l)}} \ph_{l, k} (b)$,
and completes the proof.
\end{proof}

When the group is trivial, the next theorem
is Proposition~3.2.4 in \cite{BRTW12}
or Proposition~2.13(ii) in \cite{AsAsv21}.

\begin{thm}\label{DyDirLim}
Let $G$ be a discrete amenable group.
Let $\left((A_k)_{k \in \N}, (\ph_{l, k})_{k \leq l} \right)$
be an equivariant direct system of unital \ca{s} $A_k$
with actions $\alpha^{(k)} \colon G \to \Aut (A_k)$,
in which $\ph_{l, k} \colon A_k \to A_l$
is unital and injective whenever $k \leq l$.
Set $A = \dirlim A_j$
with the direct limit action $\alpha \colon G \to \Aut (A)$.
Assume that $A$ is residually stably finite.
Then
\[
\rc (A, \alpha)
 \leq \liminf_{k \to \infty} \rc \bigl( A_k, \alpha^{(k)} \bigr).
\]
\end{thm}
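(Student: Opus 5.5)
We will prove the theorem through the algebraic characterization. By Theorem~\ref{T_2529_rc_is_alg}, which applies because $A$ is residually stably finite and $G$ is amenable, it suffices to show $\arc_{A, \af} \leq \liminf_{k} \rc (A_k, \af^{(k)})$. The individual $A_k$ need not be residually stably finite, so for them we use only the direction Lemma~\ref{L_2529_To_alg_rc}(\ref{Lem1}): dynamical $r$-comparison implies algebraic dynamical $r$-comparison, and this direction needs no residual stable finiteness. If the $\liminf$ is infinite there is nothing to prove. Otherwise fix $r$ strictly greater than the $\liminf$. Then there are infinitely many $k$ with $\rc (A_k, \af^{(k)}) < r$, so $A_k$ has dynamical $r$-comparison for those $k$, and hence algebraic dynamical $r$-comparison. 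It then suffices to show that $A$ has algebraic dynamical $r$-comparison, since letting $r$ decrease to the $\liminf$ gives the inequality.

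\textbf{Setup.} Let $m, n \in \N$ with $\frac{m}{n} > r$, and let $a, b \in \Mi (A)_{+}$ satisfy
\[
(n + 1) \langle a \rangle_{\af} + m \langle 1_A \rangle_{\af} \leq n \langle b \rangle_{\af}.
\]
By Lemma~\ref{L_5Z10_SbCnds}, it is enough to show $(a - \ep)_{+} \precsim_{A, \af} b$ for every $\ep > 0$. The relation above says
\[
c = a^{\oplus (n+1)} \oplus 1_{M_m (A)} \precsim_{A, \af} b^{\oplus n}.
\]
Fix $\ep > 0$ and choose a small $\dt > 0$. By Lemma~\ref{L_5Z10_SbCnds}, there is $\sm > 0$ with $(c - \dt)_{+} \precsim_{A, \af} (b^{\oplus n} - \sm)_{+}$.

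\textbf{Approximation at a finite stage.} Approximate $a$ and $b$ by $\ph_{\infty, k} (a_k)$ and $\ph_{\infty, k} (b_k)$ with $a_k, b_k \in \Mi (A_k)_{+}$, for some $k$ in our infinite set, within a tolerance small compared with $\dt$ and $\sm$. Using the standard estimate that $\| x - y \| < \ld$ implies $(x - \ld)_{+} \precsim y$, we transport the relation. Functional calculus commutes with direct sums, and $1$ maps to $1$ because the maps are unital. We obtain
\[
\ph_{\infty, k} \bigl( (a_k - 2\dt)_{+}^{\oplus (n+1)} \oplus (1 - \dt)_{+}^{\oplus m} \bigr) \precsim_{A, \af} \ph_{\infty, k} (b_k)^{\oplus n}.
\]
Here $(1 - \dt)_{+}$ is a positive multiple of $1$, so it is Cuntz equivalent to $1$. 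Apply Lemma~\ref{LemFiniteStageDynamicalComparison}, with a further cut-down by $\dt$ on the left, to pass to a stage $l \geq k$. We may take $l$ in our infinite set, since enlarging $l$ preserves the relation by functoriality. At stage $l$ we get
\[
(n + 1) \langle (a_l - 3\dt)_{+} \rangle_{\af^{(l)}} + m \langle 1 \rangle_{\af^{(l)}} \leq n \langle b_l \rangle_{\af^{(l)}},
\]
where $a_l = \ph_{l, k} (a_k)$ and $b_l = \ph_{l, k} (b_k)$. The cut-down of $1_{M_m}$ stays Cuntz equivalent to $1$ as long as $\dt < \frac{1}{2}$.

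\textbf{Conclusion.} Algebraic dynamical $r$-comparison of $A_l$ now gives $(a_l - 3\dt)_{+} \precsim_{A_l, \af^{(l)}} b_l$. Pushing this forward by $\ph_{\infty, l}$, which is equivariant and functorial, gives $(\ph_{\infty, k} (a_k) - 3\dt)_{+} \precsim_{A, \af} \ph_{\infty, k} (b_k)$. Since $\| \ph_{\infty, k} (a_k) - a \|$ and $\| \ph_{\infty, k} (b_k) - b \|$ are small, we can absorb these errors, for instance using $b$ in place of $\ph_{\infty, k} (b_k)$ up to a small cut-down. Choosing $\dt$ and the tolerances small relative to $\ep$ then yields $(a - \ep)_{+} \precsim_{A, \af} b$.

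\textbf{Main obstacle.} The delicate step is the bookkeeping when transporting the relation to finite stages. The relation must hold for elements of $A_l$ themselves, not for their images in $A$. Lemma~\ref{LemFiniteStageDynamicalComparison} handles exactly this, but it costs an $\ep$ cut-down on the left only. This is harmless for the $a$-part, and for the unit part it is harmless because $1$ is a projection. On the right side, $b$ itself must come from a finite stage, so the error in $b$ has to be absorbed through the $\sm$ slack obtained from Lemma~\ref{L_5Z10_SbCnds}.
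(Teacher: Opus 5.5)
Your architecture matches the paper's. You reduce to $\arc_{A, \af}$ via Theorem~\ref{T_2529_rc_is_alg} for the limit only, and use only Lemma~\ref{L_2529_To_alg_rc}(\ref{Lem1}) at the finite stages. You then transport the semigroup inequality to some $A_l$ with Lemma~\ref{LemFiniteStageDynamicalComparison}, apply algebraic comparison there, and push forward. Working with $r > \liminf$ and infinitely many good indices is equivalent to the paper's passage to cofinal subsystems.

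\textbf{The gap.} The last step does not close as written. At stage $l$ you apply algebraic comparison with $b_l = \ph_{l, k} (b_k)$ itself on the right. After pushing forward you therefore have only $(\ph_{\I, k} (a_k) - 3 \dt)_{+} \precsim_{A, \af} \ph_{\I, k} (b_k)$. Norm closeness of $\ph_{\I, k} (b_k)$ to $b$ does not give $\ph_{\I, k} (b_k) \precsim_A b$; it gives only $(\ph_{\I, k} (b_k) - \eta)_{+} \precsim_A b$. You also cannot insert a cut-down of prescribed size $\eta$ into the right-hand side after the fact. Lemma~\ref{L_5Z10_SbCnds} only produces some $\dt' > 0$ depending on the relation, hence on $k$ and $l$, which were chosen after $\eta$. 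No general absorption is possible: in $C ([0, 1])$ with $b (t) = t$, one has $1 \precsim b + \eta$, yet $(1 - \ep)_{+} \not\precsim b$ for $\ep < 1$. Your closing remark that the error in $b$ is absorbed by the $\sm$ slack uses that slack in only one direction, namely to get $(b - \sm)_{+} \precsim_A \ph_{\I, k} (b_k)$ for the transport step.

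\textbf{The repair.} Cut $b_k$ down before passing to the finite stage, so that $\sm$ pays for both directions at once.
\begin{itemize}
\item Choose $\eta > 0$ with $2 \eta \leq \sm$ and $\| \ph_{\I, k} (b_k) - b \| < \eta$, and replace $b_k$ everywhere by $d_k = (b_k - \eta)_{+}$.
\item Then $(b - \sm)_{+} \precsim_A (b - 2 \eta)_{+} \precsim_A \ph_{\I, k} (d_k)$. So your transport step, the application of Lemma~\ref{LemFiniteStageDynamicalComparison}, and algebraic comparison in $A_l$ all go through with $\ph_{l, k} (d_k)$ on the right.
\item Also $\ph_{\I, k} (d_k) = (\ph_{\I, k} (b_k) - \eta)_{+} \precsim_A b$, so the final chain now ends in $b$.
\end{itemize}
This is exactly the role of $d_0 = (b_0 - \dt / 2)_{+}$ in the paper; see~(\ref{Eq_DyDirLim_BCutdownBelowd0}), where both $(b - \dt)_{+} \precsim_A \ph_{\I, k} (d_0)$ and $\ph_{\I, k} (d_0) \precsim_A b$ are recorded. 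With this change, and $4 \dt \leq \ep$ on the $a$ side, your argument is complete.
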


\begin{proof}
Taking the infimum over all cofinal subsystems,
we see that it is enough to prove that
\begin{equation}\label{Eq_DyDirLim_Sup}
\rc (A, \alpha)
\leq \sup_{k \in \N} \rc \bigl( A_k, \alpha^{(k)} \bigr).
\end{equation}
Set
$M = \sup_{k \in \N} \rc \bigl( A_k, \alpha^{(k)} \bigr)$.
If $M = \infty$, there is nothing to prove.
So suppose that $M < \infty$.
Since $A$ is residually stably finite, it follows from
Theorem~\ref{T_2529_rc_is_alg} that
$\rc (A, \alpha) = \arc_{A, \alpha}$.
We therefore prove that $\arc_{A, \alpha} \leq M$.

For $k \in \N$, let $\ph_{\infty, k} \colon A_k \to A$
be the standard map to the direct limit.

Let $a, b \in M_{\infty} (A)_+$
and let $n, m \in \N$ satisfy
\[
\frac{m}{n} > M
\andeqn
(n + 1) \langle a \rangle_{\alpha} + m \langle 1_A \rangle_{\alpha}
 \leq n \langle b \rangle_{\alpha}.
\]
We prove that
$\langle a \rangle_{\alpha} \leq \langle b \rangle_{\alpha}$.
Set
\[
x = a^{\oplus(n + 1)} \oplus (1_A)^{\oplus m}
\andeqn
y = b^{\oplus n},
\]
so that
\[
\langle x \rangle_{\alpha}
 = (n + 1) \langle a \rangle_{\alpha} + m \langle 1_A \rangle_{\alpha}
\andeqn
\langle y \rangle_{\alpha}
 = n \langle b \rangle_{\alpha},
\]
and $x \precsim_{A, \alpha} y$.

Fix $\ep \in (0, 1)$.
We use the condition in Lemma \ref{L_5Z10_SbCnds}(\ref{I_5Z10_CC_andCC})
to choose $\dt > 0$ such that
$\left( x - \frac{\ep}{4} \right)_+ \precsim_{A, \alpha} (y - \dt)_+$.
Since $\frac{\ep}{4} < 1$, this implies that
\begin{equation}\label{Eq_DyDirLim_BufferedComparison}
\left( \left( a - \frac{\ep}{4} \right)_+
              \right)^{\oplus(n + 1)}
        \oplus (1_A)^{\oplus m}
 \precsim_{A, \alpha} \bigl( (b - \dt)_+ \bigr)^{\oplus n}.
\end{equation}
Choose $k \in \N$ and $a_0, b_0 \in M_{\infty} (A_k)_+$
such that
\begin{equation}\label{Eq_DyDirLim_ApproxA}
\left\| \varphi_{\infty, k} (a_0) - a \right\| < \frac{\ep}{8}
\andeqn
\left\| \varphi_{\infty, k} (b_0) - b \right\| < \frac{\dt}{4}.
\end{equation}
Set
\begin{equation}\label{Eq_6828_c0d0Dfn}
c_0 = \left( a_0 - \frac{3 \ep}{8} \right)_+
\aqn
d_0 = \left( b_0 - \frac{\dt}{2} \right)_+.
\end{equation}
Then, by Lemma~1.7 of~\cite{Phl40},
\begin{equation}\label{Eq_DyDirLim_c0BelowA}
\varphi_{\infty, k} (c_0) \precsim_A \left( a - \frac{\ep}{4} \right)_+
\aqn
\left( a - \frac{3 \ep}{4} \right)_+
\precsim_A
 \varphi_{\infty, k} \left( \left( c_0 - \frac{\ep}{4} \right)_+ \right).
\end{equation}
and
\begin{equation}\label{Eq_DyDirLim_BCutdownBelowd0}
(b - \dt)_+ \precsim_A \varphi_{\infty, k} (d_0)
\andeqn
\varphi_{\infty, k} (d_0) \precsim_A \left( b - \frac{\dt}{4} \right)_+
 \precsim_A b.
\end{equation}
Now define
\begin{equation}\label{Eq_6821_Nst}
x_0 = (c_0)^{\oplus(n + 1)} \oplus (1_{A_k})^{\oplus m}
\andeqn
y_0 = (d_0)^{\oplus n}.
\end{equation}
Since the connecting maps are unital, relations
(\ref{Eq_DyDirLim_BufferedComparison}),
(\ref{Eq_DyDirLim_c0BelowA}), and
(\ref{Eq_DyDirLim_BCutdownBelowd0}) imply
\begin{equation}\label{Eq_DyDirLim_StageElementsInLimit}
\varphi_{\infty, k} (x_0)
\precsim_{A, \alpha}
  \left( \left( a - \frac{\ep}{4} \right)_{+} \right)^{\oplus (n + 1)}
     \oplus (1_{A})^{\oplus m}
\precsim_{A, \alpha} ((b - \dt)_+)^{\oplus n}
\precsim_{A, \alpha} \varphi_{\infty, k} (y_0).
\end{equation}

Apply Lemma~\ref{LemFiniteStageDynamicalComparison} to
(\ref{Eq_DyDirLim_StageElementsInLimit}),
with $\frac{\ep}{4}$ in place of~$\ep$,
getting $l \geq k$ such that
\begin{equation}\label{Eq_DyDirLim_ComparisonAtl}
\varphi_{l, k} \left( \left( x_0 - \frac{\ep}{4} \right)_+ \right)
\precsim_{A_l, \alpha^{(l)}} \varphi_{l, k} (y_0).
\end{equation}

Set
\begin{equation}\label{Eq_6821_cDefn}
c = \varphi_{l, k} \left( \left( c_0 - \frac{\ep}{4} \right)_+ \right)
\andeqn
d = \varphi_{l, k} (d_0).
\end{equation}
Since $(1 - \frac{\ep}{4}) 1_{A_l} \sim_{A_l} 1_{A_l}$,
(\ref{Eq_6821_Nst}),
(\ref{Eq_DyDirLim_ComparisonAtl}), and~(\ref{Eq_6821_cDefn}) yield
\begin{equation}\label{Eq_DyDirLim_StageSemigroupInequality}
(n + 1) \langle c \rangle_{\alpha^{(l)}}
    + m \langle 1_{A_l} \rangle_{\alpha^{(l)}}
\leq n \langle d \rangle_{\alpha^{(l)}}.
\end{equation}

Choose $r$ such that $M < r < \frac{m}{n}$.
Using Lemma \ref{L_2529_To_alg_rc}(\ref{Lem1}) at the first step,
we have
\[
\arc_{A_l, \af^{(l)}}
  \leq \rc \bigl( A_l, \alpha^{(l)} \bigr)
  \leq M < r.
\]
Therefore $\bigl( A_l, \alpha^{(l)} \bigr)$ has algebraic dynamical
$r$-comparison.
Since $\frac{m}{n} > r$,
the relation~(\ref{Eq_DyDirLim_StageSemigroupInequality})
implies that $c \precsim_{A_l, \alpha^{(l)}} d$.
Therefore $\ph_{\I, l} (c) \precsim_{A, \alpha} \ph_{\I, l} (d)$.
Using this at the third step,
(\ref{Eq_6821_cDefn}) and the second part of
(\ref{Eq_DyDirLim_c0BelowA}) at the second step,
(\ref{Eq_6821_cDefn}) at the fourth step,
and (\ref{Eq_6828_c0d0Dfn}) and~(\ref{Eq_DyDirLim_ApproxA})
at the last step, we obtain
\[
(a - \ep)_+
\precsim_A \left( a - \frac{3 \ep}{4} \right)_+
\precsim_A \varphi_{\infty, l} (c)
\precsim_{A, \alpha} \varphi_{\infty, l} (d)
= \varphi_{\infty, k} (d_0)
\precsim_A b.
\]
Since $\ep \in (0, 1)$ is arbitrary,
the condition of Lemma \ref{L_5Z10_SbCnds}(\ref{I_5Z10_CC})
implies that $a \precsim_{A, \alpha} b$, as desired.
\end{proof}

\section{Actions with forms of the Rokhlin property}\label{Sec_6827_Rok}

\indent
In this section, we study finite group actions with the Rokhlin
property or the weak tracial Rokhlin property.
Our main result is
that, on the purely positive part of $A$, dynamical Cuntz comparison
agrees with Cuntz comparison in the crossed product.
As a consequence, we obtain
explicit relations among $\rc (A, \af)$, $\rc' (A, \af)$, $\rc (\CGAa)$,
$\rc (A)$, and $\rc (A^{\af})$.

The model for the results of this section is Lemma~\ref{L_1412_CGA},
on the translation action on $C (G, A_0)$..
However, we have not been able to prove
that all of Lemma~\ref{L_1412_CGA} carries over.

We recall the definition of the weak tracial Rokhlin property.

\begin{dfn}\label{W_T_R_P_def}
Let $G$ be a finite group,
let $A$ be a simple unital \ca,
and let $\alpha \colon G \to \Aut (A)$  be an action of
$G$ on $A$.
We say that $\alpha$ has the
\emph{weak tracial Rokhlin property} if for every $\ep > 0$,
every finite set $F \subseteq A$, and every positive
element $x \in A$ with $\| x \| = 1$,
there exist orthogonal positive contractions
$f_g \in A$ for $g \in G$ such that,
with $f = \sum_{g \in G} f_g$, the following hold:
\begin{enumerate}
\item\label{W_T_R_P_def.a}
$\| a f_g - f_g a \| < \ep$ for all $g \in G$ and all $a \in F$.
\item\label{W_T_R_P_def.b}
$\| \alpha_{g} ( f_h ) - f_{g h} \| < \ep$ for all $g, h \in G$.
\item\label{W_T_R_P_def.c}
$1 - f \precsim_A x$.
\item\label{W_T_R_P_def.d}
$\|  f x f \| > 1 - \ep$.
\end{enumerate}
\end{dfn}

The next lemma seems potentially useful enough to state separately.

\begin{lem}\label{L_6824_CuShift}
Let $A$ be a \ca, let $c \in A_{+}$, let $w \in A \SM \{ 0 \}$,
and let $\ep > 0$.
Then
\[
(w c w^* - \ep)_{+} \precsim_A \left( c - \frac{\ep}{\| w \|^2} \right)_{+}.
\]
\end{lem}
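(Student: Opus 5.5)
The plan is to reduce to the standard facts $x^* x \sim_A x x^*$ (Cuntz equivalence) and $(y - \ep)_+ \precsim_A (z - \ep)_+$ whenever $0 \leq y \leq z$; the latter is a known consequence of R{\o}rdam's lemma, or of Lemma~1.7 of~\cite{Phl40}. First I rescale: set $\ld = \| w \|^2$ and $u = w / \| w \|$, so $\| u \| = 1$ and $w c w^* = u (\ld c) u^*$. Since $(\ld c - \ep)_+ = \ld \bigl( c - \frac{\ep}{\ld} \bigr)_+$ is Cuntz equivalent to $\bigl( c - \frac{\ep}{\ld} \bigr)_+$, it suffices to prove that for $d \in A_+$ and a contraction $u$,
\[
(u d u^* - \ep)_+ \precsim_A (d - \ep)_+ .
\]

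Next I pass from $u d u^*$ to a conjugate of the form $d^{1/2} u^* u d^{1/2}$. Put $x = u d^{1/2}$. Then $x x^* = u d u^*$ and $x^* x = d^{1/2} u^* u d^{1/2}$. For any continuous $f$ with $f (0) = 0$, functional calculus gives the identity $f (x x^*) = v f (x^* x) v^*$ from the polar decomposition in $A^{**}$. More concretely, the standard fact $(x x^* - \ep)_+ \sim_A (x^* x - \ep)_+$ yields
\[
(u d u^* - \ep)_+ \sim_A \bigl( d^{1/2} u^* u d^{1/2} - \ep \bigr)_+ .
\]

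Since $\| u \| \leq 1$, we have $u^* u \leq 1$, so $0 \leq d^{1/2} u^* u d^{1/2} \leq d$. The monotonicity fact for $(\cdot - \ep)_+$ under the order $\leq$ then gives
\[
\bigl( d^{1/2} u^* u d^{1/2} - \ep \bigr)_+ \precsim_A (d - \ep)_+ .
\]
This relation is not an operator inequality, but it is a standard Cuntz subequivalence, obtained from R{\o}rdam's Proposition~2.4 or from Lemma~1.7 of~\cite{Phl40} via the norm estimate. Chaining these relations completes the argument.

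The main obstacle is justifying these two standard steps, the $x x^* \leftrightarrow x^* x$ cutdown equivalence and monotonicity of $(\cdot - \ep)_+$ under $\leq$, inside $A$ itself, possibly nonunital, rather than in $A^{**}$. I would cite them as known Cuntz semigroup facts, as in \cite{ArPrTm} or~\cite{GP24}, rather than reprove them. If a direct proof is needed, the monotonicity step can be done as follows: for $0 \leq y \leq z$, write $y^{1/2} = s z^{1/2}$ with $s$ in $A^{**}$ a contraction, and then use R{\o}rdam's lemma that $\| y - z' \| < \ep$ implies $(y - \ep)_+ \precsim z'$, applied along approximations of $s$.
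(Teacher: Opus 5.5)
Your proposal is correct and is essentially the paper's proof. The paper uses the $x x^* \leftrightarrow x^* x$ equivalence (Proposition~2.3(ii) of \cite{EllRobSan11}) to get $(w c w^* - \ep)_+ \sim_A \bigl( c^{1/2} w^* w c^{1/2} - \ep \bigr)_+$, then applies $c^{1/2} w^* w c^{1/2} \leq \| w \|^2 c$ with the monotonicity fact (Lemma~1.7 of \cite{Phl40}), and pulls out the scalar $\| w \|^2$ at the end rather than normalizing $w$ first.

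Since that lemma can simply be cited, your fallback sketch is unnecessary. If you do write it out as stated, it only gives $(y - \ep)_+ \precsim_A z$. To get $(z - \ep)_+$ on the right, apply R{\o}rdam's lemma with $z' = s_n (z - \ep)_+ s_n^*$ for contractions $s_n \in A$ satisfying $s_n z s_n^* \to y$. Then $\| z - (z - \ep)_+ \| \leq \ep$ gives $(y - \ep - \eta)_+ \precsim_A (z - \ep)_+$ for every $\eta > 0$, and letting $\eta \to 0$ finishes the argument.
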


\begin{proof}
Using Proposition 2.3(ii) of~\cite{EllRobSan11} at the first step,
and $c^{1/2} w^* w c^{1/2} \leq \| w \|^2 c$ and Lemma~1.7 of~\cite{Phl40}
at the second step, we have
\[
\begin{split}
(w c w^* - \ep)_{+}
& \sim_A \bigl( c^{1/2} w^* w c^{1/2} - \ep \bigr)_+
\\
& \precsim_A ( \| w \|^2 c - \ep \bigr)_+
  = \| w \|^2 \left( c - \frac{\ep}{\| w \|^2} \right)_{+}
  \sim_A \left( c - \frac{\ep}{\| w \|^2} \right)_{+},
\end{split}
\]
as desired.
\end{proof}

\begin{thm}\label{MainThm_1}
Let $A$ be a simple unital C*-algebra, let $G$ be a finite group,
and let $\alpha \colon G \to \Aut (A)$ be an action of $G$ on~$A$
with the weak tracial Rokhlin property.
Let $a, b \in \Mi (A)_+$.
Assume that $0$ is a limit point of $\spec (b)$.
Then
$a \precsim_{A, \alpha} b$ \ifo{} $a \precsim_{C^* (G, A, \alpha)} b$.
\end{thm}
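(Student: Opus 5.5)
The forward implication is Lemma~\ref{R_0817_NonEqToEq}(\ref{Item_R_0817_NonEqToEq_DynToCP}), so the work is the converse. Assume $a \precsim_{\CGAa} b$ and fix $\ep > 0$. By Lemma~\ref{L_5Z10_SbCnds} it suffices to show $(a - \ep)_+ \precsim_{A, \af} b$. I will aim for a single application of the one step relation (Definition~\ref{D_0817_EqCmp_Orig}) and then use Lemma~\ref{R_0817_NonEqToEq}(\ref{Item_L_6816_0_to_af}).

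First I choose $\dt > 0$ and $v \in \Mi (\CGAa)$ with $\| v (b - \dt)_+ v^* - (a - \ep/2)_+ \|$ small. Write $v = \sum_{g \in G} v_g u_g$ with $v_g \in \Mi (A)$, where $u_g$ are the canonical unitaries. Let $F$ be the finite set of matrix entries of the $v_g$, $a$, and $b$. Since $0$ is a limit point of $\spec (b)$, I can choose a nonzero positive element $x \in A$ (after passing to a corner) with $x \oplus (b - \dt)_+ \precsim_A b$. To do this, take a function supported on a small interval $(0, \dt')$ which meets $\spec (b)$ and apply it to $b$. This element absorbs the Rokhlin remainder. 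Shrink $x$ further, using simplicity, so that $\card (G)$ copies of $x$, and also its $G$-translates, remain below $b$ modulo $(b - \dt)_+$.

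Apply the weak tracial Rokhlin property with $F$, a small tolerance, and this $x$. This gives orthogonal contractions $f_g$, approximately permuted by $\af$, approximately central, with $1 - f \precsim_A x$. The key computation is to cut $(a - \ep/2)_+$ into $f$ and $1 - f$. The part $(1 - f)(a - \ep/2)_+(1 - f)$ is Cuntz below $x$ in $A$. For the part $f \cdot f$, the off-diagonal terms $v_g u_g (b - \dt)_+ u_h^* v_h^*$ compress, via $f_k \approx \sum f_k$ and $u_g f_k u_g^* \approx f_{gk}$, into elements of $A$. More concretely, $f_k v_g u_g = f_k v_g \af_g(\cdot) u_g$ pairs with $f_{g^{-1} k}$ on $(b - \dt)_+$.

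The expected outcome is
\[
f_k (a - \ep/2)_+ f_k \approx w_k \Bigl( \bigoplus_{g} f_{g^{-1}k}^{1/2} (b - \dt)_+ f_{g^{-1}k}^{1/2} \Bigr) w_k^*
\]
for suitable $w_k \in \Mi (A)$. Using Lemma~\ref{L_6824_CuShift} to control the cutdowns, this yields the one step data: take $x_{(k, g)} = \af_{g}^{-1}$-translates of $f_{g^{-1} k}^{1/2} (b - \dt')_+ f_{g^{-1} k}^{1/2}$ in the decomposition of $(a - \ep)_+$, and the group elements $g$. The translated pieces sum, after reindexing, to elements dominated by $\sum_h f_h^{1/2} (b - \dt')_+ f_h^{1/2}$, which is approximately $f^{1/2} (b - \dt') f^{1/2}$ by approximate centrality, and this is $\precsim_A (b - \dt')_+$. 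Together with the $x$ part, this gives $\bigoplus_j \af_{g_j} (x_j) \precsim_A (b - \dt'')_+$, closing the one step relation.

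The main obstacle is this crossed product to $A$ transfer. The crossed product relation uses $u_g$'s and mixes Rokhlin towers, so I must carefully show that the orthogonality of the $f_h$ makes the sum over $g$ of translated pieces fit under a single copy of $b$ rather than $\card (G)$ copies. This is exactly where $\af$-moves replace the $u_g$. The error control, via perturbation lemmas such as Lemma~1.7 of~\cite{Phl40} with all tolerances chosen from $\ep$, is routine but lengthy.
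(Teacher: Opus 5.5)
\textbf{There is a genuine gap, exactly at the step you call the main obstacle.} Your level-by-level estimate cannot produce a single copy of $b$.

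For fixed $k$, as $g$ runs over $G$ so does $g^{-1} k$. So your bound for $f_k (a - \ep/2)_+ f_k$ already uses pieces of \emph{every} level of the tower of $(b - \dt)_+$, that is, a full copy of $\bigoplus_h f_h^{1/2} (b - \dt)_+ f_h^{1/2}$. After translating back, the pieces indexed by $(k, g) \in G \times G$ are $f_{g^{-1} k}^{1/2} (b - \dt)_+ f_{g^{-1} k}^{1/2}$. Each $h \in G$ occurs as $g^{-1} k$ exactly $\card (G)$ times. So the reindexing gives $\card (G)$ copies of $f^{1/2} (b - \dt)_+ f^{1/2}$, not one, and you only obtain $(a - \ep)_+ \precsim_{A, \af} (b - \dt)_+^{\oplus \card (G)} \oplus x$.

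This loss is intrinsic to the method, not a bookkeeping issue. Estimating each level through the $u_1$-coefficient $\sum_g v_g \af_g ((b - \dt)_+) v_g^*$, i.e.\ through the conditional expectation, discards the information that the $u_h$-coefficients of $v (b - \dt)_+ v^*$ for $h \neq 1$ are small. Already in the model of Lemma~\ref{L_1412_CGA} this shows up. Take exact towers $p_g = \ch_{\{ g \}}$, $a = 1 \otimes c$, and $b = p_1 \otimes c'$ with $c' \sim c^{\oplus \card (G)}$. Your estimate bounds $p_k a p_k$ by all of $\af_k (b)$ for every $k$, hence only by $\card (G)$ copies of $b$ in total. (Also, your displayed formula cannot hold with $w_k \in \Mi (A)$ as written, since the pieces appear as $\af_g$-images rather than $A$-conjugates; you implicitly correct this with the translates.)

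\textbf{The missing idea is to treat all levels of $a$ and of $b$ simultaneously, using one matrix over $G$.} Write $a_0 = (a - \ep/2)_+$, $b_0 = (b - \dt)_+$ and $r = \sum_g r_g u_g$. The paper moves every level of both elements to the base, setting
$a_1 = \sum_g e_{g, g} \otimes \af_{g^{-1}} (f_g^3 a_0 f_g^3)$ and
$b_1 = \sum_g e_{g, g} \otimes \af_{g^{-1}} (f_g b_0 f_g)$.
It then takes the single element $w = \sum_{g, h} e_{h, g^{-1} h} \otimes \af_{h^{-1}} (f_h r_g f_h) \in M_{\card (G)} (A)$.

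One then has $w b_1 w^* \approx a_1$. The diagonal blocks come from the $u_1$-coefficient of $r b_0 r^*$. The off-diagonal blocks vanish precisely because the $u_h$-coefficients for $h \neq 1$ are small. This yields $a_1 \precsim_A b_1$ inside $A$. On the other side, $b_1 \sim_{A, \af} \bigoplus_g f_g b_0 f_g \sim_A f b_0 f \precsim_A b_0$, which is a single copy.

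This argument uses two $\af$-moves: the levels of $a$ go down to the base, and the levels of $b$ come back up. So the conclusion comes from transitivity (Lemma~\ref{Tran_meStepLem}), not from one application of the one step relation.

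Your treatment of the remainder matches the paper. There, $1 - f \precsim_A j (b)$ with $j (b)$ orthogonal to $b_0$ and $j (b) + b_0 \precsim_A b$, supplied by the spectral hypothesis. After the reduction to $n = 1$, one copy of $j (b)$ suffices, so no further shrinking is needed.
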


\begin{proof}
The forward implication follows from
Lemma \ref{R_0817_NonEqToEq}(\ref{Item_R_0817_NonEqToEq_DynToCP}).
We prove the backward implication.

Choose $n \in \N$ such that $a, b \in M_n (A)_{+}$.
The action $\alpha \colon G \to \Aut (M_n (A))$
also has the weak tracial Rokhlin property, by Theorem~1.3 of \cite{FG20}.
Therefore we may assume $n = 1$.
Let $\ep > 0$.
Since $a \precsim_{C^* (G, A, \alpha)} b$,
there is $\dt \in (0, \ep)$ such that
\[
\left(a - \frac{\ep}{2} \right)_+
  \precsim_{C^* (G, A, \alpha)} (b - \dt)_+.
\]
Set
\begin{equation}\label{Eq_6823_Sta0b0e0}
a_0 = \left(a - \frac{\ep}{2} \right)_+,
\qquad
b_0 = (b - \dt)_+,
\andeqn
\ep_0 = \frac{\ep}{16 \card (G)^2}.
\end{equation}
Choose $r \in C^* (G, A, \alpha)$ such that
\begin{equation}\label{2EQ3_25_12_23}
\| r b_0 r^* - a_0 \| < \ep_0.
\end{equation}
Write $r = \sum_{g \in G} r_g u_g$ with $r_g \in A$ for all $g \in G$.
Expanding $r b_0 r^*$ and substituting in~(\ref{2EQ3_25_12_23}),
we get
\[
\left\|
\left( \sum_{g \in G} r_g \alpha_{g} (b_0) r^*_g - a_0 \right) u_1
+ \sum_{h \in G \setminus \{ 1 \}}  \left(
   \sum_{g \in G }
   r_g \alpha_{g} (b_0) \alpha_{h} (r^*_{h^{-1} g}) \right) u_h \right\|
 < \ep_0.
\]
The norms of the coefficients are then all less than $\ep_0$.
For every $t \in G$, apply $\alpha_{t^{-1}}$ to the coefficients,
getting
\begin{equation}\label{2EQ4_25_12_23}
\left\| \sum_{g \in G} \alpha_{t^{-1}} (r_g)
    \alpha_{t^{-1} g} \left(b_0 \right) \alpha_{t^{-1}} (r^*_g)
  - \alpha_{t^{-1}} \left(a_0 \right) \right\| < \ep_0
\end{equation}
and, for all $h \in G \setminus \{ 1 \}$,
\begin{equation}\label{2EQ5_25_12_23}
\left\|
 \sum_{g \in G} \alpha_{t^{-1}} (r_g) \alpha_{t^{-1} g} (b_0)
       \alpha_{t^{-1}h} (r^*_{h^{-1} g}) \right\|
  < \ep_0.
\end{equation}

Since $0$ is a limit point of $\spec (b)$,
we can choose $\lambda \in \spec (b) \cap (0, \dt)$.
Let $j \colon [0, \infty) \to [0, 1]$ be a continuous function
such that $j (\lambda) = 1$ and $\supp (j) \subseteq (0, \dt)$.
Then
\begin{equation}\label{Eq1_2024_01_16}
\| j (b) \| = 1,
\qquad
j (b) b_0 = 0,
\andeqn
j (b) +  b_0 \precsim_{A} b.
\end{equation}
Set
\[
F = \bigl\{ \alpha_k (a_0), \alpha_k (b_0) \colon k \in G \bigr\}
\cup \left\{ \alpha_{h} (r_g), \alpha_{h} (r^*_g) \colon g, h \in G \right\},
\]
\[
M = \max \left( 1, \, \max_{c \in F} \| c \| \right),
\]
and
\begin{equation}\label{Eq_6824_St}
\ep_1 = \frac{\ep}{ 16^2 \, \card (G)^6 M^3}.
\end{equation}
Use the definition of the weak tracial Rokhlin property
with $F$ as given, $\ep_1$ in place of $\ep$,
and $j (b)$ in place of $x$ to choose
orthogonal positive contractions
$f_g \in A$ for $g \in G$ such that,
with $f = \sum_{g \in G} f_g$, the following hold.
\begin{enumerate}
\item\label{Def_w_t_r_p_a}
$\| c f_g - f_g c \| < \ep_1$ for all $g \in G$ and all $c \in F$.
\item\label{Def_w_t_r_p_b}
$\| \alpha_{g} ( f_h ) - f_{g h} \| < \ep_1$ for all $g, h \in G$.
\item\label{Def_w_t_r_p_c}
$1 - f  \precsim_A j (b)$.
\setcounter{TmpEnumi}{\value{enumi}}
\end{enumerate}
Now we claim that the following hold.
\begin{enumerate}
\setcounter{enumi}{\value{TmpEnumi}}
\item\label{EQ009_28_12_23}
$\| \sum_{h \in G} f_h c f_h - f c f \| < \card (G)^2 \ep_1$
for all $c \in F$.
\item\label{EQ009_00_28_12_23}
$\| \sum_{h \in G} f^3_h c f^3_h - f^3 c f^3 \| < \card (G)^2 \ep_1$
for all $c \in F$.
%
%
%
\item\label{EQ9_P_P_28_12_23}
$\| \alpha_{h^{-1}} (f^3_h c f^3_h) - f^3_1 \alpha_{h^{-1}} (c) f^3_1 \|
  < 6 M \ep_1$
for all $c \in F$ and all $h \in G$.
\item\label{EQ11_28_12_23}
For all $c_0, c_1, c_2 \in F$ and $h, l, t \in G$, we have
\[
\left\| \alpha_{h^{-1}} (f_h c_0 f_h) \alpha_{l^{-1}} (f_l c_1 f_l)
     \alpha_{t^{-1} } (f_t c_2 f_t)
  - f^3_1 \alpha_{h^{-1}} ( c_0) \alpha_{l^{-1}} (c_1)
     \alpha_{t^{-1} } (c_2 ) f^3_1 \right\|
 < 10 M^3 \ep_1.
\]
\setcounter{TmpEnumi}{\value{enumi}}
\end{enumerate}

To prove (\ref{EQ009_28_12_23}),
we use (\ref{Def_w_t_r_p_a}) to get, for $c \in F$,
\[
\begin{split}
\left\| \sum_{h \in G} f_h c f_h - f c f \right\|
 \leq \sum_{g, h \in G, \, g \neq h} \|f_g \| \cdot \| c f_h - f_h c \|
 < \card (G)^2 \ep_1.
\end{split}
\]

We prove (\ref{EQ009_00_28_12_23}).
Using orthogonality of $(f_g)_{g \in G}$ at the first step
and (\ref{EQ009_28_12_23}) at the second step, for $c \in F$ we get

\[
\Bigl\| \sum_{h \in G} f^3_h c f^3_h - f^3 c f^3 \Bigr\|
\leq \|f^2 \| \cdot \Bigl\| \sum_{h \in G} f_h c f_h - f c f \Bigr\|
      \cdot \|f^2 \|
< \card (G)^2 \ep_1.
\]



For~(\ref{EQ9_P_P_28_12_23}), first, using (\ref{Def_w_t_r_p_b})
and $\| f_h \| \leq 1$, $\| \alpha_{h^{-1}} (f^3_h) - f^3_1 \| < 3 \ep_1$.
Using this at the second step, we get, for $c \in F$,
\[
\begin{split}
\| \alpha_{h^{-1}} (f^3_h c f^3_h) - f^3_1 \alpha_{h^{-1}} (c) f^3_1 \|
& \leq
\| \alpha_{h^{-1}} (f^3_h) - f^3_1 \|
        \cdot \| \alpha_{h^{-1}} (c f^3_h) \|
\\
& \hspace*{3em} {\mbox{}}
 + \| f^3_1 \alpha_{h^{-1}} (c) \|
\cdot
\| \alpha_{h^{-1}} (f^3_h) - f^3_1 \|
\\
& < 3 M \ep_1 + 3 M \ep_1 = 6 M \ep_1.
\end{split}
\]

To prove (\ref{EQ11_28_12_23}),
first use reasoning similar to that for~(\ref{EQ9_P_P_28_12_23}) to get
$\| \alpha_{h^{-1}} (f_h c f_h) - f_1 \alpha_{h^{-1}} (c) f_1 \|
 < 2 M \ep_1$ for all $c \in F$ and $h \in G$.
Now let $c_0, c_1, c_2 \in F$ and let $h, l, t \in G$.
Then
\begin{equation}\label{Eq_6824_Part_1}
\begin{split}
& \bigl\| \alpha_{h^{-1}} (f_h c_0 f_h) \alpha_{l^{-1}} (f_l c_1 f_l)
   \alpha_{t^{-1}} (f_t c_2 f_t)
\\
& \hspace*{6em} {\mbox{}}
 - (f_1 \alpha_{h^{-1}} ( c_0 ) f_1) (f_1 \alpha_{l^{-1}} ( c_1 ) f_1)
    (f_1 \alpha_{t^{-1}} ( c_2 ) f_1)  \bigr\|
\\
& \hspace*{2em} {\mbox{}}
 \leq \| \alpha_{h^{-1}} (f_h c_0 f_h) - f_1 \alpha_{h^{-1}} ( c_0 ) f_1 \|
     \cdot \left\| \alpha_{l^{-1}} (f_l c_1 f_l)
              \alpha_{t^{-1} } (f_t c_2 f_t)  \right\|
\\
& \hspace*{4em} {\mbox{}} + \|f_1 \alpha_{h^{-1}} (c_0) f_1  \|
    \cdot
    \| \alpha_{l^{-1}} (f_l c_1 f_l) - f_1 \alpha_{l^{-1}} ( c_1 ) f_1 \|
        \cdot \| \alpha_{t^{-1} } (f_t c_2 f_t) \|
\\
& \hspace*{4em} {\mbox{}}
 + \| ( f_1 \alpha_{h^{-1}} ( c_0 ) f_1 )
      (f_1 \alpha_{l^{-1}} ( c_1 ) f_1 ) \|
  \cdot
   \| \alpha_{t^{-1} } (f_t c_2 f_t) - f_1 \alpha_{t^{-1} } ( c_2) f_1 \|
\\
& \hspace*{2em} {\mbox{}}
 < 2 M^3 \ep_1 + 2 M^3 \ep_1 + 2 M^3 \ep_1
 = 6 M^3 \ep_1.
\end{split}
\end{equation}
Also, using~(\ref{Def_w_t_r_p_a}) we get, for $c \in F$,
\[
\| f^2_h c -  c f^2_h\|
 < \| f_h \| \cdot \| f_h c - c f_h \| + \| f_h c - c f_h \|
         \cdot \| f_h \|
 < 2 \ep_1.
\]
Therefore, using $G$-invariance of~$F$,
\begin{equation}\label{Eq_6824_P21}
\begin{split}
& \bigl\| (f_1 \alpha_{h^{-1}} ( c_0 ) f_1) (f_1 \alpha_{l^{-1}} ( c_1 ) f_1)
         (f_1 \alpha_{t^{-1}} ( c_2 ) f_1)
\\
& \hspace*{8em} {\mbox{}}
  - f^3_1 \alpha_{h^{-1}} ( c_0) \alpha_{l^{-1}} (c_1)
 \alpha_{t^{-1} } (c_2 ) f^3_1 \bigl\|
\\
& \hspace*{2em} {\mbox{}}
 \leq  \|  f_1 \| \cdot \| \alpha_{h^{-1}} ( c_0) f^2_1
 - f^2_1 \alpha_{h^{-1}} ( c_0) \|
  \cdot \| \alpha_{l^{-1}} ( c_1 ) f_1 \|
  \cdot \| f_1 \alpha_{t^{-1} } ( c_2 ) f_1 \|
\\
& \hspace*{4em} {\mbox{}} +
    \| f^3_1 \| \cdot \| \alpha_{h^{-1}} ( c_0) \|
     \cdot \| \alpha_{l^{-1}} ( c_1 ) \|
     \cdot \| f^2_1 \alpha_{t^{-1} }
             ( c_2) - \alpha_{t^{-1} } ( c_2)  f^2_1 \|
    \cdot \| f_1 \|
\\
& \hspace*{2em} {\mbox{}}
 < 2 M^2 \ep_1 + 2 M^2 \ep_1
 = 4 M^2 \ep_1.
\end{split}
\end{equation}
Combining (\ref{Eq_6824_Part_1}) and~(\ref{Eq_6824_P21}),
we see that the expression in~(\ref{EQ11_28_12_23})
is less than $6 M^3 \ep_1 + 4 M^2 \ep_1
\leq 10 M^3 \ep_1$,
as desired.

Let $(e_{g, h})_{g, h \in G}$
be the standard system of matrix units for $M_{\card (G)}$.
Define
\begin{equation}\label{Eq_6823_a1Dfn}
a_1 = \sum_{g \in G} e_{g, g} \otimes \alpha_{g^{-1}} (f^3_g a_0 f^3_g),
\qquad
b_1 = \sum_{g \in G} e_{g, g} \otimes \alpha_{g^{-1}} (f_g b_0 f_g),
\end{equation}
and
\begin{equation}\label{Eq_6812_wDef}
w = \sum_{g, h \in G} e_{h, g^{-1} h} \otimes \alpha_{h^{-1}} (f_h r_g f_h).
\end{equation}
Clearly
\begin{equation}\label{EQ15_28_12_23_a}
\| w \| \leq \card (G)^2 M.
\end{equation}

We claim that
\begin{equation}\label{EQ15_28_12_23_b}
\| w b_1 w^* - a_1 \| < \frac{\ep}{8}.
\end{equation}
To prove this, for $h, t \in G$, set
%
\[
x_{h, t}
 = \sum_{l \in G} \alpha_{h^{-1}} (f_h r_{hl^{-1}}f_h)
    \alpha_{l^{-1}} (f_l b_0 f_l) \alpha_{t^{-1}} (f_t r^*_{tl^{-1}}f_t).
\]
and
\[
y_{h, t} = \sum_{l \in G} f_1^3 \alpha_{h^{-1}} (r_{hl^{-1}})
            \alpha_{l^{-1}} (b_0) \alpha_{t^{-1}} (r^*_{tl^{-1}}) f_1^3.
\]
%
It follows from~(\ref{EQ11_28_12_23}) that
\begin{equation}\label{Eq_6824_StSt}
\| x_{h, t} - y_{h, t} \| < 10 M^3 \card (G) \ep_1.
\end{equation}
A calculation shows that
\[
w b_1 w^* = \sum_{h, t \in G} e_{h, t} \otimes x_{h, t}.
\]
Therefore, using (\ref{Eq_6824_StSt}), (\ref{2EQ5_25_12_23}),
(\ref{2EQ4_25_12_23}), and~(\ref{EQ9_P_P_28_12_23}) at the third step,
and using (\ref{Eq_6824_St}) and (\ref{Eq_6823_Sta0b0e0}) at the last step,
we get
\[
\begin{split}
\| w b_1 w^* - a_1 \|
& \leq \sum_{h, t \in G} \| x_{h, t} - y_{h, t} \|
  + \sum_{h, t \in G, \, t \neq h} \| y_{h, t} \|
  + \sum_{h \in G}
      \left\| y_{h, h} - \alpha_{h^{-1}} (f^3_h a_0 f^3_h) \right\|
\\
& \hspace*{2em} {\mbox{}}
  + \sum_{h \in G} \left\| \alpha_{h^{-1}} (f^3_h a_0 f^3_h)
         - f^3_1 \alpha_{h^{-1}} (a_0) f^3_1 \right\|
\\
& < 10 M^3 \card (G)^3 \ep_1
    + \card (G) (\card (G) - 1) \ep_0
\\
& \hspace*{2em} {\mbox{}}
    + \card (G) \ep_0
    + 6 M \card (G) \ep_1
\\
& \leq 16 M^3 \card (G)^3 \ep_1 + \card (G)^2 \ep_0
  \leq \frac{\ep}{16} + \frac{\ep}{16}
  = \frac{\ep}{8}.
\end{split}
\]
This completes the proof of (\ref{EQ15_28_12_23_b}).

We now claim that
\begin{equation}\label{Eq_6824_a1b0}
\left( a_1 - \frac{\ep}{4} \right)_+ \precsim_{A, \af} b_0.
\end{equation}
We prove the claim.
If $w = 0$ then $\left( a_1 - \frac{\ep}{4} \right)_+= 0$
by~(\ref{EQ15_28_12_23_b}), and the claim is trivial.
So assume $w \neq 0$.
Using (\ref{EQ15_28_12_23_b}) and Corollary~1.6 of~\cite{Phl40}
at the first step,
Lemma~\ref{L_6824_CuShift} at the second step,
(\ref{EQ15_28_12_23_a}) at the third step, (\ref{Eq_6823_a1Dfn})
at the fourth step,
orthogonality of the contractions $(f_g)_{g \in G}$ at the fifth step,
(\ref{Eq_6823_Sta0b0e0}), (\ref{EQ009_28_12_23}) with $c = b_0$,
and $\card (G)^2 \ep_1 \leq \ep \ [8 \card (G)^4 M^2]$
(which follows from~(\ref{Eq_6824_St})) at the sixth step,
we get
\begin{equation}\label{EQ17.28.12.23}
\begin{split}
\left( a_1 - \frac{\ep}{4} \right)_+
& \precsim_{A} \left( w b_1 w^* - \frac{\ep}{8} \right)_+
  \precsim_{A} \left( b_1 - \frac{\ep}{8 \| w\|^2} \right)_+
\\
& \precsim_{A} \left( b_1 - \frac{\ep}{8 \card (G)^4 M^2} \right)_+
\\
&  \sim_{A, \af}
    \sum_{g \in G}
     \left( e_{g, g} \otimes f_g b_0 f_g
              - \frac{\ep}{8 \card (G)^4 M^2} \right)_+
\\
& \sim_{A}
   \left( \sum_{g \in G} f_g b_0 f_g - \frac{\ep}{8 \card (G)^4 M^2}
          \right)_+
  \precsim_{A} f b_0 f
  \precsim_{A} b_0.
\end{split}
\end{equation}
This proves the claim.

Using Lemma~2.6 of \cite{AsvGlsPhl} at the second step
with $1 - f^3$ in place of $g$, using (\ref{EQ009_00_28_12_23}), the fact that
$\card (G) \ep_1 \leq \frac{\ep}{4}$,
and Lemma~2.6 of \cite{Asd3} at the third step,
using (\ref{Eq_6823_Sta0b0e0}),
orthogonality of the contractions $(f_g)_{g \in G}$,
Lemma~\ref{R_0817_NonEqToEq}(\ref{Item_R_0817_NonEqToEq_Translate}),
and $1 - f^3 \sim_A 1 - f$
(which follows by examining $C^* (1, f)$)
at the fourth step,
using (\ref{Eq_6823_a1Dfn}) and~(\ref{Def_w_t_r_p_c})
at the fifth step,
using (\ref{EQ17.28.12.23}) at the sixth step,
and using the second and third parts of (\ref{Eq1_2024_01_16})
at the last step, we get
\[
\begin{split}
(a - \ep)_+
& = \left( \left( a - \frac{\ep}{2} \right)_+
 - \frac{\ep}{2} \right)_+
\\
& \precsim_{A}
 \left( f^3 \left( a - \frac{\ep}{2} \right)_+ f^3
     - \frac{\ep}{2} \right)_+
 \oplus (1 - f^3)
\\
& \precsim_{A}
 \left( \sum_{g \in G} f^3_g
     \left( a - \frac{\ep}{2} \right)_+ f^3_g - \frac{\ep}{4} \right)_+
      \oplus ( 1 - f^3 )
\\
& \precsim_{A, \alpha}
      \left( \sum_{g \in G} e_{g, g} \otimes
        \alpha_{g^{-1}} (f^3_g a_0 f^3_g) - \frac{\ep}{4} \right)_+
    \oplus (1 - f)
\\
& \precsim_A \left( a_1 - \frac{\ep}{4} \right)_+ \oplus j (b)
  \precsim_{A, \af} b_0 \oplus j (b)
  \precsim_A b.
\end{split}
\]
Since $\ep > 0$ is arbitrary,
Lemma~\ref{L_5Z10_SbCnds}
implies that $a \precsim_{A, \alpha} b$.
\end{proof}

\begin{ntn}\label{defPurePosit}
Let $A$ be a simple \ca.
Recall from Definition~3.11 of
\cite{ASEL25} that
\[
A_{++}
= \left\{ a \in A_+ \colon
 \mbox{$\langle a \rangle_A \neq \langle p \rangle_A$
  for every projection $p \in M_{\infty} (A)$} \right\},
\]
\[
\W_+ (A)
= \left\{ \langle a \rangle_A \colon
     a \in \bigcup_{n = 1}^{\I} M_n (A)_{++} \right\},
\aqn
\Cu_+ (A)
= \left\{ \langle a \rangle_A \colon
     a \in (\K \otimes A)_{++} \right\}.
\]
The elements of $A_{++}$, $M_n (A)_{++}$, and $(\Kt A)_{++}$
are called purely positive.
If $\alpha \colon G \to \Aut (A)$ is an action of a discrete group $G$, we set
\[
\Cu_+ (A, \alpha)
= \left\{ \langle a \rangle_{\alpha} \colon
  a \in (\K \otimes A)_{++} \right\}.
\]
\end{ntn}

See Lemma~3.12 of~\cite{ASEL25} for equivalent conditions
for membership in $A_{++}$.

\begin{cor}\label{C_6824_Cu_p_inj}
Let $A$ be a stably finite simple unital C*-algebra,
let $G$ be a finite group,
and let $\af \colon G \to \Aut (A)$ be an action of $G$ on~$A$
which has the weak tracial Rokhlin property.
Let $\te_{\af}$ and $(\io_{\af})_{\#}$
be as in Proposition~\ref{P_1412_NatMaps}.
Then the restriction of $\te_{\af} \circ (\io_{\af})_{\#}$
to $\Cu_{+} (A, \alpha)$ defines a natural injection
\[
\Cu_{+} (A, \alpha) \cup \{ 0 \}
 \to \Cu_+ (C^* (G, A, \alpha)) \cup \{ 0 \}.
\]
\end{cor}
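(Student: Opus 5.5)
The plan is to split the statement into two parts. First, the restricted map lands in $\Cu_+ (C^* (G, A, \alpha)) \cup \{ 0 \}$. Second, it is injective. The second part is essentially Theorem~\ref{MainThm_1}. The main tool for converting between ``purely positive'' and the spectral hypothesis of that theorem is Lemma~3.12 of~\cite{ASEL25}. For a simple stably finite \ca~$B$ and $c \in (\Kt{B})_+$, that lemma says $c \in (\Kt{B})_{++}$ \ifo{} $0$ is a limit point of $\spec (c)$.

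For the first part, I will use two standard facts about an action $\af$ of a finite group with the weak tracial Rokhlin property on a simple stably finite \uca. Such an action is pointwise outer, so $\CGAa$ is simple. The crossed product is also stably finite; I will cite this from the literature on the weak tracial Rokhlin property, or argue it by restricting quasitraces, using that $A$ has a quasitrace. Now let $a \in (\Kt{A})_{++}$. Since $\io_{\af}$ is an injective \hm, $\spec (\io_{\af} (a)) = \spec (a) \cup \{0\}$, so $0$ is still a limit point of the spectrum. By Lemma~3.12 of~\cite{ASEL25} applied in $\CGAa$, $\io_{\af} (a)$ is purely positive there. Thus $\te_{\af} \circ (\io_{\af})_{\#}$ maps $\Cu_+ (A, \af)$ into $\Cu_+ (\CGAa)$. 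The map is well defined on classes because $(\io_{\af})_{\#}$ is (Proposition~\ref{P_1412_NatMaps}), and we send $0$ to $0$. Nonzero elements do not go to $0$, since $\io_{\af}$ is injective. Naturality is inherited from Proposition~\ref{P_1412_NatMaps}.

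For injectivity, let $a, b \in (\Kt{A})_{++}$ with $\io_{\af} (a) \sim_{\CGAa} \io_{\af} (b)$. By Lemma~3.12 of~\cite{ASEL25} in~$A$, $0$ is a limit point of both $\spec (a)$ and $\spec (b)$. Theorem~\ref{MainThm_1} would then give $a \precsim_{A, \af} b$ and $b \precsim_{A, \af} a$, hence $\langle a \rangle_{\af} = \langle b \rangle_{\af}$. A purely positive class is never identified with $0$, so injectivity on the union with $\{ 0 \}$ follows.

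The main obstacle is that Theorem~\ref{MainThm_1} is stated for elements of $\Mi (A)_+$, whereas here $a, b \in \Kt{A}$. I would first try to check that the proof of Theorem~\ref{MainThm_1} goes through verbatim in $\Kt{A}$. That proof reduces to $n = 1$ using Theorem~1.3 of~\cite{FG20}, but the only place it uses $b$ is through $j (b)$ and $b_0 = (b - \dt)_+$, together with the weak tracial Rokhlin property applied with $x = j (b)$. If $j (b) \in \Kt{A}$ causes trouble there, I would replace $j (b)$ by a nonzero positive element of $A$ Cuntz below it, which exists by simplicity. If that fails, the fallback is an $\ep$-reduction. Given $\ep > 0$, choose $n$ and $a' \in M_n (A)_+$ with $(a - \ep)_+ \precsim_A a' \precsim_A a$, and similarly $b'$ for~$b$. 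For $b'$ I would ensure $0$ remains a limit point of the spectrum, for example by adding a small purely positive orthogonal piece of~$b$. Then apply Theorem~\ref{MainThm_1} at the finite stage and conclude with Lemma~\ref{L_5Z10_SbCnds}.
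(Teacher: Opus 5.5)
Your proposal is correct and takes the same route as the paper, whose proof says only that the corollary follows immediately from Theorem~\ref{MainThm_1}; Lemma~3.12 of~\cite{ASEL25} converts pure positivity into ``$0$ is a limit point of the spectrum,'' and your finite-matrix $\ep$-reduction handles the passage from $\Kt{A}$ to $\Mi (A)$, which the paper leaves implicit. Your ``small purely positive orthogonal piece'' can be realized as an element with spectrum $[0,1]$ Cuntz below $j (b)$, as in Lemma~\ref{L_FiniteMatrixFullSpectrum}; the one fix needed is that stable finiteness of $\CGAa$ does not come from restricting quasitraces (that goes the wrong way), but from the regular representation, which for finite~$G$ embeds $\CGAa$ unitally into $M_{\card (G)} (A)$.
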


\begin{proof}
This follows immediately from Theorem~\ref{MainThm_1}.
\end{proof}

We recall the definition of the Rokhlin property.

\begin{dfn}\label{R_P_def}
Let $G$ be a finite group,
let $A$ be a  unital \ca,
and let $\alpha \colon G \to \Aut (A)$  be an action of
$G$ on $A$.
We say that $\alpha$ has the
\emph{Rokhlin property} if for every $\ep > 0$ and
every finite set $F \subseteq A$,
there exist orthogonal positive projections
$p_g \in A$ for $g \in G$ such that:
\begin{enumerate}
\item\label{Def_r_p_a}
$\| a p_g - p_g a \| < \ep$ for all $g \in G$ and all $a \in F$.
\item\label{Def_r_p_b}
$\| \alpha_{g} ( p_h ) - p_{g h} \| < \ep$ for all $g, h \in G$.
\item\label{Def_r_p_c}
$\sum_{g \in G} p_g = 1_A$.
\end{enumerate}
\end{dfn}

\begin{prp}\label{Pro.Dyn.Wea}
Let $A$ be a unital C*-algebra, let $G$ be a finite group,
and let $\alpha \colon G \to \Aut (A)$ be an action of $G$ on~$A$
which has the Rokhlin property.
Let $a, b \in A_+$.
Then $a \precsim_{A, \alpha} b$ if and only if
$a \precsim_{C^* (G, A, \alpha)} b$.
\end{prp}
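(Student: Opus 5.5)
The forward implication is Lemma~\ref{R_0817_NonEqToEq}(\ref{Item_R_0817_NonEqToEq_DynToCP}). For the converse I will follow the proof of Theorem~\ref{MainThm_1} essentially verbatim, replacing the tower $(f_g)$ by genuine Rokhlin projections $(p_g)$. Because $\sum_g p_g = 1$, there is no leftover piece $1 - f$ to absorb. This is why neither simplicity nor the hypothesis that $0$ is a limit point of $\spec (b)$ is needed, and also why $a, b$ are taken in $A_+$: that way the Rokhlin projections can be used directly, without passing to matrix amplifications.

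Fix $\ep > 0$. Since $a \precsim_{\CGAa} b$, choose $\dt \in (0, \ep)$ with $(a - \ep/2)_+ \precsim_{\CGAa} (b - \dt)_+$, and set $a_0 = (a - \ep/2)_+$ and $b_0 = (b - \dt)_+$. Choose $r = \sum_g r_g u_g$ with $\| r b_0 r^* - a_0 \| < \ep_0$. Comparing coefficients, exactly as in (\ref{2EQ4_25_12_23}) and (\ref{2EQ5_25_12_23}), I get approximate identities in $A$. Let $F$ consist of all $\alpha_k (a_0)$, $\alpha_k (b_0)$, $\alpha_h (r_g)$, and $\alpha_h (r_g^*)$, and apply the Rokhlin property with $F$ and a small $\ep_1$ to get projections $p_g$. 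Then define, as in (\ref{Eq_6823_a1Dfn}) and (\ref{Eq_6812_wDef}),
\[
a_1 = \sum_{g} e_{g, g} \otimes \alpha_{g^{-1}} (p_g a_0 p_g),
\qquad
b_1 = \sum_{g} e_{g, g} \otimes \alpha_{g^{-1}} (p_g b_0 p_g),
\]
\[
w = \sum_{g, h} e_{h, g^{-1} h} \otimes \alpha_{h^{-1}} (p_h r_g p_h).
\]
Since $p_h^3 = p_h$, the same estimates as in (\ref{EQ11_28_12_23}) and (\ref{EQ15_28_12_23_b}), now with $p_1$ in place of $f_1^3$, give $\| w b_1 w^* - a_1 \| < \ep/8$ once $\ep_0$ and $\ep_1$ are chosen small.

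Next I run the chain of comparisons, as in (\ref{EQ17.28.12.23}):
\[
(a_1 - \ep/4)_+ \precsim_A (b_1 - \eta)_+ \sim_{A, \alpha} \Bigl( \sum_g p_g b_0 p_g - \eta \Bigr)_+ \precsim_A \Bigl( \sum_g p_g b_0 p_g \Bigr).
\]
Here the middle step translates each diagonal block back by $\alpha_g$, using Lemma~\ref{R_0817_NonEqToEq}(\ref{Item_R_0817_NonEqToEq_Translate}), and then uses orthogonality. Because $\sum_g p_g = 1$ and the $p_g$ almost commute with $b_0$, I have $\| \sum_g p_g b_0 p_g - b_0 \| \leq \card (G)^2 \ep_1$, so a further cutdown gives comparison with $b_0 \leq b$. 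On the $a$ side, $\| \sum_g p_g a_0 p_g - a_0 \|$ is small as well, so
\[
(a - \ep)_+ \precsim_A \Bigl( \sum_g p_g a_0 p_g - \ep/4 \Bigr)_+ \sim_A \bigoplus_g (p_g a_0 p_g - \ep/4)_+ \sim_{A, \alpha} (a_1 - \ep/4)_+ ,
\]
again using Lemma~\ref{R_0817_NonEqToEq}(\ref{Item_R_0817_NonEqToEq_Translate}). Combining the two chains via Lemma~\ref{Tran_meStepLem}(\ref{Tran_meStepLem_a}) yields $(a - \ep)_+ \precsim_{A, \alpha} b$ for every $\ep > 0$. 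Lemma~\ref{L_5Z10_SbCnds} then gives $a \precsim_{A, \alpha} b$.

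The main obstacle is bookkeeping rather than ideas. The cutdown constants must be chosen compatibly, so that the $\ep/8$ perturbation from $w b_1 w^*$ and the $\card (G)^2 \ep_1$ errors fit inside the $\ep/4$ and $\eta$ cutdowns. The threshold $\eta = \ep / (8 \| w \|^2)$ from Lemma~\ref{L_6824_CuShift} depends on $M$, so I would choose $\ep_1$ only after $F$ and $M$ are fixed, exactly as in (\ref{Eq_6824_St}).
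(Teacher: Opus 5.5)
Your proposal is correct and matches the argument the paper has in mind. The paper omits this proof as a simplified version of the proof of Theorem~\ref{MainThm_1}, and your version is exactly that simplification: Rokhlin projections replace the $f_g$, and $p_g^3 = p_g$ together with $\sum_g p_g = 1$ eliminates the $1 - f$ term, hence also the need for $j (b)$.

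Two small points of bookkeeping, both of which you already indicate. First, use $\| w \| \leq \card (G)^2 M$ so that the final cutdown is $\ep / (8 \card (G)^4 M^2)$, fixed before $\ep_1$ is chosen. Second, compare $\bigl( \sum_g p_g b_0 p_g - \eta \bigr)_+$ directly with $b_0$, rather than first dropping the cutdown.
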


\begin{proof}
The proof is very similar to that of Theorem~\ref{MainThm_1},
but is considerably simpler.
It is therefore omitted.
\end{proof}

\begin{lem}\label{L_FiniteMatrixFullSpectrum}
Let $A$ be a stably finite simple unital \ca{} which is not of type~I.
Let $b \in M_{\infty} (A)_+ \setminus \{ 0 \}$,
let $\eta > 0$, and let $n \in \N$.
Then there exists
$c \in \bigcup_{k = 1}^{\infty} M_k (A)_{++}$
such that
\[
n \langle (b - \eta)_+ \rangle_A \leq (n + 1) \langle c \rangle_A
\andeqn
c \precsim_A b.
\]
\end{lem}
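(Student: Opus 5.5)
We may assume $x = (b - \eta)_+ \neq 0$. If instead $(b - \eta)_+ = 0$, the first inequality is trivial, and it suffices to find a nonzero purely positive $c \precsim_A b$. Since $A$ is simple and not of type~I, so is the hereditary subalgebra $\overline{b M_\infty (A) b}$. By the standard result (Glimm's lemma, as used in the theory of large subalgebras), this hereditary subalgebra contains a positive element $c$ with $\spec (c) = [0, 1]$. In a stably finite simple unital algebra, a positive element whose spectrum has $0$ as a non-isolated point is purely positive; I would cite Lemma~3.12 of~\cite{ASEL25} for this. Also $c \precsim_A b$, since $c$ lies in the hereditary subalgebra generated by $b$.

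In the main case, if $x$ itself is purely positive, take $c = x$; then $c \precsim_A b$, and $n \langle x \rangle_A \leq (n + 1) \langle c \rangle_A$ holds trivially. Otherwise $\langle x \rangle_A = \langle p \rangle_A$ for a nonzero projection $p \in M_\infty (A)$, and we work in the simple, non-type~I, stably finite corner $p M_\infty (A) p$. The plan is to shave off from $p$ a small purely positive piece while keeping a fraction $\frac{n}{n + 1}$ of its size.
\begin{enumerate}
\item Using non-type-I-ness and simplicity, apply the standard divisibility lemma to find nonzero, mutually orthogonal, pairwise Cuntz equivalent positive elements $e_1, e_2, \ldots, e_{n + 1} \in p M_\infty (A) p$. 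This is the Glimm halving argument inside a matrix corner, iterated.
\item Inside $\overline{e_1 M_\infty (A) e_1}$, choose $t$ with $\spec (t) = [0, 1]$ and $\| t \| = 1$.
\item Set $c = (p - e) + t$, where $e$ is a suitable functional-calculus support element for $e_1$ (for example $e = g (e_1)$ with $g$ equal to~$1$ near $\| e_1 \|$). Arrange $0 \leq c \leq p$ and $c \geq t$ on the corner, so that $\spec (c)$ contains $[0, 1]$ near~$0$.
\end{enumerate}
Then $c \precsim_A p \sim_A x \precsim_A b$. By the criterion above, $c$ is purely positive: since $0$ is a non-isolated point of its spectrum and $A$ is stably finite, $c$ cannot be Cuntz equivalent to a projection.

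For the inequality, $c$ dominates (up to Cuntz equivalence) the part of $p$ orthogonal to $e_1$. That part in turn dominates $e_2 \oplus e_3 \oplus \cdots \oplus e_{n + 1}$, which has class $n \langle e_1 \rangle_A$. One then gets $\langle p \rangle_A \leq \langle c \rangle_A + \langle e_1 \rangle_A$ and $n \langle e_1 \rangle_A \leq \langle c \rangle_A$. Hence
\[
n \langle p \rangle_A \leq n \langle c \rangle_A + n \langle e_1 \rangle_A \leq (n + 1) \langle c \rangle_A ,
\]
which is the required inequality, since $\langle p \rangle_A = \langle x \rangle_A = \langle (b - \eta)_+ \rangle_A$.

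The main obstacle is step~(3): building $c$ so that it is simultaneously
\begin{enumerate}
\item bounded by $p$,
\item spectrally non-isolated at $0$, and
\item large enough to give $\langle p \rangle_A \leq \langle c \rangle_A + \langle e_1 \rangle_A$.
\end{enumerate}
I would first try $e_1$ of the form $h (s)$ for a single positive $s \in p M_\infty (A) p$ with spectrum $[0, 1]$. In that case everything lives in the commutative algebra $C^* (p, s)$, and both the spectral and Cuntz estimates become computations with functions on $[0, 1]$. This plan is not fully checked; if orthogonality with the other $e_j$ causes trouble, I would fall back on choosing all the $e_j$ and $t$ inside one commutative subalgebra from the start.
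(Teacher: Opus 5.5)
Your reduction is sound. The trivial case $(b-\eta)_+=0$ is handled, you take $c=(b-\eta)_+$ when it is already purely positive, and otherwise you work in $B = p M_\infty (A) p$ with $\langle p \rangle_A = \langle (b-\eta)_+ \rangle_A$. The two Cuntz estimates in your last paragraph are also fine.

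\textbf{The gap is step~(3)}, which is exactly where pure positivity has to come from. With $e = g(e_1)$, $g = 1$ only near $\| e_1 \|$, and $t$ an arbitrary element of $\overline{e_1 B e_1}$, the elements $p - e$ and $t$ need not be orthogonal. For instance, if $e t = 0$ then $c t = t + t^2$, so $\| c \|$ can be $2$. Hence neither $c \leq p$ nor anything about $\spec (c)$ near $0$ follows. The inference ``$c \geq t$, hence $\spec (c)$ contains $[0,1]$ near $0$'' is false: $p \geq t$, but $\spec (p) \subseteq \{0,1\}$. Your fallback of putting everything into one commutative $C^*(p,s)$ does not help either. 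Functional calculus of a single element will not produce orthogonal $e_j$ that are pairwise Cuntz equivalent.

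\textbf{The missing idea is to place $t$ where $e$ acts as a unit.} Normalize $\| e_1 \| = 1$. Take $g$ with $0 \leq g \leq 1$, $g(0)=0$, and $g = 1$ on $[\tfrac12, 1]$. Take $h$ with $h(1) = 1$ and support in $(\tfrac12, 1]$. Choose $t \in \overline{h(e_1) B h(e_1)}$ with $\spec (t) = [0,1]$. Then:
\begin{itemize}
\item $et = te = t$, so $(p-e)t = 0$.
\item $c = (p-e) + t$ is an orthogonal sum with $0 \leq c \leq p$ and $\spec (c) \supseteq [0,1]$, so $c$ is purely positive.
\item Since $e_j e = 0$ for $j \geq 2$, you get $n \langle e_1 \rangle_A \leq \langle p - e \rangle_A \leq \langle c \rangle_A$.
\item Also $\langle p \rangle_A \leq \langle p-e \rangle_A + \langle e \rangle_A \leq \langle c \rangle_A + \langle e_1 \rangle_A$.
\end{itemize}
With these, your conclusion goes through.

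\textbf{The paper takes a shorter route that avoids this bookkeeping.} It applies Lemma~2.7 of~\cite{Phl40} in $M_k (A)$ to get $c_0$ and $d \neq 0$ with
$n \langle (b-\eta)_+ \rangle_A \leq (n+1) \langle c_0 \rangle_A$ and $\langle c_0 \rangle_A + \langle d \rangle_A \leq \langle b \rangle_A$.
It then picks $d_0 \in \overline{d M_k (A) d}$ with $\spec (d_0) = [0,1]$ and sets $c = c_0 \oplus d_0$. The spectrum of a block diagonal sum is the union, so pure positivity is immediate. There is also no case split according to whether $(b-\eta)_+$ has the class of a projection. After the fix, your argument is in effect a hand proof of that lemma in the projection case, with $c_0 = p - e$ and $d_0 = t$, built from the divisibility lemma. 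It cites less, but it is longer and needs the orthogonality arrangement above.
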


\begin{proof}
Choose $k \in \N$ such that $b \in M_k (A)_+$.
Applying Lemma~2.7 of \cite{Phl40} to $M_k (A)$, we obtain
$c_0 \in M_k (A)_+$ and $d \in M_k (A)_+ \setminus \{ 0 \}$ such that
\[
n \langle (b - \eta)_+ \rangle_A
\leq (n + 1) \langle c_0 \rangle_A
\andeqn
\langle c_0 \rangle_A + \langle d \rangle_A
\leq \langle b \rangle_A.
\]

Since $\overline{d M_k (A) d}$ is simple and not of type~I,
Lemma~2.1 of \cite{Phl40} (originally from \cite{AkShu85}), gives
$d_0 \in \bigl( \overline{d M_k (A) d} \bigr)_+$ such that
$\spec (d_0) = [0, 1]$.
Set $c = c_0 \oplus d_0 \in M_{2k} (A)_+$.
Then $c \precsim_A c_0 \oplus d \precsim_A b$.
Moreover,
$0$ is a limit point of $\spec (c)$, so $c$ is purely positive.
Thus
\[
c \in M_{2k}(A)_{++}
\andeqn
n \langle (b - \eta)_+ \rangle_A
\leq (n + 1) \langle c_0 \rangle_A
\leq (n + 1) \langle c \rangle_A.
\]
This completes the proof.
\end{proof}

\begin{thm}\label{WTRPMainCor}
Let $A$ be a unital C*-algebra,
let $G$ be a finite group,
and let $\alpha \colon G \to \Aut (A)$ be an action of $G$ on~$A$.
Assume that one of the following holds.
\begin{enumerate}
\item\label{WTRPMainThm_a}
$A$ is residually stably finite and $\alpha$ has the Rokhlin property.
\item\label{WTRPMainThm_b}
$A$ is simple and stably finite,
and $\alpha$ has the weak tracial Rokhlin property.
\end{enumerate}
Then
\[
\rc (A, \alpha) = \rc' (A, \alpha)
\leq \rc (C^* (G, A, \alpha)) = \frac{\rc (A^{\alpha})}{\card (G)}
\leq \frac{\rc (A)}{\card (G)}.
\]
\end{thm}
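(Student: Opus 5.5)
The plan is to prove the chain one link at a time. Two inequalities are already available: $\rc (A, \alpha) \geq \rc' (A, \alpha)$ is Lemma~\ref{Lem_rcp}(\ref{Lem_rcp_a}), and $\rc' (A, \alpha) \leq \rc (C^* (G, A, \alpha))$ is Lemma~\ref{Lem_rcp}(\ref{Lem_rcp_c}). This leaves three tasks:
\begin{enumerate}
\item the reverse inequality $\rc (A, \alpha) \leq \rc' (A, \alpha)$;
\item the identification $\rc (C^* (G, A, \alpha)) = \rc (A^{\alpha}) / \card (G)$;
\item the inequality $\rc (A^{\alpha}) \leq \rc (A)$.
\end{enumerate}

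\emph{Task 1, case (\ref{WTRPMainThm_a}).} Fix $r > \rc' (A, \alpha)$ such that $(A, \alpha)$ has $r$-comparison in the crossed product. Take $a, b \in M_\infty (A)_+$ with $a \in I_{\alpha} (b)$ and $d_\tau (a) + r < d_\tau (b)$ for all $\tau \in \QT (A)^{\alpha}$. Then $a \precsim_{C^* (G, A, \alpha)} b$. The amplified action on $M_n (A)$ still has the Rokhlin property (take $1 \otimes p_g$). So Proposition~\ref{Pro.Dyn.Wea} applies in $M_n (A)$ and gives $a \precsim_{A, \alpha} b$. Hence $A$ has dynamical $r$-comparison.

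\emph{Task 1, case (\ref{WTRPMainThm_b}).} Theorem~\ref{MainThm_1} requires $0$ to be a limit point of $\spec (b)$, so I will replace $b$ by a purely positive element.
\begin{itemize}
\item First, a type~I simple unital $A$ is some $M_k$. Then $\QT (A)$ is a point, and the weak tracial Rokhlin property forces $G$ to be trivial, by considering $\|\alpha_g (f_1) - f_g\|$ against the trace. That case is trivial, so I may assume $A$ is not type~I.
\item Take $r > r' > \rc' (A, \alpha)$, and let $a, b$ satisfy $d_\tau (a) + r < d_\tau (b)$ on $\QT (A)^{\alpha}$.
\item Run the compactness argument of Lemma~\ref{LemUniformProjectionBelowPositive}, i.e.\ Lemma~6.13 of \cite{Phl40} applied to $\tau \mapsto d_\tau ((b - 1/k)_+)$. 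This gives $\eta > 0$ and $n \in \N$ with $d_\tau (a) + r' < \tfrac{n}{n+1} d_\tau ((b - \eta)_+)$ for all invariant $\tau$.
\item Lemma~\ref{L_FiniteMatrixFullSpectrum} gives a purely positive $c \precsim_A b$ with $n \langle (b - \eta)_+ \rangle \leq (n + 1) \langle c \rangle$. Therefore $d_\tau (a) + r' < d_\tau (c)$.
\item $A$ is simple and $c \neq 0$, so $a \in I_{\alpha} (c)$. Then $r'$-comparison in the crossed product gives $a \precsim_{C^* (G, A, \alpha)} c$.
\item Theorem~\ref{MainThm_1} now gives $a \precsim_{A, \alpha} c \precsim_A b$, and Lemma~\ref{Tran_meStepLem} yields $a \precsim_{A, \alpha} b$.
\end{itemize}
I expect this replacement step, keeping a uniform tracial gap while passing to the purely positive $c$, to be the main technical obstacle.

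\emph{Task 2.} Set $B = C^* (G, A, \alpha)$ and $p = \card (G)^{-1} \sum_g u_g$. It is standard that $A^{\alpha} \cong p B p$. In case (\ref{WTRPMainThm_b}), $\alpha$ is outer, so $B$ is simple and $p$ is full. In case (\ref{WTRPMainThm_a}), the Rokhlin property gives saturation. Thus $\Cu (A^{\alpha}) \cong \Cu (B)$, with $\langle 1_{A^{\alpha}} \rangle$ mapping to $\langle p \rangle$.

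Next I show $d_\tau (p) = 1 / \card (G)$ for every $\tau \in \QT (B)$. Using the Rokhlin elements, $\tau (a u_g) = 0$ for $g \neq 1$, because $f_h u_g f_h \approx f_h f_{gh} u_g \approx 0$ and $\tau$ of the defect part is small. For the quasitrace case I would cite the corresponding lemmas of \cite{AsvGlsPhl} and \cite{Asd2} rather than redo this. Rescaling the normalized quasitraces by the factor $\card (G)$ then gives $\rc (A^{\alpha}) = \card (G) \rc (B)$, as in Lemma~\ref{DirExtLemma}(\ref{DirExtLemma_Mn}).

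\emph{Task 3.} The inequality $\rc (A^{\alpha}) \leq \rc (A)$ under the (weak tracial) Rokhlin property is, I believe, a main theorem of \cite{AsvGlsPhl} (and \cite{Asd2} for residually stably finite $A$ with the Rokhlin property), which I would invoke directly. Failing that, it follows by pushing a comparison in $A$ into $A^{\alpha}$ via the approximately equivariant map $x \mapsto \sum_g \alpha_g (f_1 x f_1)$.
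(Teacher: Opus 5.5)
Your plan is sound and Tasks~2 and~3 rest on the same results the paper cites (Theorems~4.1, 4.2, 4.5, 4.6 of \cite{AsvGlsPhl}); your case~(a) argument is correct, but there is a gap in the compactness step of case~(b).

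\textbf{The gap.} Lemma~6.13 of \cite{Phl40}, as used in Lemma~\ref{LemUniformProjectionBelowPositive}, needs the function being dominated to be continuous (upper semicontinuity is what the Dini argument uses). There it is $\tau \mapsto d_{\tau} (p)$ for a \emph{projection}~$p$. For general $a \in \Mi (A)_{+}$, the map $\tau \mapsto d_{\tau} (a)$ is only lower semicontinuous on $\QT (A)^{\alpha}$. So the sets $\{ \tau \colon d_{\tau} ((b - 1/k)_{+}) > d_{\tau} (a) + r' + \cdots \}$ need not be open. A pointwise strict inequality $d_{\tau} (a) + r < d_{\tau} (b)$ therefore does not hand you a single $\eta$ and~$n$.

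\textbf{The repair.} Cut $a$ down first. By Lemma~\ref{L_5Z10_SbCnds} it suffices to prove $(a - \ep)_{+} \precsim_{A, \alpha} b$ for every $\ep > 0$. Take a continuous $g_{\ep} \colon [0, \infty) \to [0, 1]$ vanishing near~$0$ and equal to~$1$ on $[\ep, \infty)$. Then $d_{\tau} ((a - \ep)_{+}) \leq \tau (g_{\ep} (a)) \leq d_{\tau} (a)$, and the middle term is continuous in~$\tau$. Let $b \in M_N (A)$ and choose $n$ with $\frac{N}{n + 1} < r - r'$. Dini applied against $\tau \mapsto \tau (g_{\ep} (a)) + r' + \frac{N}{n + 1}$ gives $\eta > 0$ with $d_{\tau} ((a - \ep)_{+}) + r' < \frac{n}{n + 1} d_{\tau} ((b - \eta)_{+})$ for all $\tau \in \QT (A)^{\alpha}$. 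The rest of your argument then runs with $(a - \ep)_{+}$ in place of~$a$.

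\textbf{Comparison with the paper.} With this fix, your route differs from the paper's mainly in framing; the core (Lemma~\ref{L_FiniteMatrixFullSpectrum} plus Theorem~\ref{MainThm_1}) is the same. The paper never needs a uniform tracial gap. It proves $\operatorname{r}_{A, \alpha} \leq \rc' (A, \alpha)$ and invokes Theorem~\ref{T_2529_rc_is_alg}. It therefore starts from a semigroup inequality $(n + 1) \langle a \rangle_{\alpha} + m \langle 1 \rangle_{\alpha} \leq n \langle b \rangle_{\alpha}$, which Lemma~\ref{L_5Z10_SbCnds} cuts down to $(b - \dt)_{+}$ directly. The compactness is hidden in Lemmas~\ref{Lem2_8} and~\ref{Lem2_3} via Lemma~\ref{L_2529_To_alg_rc}. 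Your direct approach avoids the algebraic radius altogether. In case~(a) it gives $\rc (A, \alpha) = \rc' (A, \alpha)$ straight from the definitions, without residual stable finiteness.

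\textbf{The type~I reduction.} This fills a small omission: the paper applies Lemma~\ref{L_FiniteMatrixFullSpectrum} without excluding $A \cong M_k$. Justify it by approximate centrality against the matrix units. That forces each $f_g$ close to a scalar, so $f_g \approx \alpha_g (f_1) \approx f_1$, and orthogonality then gives $f \approx 0$, contradicting the norm condition. The trace alone does not suffice. For the inner action of $\Z / 2 \Z$ on $M_2$ exchanging $e_{1, 1}$ and $e_{2, 2}$, these two projections satisfy every condition of Definition~\ref{W_T_R_P_def} except approximate centrality.
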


Compare this result with Lemma~\ref{L_1412_CGA}, which is about the
translation action on $C (G, A_0)$.
We do not know whether the inequalities in this theorem
are necessarily equalities as in Lemma~\ref{L_1412_CGA}.

\begin{proof}[Proof of Theorem~\ref{WTRPMainCor}]
We first prove the theorem in case~(\ref{WTRPMainThm_b}).
We begin by showing that
$\rc (A, \alpha) \leq \rc (C^* (G, A, \alpha))$.
By Lemma \ref{L_2529_To_alg_rc}(\ref{Lem2}), it is enough to show that
$\operatorname{r}_{A, \af} \leq \rc (C^* (G, A, \alpha))$.
Let $r \in (0, \infty)$ and
suppose that $C^* (G, A, \alpha)$ has $r$-comparison.
Let $m, n \in \N$ satisfy $\frac{m}{n} > r$.
Let $a, b \in M_{\I} (A)_{+}$ satisfy
\begin{equation}\label{Eq2_20200424_p_000}
(n + 1) \langle a \rangle_{\alpha} + m \langle 1 \rangle_{\alpha}
 \leq n \langle b \rangle_{\alpha}.
\end{equation}
%
We claim that $a \precsim_{A, \af} b$.
By Lemma~\ref{L_5Z10_SbCnds}, it suffices to show that
for every $\ep > 0$, we have $(a - \ep)_{+} \precsim_{A, \af} b$.

So let $\ep > 0$.
\Wolog{} $\ep < \frac{1}{4}$.
Choose $l \in \N$ such that
\begin{equation}\label{Eq_9422_kmkn1'}
\frac{m l}{ n l + 1} > r.
\end{equation}
Then, by (\ref{Eq2_20200424_p_000}),
\begin{equation}\label{Eq3_20200424_P}
(n l + 1) \langle a \rangle_{\alpha}
 + m l \langle 1 \rangle_{\alpha}
  \leq (n + 1) l \langle a \rangle_{\alpha}
     + m l \langle 1 \rangle_{\alpha}
  \leq n l \langle b \rangle_{\alpha}.
\end{equation}
We define the following elements:
\begin{enumerate}
\item\label{I_6824_x}
$x \in M_{\infty} (A)_{+}$
is the direct sum of $n l + 1$ copies of~$a$.
\item\label{I_6824_y_Dnf}
$y \in M_{\infty} (A)_{+}$
is the direct sum of $n l$ copies of~$b$.
\item\label{I_6824_Df_p}
$p \in M_{\infty} (A)_{+}$
is the direct sum of $m l$ copies of~$1_A$.
\end{enumerate}

By (\ref{Eq3_20200424_P}), we have $x \oplus p \precsim_{A, \alpha} y$.
So there exists $\dt > 0$ such that
$\big( x \oplus p - \ep \big)_{+} \precsim_{A, \af} (y - \dt)_{+}$.
Using $\ep < \frac{1}{4}$ at the last step, we get
\[
( x \oplus p - \ep )_{+}
  = ( x - \ep )_{+} \oplus ( p - \ep )_{+}
  \sim_{A, \af} ( x - \ep )_{+} \oplus p,
\]
so
\begin{equation}\label{Eq_9606_Star_P}
(n l + 1)
    \langle ( a - \ep )_{+} \rangle_{\alpha}
              + m l \langle 1 \rangle_{\alpha}
  \leq n l \langle (b - \dt)_{+} \rangle_{\alpha}.
\end{equation}
Apply Lemma~\ref{L_FiniteMatrixFullSpectrum} to $b$,
with $\eta = \delta$ and $nl$ in place of $n$.
We obtain
$c \in \bigcup_{k = 1}^{\infty}M_k (A)_{++}$
such that
\begin{equation}\label{Eq1260901}
nl\langle (b - \delta)_+ \rangle_A
\leq (n l + 1) \langle c \rangle_A
\andeqn
c \precsim_A b.
\end{equation}
Since Cuntz comparison implies dynamical Cuntz comparison
(Lemma \ref{R_0817_NonEqToEq}(\ref{Item_R_0817_NonEqToEq_AToDyn})),
we can combine this with~(\ref{Eq_9606_Star_P}) to get
\[
(n l + 1)
  \langle (a - \ep)_{+} \rangle_{\alpha}
         +  m l \langle 1 \rangle_{\alpha}
  \leq (n l + 1) \langle c \rangle_{\alpha}.
\]
It follows from
Lemma \ref{R_0817_NonEqToEq}(\ref{Item_R_0817_NonEqToEq_DynToCP}) that
\[
(n l + 1)
  \langle (a - \ep)_{+} \rangle_{C^* (G, A, \alpha)}
        +  m l \langle 1 \rangle_{C^* (G, A, \alpha)}
  \leq (n l + 1) \langle c \rangle_{C^* (G, A, \alpha)}.
\]
Since $\frac{m l}{nl + 1} > r$ and $C^* (G, A, \alpha)$
has $r$-comparison, Lemma \ref{L_2529_To_alg_rc}(\ref{Lem1}) implies that
\begin{equation}\label{2026_02_05}
(a - \ep)_{+} \precsim_{C^* (G, A, \alpha)} c.
\end{equation}
Use this, Lemma~3.12 of \cite{ASEL25}, and Theorem~\ref{MainThm_1}
at the first step,
and use the second part of (\ref{Eq1260901}) and
Lemma \ref{R_0817_NonEqToEq}(\ref{Item_R_0817_NonEqToEq_AToDyn})
at the second step, to get
\[
(a - \ep)_{+} \precsim_{A, \alpha} c
  \precsim_{A, \alpha} b.
\]
This proves the claim.

We get the desired conclusion by
taking the infimum over $r$ such that $\CGAa$ has $r$-comparison.

Now, we prove that $\rc' (A, \alpha) = \rc (A, \alpha)$.
It follows from
Lemma~\ref{Lem_rcp}(\ref{Lem_rcp_a}) that
$\rc' (A, \alpha) \leq \rc (A, \alpha)$.
We prove the reverse inequality.
By Theorem~\ref{T_2529_rc_is_alg}, it is enough to show that
$\operatorname{r}_{A, \alpha} \leq \rc' (A, \alpha)$.
If $\rc' (A, \alpha) = \infty$, there is nothing to prove.
Suppose that $\rc' (A, \alpha) < \infty$ and let $s>\rc' (A, \alpha)$.
By the definition of $\rc' (A, \alpha)$, there exists $r<s$
such that $(A, \alpha)$ has $r$-comparison in $\CGAa$.
We must show that $A$ has algebraic dynamical $s$-comparison.
Let $m,n \in \N$ satisfy
$\frac{m}{n}>s$
and let $a, b \in M_{\infty} (A)_+$ satisfy
\[
(n + 1) \langle a \rangle_{\alpha} + m\langle1\rangle_{\alpha} 
 \leq n \langle b \rangle_{\alpha}.
\]
We claim that $a \precsim_{A, \alpha} b$.
By Lemma~\ref{L_5Z10_SbCnds}, it is enough to show that,
for every $\ep > 0$, we have $(a - \ep)_+ \precsim_{A, \alpha} b$.

Fix $\ep > 0$.
We may assume that $0< \ep<1$.
Since $\frac{m}{n}>s>r$,
there is $l\in \N$ such that $\frac{m l}{n l + 1} > r$.
Then
\[
(n l + 1) \langle a \rangle_{\alpha} + m l \langle1\rangle_{\alpha}
 \leq (n + 1)l\langle a \rangle_{\alpha} + m l \langle1\rangle_{\alpha} 
 \leq nl\langle b \rangle_{\alpha}.
\]
Let $x \in M_{\infty} (A)_+$ be the direct sum of $n l + 1$ copies of $a$,
let $y\in M_{\infty} (A)_+$ be the direct sum of $nl$ copies of $b$,
and let $p \in M_{\infty} (A)_+$ be the direct sum of $ml$ copies of $1_A$.
Then
$x \oplus p \precsim_{A, \alpha}y$.
By Lemma~\ref{L_5Z10_SbCnds}, there exists $\dt> 0$ such that
\[
(x \oplus p - \ep)_+ \precsim_{A, \alpha} (y- \dt)_+.
\]
We know that $(p - \ep)_+ \sim_A p$ as $0< \ep<1$.
Therefore
\begin{equation}\label{Eq120260910}
(n l + 1) \langle (a - \ep)_+ \rangle_{\alpha} + m l \langle1\rangle_{\alpha} 
 \leq nl\langle (b - \dt)_+ \rangle_{\alpha}.
\end{equation}
Apply Lemma~\ref{L_FiniteMatrixFullSpectrum} to $b$,
with $\eta = \dt$ and $nl$ in place of $n$.
We obtain $c \in \bigcup_{k = 1}^{\infty} M_k (A)_{++}$ such that
\begin{equation}\label{Eq120260911}
nl\langle (b - \dt)_+ \rangle_A
\leq (n l + 1) \langle c \rangle_A
\andeqn
c \precsim_A b.
\end{equation}
Using (\ref{Eq120260910}), the first part of (\ref{Eq120260911}),
and Lemma~\ref{R_0817_NonEqToEq}(\ref{Item_R_0817_NonEqToEq_AToDyn}),
we get
\[
(n l + 1) \langle (a - \ep)_+ \rangle_{\alpha} + m l \langle1\rangle_{\alpha} 
\leq (n l + 1) \langle c \rangle_{\alpha}.
\]
For $\tau \in \QT (A)^{\alpha}$, apply $d_{\tau}$ and
use $\frac{m l}{n l + 1} > r$ to get
\[
d_{\tau} ((a - \ep)_+) + r
 < d_{\tau} ((a - \ep)_+) + \frac{m l}{n l + 1}
 \leq d_{\tau} (c).
\]
Since $(A, \alpha)$ has $r$-comparison in $\CGAa$,
it follows that
$(a - \ep)_+ \precsim_{C^*(G, A, \alpha)} c$.
Choose $k \in \N$ such that $c \in M_k (A)_{++}$.
Clearly, $0$ is a limit point of $\spec (c)$.
Therefore Theorem~\ref{MainThm_1} gives
$(a - \ep)_+ \precsim_{A, \alpha} c$.
Using the second part of (\ref{Eq120260911}) and
Lemma~\ref{R_0817_NonEqToEq}(\ref{Item_R_0817_NonEqToEq_AToDyn}),
we get $c \precsim_{A, \alpha} b$.
Hence $(a - \ep)_+ \precsim_{A, \alpha} b$.
Thus $A$ has algebraic dynamical $s$-comparison, and therefore
$\operatorname{r}_{A, \alpha}\leq s$.
Since $s>\rc' (A, \alpha)$ was arbitrary,
$\operatorname{r}_{A, \alpha} \leq \rc' (A, \alpha)$, as desired.

The remaining two relations,
\[
\rc (C^* (G, A, \alpha)) = \frac{\rc (A^{\alpha})}{\card (G)}
\leq \frac{\rc (A)}{\card (G)},
\]
follow from Theorem~4.5 of \cite{AsvGlsPhl}
and Theorem~4.1 of \cite{AsvGlsPhl}.

The proof for case~(\ref{WTRPMainThm_a}) is simpler.
After obtaining the analog of
(\ref{Eq_9606_Star_P}), we do not introduce the purely positive
element $c$.
Instead, we pass directly to the crossed product,
apply $r$-comparison there, and then use
Proposition~\ref{Pro.Dyn.Wea}, applied to the appropriate matrix
amplification, in place of Theorem~\ref{MainThm_1}.
The same modification applies in the proof that
$\rc' (A, \alpha) = \rc (A, \alpha)$.
Also, for the remaining relations, we use Theorem~4.6 of~\cite{AsvGlsPhl}
in place of Theorem~4.5 there, and Theorem~4.2 of~\cite{AsvGlsPhl}
in place of Theorem~4.1 there.
\end{proof}

\section{Actions with forms of approximate
 representability}\label{Sec_0818_AppInn}

\indent
In this section, we consider strictly approximately inner actions on \uca{s}
and weakly tracially strictly approximately inner actions
on stably finite simple \uca{s}.
We establish
invariance properties of quasitraces and compare dynamical Cuntz comparison
with ordinary Cuntz comparison.
As a consequence, we show that
$\rc' (A, \af)$, $\rc (A, \af)$, and $\rc (A)$ agree
and are bounded above by $\rc (C^* (G, A, \alpha))$.
Also, we give several examples of actions satisfying these hypotheses.

The model for the results of this section is Lemma~\ref{L_1412_InnReg}.
However, we have not been able to prove
that all of Lemma~\ref{L_1412_InnReg} carries over.

\begin{dfn}[Definition~4.1 of~\cite{Asd2}]\label{Week_Ap_Inn_Def}
Let $A$ be an infinite dimensional simple unital \ca.
An action $\alpha \colon  G \to \Aut (A)$ of a
finite group $G$ on $A$
is \emph{weakly tracially strictly approximately inner}
if for every finite set $F \subseteq A$, every $\ep > 0$, and every
$x \in A_{+}$ with $\|x \| = 1$, there are
$c \in (A^{\alpha})_+$ with $\| c \| \leq 1$ and
contractive elements $s_g \in  \overline{c A c}$
for $g \in G$ such that:
\begin{enumerate}
\item\label{Week_Ap_Inn_Def_1A}
$\| s_1 - c \| < \ep$.
\item\label{Week_Ap_Inn_Def_1bb}
$\|s^*_g - s_{g^{-1}} \| < \ep$ for all $g \in G$.
\item\label{Week_Ap_Inn_Def_2}
$\| s_g s_h - c s_{gh}\| < \ep$ for all $g, h \in G$.
\item\label{Week_Ap_Inn_Def_3}
$\| c a - a c \| < \ep$ for all $a \in F \cup \{s_g \colon g \in G \}$.
\item\label{Week_Ap_Inn_Def_4}
$\| \alpha_g (c a c) - s_g a  s^*_g \| < \ep$
for all $a \in F$ and all $g \in G$.
\item\label{Week_Ap_Inn_Def_6}
$(1 - c - \ep)_+ \precsim_A x$.
\item\label{Week_Ap_Inn_Def_7}
$\| c x c \| > 1 - \ep$.
\end{enumerate}
\end{dfn}

This definition is a slight weakening of the appropriate nonabelian
version of weak tracial approximate representability,
given in Definition~2.1 of~\cite{Asd3}.
For weak tracial approximate representability,
in addition to conditions (\ref{Week_Ap_Inn_Def_1A})
through (\ref{Week_Ap_Inn_Def_7}) of
Definition~\ref{Week_Ap_Inn_Def}, we also require
$\| \alpha_g (s_h) - s_{g h g^{-1}}\| < \ep$ for all $g, h \in G$.

\begin{prp}\label{P_TrAppInn_QT}
Let $A$ be a unital \ca{} not of type I, let $G$ be a finite group,
and let $\af \colon G \to \Aut (A)$ be an action of $G$ on $A$.
Assume one of the following.
\begin{enumerate}
\item\label{P_TrAppInn_QT.a}
$\alpha$ is strictly approximately inner
(Definition \ref{DefPUE}(\ref{I_6816_Str_pai})).
\item\label{P_TrAppInn_QT_b}
$A$ is simple and $\alpha$ is weakly tracially strictly approximately inner.
\end{enumerate}
Then for every $\ta \in \QT (A)$,
we have $\ta \circ \af_g = \ta$ for all $g \in G$.
\end{prp}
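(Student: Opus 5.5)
The plan is to show that each $\af_g$ preserves the dimension function $d_{\ta}$ on $A_+$, and then recover $\ta$ from $d_{\ta}$. Since quasitraces are positively homogeneous, it suffices to treat positive contractions $a\in A_+$. For these we have $\ta(a)=\int_0^1 d_{\ta}((a-t)_+)\,dt$, and $(\af_g(a)-t)_+=\af_g((a-t)_+)$. So it is enough to prove $d_{\ta}(\af_g(a))=d_{\ta}(a)$ for all $a\in A_+$, $g\in G$, and $\ta\in\QT(A)$. This avoids matrices entirely. (Alternatively, one could use the injectivity of $\ta\mapsto d_{\ta}$ from Theorem~II.2.2 of~\cite{BH82}, as in the proof of Lemma~\ref{MyLemma1}, but that would require the amplified action.) By symmetry, applying the result to $g^{-1}$ and $\af_g(a)$, it suffices to prove the inequality $d_{\ta}(\af_g(a))\leq d_{\ta}(a)$.

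\textbf{Case (\ref{P_TrAppInn_QT.a}).} Here no comparison argument is needed. Condition~(\ref{I_6816_Str_pai_cj}) of Definition~\ref{DefPUE}(\ref{I_6816_Str_pai}) already shows that each $\af_g$ is approximately inner: for $a\in A_+$ there are unitaries $z_n$ with $z_n a z_n^*\to\af_g(a)$. A $2$-quasitrace satisfies $\ta(z a z^*)=\ta\bigl((a^{1/2}z^*)^*(a^{1/2}z^*)\bigr)=\ta(a)$. Normalized quasitraces on unital algebras are norm continuous on $A_+$, so $\ta(\af_g(a))=\ta(a)$ directly. (Type~I is irrelevant in this case.)

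\textbf{Case (\ref{P_TrAppInn_QT_b}).} Fix $a\in A_+$ with $\|a\|\leq 1$, $g\in G$, and $\ep,\et>0$.

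First, since $A$ is simple, unital, infinite dimensional and not of type~I, there is $x\in A_+$ with $\|x\|=1$ and $d_{\ta}(x)<\et$ for all $\ta\in\QT(A)$. One can take $x$ from a hereditary subalgebra containing $N>1/\et$ mutually orthogonal, pairwise Cuntz equivalent nonzero elements, obtained via Lemma~2.1 of~\cite{Phl40}.

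Second, apply Definition~\ref{Week_Ap_Inn_Def} with $F=\{a,a^{1/2}\}$, this $x$, and a small tolerance, obtaining $c$ and $(s_h)_{h\in G}$. The goal is the estimate
\[
(\af_g(a)-3\ep)_+ \precsim_A \bigl(\af_g(cac)-\ep\bigr)_+\oplus \af_g\bigl((1-c-\ep')_+\bigr)
\]
for a suitable $\ep'$. Here $\af_g\bigl((1-c-\ep')_+\bigr)=(1-c-\ep')_+\precsim_A x$, since $c\in A^{\af}$. This estimate would come from $a\approx c a c + (\text{terms supported on } 1-c)$, using approximate commutation of $c$ with $a$ and a Lemma~2.6 of~\cite{AsvGlsPhl} type splitting.

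Third, $\|\af_g(cac)-s_g a s_g^*\|$ is small. So Corollary~1.6 of~\cite{Phl40} and Lemma~\ref{L_6824_CuShift} (note $\|s_g\|\leq1$) give
\[
\bigl(\af_g(cac)-\ep\bigr)_+\precsim_A (s_g a s_g^*-\ep/2)_+\precsim_A a.
\]
Hence $d_{\ta}((\af_g(a)-3\ep)_+)\leq d_{\ta}(a)+\et$. Letting $\et\to0$ and then $\ep\to0$ gives $d_{\ta}(\af_g(a))\leq d_{\ta}(a)$, as required.

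I expect the main obstacle to be the splitting in the second step. The definition only controls $(1-c-\ep)_+$, not $1-c$ itself, so the standard inequality $a\precsim (cac-\dt)_+\oplus(1-c^2)$ must be adjusted. My first attempt would be to replace $c$ by $h(c)$ for a continuous function $h$ with $h=1$ on $[1-\ep,1]$. Then $1-h(c)\precsim(1-c-\ep/2)_+$, while $h(c)\in A^{\af}$ still nearly commutes with $a$, and $h(c)\,a\,h(c)$ is close enough to a cutdown of $cac$ to transfer the approximation $\af_g(cac)\approx s_g a s_g^*$. If that fails, I would apply the definition twice with nested tolerances.
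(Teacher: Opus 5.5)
Your argument is correct, and it takes a genuinely different route from the paper's.

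\textbf{The paper's route.} It reduces, via the Blackadar--Handelman bijection $\tau\mapsto d_\tau$, to showing $d_{\tau\circ\alpha_g}=d_\tau$ on $M_\infty(A)_+$. It then gets this from crossed product injectivity. Since $\iota_\alpha(\alpha_g(a))=u_g\iota_\alpha(a)u_g^*$, injectivity of $\W(\iota_\alpha)$ (Theorem~3.18 of \cite{Asd2}) gives $\langle\alpha_g(a)\rangle_A=\langle a\rangle_A$ in case~(1). In case~(2), injectivity on $\W_+(A)\cup\{0\}$ (Theorem~4.10 of \cite{Asd2}) is applied to some $b$ with $\spec(b)=[0,1]$ and to $a\oplus b$, and then $d_\tau(b)$ is cancelled.

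\textbf{Your route and the trade-off.} You replace Blackadar--Handelman by the layer-cake formula $\tau(a)=\int_0^1 d_\tau((a-t)_+)\,dt$, which is valid since $\tau$ is linear on $C^*(a)$. You handle case~(1) by approximate innerness plus norm continuity of quasitraces, the same fact the paper uses in Lemma~\ref{DirExtLemma}(\ref{DirExtLemma_aue}). You handle case~(2) by a direct estimate from Definition~\ref{Week_Ap_Inn_Def}. The paper's route yields the stronger Cuntz-semigroup identity, not just equality of dimension functions. The price is importing the comparatively deep theorems of \cite{Asd2}; the second of these needs stable finiteness, which the paper has to extract from the existence of~$\tau$. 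Your route is self-contained and elementary.

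\textbf{The obstacle you anticipate is harmless.} Take $h(t)=\min\bigl(1,\,t/(1-\varepsilon)\bigr)$ and $k(t)=h(t)/t$, with $k(0)=(1-\varepsilon)^{-1}$. Then $h(c)=k(c)c$ exactly, $\|k(c)\|\le(1-\varepsilon)^{-1}$, and $k(c)\in A^{\alpha}$. Hence $\alpha_g(h(c)ah(c))=k(c)\,\alpha_g(cac)\,k(c)$ holds exactly. This is within $(1-\varepsilon)^{-2}$ times the tolerance of $k(c)s_gas_g^*k(c)\precsim_A a$.

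Lemma~2.6 of \cite{AsvGlsPhl} needs no commutation hypothesis. It gives $(a-\varepsilon)_+\precsim_A(h(c)ah(c)-\varepsilon)_+\oplus(1-h(c))$. Comparing open supports in $C^*(1,c)$ gives $1-h(c)\precsim_A(1-c-\varepsilon/2)_+\precsim_A x$, once the tolerance in the definition is at most $\varepsilon/2$.

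So only conditions (5) and (6) of the definition and the $\alpha$-invariance of $c$ are used; condition (4) and putting $a^{1/2}$ in $F$ are unnecessary. Also, since $\tau$ is fixed throughout, you do not need $x$ to be uniformly small. Lemma~2.1 of \cite{Phl40} alone suffices: for $b$ with $\spec(b)=[0,1]$, take $x=f(b)$ with $f$ supported in one of $N$ disjoint open subintervals having $\mu_b$-measure at most $1/N$.
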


\begin{proof}
By the Blackadar--Handelman correspondence,
Theorem~II.2.2 in \cite{BH82}, the association
$\ta \mapsto d_{\ta}$
defines a bijection between $\QT (A)$ and the normalized
lower semicontinuous dimension functions on $\W (A)$.
Therefore, to prove that
$\ta \circ \af_g = \ta$,
it suffices to prove that
$d_{\ta \circ \af_g}= d_{\ta}$.
We prove this separately under the two hypotheses.
Fix $a \in M_{\infty} (A)_+$
and choose $n \in \N$ such that $a \in M_n (A)_+$.
Fix $\ta \in \QT (A)$ and $g\in G$.
Let $u_g \in C^* (G, A, \alpha)$ be the canonical unitary
corresponding to $g\in G$.
Let $\io_{\alpha}\colon A \to C^* (G, A, \alpha)$
be the canonical inclusion (Notation~\ref{N_4303_NN}).
As in Notation~\ref{N_5Z11_MU}, also write $\io_{\alpha}$ and $\alpha_g$
for their matrix amplifications.

Suppose first that~(\ref{P_TrAppInn_QT.a}) holds.
Then
$(1_{M_n} \otimes u_g) \io_{\alpha} (a) (1_{M_n} \otimes u_g)^*
 = \io_{\alpha} (\alpha_g (a))$,
so
$W (\io_{\alpha}) \big(\langle\alpha_g (a) \rangle_A \big)
 = W (\io_{\alpha}) \big(\langle a \rangle_A \big)$.
By Theorem~3.18 of~\cite{Asd2}, in its version for the original
Cuntz semigroup, the map
$W (\io_{\alpha}) \colon (A) \to W (C^* (G, A, \alpha))$
is injective.
It follows that
$\langle \alpha_g (a) \rangle_A = \langle a \rangle_A$.
Therefore
$d_{\ta \circ \alpha_g}(a) = d_{\ta}(\alpha_g (a)) = d_{\ta}(a)$.

Now suppose that~(\ref{P_TrAppInn_QT_b}) holds.
Since $\ta \in \QT (A)$ and $A$ is simple, unital,
and stably finite, it follows from Theorem~4.10 of~\cite{Asd2},
in its version for the original
Cuntz semigroup, that
$W (\io_{\alpha}) \colon W (A) \to W (C^* (G, A, \alpha))$
is injective on the subsemigroup $W_+ (A) \cup \{ 0 \}$
(see Notation~\ref{defPurePosit}).
Since $A$ is simple and not of type~I, we can choose
$b \in A_+$ such that
$\spec (b) = [0, 1]$ by Lemma~2.1 of \cite{Phl40}
(originally from \cite{AkShu85}).
By Lemma~3.12 of \cite{ASEL25},
$\langle b \rangle_A \in W_+ (A)$.
Therefore
$\langle \alpha_g (b) \rangle_A \in W_+ (A)$ as well.
We know that
$\io_{\alpha} (\alpha_g (b)) = u_g \io_{\alpha} (b) u_g^*$.
Therefore
$W (\io_{\alpha}) \big(\langle\alpha_g (b) \rangle_A \big)
 = W (\io_{\alpha}) \big(\langle b \rangle_A \big)$.
Injectivity of $W (\io_{\alpha})$ on $W_+ (A) \cup \{ 0 \}$ gives
$\langle\alpha_g (b) \rangle_A = \langle b \rangle_A$,
and consequently
\begin{equation}\label{Eq_TrAppInn_QT_h}
d_{\ta} (\alpha_g (b)) = d_{\ta}(b).
\end{equation}
Consider $a \oplus b \in M_{n + 1}(A)_+$.
Since $\spec (a \oplus b) = \spec (a) \cup[0, 1]$,
we can apply the same reasoning to deduce
\[
d_{\ta} (\alpha_g (a)) + d_{\ta}(\alpha_g (b)) 
 = d_{\ta} (\alpha_g (a \oplus b))
 = d_{\ta} (a \oplus b)
 = d_{\ta} (a) + d_{\ta} (b).
\]
Together with~(\ref{Eq_TrAppInn_QT_h}), this implies that
$d_{\ta \circ \alpha_g }(a) = d_{\ta}(\alpha_g (a)) = d_{\ta}(a)$.
\end{proof}

\begin{prp}\label{M_Lem_13}
Let $\afGAA$ be an action of a finite group $G$ on a \uca~$A$.
Let $a, b \in \Mi (A)_{+}$.
Assume one of the following:
\begin{enumerate}
%
\item\label{I_6829_M_Lem_13_SAI}
$\af$ is strictly approximately inner.
\item\label{I_6829_M_Lem_13_wtSAI}
$A$ is simple and in\fd,
$\af$ is weakly tracially strictly approximately inner,
and $0$ is a limit point of $\spec (b)$.
\end{enumerate}
Then $a \precsim_{A, \alpha} b$ if and only if $a \precsim_{A} b$.
\end{prp}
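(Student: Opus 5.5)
The reverse implication, that $a \precsim_A b$ implies $a \precsim_{A, \af} b$, is Lemma~\ref{R_0817_NonEqToEq}(\ref{Item_R_0817_NonEqToEq_AToDyn}). So all the work is in the forward direction. In both cases I will route through the crossed product and use the injectivity results of~\cite{Asd2} for $\W (\io_{\af})$, in the same way as in the proof of Proposition~\ref{P_TrAppInn_QT}.

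\emph{Case~(\ref{I_6829_M_Lem_13_SAI}).} Suppose $a \precsim_{A, \af} b$. By Lemma~\ref{R_0817_NonEqToEq}(\ref{Item_R_0817_NonEqToEq_DynToCP}) we get $\io_{\af} (a) \precsim_{\CGAa} \io_{\af} (b)$, that is,
\[
(\io_{\af})_* (\langle a \rangle_A) \leq (\io_{\af})_* (\langle b \rangle_A).
\]
Theorem~3.18 of~\cite{Asd2}, which is already used in Proposition~\ref{P_4307_AppInnActions}, should give more than injectivity: it should say that $\W (\io_{\af})$ is an order embedding. If so, $a \precsim_A b$ follows at once. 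If the cited theorem only gives injectivity, I would instead argue directly at the one step level. By Remark~\ref{N_6816_TransCl} and transitivity of $\precsim_A$, it suffices to show that $c \precsim_{A, \af, 0} d$ implies $c \precsim_A d$. For this, Definition~\ref{D_0817_EqCmp_Orig} gives
\[
(c - \ep)_+ \precsim_A \bigoplus_j x_j
\andeqn
\bigoplus_j \af_{g_j} (x_j) \precsim_A (d - \dt)_+ .
\]
Each $\af_{g_j}$ is approximately inner, so $\af_{g_j} (x_j) \sim_A x_j$. This yields $(c - \ep)_+ \precsim_A d$ for every $\ep > 0$, hence $c \precsim_A d$. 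In fact this argument is just Proposition~\ref{L_0817_Inn} applied to the pointwise approximately inner action, since $\kp_{\af}$ is then an order isomorphism. So Case~(\ref{I_6829_M_Lem_13_SAI}) is immediate from Proposition~\ref{L_0817_Inn}, because strict approximate innerness implies pointwise approximate innerness.

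\emph{Case~(\ref{I_6829_M_Lem_13_wtSAI}).} Here the action need not be pointwise approximately inner, so I will use the crossed product together with purely positive elements.

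1. Assume $a \precsim_{A, \af} b$ and fix $\ep > 0$. Lemma~\ref{R_0817_NonEqToEq}(\ref{Item_R_0817_NonEqToEq_DynToCP}) gives $(a - \ep)_+ \precsim_{\CGAa} b$.

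2. Since $0$ is a limit point of $\spec (b)$, the element $b$ is purely positive by Lemma~3.12 of~\cite{ASEL25}. The element $(a - \ep)_+$ may fail to be purely positive, so I enlarge it. As in the proof of Lemma~\ref{L_FiniteMatrixFullSpectrum}, I would like to replace $(a - \ep)_+$ by $a' = (a - \ep)_+ \oplus d_0$ with $\spec (d_0) = [0, 1]$ and $d_0$ Cuntz-small, while keeping $a' \precsim_{\CGAa} b$. This requires that $b$ have room to spare in the crossed product. I would get the room from $(a - \ep)_+ \precsim_{\CGAa} (b - \dt)_+$: because $0$ is a limit point of $\spec (b)$, there is a nonzero $j (b)$ orthogonal to $(b - \dt)_+$ with $(b - \dt)_+ + j (b) \precsim_A b$, exactly as in~(\ref{Eq1_2024_01_16}). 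Taking $d_0$ in the hereditary subalgebra generated by $j (b)$ with full spectrum gives $a' \precsim_{\CGAa} (b - \dt)_+ \oplus j (b) \precsim_A b$.

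3. Theorem~4.10 of~\cite{Asd2}, as used in Proposition~\ref{P_TrAppInn_QT}, gives that $\W (\io_{\af})$ is an order embedding on $\W_+ (A) \cup \{ 0 \}$. From this I conclude $a' \precsim_A b$, hence $(a - \ep)_+ \precsim_A b$. Since $\ep > 0$ is arbitrary, $a \precsim_A b$.

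The main obstacle is step~3: checking that the cited result of~\cite{Asd2} gives order embedding, not merely injectivity, on the purely positive part, and that simplicity and infinite dimensionality (which is needed for non type~I and stable finiteness hypotheses there) are enough to apply it. If only injectivity is available, I would try instead to transport one step dynamical subequivalences directly, by using the tracial approximate implementation $s_g$ of $\af_g$ to show $\af_g (x) \oplus (\text{small}) \precsim_A x \oplus (\text{small})$, and absorbing the small error into the room supplied by $j (b)$ as in step~2.
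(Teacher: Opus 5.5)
Your case~(\ref{I_6829_M_Lem_13_SAI}) is correct, but it takes a different route from the paper. The paper passes to $\CGAa$ via Lemma~\ref{R_0817_NonEqToEq}(\ref{Item_R_0817_NonEqToEq_DynToCP}). It then pulls back to $A$ using Proposition~3.16 of~\cite{Asd2}, which says that $a \precsim_{\CGAa} b$ implies $a \precsim_A b$ for strictly approximately inner actions. Your fallback argument uses only that each $\af_g$ is approximately inner, so that $\af_g (x) \sim_A x$. Combined with Remark~\ref{N_6816_TransCl}, this is exactly Lemma~\ref{L_0Y06_AppInn} (equivalently, Proposition~\ref{L_0817_Inn}). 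Your route needs no input from~\cite{Asd2} and works for pointwise approximately inner actions of arbitrary discrete groups. The paper's route yields, as a by-product, that comparison in the crossed product already descends to~$A$, but that is not needed for this statement. You can drop the speculation about Theorem~3.18 of~\cite{Asd2}; the fallback is the proof.

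For case~(\ref{I_6829_M_Lem_13_wtSAI}) your outline matches the paper's: go to $\CGAa$, then come back using~\cite{Asd2}. However, step~3, which carries all the content, is not established.

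As this paper invokes it (Proposition~\ref{P_TrAppInn_QT}, Corollary~\ref{C_6829_SoftInj}), Theorem~4.10 of~\cite{Asd2} gives only injectivity of $\W (\io_{\af})$ on $\W_+ (A) \cup \{ 0 \}$, and only for stably finite~$A$. Injectivity does not let you deduce $a' \precsim_A b$ from $\io_{\af} (a') \precsim_{\CGAa} \io_{\af} (b)$. Your alternative via the $s_g$ is too vague to check.

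The missing ingredient is Proposition~4.5 of~\cite{Asd2}, which is what the paper cites. It says that if $0$ is a limit point of $\spec (b)$, then $a \precsim_{\CGAa} b$ implies $a \precsim_A b$, with no condition on~$a$. With it, the forward implication follows immediately from your step~1, and the enlargement of $(a - \ep)_+$ by $d_0$ in step~2 is unnecessary.

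The detour through $\W_+ (A)$ also quietly adds a hypothesis. Identifying ``$0$ is a limit point of $\spec (b)$'' with pure positivity needs stable finiteness. In a purely infinite simple algebra every nonzero positive element is Cuntz equivalent to~$1$, so $A_{++} = \E$ there. Stable finiteness is not assumed in~(\ref{I_6829_M_Lem_13_wtSAI}): infinite dimensionality gives non-type-I, but not stable finiteness.
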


\begin{proof}
In both cases, the backward implication is immediate
(Lemma~\ref{R_0817_NonEqToEq}(\ref{Item_R_0817_NonEqToEq_AToDyn})).
For the forward implication, first observe that
$a \precsim_{C^* (G, A, \alpha)} b$
by Lemma~\ref{R_0817_NonEqToEq}(\ref{Item_R_0817_NonEqToEq_DynToCP}).
Now, in~(\ref{I_6829_M_Lem_13_SAI}), $a \precsim_{A} b$
by Proposition~3.16 in \cite{Asd2},
while in~(\ref{I_6829_M_Lem_13_wtSAI}), $a \precsim_{A} b$
by Proposition~4.5 in \cite{Asd2}.
\end{proof}

Recall $\Cu_+ (A)$ and $\Cu_+ (A, \alpha)$ from
Notation~\ref{defPurePosit}.

\begin{cor}\label{C_6829_SoftInj}
Let $A$ be a stably finite simple unital C*-algebra which is not
of type~I,
let $G$ be a finite group,
and let $\af \colon G \to \Aut (A)$ be an action of $G$ on~$A$
which is weakly tracially  strictly approximately inner.
Then the natural \hm{s} of unital semigroups,
as in Proposition~\ref{P_1412_NatMaps},
\[
\Cu_{+} (A) \cup \{ 0 \}
 \stackrel{\kp_{\af}}{\longrightarrow} \Cu_+ (A, \alpha) \cup \{ 0 \}
 \stackrel{\te_{\af} \circ (\io_{\af})_{\#}}{\longrightarrow}
  \Cu_+ (C^* (G, A, \alpha)) \cup \{ 0 \}
\]
are injective.
\end{cor}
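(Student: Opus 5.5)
The plan is to derive both injectivity statements from Proposition~\ref{M_Lem_13}(\ref{I_6829_M_Lem_13_wtSAI}), together with Lemma~\ref{R_0817_NonEqToEq}. The key input is that purely positive elements have $0$ as a limit point of their spectrum, which is exactly the hypothesis needed there.

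\emph{Setting up.} First I will check that the maps make sense as stated. For $a \in (\Kt A)_{++}$, $\kp_{\af}$ sends $\langle a \rangle_A$ to $\langle a \rangle_{\af}$, which lies in $\Cu_+ (A, \af)$ by definition. Also $\te_{\af} \circ (\io_{\af})_{\#}$ sends it to $\langle \io_{\af} (a) \rangle_{\CGAa}$. To land in $\Cu_+ (\CGAa)$, I will use Lemma~3.12 of \cite{ASEL25}: $a \in (\Kt A)_{++}$ iff $0$ is a limit point of $\spec (a)$ (for $A$ simple, stably finite, and $a \neq 0$). Spectrum is preserved by the injective map $\io_{\af}$, so $\io_{\af} (a)$ is purely positive in $\CGAa$. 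This needs $\CGAa$ to be simple, or at least the criterion to apply there. If simplicity of the crossed product is an issue, I will interpret $\Cu_+ (\CGAa)$ as classes of elements with $0$ as a limit point of the spectrum.

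\emph{Injectivity.} I will prove injectivity of the composite $\te_{\af} \circ (\io_{\af})_{\#} \circ \kp_{\af}$ on $\Cu_+ (A) \cup \{ 0 \}$. Combined with surjectivity of $\kp_{\af}$ onto $\Cu_+ (A, \af) \cup \{ 0 \}$, which holds by definition, this gives injectivity of both maps. So suppose $a, b$ are purely positive (or zero) with $\io_{\af} (a) \sim_{\CGAa} \io_{\af} (b)$. The task is to show $a \sim_A b$. The zero cases are trivial, since Cuntz equivalence to $0$ forces the element to be $0$ under an injective map. Otherwise $0$ is a limit point of both $\spec (a)$ and $\spec (b)$. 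Proposition~4.5 of \cite{Asd2}, which Proposition~\ref{M_Lem_13} invokes, gives that $a \precsim_{\CGAa} b$ implies $a \precsim_A b$ when $0$ is a limit point of $\spec (b)$; its $\Cu$ version handles elements of $\Kt A$. Applying this in both directions gives $a \sim_A b$. In fact I only need injectivity of each map separately, and this gives something stronger: $a \sim_{A, \af} b$ implies $a \sim_{\CGAa} b$ by Lemma~\ref{R_0817_NonEqToEq}(\ref{Item_R_0817_NonEqToEq_DynToCP}), hence $a \sim_A b$.

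\emph{Main obstacle.} The main work is passing from the $\Mi (A)$ statements to $\Kt A$. Proposition~\ref{M_Lem_13} and \cite{Asd2} are stated for $\Mi (A)$, while $\Cu$ uses $\Kt A$. I would handle this in one of two ways. One option is to cite the $\Cu$ versions in \cite{Asd2}, as the paper already does for Theorem~4.10. The other is to reduce using cut-downs $(a - \ep)_+$, approximated by elements of $M_n (A)$. If I use the cut-downs, I need to keep the spectral condition: I would add a small orthogonal summand with full spectrum $[0,1]$ (as in Lemma~\ref{L_FiniteMatrixFullSpectrum}) to ensure $0$ remains a limit point of the spectrum, and then pass to the supremum over $\ep$. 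Equivalently, I would check that the weak tracial strict approximate innerness hypotheses pass to $\Kt A$ via the hereditary subalgebras $M_n (A)$. Finally, $A$ is not of type~I, hence infinite dimensional, so the hypotheses of Proposition~\ref{M_Lem_13}(\ref{I_6829_M_Lem_13_wtSAI}) are met.
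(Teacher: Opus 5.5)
Your proposal is correct and follows essentially the paper's route. The paper gets injectivity of $\kp_{\af}$ from Proposition~\ref{M_Lem_13}(\ref{I_6829_M_Lem_13_wtSAI}), gets injectivity of the composite $(\io_{\af})_{*}$ from Theorem~4.10 of \cite{Asd2} (the packaged form of the Proposition~4.5 argument you apply in both directions), and then deduces injectivity of $\te_{\af} \circ (\io_{\af})_{\#}$ from surjectivity of $\kp_{\af}$, exactly as you do. Your extra checks, namely that $\io_{\af}$ sends purely positive classes to purely positive ones and the passage from $\Mi (A)$ to $\Kt{A}$, are points the paper passes over silently, so they add care rather than change the approach.
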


\begin{proof}
Injectivity of $\kp_{\af}$
follows from Proposition~\ref{M_Lem_13}(\ref{I_6829_M_Lem_13_wtSAI}).
Theorem~4.10 of \cite{Asd2} implies that
$(\io_{\af})_{*} = \te_{\af} \circ (\io_{\af})_{\#} \circ \kp_{\af}$
is injective.
Since
$\kp_{\af} \colon \Cu_{+} (A) \cup \{ 0 \}
  \to \Cu_+ (A, \alpha) \cup \{ 0 \}$
is surjective, for the same reason as in
Remark \ref{R_1412_MapsSI}(\ref{Item_1412_MapsSI_3S}),
injectivity of
$\te_{\af} \circ (\io_{\af})_{\#} \colon
  \Cu_+ (A, \alpha) \cup \{ 0 \} \to \Cu_+ (C^* (G, A, \alpha)) \cup \{ 0 \}$
follows.
\end{proof}

\begin{thm}\label{Prp_Rep_1}
Let $\afGAA$ be an action of a finite group $G$ on a unital \ca~$A$.
Assume one of the following:
\begin{enumerate}
\item\label{Prp_Rep_1_a}
$A$ is residually stably finite
and $\alpha$ is strictly approximately inner.
\item\label{Prp_Rep_1.b}
$A$ is infinite dimensional, simple, and stably finite, and
$\alpha$ is weakly tracially strictly approximately inner.
\end{enumerate}
Then
\[
\rc (A)
 = \rc (A, \alpha)
 = \rc' (A, \alpha)
 \leq \rc (C^* (G, A, \alpha)).
\]
\end{thm}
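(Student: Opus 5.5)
The plan is to prove the chain
\[
\rc (A) \leq \rc' (A, \af) \leq \rc (A, \af) \leq \rc (A),
\]
together with $\rc' (A, \af) \leq \rc (\CGAa)$.
The last two inequalities in the chain are already available. Lemma~\ref{Lem_rcp}(\ref{Lem_rcp_a}) gives $\rc' (A, \af) \leq \rc (A, \af)$. Lemma~\ref{Lem_rcp}(\ref{Lem_rcp_c}) gives $\rc' (A, \af) \leq \rc (\CGAa)$. For $\rc (A, \af) \leq \rc (A)$, we use Lemma~\ref{Lem_rcp}(\ref{Lem_rcp_b}), whose hypothesis (all quasitraces are $G$-invariant) is supplied by Proposition~\ref{P_TrAppInn_QT}.
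Proposition~\ref{P_TrAppInn_QT} needs $A$ not of type~I. In case~(\ref{Prp_Rep_1.b}) this holds because $A$ is infinite dimensional, simple, and unital. In case~(\ref{Prp_Rep_1_a}) it may fail, but strictly approximately inner implies pointwise approximately inner. Quasitraces are invariant under approximately inner automorphisms, as in the proof of Lemma~\ref{DirExtLemma}(\ref{DirExtLemma_aue}), so $\QT (A)^{\af} = \QT (A)$ directly.

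The key step is $\rc (A) \leq \rc' (A, \af)$. Since $\QT (A)^{\af} = \QT (A)$, the hypotheses in $r$-comparison in the crossed product are exactly those of ordinary $r$-comparison for $A$. So it suffices to pull back $a \precsim_{\CGAa} b$ to $a \precsim_A b$, at least up to an approximation.

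In case~(\ref{Prp_Rep_1_a}), Proposition~3.16 of~\cite{Asd2} (as used in Proposition~\ref{M_Lem_13}) gives this pull-back for all $a, b$. Hence $r$-comparison in the crossed product implies $r$-comparison for $A$, using Lemma~\ref{L_6920_No_aIb} to handle the fullness condition $a \in I (b)$ via residual stable finiteness.

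In case~(\ref{Prp_Rep_1.b}), the pull-back is only known when $0$ is a limit point of $\spec (b)$. Here I would argue through the algebraic radius, mimicking the second half of the proof of Theorem~\ref{WTRPMainCor} with the trivial group, so that ordinary comparison plays the role of dynamical comparison. The steps are as follows.
\begin{enumerate}
\item Take $s > \rc' (A, \af)$ and $r < s$ such that $(A, \af)$ has $r$-comparison in the crossed product.
\item Take $m/n > s$ with $(n+1)\langle a \rangle_A + m \langle 1 \rangle_A \leq n \langle b \rangle_A$. Pass to $(a - \ep)_+$ and $(b - \dt)_+$, with a multiplier $l$ chosen so that $ml/(nl+1) > r$.
\item Replace $(b - \dt)_+$ by the purely positive $c \precsim_A b$ from Lemma~\ref{L_FiniteMatrixFullSpectrum}.
\item Conclude $d_\ta ((a-\ep)_+) + r < d_\ta (c)$ for all $\ta$, hence $(a - \ep)_+ \precsim_{\CGAa} c$.
\item Apply Proposition~4.5 of~\cite{Asd2} (equivalently Proposition~\ref{M_Lem_13}(\ref{I_6829_M_Lem_13_wtSAI}) combined with Lemma~\ref{R_0817_NonEqToEq}) to get $(a - \ep)_+ \precsim_A c \precsim_A b$.
\end{enumerate}
This shows $\arc_{A, \io} \leq s$. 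Then Theorem~\ref{T_2529_rc_is_alg} applied to the trivial action, together with Remark~\ref{R_5Z10_Not_rsf}, gives $\rc (A) \leq s$. Simple stably finite algebras are residually stably finite, so that theorem applies.

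The main obstacle is this last case: passing from comparison in the crossed product back to $A$ requires the spectrum condition on $b$. The substitution by a purely positive $c$ via Lemma~\ref{L_FiniteMatrixFullSpectrum}, with the slack supplied by the algebraic characterization, is the device I expect to carry it.
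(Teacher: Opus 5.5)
Your proposal is correct and follows essentially the same route as the paper: the easy inequalities come from Lemma~\ref{Lem_rcp}, and $\rc (A) \leq \rc' (A, \af)$ is obtained via the algebraic radius, the purely positive replacement $c$ from Lemma~\ref{L_FiniteMatrixFullSpectrum}, and Proposition~4.5 (respectively~3.16) of~\cite{Asd2}. Your handling of case~(\ref{Prp_Rep_1_a}) is a harmless and slightly cleaner variant (invariance of quasitraces directly from pointwise approximate innerness, which avoids the nominal non-type-I hypothesis of Proposition~\ref{P_TrAppInn_QT}, and $r$-comparison plus Lemma~\ref{L_6920_No_aIb} instead of the algebraic radius with $c = b$); but note that the pull-back from $\precsim_{\CGAa}$ to $\precsim_A$ is Proposition~4.5 of~\cite{Asd2} itself, not Proposition~\ref{M_Lem_13}(\ref{I_6829_M_Lem_13_wtSAI}) combined with Lemma~\ref{R_0817_NonEqToEq}, since Lemma~\ref{R_0817_NonEqToEq}(\ref{Item_R_0817_NonEqToEq_DynToCP}) goes in the opposite direction.
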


Compare this result with Lemma~\ref{L_1412_InnReg},
about conjugation by the regular representation
on $L (l^2 (G)) \otimes A_0)$ when $G$ is abelian.
We do not know whether the inequality in this theorem
is necessarily an equality as in Lemma~\ref{L_1412_InnReg}.

\begin{proof}[Proof of Theorem~\ref{Prp_Rep_1}]
Using Lemma \ref{Lem_rcp}(\ref{Lem_rcp_a}) for the first inequality,
and Lemma \ref{Lem_rcp}(\ref{Lem_rcp_b}) and
both parts of Proposition~\ref{P_TrAppInn_QT} for the second inequality,
we have
\[
\rc' (A, \alpha) \leq \rc (A, \alpha) \leq \rc (A).
\]
Also, $\rc' (A, \alpha) \leq \rc (C^* (G, A, \alpha))$
by Lemma \ref{Lem_rcp}(\ref{Lem_rcp_c}).
Therefore it suffices to prove that $\rc (A) \leq \rc' (A, \alpha)$.

We first do this under the hypothesis~(\ref{Prp_Rep_1.b}).
By Theorem~\ref{T_2529_rc_is_alg} for the trivial group
(which, by Remark~\ref{R_6915_trv}, is Proposition 3.2.3 of~\cite{BRTW12}),
it suffices to prove
$\operatorname{r}_{A} \leq \rc' (A, \af)$.
So let $r > 0$,
suppose that $(A, \af)$ has $r$-comparison in the crossed product,
and let $m, n \in \N$ satisfy $\frac{m}{n} > r$.
Let $k \in \N$ and let $a, b \in \Mi (A)_{+}$
satisfy $\| a \| = \| b \| = 1$ and
\begin{equation}\label{Eq2_20200424}
(n + 1) \langle a \rangle_{A} + m \langle 1 \rangle_{A}
 \leq n \langle b \rangle_{A}.
\end{equation}
We have to prove that $a \precsim_{A} b$.
It suffices to show that
for every $\ep > 0$, we have $(a - \ep)_{+} \precsim_{A} b$.

So let $\ep > 0$.
\Wolog{} $\ep < \frac{1}{4}$.
Choose $l \in \N$ such that
\begin{equation}\label{Eq_9422_kmkn1}
\frac{m l}{ n l + 1} > r.
\end{equation}
Define elements of $\Mi (A)_{+}$ as follows.
\begin{enumerate}
\item\label{I_6829_x_Dnf}
$x \in M_{\infty} (A)_{+}$
is the direct sum of $n l + 1$ copies of~$a$.
\item\label{I_I_6829_y_dfn}
$y \in M_{\infty} (A)_{+}$
is the direct sum of $n l$ copies of~$b$.
\item\label{I_I_6829_p_def}
$p \in M_{\infty} (A)_{+}$
is the direct sum of $m l$ copies of~$1_A$.
\end{enumerate}

It follows from (\ref{Eq2_20200424})
that $x \oplus p \precsim_{A} y$.
So there exists $\dt > 0$ such that
$\big( x \oplus p - \ep \big)_{+} \precsim_{A} (y - \dt)_{+}$.
Using $\ep < \frac{1}{4}$, we get
$( x \oplus p - \ep )_{+} \sim_{A} ( x - \ep )_{+} \oplus p$.
Therefore
\begin{equation}\label{Eq_9606_Star}
(n l + 1)
    \langle ( a - \ep )_{+} \rangle_{A}
              + m l \langle 1 \rangle_{A}
  \leq n l \langle (b - \dt)_{+} \rangle_{A}.
\end{equation}
Apply Lemma~\ref{L_FiniteMatrixFullSpectrum} to $b$,
with $\eta = \dt$ and $nl$ in place of $n$.
We obtain $c \in \bigcup_{k = 1}^{\infty} M_k (A)_{++}$
such that
\begin{equation}\label{EQ220260901}
n l \langle (b - \dt)_+ \rangle_A \leq (n l + 1) \langle c \rangle_A
\andeqn
c \precsim_A b.
\end{equation}
Then, by (\ref{Eq_9606_Star}),
\[
(n l + 1)
  \langle (a - \ep)_{+} \rangle_{A} +  m l \langle 1 \rangle_{A}
  \leq (n l + 1) \langle c \rangle_{A}.
\]
For $\ta \in \QT (A)^{\af}$, use (\ref{Eq_9422_kmkn1}) at the first step,
and, for the second step,
apply $d_{\ta}$ to the preceding inequality
and divide by $n l + 1$, getting
\begin{equation}\label{Eq_6915_ToHere}
d_{\ta} ( (a - \ep)_{+} ) + r
 < d_{\ta} ( (a - \ep)_{+} ) + \frac{m l}{n l + 1}
 \leq d_{\ta} (c).
\end{equation}
Since $(A, \af)$ has $r$-comparison in the crossed product, we have
$(a - \ep)_{+} \precsim_{\CGAa} c$.
Using this and Proposition~4.5 in \cite{Asd2} at the first step,
and using the second part of (\ref{EQ220260901}) at the second step, we get
\[
(a - \ep)_{+} \precsim_{A} c \precsim_{A} b.
\]
This completes the proof that $\operatorname{r}_{A} \leq r$.
Taking the infimum over $r$
such that $A$ has $r$-comparison in the crossed product,
we get $\operatorname{r}_{A} \leq \rc' (A, \af)$,
and therefore $\rc (A) \leq \rc' (A, \af)$.

Under Condition~(\ref{Prp_Rep_1_a}), the proof is simpler.
We use Proposition~3.16 of~\cite{Asd2}
in place of Proposition~4.5 in \cite{Asd2}.
Since Proposition~3.16 of~\cite{Asd2} does not require pure positivity,
we do not need a replacement for Lemma~\ref{L_FiniteMatrixFullSpectrum},
and we can take $c = b$.
\end{proof}

\begin{exa}\label{Good.Example}
For every finite abelian group $G$
and every $r \in \left( 0, \frac{1}{\card (G)} \right)$, we exhibit
a simple unital AH~algebra $A$ with stable rank one and
an action $\alpha \colon G \to \Aut (A)$ such that:
\begin{enumerate}
\item\label{Good.Example_AppInn}
$\alpha$ is approximately representable and pointwise outer.
\item\label{Good.Example_rc}
$\rc (A, \af) = \rc' (A, \alpha) = \rc (A) = \rc (C^* (G, A, \alpha)) = r$.
\end{enumerate}

For the construction,
let $\alpha \colon G \to \Aut (A)$
be the action in Corollary~5.20 of \cite{Asd2}, with $\et = r$.
It was shown there that $\rc (A) = \rc (C^* (G, A, \alpha)) = r$.
Also, $\alpha$ is approximately representable
by Proposition 5.13(3) of \cite{Asd2},
and pointwise outer by Proposition 5.13(4) of \cite{Asd2}.
We use Theorem~\ref{Prp_Rep_1} to get
$\rc (A, \af) = \rc' (A, \alpha) = \rc (A)$.
\end{exa}

\section{Actions with intermediate values of the dynamical radius
 of comparison}\label{ImportantExample}


In this section, we construct a family of actions $\af$ of
$G = \Z / 2 \Z$ on simple separable unital AH algebras for which
\[
\frac{\rc (A)}{\card (G)} < \rc (A, \af) = \rc (\CGAa) < \rc (A).
\]
Even omitting $\rc (A, \af)$ from these inequalities,
no such examples were previously known.

\begin{ctn}\label{Cns_1622_Construction}
This construction depends on the following parameters.
\begin{itemize}
\item
Two integers $t_0, t_1 \in \N$.
\item
Two sequences $(d_i (n))_{n \in \N}$, for $i = 0, 1$,
such that $d_i (n) \geq 2$ for all $n \in \N$
and $\lim_{n \to \I} d_i (n) = \I$.
\item
A sequence $(l (n))_{n \in \N}$
such that $l (n) \geq 3 + \max ( d_0 (n), \, d_1 (n) )$
for all $n \in \N$.
\item
Two sequences $(x_{i, n})_{n \in \Nz}$, for $i = 0, 1$,
satisfying conditions described in~(\ref{Im_1622_Ctn_115}) below.
\end{itemize}
We define the following objects,
making a few immediate observations along the way.
The properties needing more serious proof are given in a sequence
of lemmas afterwards.
Throughout, $i \in \{ 0, 1 \}$.
\begin{enumerate}
\item\label{Im_1622_Ctn_1}
For $n \in \Nz$, define
\begin{itemize}
\item
$r (0) = 1$ and $r (n) = \prod_{k = 1}^n l (k)$.
\item
$s_i (0) = 1$ and $s_i (n) = \prod_{k = 1}^n d_i (k)$.
\item
$u_i (n)
 = \frac{s_i (n)}{r (n)}
 = \prod_{k = 1}^n \frac{d_i (k)}{l (k)}$.
\end{itemize}
\item\label{Im_1622_Ctn_112}
Define $\kp_i = \lim_{n \to \infty} u_i (n)$.
\item\label{Im_1622_Ctn_Base}
Define $Z_i = (S^2)^{t_i}$.
\item\label{Im_1622_Ctn_113}
For $n \in \Nz$, define compact spaces by $X_{i, n} = Z_i^{s_i (n)}$.
Then:
\begin{enumerate}
%
\item\label{I_6720_Prd}
$X_{i, \, n + 1} = ( X_{i, n} )^{d_i (n + 1)}$.
\item\label{I_6720_Dim}
The covering dimension of $X_{i, n}$
is $\dim ( X_{i, n} ) = 2 t_i s_i (n)$.
\end{enumerate}
\item\label{Im_1622_Ctn_114}
For $n \in \Nz$ and $\nu = 1, 2, \ldots, d_i (n + 1)$,
let $P^{(n)}_{i, \nu} \colon X_{i, n + 1} \to X_{i, n}$
be the $\nu$~coordinate projection.
\item\label{Im_1622_Ctn_115}
The points $x_{i, m}$ are required to be in $X_{i, m}$ for $m \in \Nz$,
and to satisfy the condition that, for all $n \in \Nz$, the set
\[
\begin{split}
& \Bigl\{ \bigl( P^{(n)}_{i, \nu_{1}} \circ P^{(n + 1)}_{i, \nu_{2}}
  \circ \cdots \circ P^{(m - 1)}_{i, \nu_{m - n}} \bigr) (x_{i, m})
  \colon
\\
& \hspace*{1em} {\mbox{}}
{\mbox{$m = n, \, n + 1, \, \ldots$
       and $\nu_j = 1, 2, \ldots, d_i (n + j)$
       for $j = 1, 2, \ldots, m - n$}} \Bigr\}
\end{split}
\]
is dense in $X_{i, n}$.
(The contribution to this set when $m = n$ is~$x_{i, n}$.)
\item\label{Im_1622_Ctn_116}
For $n \in \Nz$, define
\[
A_{0, n}
 = \bigl[ C (X_{0, n}) \oplus C (X_{0, n}) \bigr] \otimes M_{r (n)}
\aqn
A_{1, n} = C (X_{1, n}) \otimes M_{r (n)}.
\]
When convenient, we identify $A_{0, n}$ in the obvious ways with
\[
C (X_{0, n}, M_{r (n)}) \oplus C (X_{0, n}, M_{r (n)})
\aqn
C \bigl( X_{0, n} \amalg X_{0, n}, \, M_{r (n)} \bigr),
\]
and we identify $A_{1, n}$ in the obvious way
with $C (X_{1, n}, M_{r (n)})$.
Further set $A_n = A_{0, n} \oplus A_{1, n}$.
\item\label{Im_1622_Ctn_117}
For $n \in \Nz$,
define a unital \hm
\[
\begin{split}
\lambda_n
& \colon
C (X_{0, n}) \oplus C (X_{0, n}) \oplus C (X_{1, n})
\\
& \qquad \qquad {\mbox{}}
 \to M_{l (n + 1)}
   \bigl( C (X_{0, \, n + 1}) \oplus C (X_{0, \, n + 1})
             \oplus C (X_{1, \, n + 1})  \bigr)
\end{split}
\]
by (with further explanation afterwards)
\begin{equation}\label{Im_1622_lamdaDefinition}
\begin{split}
\lambda_{n} (f_1, f_2, g)
& = \Bigl( \diag \bigl(
      f_1 \circ P^{(n)}_{0, 1}, \, f_1 \circ P^{(n)}_{0, 2},
       \, \ldots, \, f_1 \circ P^{(n)}_{0, \, d_0 (n + 1)},
\\
& \qquad \qquad {\mbox{}}
      \, f_1 (x_{0, n}), \, f_1 (x_{0, n}), \, \ldots, \, f_1 (x_{0, n}),
      \, f_2 (x_{0, n}), \, g (x_{1, n} ) \bigr),
\\
& \qquad {\mbox{}}
  \diag \bigl( f_2 \circ P^{(n)}_{0, 1}, \, f_2 \circ P^{(n)}_{0, 2},
    \, \ldots, \, f_2 \circ P^{(n)}_{0, \, d_0 (n + 1)},
\\
& \qquad \qquad {\mbox{}}
      \, f_2 (x_{0, n}), \, f_2 (x_{0, n}), \, \ldots, \, f_2 (x_{0, n}),
      \, f_1 (x_{0, n}), \, g (x_{1, n} ) \bigr),
\\
& \qquad {\mbox{}}
  \diag \bigl( g \circ P^{(n)}_{1, 1}, \, g \circ P^{(n)}_{1, 2},
    \, \ldots, \, g \circ P^{(n)}_{1, \, d_1 (n + 1)},
\\
& \qquad \qquad {\mbox{}}
       \, g (x_{1, n}), \, g (x_{1, n}), \, \ldots, \, g (x_{1, n}),
       \, f_1 (x_{0, n} ),  \, f_2 (x_{0, n} ) \bigr) \Bigr).
\end{split}
\end{equation}
The expressions $f_1 (x_{0, n})$, $f_2 (x_{0, n})$, and $g (x_{1, n})$
all represent constant functions with those values.
In the first coordinate, $f_1 (x_{0, n})$ occurs
$l (n + 1) - d_0 (n + 1) - 2$ times;
in the second coordinate, $f_2 (x_{0, n})$ occurs
$l (n + 1) - d_0 (n + 1) - 2$ times;
and in the third coordinate, $g (x_{1, n})$ occurs
$l (n + 1) - d_1 (n + 1) - 2$ times.
The condition $l (n + 1) \geq 3 + \max ( d_0 (n + 1), \, d_1 (n + 1) )$
ensures that these numbers are all strictly positive.
\item\label{Im_1622_Ctn_117p}
For $n \in \Nz$, define
$\Lambda_{n + 1, \, n} \colon A_n \to A_{n + 1}$
by
$\Lambda_{n + 1, \, n} = \lambda_n \otimes \id_{M_{r (n )}}$.
For $m, n \in \Nz$ with $m \leq n$, define
\[
\Lambda_{n, m}
= \Lambda_{n, n - 1} \circ \Lambda_{n - 1, \, n - 2} \circ \cdots
          \circ \Lambda_{m + 1, m}
   \colon A_m \to A_n.
\]
To make our matrix notation explicit, we write out the formula
for $\Lambda_{n + 1, \, n}$ without using tensor products.
For $f_1, f_2 \in C (X_{0, n}, \, M_{r (n)} )$
and $g \in C (X_{1, n}, \, M_{r (n)} )$, we have
(with the entries explained afterwards)
\[
\Lambda_{n + 1, \, n} (f_1, f_2, g)
 = \bigl( \Lambda_{n + 1, \, n}^{(0, 1)} (f_1, f_2, g),
    \, \Lambda_{n + 1, \, n}^{(0, 2)} (f_1, f_2, g),
    \, \Lambda_{n + 1, \, n}^{(1)} (f_1, f_2, g) \bigr),
\]
in which every matrix is $l (n + 1) \times l (n + 1)$
with entries which are $r (n) \times r (n)$ matrices.
Written more explicitly, for $y \in X_{0, n + 1}$,
\[
\begin{split}
& \Lambda_{n + 1, \, n}^{(0, 1)} (f_1, f_2, g) (y)
\\
& \qquad {\mbox{}}
 = \diag \bigl(
      (f_1 \circ P^{(n)}_{0, 1}) (y), \, (f_1 \circ P^{(n)}_{0, 2}) (y),
       \, \ldots, \, (f_1 \circ P^{(n)}_{0, \, d_0 (n + 1)}) (y),
\\
& \qquad \qquad {\mbox{}}
      \, f_1 (x_{0, n}), \, f_1 (x_{0, n}), \, \ldots, \, f_1 (x_{0, n}),
      \, f_2 (x_{0, n}), \, g (x_{1, n} ) \bigr)
\end{split}
\]
and
\[
\begin{split}
& \Lambda_{n + 1, \, n}^{(0, 2)} (f_1, f_2, g) (y)
\\
& \qquad {\mbox{}}
 = \diag \bigl(
      (f_2 \circ P^{(n)}_{0, 1}) (y), \, (f_2 \circ P^{(n)}_{0, 2}) (y),
       \, \ldots, \, (f_2 \circ P^{(n)}_{0, \, d_0 (n + 1)}) (y),
\\
& \qquad \qquad {\mbox{}}
      \, f_2 (x_{0, n}), \, f_2 (x_{0, n}), \, \ldots, \, f_2 (x_{0, n}),
      \, f_1 (x_{0, n}), \, g (x_{1, n} ) \bigr),
\end{split}
\]
and for $z \in X_{1, n + 1}$,
\[
\begin{split}
& \Lambda_{n + 1, \, n}^{(1)} (f_1, f_2, g) (z)
\\
& \qquad {\mbox{}}
 = \diag \bigl(
     (g \circ P^{(n)}_{1, 1}) (z), \, (g \circ P^{(n)}_{1, 2}) (z),
      \, \ldots, \, (g \circ P^{(n)}_{1, \, d_1 (n + 1)}) (z),
\\
& \qquad \qquad {\mbox{}}
       \, g (x_{1, n}), \, g (x_{1, n}), \, \ldots, \, g (x_{1, n}),
       \, f_1 (x_{0, n} ),  \, f_2 (x_{0, n} ) \bigr).
\end{split}
\]

\item\label{Im_1622_Ctn_118}
Define
\[
A = \dirlim \bigl( A_n, \, (\Lambda_{m, \, n})_{m \geq n} \bigr).
\]
For $n \in \Nz$, it is clear that $\Lambda_{n + 1, \, n}$
is an injective unital homomorphism.
Let $\Lambda_{\infty, n} \colon A_n \to A$
be the standard map associated with the direct limit.
\item\label{Im_1622_Ctn_119}
Inductively define unitaries $v_n, w_n \in M_{r (n)}$ as follows.
Take $v_0 = w_0 = 1$.
Given $v_n, w_n \in M_{r (n)}$,
using the same matrix conventions as in~(\ref{Im_1622_Ctn_117p}),
set
%
\[
v_{n + 1}
 = 1_{M_{l (n +1 ) - 1}} \otimes v_n \oplus w_n
 = \diag \bigl( v_n, v_n, \ldots, v_n, w_n \bigr)
\]
and
\[
w_{n + 1}
 = 1_{M_{l (n +1 ) - 2}} \otimes w_n
  \oplus \left( \begin{matrix}
  0     &  v_n        \\
  v_n     &  0
\end{matrix} \right)
 = \diag \bigl( w_n, w_n, \ldots, w_n \bigr)
  \oplus \left( \begin{matrix}
  0     &  v_n        \\
  v_n     &  0
\end{matrix} \right).
\]
We interpret $v_n$ and $w_n$ as constant functions in
$C (X_{0, n}, \, M_{r (n)})$ and $C (X_{1, n}, \, M_{r (n)})$.
Using induction on~$n$, one immediately checks that
$v_{n}^2=w_{n}^2= 1_{M_{r(n)}}$.
\item\label{Im_1622_Ctn_Auto}
Write $\Z / 2 \Z = \{ 0, 1 \}$.
For $n \in \Nz$, define actions as follows,
only specifying the automorphism associated to the nontrivial group
element.
\begin{enumerate}
\item\label{Im_1622_Ctn_Auto_bt}
$\beta^{(n)} \colon \Z / 2 \Z \to \Aut (A_{0, n})$ is given by
$\beta_1^{(n)} (f_1, f_2) = (f_2, f_1)$
for $f_1, f_2 \in C (X_{0, n}, \, M_{r (n)})$.
\item\label{Im_1622_Ctn_Auto_afZero}
$\alpha^{(0, n)} \colon \Z / 2 \Z \to \Aut (A_{0, n})$ is given by
\[
\alpha^{(0, n)}_{1} (f_1, f_2) = (v_n f_2 v_n^*, \, v_n f_1 v_n^*)
\]
for $f_1, f_2 \in C (X_{0, n}, \, M_{r (n)})$.
\item\label{Im_1622_Ctn_AutoafOne}
$\alpha^{(1, n)} \colon \Z / 2 \Z \to \Aut (A_{1, n})$ is given by
$\alpha^{(1, n)}_{1} (g) =  w_n g w_n^*$
for $g \in C (X_{1, n}, \, M_{r (n)})$.
\item\label{Im_1622_Ctn_Auto_af}
$\alpha^{(n)} \colon \Z / 2 \Z \to \Aut (A_{n})$ is given by
$\alpha_{1}^{(n)} = \alpha_{1}^{(0, n)} \oplus \alpha_{1}^{(1, n)}$.
\end{enumerate}
%
We then have the following diagram:
\begin{equation}\label{Im_1622_Eq_1622_LimDiag}
\begin{CD}
A_0 @>{\Lambda_{1, \, 0}}>>
A_1 @>{\Lambda_{2, \, 1}}>>
A_2 @>{\Lambda_{3, \, 2}}>>
A_3 @>{}>>
 \cdots   \\
@V{\alpha_{1}^{(0)}} VV  @V{\alpha_{1}^{(1)}}  VV
@V{\alpha_{1}^{(2)}} VV  @V{\alpha_{1}^{(3)}}  VV      \\
A_0 @>{\Lambda_{1, \, 0}}>>
A_1 @>{\Lambda_{2, \, 1}}>>
A_2 @>{\Lambda_{3, \, 2}}>>
A_3 @>{}>>
 \cdots.
\end{CD}
\end{equation}
\end{enumerate}
\end{ctn}

\begin{lem}\label{L_1623_Cmm}
In Construction \ref{Cns_1622_Construction}(\ref{Im_1622_Ctn_Auto}),
the diagram~(\ref{Im_1622_Eq_1622_LimDiag}) commutes.
Moreover, there is a unique action $\af \colon \Z / 2 \Z \to \Aut (A)$
such that $\af = \dirlim \af^{(n)}$.
\end{lem}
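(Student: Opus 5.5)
The plan is to verify the commutation $\Lambda_{n+1,n} \circ \alpha_1^{(n)} = \alpha_1^{(n+1)} \circ \Lambda_{n+1,n}$ directly on a general element $(f_1, f_2, g) \in A_n$, with $f_1, f_2 \in C(X_{0,n}, M_{r(n)})$ and $g \in C(X_{1,n}, M_{r(n)})$. We also need each $\alpha^{(n)}$ to be a genuine action, i.e. $(\alpha_1^{(n)})^2 = \id$. This follows from $v_n^2 = w_n^2 = 1$ (Construction \ref{Cns_1622_Construction}(\ref{Im_1622_Ctn_119})): applying the swap twice gives $(v_n^2 f_1 v_n^{2*}, v_n^2 f_2 v_n^{2*}) = (f_1, f_2)$, and similarly for $w_n$.

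For the commutation, compute the left side first. We have $\alpha_1^{(n)}(f_1,f_2,g) = (v_n f_2 v_n^*, v_n f_1 v_n^*, w_n g w_n^*)$, and we apply the explicit block formulas for $\Lambda_{n+1,n}^{(0,1)}$, $\Lambda_{n+1,n}^{(0,2)}$, $\Lambda_{n+1,n}^{(1)}$ from (\ref{Im_1622_Ctn_117p}). Conjugation by a constant unitary commutes with composition with $P^{(n)}_{i,\nu}$ and with point evaluation. So the first coordinate becomes
\[
\diag\bigl(v_n (f_2\circ P^{(n)}_{0,\nu}) v_n^*, \ldots, v_n f_2(x_{0,n}) v_n^*, \ldots, v_n f_1(x_{0,n}) v_n^*, \, w_n g(x_{1,n}) w_n^*\bigr).
\]
This is exactly $v_{n+1}\, \Lambda^{(0,2)}_{n+1,n}(f_1,f_2,g)\, v_{n+1}^*$, since $v_{n+1} = \diag(v_n,\ldots,v_n,w_n)$ has $w_n$ in precisely the last slot, where the $g(x_{1,n})$ entry sits. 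The second coordinate is symmetric and equals $v_{n+1} \Lambda^{(0,1)}_{n+1,n}(f_1,f_2,g) v_{n+1}^*$, which together with the first gives the swapped form of $\alpha_1^{(0,n+1)}$.

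For the third coordinate, the left side is
\[
\diag\bigl(w_n (g\circ P^{(n)}_{1,\nu}) w_n^*, \ldots, w_n g(x_{1,n}) w_n^*, \, v_n f_2(x_{0,n}) v_n^*, \, v_n f_1(x_{0,n}) v_n^*\bigr).
\]
We compare this with $w_{n+1} \Lambda^{(1)}_{n+1,n}(f_1,f_2,g) w_{n+1}^*$. On the first $l(n+1)-2$ slots $w_{n+1}$ is $\diag(w_n,\ldots,w_n)$, which matches. On the last $2\times 2$ block, conjugating $\diag(f_1(x_{0,n}), f_2(x_{0,n}))$ by $\left(\begin{smallmatrix} 0 & v_n \\ v_n & 0\end{smallmatrix}\right)$ gives $\diag(v_n f_2(x_{0,n}) v_n^*, v_n f_1(x_{0,n}) v_n^*)$, using $v_n^* = v_n$, which holds because $v_n$ is a self-adjoint unitary by induction. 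This is the bookkeeping step most likely to go wrong: one must check that the orders of the slots agree in all three coordinates, and that the symmetric roles of $f_1$ and $f_2$ in the $(0,1)$ and $(0,2)$ blocks are swapped correctly.

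Finally, since the diagram commutes and every $\alpha_1^{(n)}$ is an automorphism of order at most two, the maps $\Lambda_{\infty,n} \circ \alpha_1^{(n)}$ are compatible and isometric. They therefore induce an endomorphism $\alpha_1$ of $A$, first on the dense union $\bigcup_n \Lambda_{\infty,n}(A_n)$ and then by continuity on all of $A$. Its square is the identity on that dense union, hence $\alpha_1$ is an automorphism of order at most two, and setting $\alpha_0 = \id_A$ defines the action $\alpha \colon \Z/2\Z \to \Aut(A)$. Uniqueness holds because any action satisfying $\alpha_1 \circ \Lambda_{\infty,n} = \Lambda_{\infty,n} \circ \alpha_1^{(n)}$ is determined on this dense subalgebra.
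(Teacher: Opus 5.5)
Your proposal is correct and follows the paper's proof: both verify commutativity by computing $\Lambda_{n+1,n}\circ\alpha_1^{(n)}$ and $\alpha_1^{(n+1)}\circ\Lambda_{n+1,n}$ on a general $(f_1,f_2,g)$ and checking that they agree slot by slot, and then obtain the limit action in the standard way. (Invoking $v_n^*=v_n$ in the $2\times 2$ block is harmless but unnecessary: conjugating $\diag(A,B)$ by $\left(\begin{smallmatrix}0&v_n\\v_n&0\end{smallmatrix}\right)$ gives $\diag(v_nBv_n^*,\,v_nAv_n^*)$ for any unitary $v_n$.)
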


\begin{proof}
Use induction
and Construction \ref{Cns_1622_Construction}(\ref{Im_1622_Ctn_119})
to show that, for
\[
f_1, f_2 \in C (X_{0, n}, \, M_{r (n)} )
\andeqn
g \in C (X_{1, n}, \, M_{r (n)} ),
\]
both
\[
(\af_{1}^{(n + 1)} \circ \Lambda_{n + 1, \, n}) (f_1, f_2, g)
\andeqn
(\Lambda_{n + 1, \, n} \circ \af_{1}^{(n)}) (f_1, f_2, g)
\]
are equal to
\[
\begin{split}
& \Bigl( \diag \bigl(
      v_n (f_2 \circ P^{(n)}_{0, 1}) v_n^*,
       \, v_n (f_2 \circ P^{(n)}_{0, 2}) v_n^*,
        \, \ldots, \,
      v_n (f_2 \circ P^{(n)}_{0, \, d_0 (n + 1)}) v_n^*,
\\
& \hspace*{2em} {\mbox{}}
      \, v_n f_2 (x_{0, n}) v_n^*, \, v_n f_2 (x_{0, n}) v_n^*,
         \, \ldots, \, v_n f_2 (x_{0, n}) v_n^*,
      \, v_n f_1 (x_{0, n}) v_n^*, \, w_n g (x_{1, n} ) w_n^* \bigr),
\\
& \hspace*{1em} {\mbox{}}
 \diag \bigl(
      v_n (f_1 \circ P^{(n)}_{0, 1}) v_n^*,
       \, v_n (f_1 \circ P^{(n)}_{0, 2}) v_n^*,
        \, \ldots, \,
      v_n (f_1 \circ P^{(n)}_{0, \, d_0 (n + 1)}) v_n^*,
\\
& \hspace*{2em} {\mbox{}}
      \, v_n f_1 (x_{0, n}) v_n^*, \, v_n f_1 (x_{0, n}) v_n^*,
        \, \ldots, \, v_n f_1 (x_{0, n}) v_n^*,
      \, v_n f_2 (x_{0, n}) v_n^*, \, w_n g (x_{1, n} ) w_n^* \bigr),
\\
& \hspace*{1em} {\mbox{}}
  \diag \bigl( w_n (g \circ P^{(n)}_{1, 1}) w_n^*,
    \, w_n (g \circ P^{(n)}_{1, 2}) w_n^*,
   \, \ldots, \,
   w_n (g \circ P^{(n)}_{1, \, d_1 (n + 1)}) w_n^*,
\\
& \hspace*{2em} {\mbox{}}
       \, w_n g (x_{1, n}) w_n^*, \, w_n g (x_{1, n}) w_n^*,
        \, \ldots, \, w_n g (x_{1, n}) w_n^*,
       \, v_n f_2 (x_{0, n} ) v_n^*,  \, v_n f_1 (x_{0, n} ) v_n^*
          \bigr) \Bigr).
\end{split}
\]
This is the first statement.
The second statement is immediate from the first.
\end{proof}

\begin{lem}\label{PUE1}
Assume the notation and choices in
Construction~\ref{Cns_1622_Construction}.
Then for every $n \in \Nz$, the actions
$\alpha^{(0, n)}$ and $\beta^{(n)}$
are exterior equivalent (cocycle conjugate).
\end{lem}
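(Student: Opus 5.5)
We want a unitary $1$-cocycle for $\beta^{(n)}$ implementing $\alpha^{(0, n)}$. For $\Z / 2 \Z$, exterior equivalence means finding a unitary $z \in A_{0, n}$ with $\alpha^{(0, n)}_1 = \Ad (z) \circ \beta^{(n)}_1$ and $z \beta^{(n)}_1 (z) = 1$. The cocycle is then $u_0 = 1$, $u_1 = z$.

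Take $z = (v_n, v_n)$, with $v_n$ regarded as a constant function in $C (X_{0, n}, M_{r (n)})$. For $f_1, f_2 \in C (X_{0, n}, M_{r (n)})$ we compute
\[
(\Ad (z) \circ \beta^{(n)}_1) (f_1, f_2)
 = (v_n, v_n) (f_2, f_1) (v_n^*, v_n^*)
 = (v_n f_2 v_n^*, \, v_n f_1 v_n^*).
\]
This is $\alpha^{(0, n)}_1 (f_1, f_2)$ by Construction~\ref{Cns_1622_Construction}(\ref{Im_1622_Ctn_Auto_afZero}).

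For the cocycle identity, $\beta^{(n)}_1 (z) = (v_n, v_n) = z$, since the two coordinates of $z$ agree. So $z \beta^{(n)}_1 (z) = z^2 = (v_n^2, v_n^2) = 1$, using $v_n^2 = 1_{M_{r (n)}}$ from Construction~\ref{Cns_1622_Construction}(\ref{Im_1622_Ctn_119}). The other cocycle conditions for the trivial group element hold trivially. Hence $\alpha^{(0, n)}$ is the cocycle perturbation of $\beta^{(n)}$ by $(u_g)$, which is exterior equivalence and in particular gives cocycle conjugacy.

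There is no real obstacle. The only points needing care are that $v_n$ is an involution, and that the same unitary sits in both summands so that $z$ is $\beta^{(n)}$-invariant. The first was already checked by induction in Construction~\ref{Cns_1622_Construction}(\ref{Im_1622_Ctn_119}), and the second holds by the choice of $z$.
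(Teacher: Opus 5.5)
Your proposal is correct and is essentially the paper's proof: the paper uses the same cocycle $z_{n, 0} = (1, 1)$, $z_{n, 1} = (v_n, v_n)$, checks $\alpha^{(0, n)}_g = \Ad (z_{n, g}) \circ \beta^{(n)}_g$ directly, and reduces the cocycle identity to $(v_n, v_n) \beta^{(n)}_1 (v_n, v_n) = 1$ via $v_n^2 = 1$.
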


\begin{proof}
For every $n \in \Nz$,
define $z_n \colon \Z / 2 \Z \to \U (A_{0, n})$ by
\[
z_{n, 0} = (1, 1)
\andeqn
z_{n, 1} = (v_n, v_n).
\]
It is immediate to check that for $g = 0, 1$ we have
$\alpha_{g}^{(0, n)} (f_1, f_2)
= z_{n, g} \beta_{g}^{(n)} (f_1, f_2) z^*_{n, g}$.

It remains to check the cocycle identity, which here says
$z_{g + h} = z_g \beta_{g}^{(n)} (z_h)$ for $g, h \in \{ 0, 1 \}$.
If $g = 0$ or $h = 0$, this is trivial.
If $g = h = 1$,
we need $(v_n, v_n) \bt_1^{(n)} (v_n, v_n) = 1$,
which follows from $v_n^2 = 1$
(in Construction \ref{Cns_1622_Construction}(\ref{Im_1622_Ctn_119}))
and the definition of $\bt_1^{(n)}$
(in Construction \ref{Cns_1622_Construction}(\ref{Im_1622_Ctn_Auto_bt})).
\end{proof}

\begin{lem}\label{L_1623_Simple}
Assume the notation and choices in
Construction~\ref{Cns_1622_Construction}.
Then the \ca~$A$ is stably finite and simple, and has stable rank one.
\end{lem}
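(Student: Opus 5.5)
Stable finiteness comes for free: $A$ is a unital direct limit of the algebras $A_n$, each a finite direct sum of algebras $C(X, M_k)$ with $X$ compact, and these are stably finite. Any unital inductive limit of stably finite algebras is stably finite. Indeed, if $s \in M_k(A)$ satisfies $s^*s = 1$ and $ss^* \neq 1$, then a perturbation gives an isometry at a finite stage which is not unitary, a contradiction. So I would state this briefly and spend most of the effort on simplicity and on stable rank one.

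For simplicity I would use the standard criterion for AH systems with point evaluations, in the form used for Villadsen-type algebras in \cite{Asd2} and the earlier papers of Phillips. It suffices to show the following: for every $n$ and every nonzero $a \in A_n$ there is $m \geq n$ such that $\Lambda_{m, n}(a)(y) \neq 0$ for every point $y$ in each of the three summands' spaces $X_{0,m} \amalg X_{0,m} \amalg X_{1,m}$. Then $\Lambda_{m,n}(a)$ is full in $A_m$, so every nonzero ideal of $A$ contains $1$.

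To verify this, I would fix a nonzero $a = (f_1, f_2, g)$. Then some component, say $f_1$, is nonzero on an open set $U \subseteq X_{0,n}$. By the density condition in Construction~\ref{Cns_1622_Construction}(\ref{Im_1622_Ctn_115}), there are $m_0 \geq n$ and indices $\nu_j$ such that the iterated projection of $x_{0,m_0}$ lands in $U$. I would then inspect the formula for $\lambda_n$ and track the entries of $\Lambda_{m_0+1, n}(a)$. The constant entry evaluated at $x_{0,m_0}$ appears, as a point evaluation, in all three blocks: the $f_1(x_{0,n})$ entries in block $(0,1)$, the swapped $f_1(x_{0,n})$ entry in block $(0,2)$, and the $f_1(x_{0,n})$ entry in block $(1)$. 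Each such entry is a constant nonzero matrix in every coordinate, so the image is nonvanishing everywhere. The same argument works if $f_2$ or $g$ is the nonzero component, using the cross terms $f_2(x_{0,n})$ and $g(x_{1,n})$. The bookkeeping is an induction showing that the composite $\Lambda_{m,n}$ is a diagonal of compositions of coordinate projections together with point evaluations at the images of the $x_{i,k}$, with every such evaluation feeding all three blocks at the next stage.

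For stable rank one I expect the main obstacle. The dimension ratios $\dim(X_{i,n})/r(n) = 2t_i u_i(n)$ converge to $2t_i\kappa_i$ and need not tend to zero, so slow dimension growth is unavailable. Instead I would follow the Villadsen-type argument of Elliott--Ho--Toms, as adapted in \cite{Asd2}. Each connecting map has point-evaluation summands, namely at least one constant summand per block. Then for $a \in A_n$ one approximates $\Lambda_{n+1,n}(a)$ by an element which is a block diagonal of a compression to a finite-dimensional-like corner at the evaluation points. Using such summands, one shows that the image is within $\ep$ of an invertible element by making the element non-invertible only on a set that can be moved, via a unitary homotopy inside the constant summands, to achieve invertibility. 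Concretely, I would first try to cite the general theorem that a simple unital AH algebra whose connecting maps all contain a nonzero point-evaluation summand in each block has stable rank one. If that citation is not available in exactly this form, I would reproduce the Elliott--Ho--Toms proof in this setting, which is where I expect most of the work to lie.
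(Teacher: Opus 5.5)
Your treatment of stable finiteness and simplicity is fine and matches the paper. For simplicity the paper just checks the hypotheses of Proposition~2.1(ii) of \cite{DNN92}. Your direct verification is exactly that check: the $(0,1)$ component of $\Lambda_{m_0,n}(a)$ at $x_{0,m_0}$ contains $f_1$ evaluated at an iterated projection of $x_{0,m_0}$ that lies in $U$. So it is a nonzero constant matrix, and it reappears in all three blocks of $\Lambda_{m_0+1,n}(a)$.

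The gap is stable rank one. You do not actually prove it, and the hypothesis you plan to cite is the wrong one.

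The fact you need is one you essentially write down in your simplicity bookkeeping. Every partial map of $\Lambda_{n+1,n}$ has the form $f \mapsto \diag (f \circ \lambda_1, \ldots, f \circ \lambda_k)$. Here each $\lambda_j$ is either a coordinate projection $P^{(n)}_{i,\nu}$ or a constant map, since the point evaluation $f(x_{i,n})$ is composition with the constant map to $x_{i,n}$. So the system has diagonal maps in the sense of Definition~2.1 of \cite{ElHoTm}. Theorem~4.1 of \cite{ElHoTm} then gives stable rank one for a simple unital AH algebra arising from such a system, with no dimension growth condition. Together with the simplicity you proved, this is the whole proof; there is nothing to reproduce.

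The criterion you propose to cite (a point-evaluation summand in each block) is not the hypothesis of \cite{ElHoTm}, and I know of no theorem of that form. Point evaluations are what give simplicity, and they say nothing about the remaining summands $f \circ P^{(n)}_{i,\nu}$. Those summands are not negligible: when $\kappa_i > 0$ one has $d_i (n) / l (n) \to 1$, so they fill almost all of each block. It is precisely their untwisted form that \cite{ElHoTm} uses. For the same reason, your sketched mechanism, which only manipulates the constant summands, is not an argument as it stands.
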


\begin{proof}
Stable finiteness is immediate.
For simplicity, it is easy to check that
the hypotheses of Proposition 2.1(ii) of \cite{DNN92} hold.

For stable rank one, we observe that the direct system in
Construction \ref{Cns_1622_Construction}(\ref{Im_1622_Ctn_118})
has diagonal maps in the sense of Definition~2.1 of~\cite{ElHoTm}.
Therefore $A$ has stable rank one by Theorem~4.1 of~\cite{ElHoTm}.
\end{proof}

\begin{ntn}\label{N_1727_Bott}
Let $p \in C (S^2, M_2)$ denote the Bott projection, and let
$L$ be the dual of the tautological line bundle over
$S^2 \cong \mathbb{C} \mathbb{P}^1$.
(Thus, the range of~$p$ is the section space of~$L$.)
We use $L^{\times k}$ to denote the Cartesian product
of $k$ copies of~$L$, which is a vector bundle over $(S^2)^{k}$.
Further, for any $k \in \N$,
for $j = 1, 2, \ldots, k$ let $Q_j \colon (S^2)^{k} \to S^2$
be the $j$~coordinate \pj.
Then let $p_k \in C \bigl( (S^2)^{k}, M_{2 k} \bigr)$
be the direct sum over $j = 1, 2, \ldots, k$ of the \pj{s} $p \circ Q_j$.
\end{ntn}

\begin{lem}\label{L_1727_pm}
Let $k \in \N$ and let $p_k$ be as in Notation~\ref{N_1727_Bott}.
The vector bundle over $(S^2)^k$ gotten as the range of $p_k$
is isomorphic to $L^{\times k}$.
\end{lem}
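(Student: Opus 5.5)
The plan is to reduce to the case $k = 1$ and then take products, because both the range bundle of $p_k$ and $L^{\times k}$ are built coordinatewise. Since $S^2 \cong \mathbb{C}\mathbb{P}^1$ and, by Notation~\ref{N_1727_Bott}, $L$ is defined so that the range of the Bott projection $p$ is (the section space of) $L$, the case $k = 1$ holds by definition. Concretely, the range of $p$ is the vector bundle $E = \{ (x, \xi) \in S^2 \times \C^2 \colon p (x) \xi = \xi \}$, and this is isomorphic to~$L$.

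For general~$k$, I write $E_k = \{ (y, \xi) \in (S^2)^k \times \C^{2k} \colon p_k (y) \xi = \xi \}$. Since $p_k = \bigoplus_{j = 1}^k p \circ Q_j$ is block diagonal with respect to $\C^{2k} = (\C^2)^{\oplus k}$, we have $p_k (y) (\xi_1, \ldots, \xi_k) = (\xi_1, \ldots, \xi_k)$ if and only if $p (Q_j (y)) \xi_j = \xi_j$ for every~$j$. Hence $E_k \cong \bigoplus_{j = 1}^k Q_j^* (E)$, the Whitney sum of the pullbacks of $E$ along the coordinate projections. Writing $y = (y_1, \ldots, y_k)$, this is the map $(y, \xi_1, \ldots, \xi_k) \mapsto ((y_1, \xi_1), \ldots, (y_k, \xi_k))$.

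I will then identify $\bigoplus_{j} Q_j^* (E)$ with the external Cartesian product $E^{\times k} \to (S^2)^k$, whose fiber over $y$ is $\prod_j E_{y_j}$. The map above is a continuous bijection that is fiberwise linear and covers the identity of $(S^2)^k$, with continuous inverse given by the same formula read backwards. It is therefore a vector bundle isomorphism $E_k \cong E^{\times k}$. Finally, the Cartesian product of bundle isomorphisms $E \cong L$ gives $E^{\times k} \cong L^{\times k}$.

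The only real point needing care is the convention that $L$, the dual of the tautological line bundle, is really the range of $p$ and not of $1 - p$ or of the transpose. The notation explicitly asserts that the range of $p$ is the section space of~$L$, so I take this as given. Everything else is routine bookkeeping with the block-diagonal structure of $p_k$.
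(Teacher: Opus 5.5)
Your proposal is correct and is the same approach as the paper, which just calls the lemma immediate. You supply the routine details: the block-diagonal decomposition of $p_k$, the identification of $\bigoplus_j Q_j^* (E)$ with the external product $E^{\times k}$, and the convention from Notation~\ref{N_1727_Bott} that the range of $p$ is~$L$.
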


\begin{proof}
This is immediate.
\end{proof}

\begin{lem}\label{L_1727_NoEmbed}
Let $k \in \N$ and let $r \in \Nz$.
Let $L$ be as in Notation~\ref{N_1727_Bott},
and let $V$ be a trivial vector bundle of rank~$r$ over $(S^2)^k$.
Then $L^{\times k} \oplus V$ does not embed in a trivial bundle
over $(S^2)^k$ of rank less than $2 k + r$.
\end{lem}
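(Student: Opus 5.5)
The plan is to use Chern classes, the standard argument in Villadsen-type constructions. Suppose $L^{\times k} \oplus V$ embeds in a trivial bundle $\Theta^N$ of rank $N$ over $(S^2)^k$. Then there is a complementary bundle $W$ with $L^{\times k} \oplus V \oplus W \cong \Theta^N$, and $\operatorname{rank} (W) = N - k - r$. I aim to show that $N - k - r \geq k$, that is, $N \geq 2 k + r$.

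For the key computation, set $x_j = Q_j^* (c_1 (L)) \in H^2 ((S^2)^k; \Z)$ for $j = 1, 2, \ldots, k$. Since $L^{\times k} = \bigoplus_{j = 1}^k Q_j^* (L)$, the Whitney product formula gives
\[
c (L^{\times k}) = \prod_{j = 1}^k (1 + x_j),
\]
and $c (V) = 1$ because $V$ is trivial. Since $c (\Theta^N) = 1$, we get
\[
c (W) = \prod_{j = 1}^k (1 + x_j)^{-1} = \prod_{j = 1}^k (1 - x_j),
\]
using $x_j^2 = 0$, which holds because $H^4 (S^2) = 0$. The top-degree component of $c (W)$ is therefore $c_k (W) = (-1)^k x_1 x_2 \cdots x_k$. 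By the Künneth formula, $x_1 x_2 \cdots x_k$ generates $H^{2 k} ((S^2)^k; \Z) \cong \Z$, because $c_1 (L)$ generates $H^2 (S^2; \Z)$. Hence $c_k (W) \neq 0$.

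Since $c_i (W) = 0$ for $i > \operatorname{rank} (W)$, this forces $\operatorname{rank} (W) \geq k$, which gives $N \geq 2 k + r$.

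The only step needing care is the identification of $c_1 (L)$ with a generator of $H^2 (S^2; \Z)$, which is a standard fact: the dual of the tautological bundle has first Chern class equal to the positive generator. Everything else is formal bookkeeping with the Whitney formula and the Künneth theorem, so I do not expect a real obstacle.
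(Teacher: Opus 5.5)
Your proof is correct and follows the paper's route. The paper takes a complement $F$ of $L^{\times k} \oplus V$ in the trivial bundle and uses the Whitney formula with $c(V) = c(E) = 1$ to get $c(L^{\times k})\, c(F) = 1$. It then cites the proof of Lemma~1.9 of~\cite{HrsPh2} for $\rank(F) \geq k$, a step you carry out explicitly via $x_j^2 = 0$ and the K\"unneth identification of $x_1 x_2 \cdots x_k$ as a generator of $H^{2k}((S^2)^k; \Z)$.
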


\begin{proof}
This is a small modification of the proof of Lemma~1.9 of~\cite{HrsPh2},
with the warning that, in that proof, ``Chern character'' is
written in several places where ``(total) Chern class'' is intended.
We describe the changes from that proof, following the notation there.

Let $E$ be a trivial bundle over $(S^2)^k$ such that
$L^{\times k} \oplus V$ embeds in~$E$.
Let $F \subseteq E$ be the corresponding complementary bundle.
Then, using the product formula for the total Chern class
as in the proof of Lemma~1.9 of~\cite{HrsPh2},
as well as $c (V) = c (E) = 1$, we get
\[
c (L^{\times k}) c (F)
 = c (L^{\times k}) c (V) c (F)
 = c (E) = 1.
\]
In the proof of Lemma~1.9 of~\cite{HrsPh2},
it is shown that this equation implies $\rank (F) \geq k$.
Therefore $\rank (E) \geq 2 k + r$.
\end{proof}

The following notation is used to avoid conflict
with the notation for Cuntz equivalence and subequivalence.
(For stably finite simple \ca{s}, \mvnc{} is really the same,
but this is not true in general.)

\begin{ntn}\label{N_1727_MvN}
Let $A$ be a \ca{} and let $p, q \in A$ be \pj{s}.
We write $p \approx_A q$ to mean that $p$ is \mvnt{} in $A$ to~$q$,
and $p \lessapprox_A q$ to mean that $p$ is \mvnt{}
in $A$ to a sub \pj{} of~$q$.
We write $p \approx q$ and $p \lessapprox q$
when $A$ is clear from the context.
\end{ntn}

The following nonstandard terminology
(already used in Section~6 of~\cite{AsvGlsPhl}) is convenient.
It calls a projection in $M_{\infty} ( C (X))$ trivial
if the corresponding vector bundle is trivial.

\begin{dfn}\label{D_1727_Triv}
Let $X$ be a compact Hausdorff space and let $p$
be a projection in $M_{\infty} ( C (X))$.
We call $p$ \emph{trivial}
if there is $n \in \Nz$ such that $p$ is Murray-von Neumann
equivalent to $1_{M_{n} (C (X))}$.
When $n = 0$, this means $p = 0$.
\end{dfn}

The next lemma is the analog of Lemma~6.10 of~\cite{AsvGlsPhl}.

\begin{lem}\label{L_1727_Id_p0}
Assume the notation and choices in
Construction~\ref{Cns_1622_Construction},
and adopt \Ntn{N_1727_Bott} and \Ntn{N_1727_MvN}.
\begin{enumerate}
%
\item\label{Item_1727_Id_p0}
Let $e_0 \in \Mi \bigl( C \bigl( (S^2)^{t_0}, M_{2 t_0} \bigr) \bigr)$
and $e_1 \in \Mi \bigl( C \bigl( (S^2)^{t_1}, M_{2 t_0} \bigr) \bigr)$
be trivial \pj{s} of rank~$t_0$.
Then for any $n \in \N$
there are trivial \pj{s} $f, f_0 \in \Mi ( C (X_{0, n}, M_{r (n)}) )$
of ranks $[r (n) - s_0 (n)] t_0$ and $r (n) t_0$,
and a trivial \pj{}
$f_1 \in \Mi ( C (X_{1, n}, M_{r (n)}) )$ of rank $r (n) t_0$, such that
\[
(\id_{M_{\I}} \otimes \Ld_{n, 0}) ( p_{t_0}, e_0, e_1 )
\approx \bigl( p_{s_0 (n) t_0} \oplus f, \, f_0, \, f_1 \bigr).
\]
\item\label{Item_1727_Id_prj_1}
Let $e_0 \in \Mi \bigl( C \bigl( (S^2)^{t_0}, M_{2 t_0} \bigr) \bigr)$
be a trivial \pj{} of rank~$t_1$.
Then for any $n \in \N$
there are trivial \pj{s} $f_0 \in \Mi ( C (X_{0, n}, M_{r (n)}) )$
of rank $r (n) t_1$,
and $f \in \Mi ( C (X_{1, n}, M_{r (n)}) )$
of rank $[r (n) - s_1 (n)] t_1$, such that
\[
(\id_{M_{\I}} \otimes \Ld_{n, 0}) ( e_0, e_0, p_{t_1} )
 \approx \bigl( f_0, \, f_0, \, p_{s_1 (n) t_1} \oplus f \bigr).
\]
\end{enumerate}
\end{lem}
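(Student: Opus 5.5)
We argue by induction on $n$, tracking the image of each summand under a single connecting map $\Ld_{n+1, n}$, using formula~(\ref{Im_1622_lamdaDefinition}) and the explicit entrywise description in Construction~\ref{Cns_1622_Construction}(\ref{Im_1622_Ctn_117p}). The two tools are these. First, for a projection $q$ over $X_{i,n}$, the composite $q \circ P^{(n)}_{i,\nu}$ is a projection over $X_{i,n+1}$. Second, evaluation at the base point $x_{i,n}$ gives a constant projection, which is trivial of rank $\rank (q)$. We also use that the Bott-type projection satisfies
\[
\bigoplus_{\nu = 1}^{d_0 (n + 1)} p_{s_0 (n) t_0} \circ P^{(n)}_{0, \nu} \approx p_{s_0 (n + 1) t_0}.
\]
This holds because $X_{0,n+1} = (X_{0,n})^{d_0 (n+1)}$ and $p_{k}$ is the direct sum of the pullbacks $p \circ Q_j$ over all $S^2$ factors. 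By Lemma~\ref{L_1727_pm} the range is the corresponding product line bundle, so the direct sum over $\nu$ simply reassembles all $s_0 (n+1) t_0$ coordinate factors, up to permuting the summands.

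For part~(\ref{Item_1727_Id_p0}) the case $n = 0$ is just the statement itself, with $r (0) = s_0 (0) = 1$, $f = 0$, $f_0 = e_0$, and $f_1 = e_1$, all of rank $t_0$. In the inductive step, suppose the image at stage $n$ is $\bigl( p_{s_0 (n) t_0} \oplus f, \, f_0, \, f_1 \bigr)$. We apply $\Ld_{n+1,n}$ and read off each coordinate.

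In the first coordinate we get the $d_0 (n+1)$ pullbacks of $p_{s_0 (n) t_0} \oplus f$, which give $p_{s_0 (n+1) t_0}$ plus the trivial pullbacks of $f$. We also get $l (n+1) - d_0 (n+1) - 2$ copies of the constant projection $(p_{s_0(n)t_0} \oplus f)(x_{0,n})$, each trivial of rank $r (n) t_0$, together with the constants $f_0 (x_{0,n})$ and $f_1 (x_{1,n})$.

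The resulting trivial part has rank
\[
d_0 (n + 1) [r (n) - s_0 (n)] t_0 + [l (n + 1) - d_0 (n + 1)] r (n) t_0 = [r (n + 1) - s_0 (n + 1)] t_0 .
\]
A direct sum of trivial projections is trivial, so this gives the new $f$.

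The second coordinate consists of pullbacks of $f_0$ together with constants of total rank $r (n) t_0$ each, so it has total rank $l (n+1) r (n) t_0 = r (n+1) t_0$. Every piece is trivial, so the second coordinate is a trivial projection of that rank. The third coordinate is handled the same way, using pullbacks of $f_1$ and constants. For these two coordinates we use that the pullback of a trivial projection is trivial of the same rank.

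Part~(\ref{Item_1727_Id_prj_1}) is the symmetric computation with the roles of the two blocks interchanged. The nontrivial piece now lives in the third coordinate and propagates via the $d_1 (n+1)$ projections $P^{(n)}_{1,\nu}$. The first two coordinates remain trivial and equal, since $f_1(x_{0,n})$ and $f_2(x_{0,n})$ agree when the two inputs are equal.

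The only point that needs real care is the identification of the direct sum of pullbacks of $p_{s_0(n)t_0}$ with $p_{s_0(n+1)t_0}$ up to Murray-von Neumann equivalence. We handle it by noting that the two are conjugate by a constant permutation matrix, after matching the coordinate indexing of $(S^2)^{t_0 s_0(n+1)}$ with nested products. The rank bookkeeping is routine.
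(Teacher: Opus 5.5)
Your proposal is correct and follows essentially the same route as the paper. Both argue by induction on $n$, use the fact that the maps $P^{(n)}_{0,\nu}$ are all the coordinate projections so that the pullbacks of $p_{s_0 (n) t_0}$ reassemble into $p_{s_0 (n+1) t_0}$, note that pullbacks of trivial projections and constant projections are trivial, and finish by counting ranks. Your permutation-matrix justification and explicit rank formula only spell out what the paper leaves to Lemma~\ref{L_1727_pm} and ``counting ranks.'' One step deserves an explicit sentence: the inductive hypothesis gives only Murray-von Neumann equivalence at stage~$n$, so you should note that $\id_{M_{\I}} \otimes \Ld_{n + 1, n}$ preserves Murray-von Neumann equivalence before applying it to the stage~$n$ representative.
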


\begin{proof}
The proofs of the two parts are essentially the same,
so we only prove~(\ref{Item_1727_Id_p0}).
We prove the formula by induction on~$n$.
It is trivial for $n = 0$.

Suppose the result is known for~$n$.
Since $\id_{M_{\infty}} \otimes \Ld_{n + 1, n}$ preserves \mvnc,
we may as well, using the notation in the statement,
consider
\[
\bigl( \id_{M_{\infty}} \otimes \Ld_{n + 1, n} \bigr)
 \bigl( p_{s_0 (n) t_0} \oplus f, \, f_0, \, f_1 \bigr).
\]
With $\ld_n$ as in (\ref{Im_1622_lamdaDefinition})
and the maps $P^{(n)}_{i, \nu}$ in that formula as in
Construction \ref{Cns_1622_Construction}(\ref{Im_1622_Ctn_114}),
we can reinterpret this as
\[
\bigl( \id_{M_{\infty}} \otimes \ld_{n} \bigr)
 \bigl( p_{s_0 (n) t_0} \oplus f, \, f_0, \, f_1 \bigr).
\]
The maps
\[
P^{(n)}_{0, 1}, \, P^{(n)}_{0, 2}, \ldots, P^{(n)}_{0, \, d_0 (n + 1)}
 \colon \bigl( (S^2)^{s_0 (n) t_0} \bigr)^{d_0 (n + 1)}
   \to (S^2)^{s_0 (n) t_0}
\]
are exactly all the coordinate \pj{s}.
Therefore the terms $p_{s_0 (n) t_0} \circ P^{(n)}_{0, \nu}$
taken together give,
in the first coordinate of
\[
\bigl( \id_{M_{\infty}} \otimes \ld_{n} \bigr)
 \bigl( p_{s_0 (n) t_0} \oplus f, \, f_0, \, f_1 \bigr),
\]
a summand which, by Lemma~\ref{L_1727_pm}, is \mvnt{} to
$p_{s_0 (n) t_0 \cdot d_0 (n + 1)} = p_{s_0 (n + 1) t_0}$.
The \pj{s} $f \circ P^{(n)}_{0, \nu}$ and $f_0 \circ P^{(n)}_{0, \nu}$,
for $\nu = 1, 2, \ldots, d_0 (n + 1)$,
and $f_1 \circ P^{(n)}_{1, \nu}$,
for $\nu = 1, 2, \ldots, d_1 (n + 1)$, are all trivial
since $f$, $f_0$, and $f_1$ are trivial.
All remaining summands are constant projections,
gotten by evaluating some \pj{} (trivial or not) at a single point.
The claimed form of
\[
\bigl( \id_{M_{\infty}} \otimes \Ld_{n + 1, n} \bigr)
 \bigl( p_{s_0 (n) t_0} \oplus f, \, f_0, \, f_1 \bigr)
\]
follows by counting ranks.
\end{proof}

The next result is the analog of Corollary~6.13 of~\cite{AsvGlsPhl}.

\begin{cor}\label{C_1727_NoSubeq}
Assume the notation and choices in
Construction~\ref{Cns_1622_Construction}.
Let $n \in \Nz$
and let $g = (g_{0, 1}, g_{0, 2}, g_1) \in M_{\I} (A_n)$ be a \pj.
\begin{enumerate}
%
\item\label{Item_1727_NoSubeq_C1}
Suppose $g_{0, 1}$ is trivial.
Suppose, in the notation of
Lemma~\ref{L_1727_Id_p0}(\ref{Item_1727_Id_p0}),
that there is $x \in M_{\I} (A_n)$ such that
\[
\bigl\| x g x^*
  - (\id_{M_{\I}} \otimes \Ld_{n, 0}) \bigl( p_{t_0}, e_0, e_1 \bigr)
         \bigr\|
 < \frac{1}{2}.
\]
Then $\rank (g_{0, 1}) \geq [r (n) + s_0 (n)] t_0$.
\item\label{Item_1727_NoSubeq_col2}
Suppose $g_{1}$ is trivial.
Suppose, in the notation of
Lemma~\ref{L_1727_Id_p0}(\ref{Item_1727_Id_prj_1}),
that there is $x \in M_{\I} (A_n)$ such that
\[
\bigl\| x g x^*
  - (\id_{M_{\I}} \otimes \Ld_{n, 0}) \bigl( e_0, e_0, p_{t_1} \bigr)
         \bigr\|
 < \frac{1}{2}.
\]
Then $\rank (g_{1}) \geq [r (n) + s_1 (n)] t_1$.
\end{enumerate}
\end{cor}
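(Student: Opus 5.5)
We reduce to a statement about vector bundles on $X_{0, n}$ (respectively $X_{1, n}$) and then apply Lemma~\ref{L_1727_NoEmbed}; we describe part~(\ref{Item_1727_NoSubeq_C1}), and part~(\ref{Item_1727_NoSubeq_col2}) is the same with the roles of the summands exchanged. Set $h = (\id_{M_{\I}} \otimes \Ld_{n, 0}) ( p_{t_0}, e_0, e_1 )$, a projection. Since $\| x g x^* - h \| < \frac{1}{2}$ with $h$ a projection, the standard argument gives $h \precsim x g x^* \precsim g$ in $M_{\I} (A_n)$. Concretely, $(x g x^* - \frac{1}{2})_{+}$ is close enough to $h$ that $h \precsim (x g x^* - \frac{1}{2})_{+} \precsim g$, by Corollary~1.6 of~\cite{Phl40} or a direct functional calculus argument. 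For projections in $M_{\I} (C (Y))$, Cuntz subequivalence is the same as \mvnc{} to a subprojection. So $h \lessapprox g$, and hence, passing to the first summand $C (X_{0, n}, M_{r (n)})$, the first coordinate $h_{0, 1}$ of $h$ satisfies $h_{0, 1} \lessapprox g_{0, 1}$.

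Next we identify $h_{0, 1}$. By Lemma~\ref{L_1727_Id_p0}(\ref{Item_1727_Id_p0}), $h_{0, 1} \approx p_{s_0 (n) t_0} \oplus f$ with $f$ trivial of rank $[r (n) - s_0 (n)] t_0$. Lemma~\ref{L_1727_pm} identifies the range of $p_{s_0 (n) t_0}$ with $L^{\times s_0 (n) t_0}$ over $(S^2)^{s_0 (n) t_0} = X_{0, n}$; here we use $X_{0, n} = Z_0^{s_0 (n)} = (S^2)^{t_0 s_0 (n)}$. Thus the vector bundle of $h_{0, 1}$ is $L^{\times k} \oplus V$ with $k = s_0 (n) t_0$ and $V$ trivial of rank $[r (n) - s_0 (n)] t_0$. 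Since $g_{0, 1}$ is trivial, $h_{0, 1} \lessapprox g_{0, 1}$ says this bundle embeds in a trivial bundle of rank $\rank (g_{0, 1})$.

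Lemma~\ref{L_1727_NoEmbed} then gives
\[
\rank (g_{0, 1}) \geq 2 s_0 (n) t_0 + [r (n) - s_0 (n)] t_0 = [r (n) + s_0 (n)] t_0,
\]
as desired. The only point needing care, and the one I expect to be the main obstacle, is the first step: passing from the norm estimate $\| x g x^* - h \| < \frac{1}{2}$ to an honest \mvnc{} of $h$ with a subprojection of $g$, and checking that this subequivalence restricts coordinatewise to the direct summand indexed by $(0, 1)$. Both follow because $A_n$ is a direct sum and the estimate holds in each summand separately. In part~(\ref{Item_1727_NoSubeq_col2}) we use the third coordinate, $X_{1, n} = (S^2)^{t_1 s_1 (n)}$, and Lemma~\ref{L_1727_Id_p0}(\ref{Item_1727_Id_prj_1}).
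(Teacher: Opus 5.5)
Your proof is correct and is essentially the paper's argument. The paper first replaces $(\id_{M_{\I}} \otimes \Ld_{n, 0}) ( p_{t_0}, e_0, e_1 )$ by the Murray--von Neumann equivalent projection $\bigl( p_{s_0 (n) t_0} \oplus f, \, f_0, \, f_1 \bigr)$ from Lemma~\ref{L_1727_Id_p0} (conjugating by a partial isometry keeps the $\frac{1}{2}$ estimate), then restricts to the first summand and uses Lemma~1.13 of~\cite{HrsPh5} to get $p_{s_0 (n) t_0} \oplus f \lessapprox g_{0, 1}$. You instead pass through Cuntz subequivalence and the standard fact that it agrees with Murray--von Neumann subequivalence for projections (true in any C*-algebra), and only then restrict and identify $h_{0, 1}$; the final step via Lemmas~\ref{L_1727_pm} and~\ref{L_1727_NoEmbed} is identical.
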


\begin{proof}
As for Lemma~\ref{L_1727_Id_p0},
the proofs of the two parts are essentially the same,
so we prove only the first part.

We apply Lemma~\ref{L_1727_Id_p0}(\ref{Item_1727_Id_p0}).
We claim that we may replace
$(\id_{M_{\I}} \otimes \Ld_{n, 0}) \bigl( p_{t_0}, e_0, e_1 \bigr)$
with any \mvnt{} \pj.
Indeed, if $B$ is a \ca, $g, p, q \in B$ are \pj{s},
and $s,  x \in B$ satisfy
\[
s^* s = p, \qquad s s^* = q, \andeqn
\| x g x^* - p \| < \frac{1}{2},
\]
then $\| s \| \leq 1$ and $s p s^* = q$,
so $\| s x g x^* s^* - q \| < \frac{1}{2}$.
This proves the claim.

By Lemma~\ref{L_1727_Id_p0}(\ref{Item_1727_Id_p0}),
and using the notation there,
we may therefore assume that
\[
\bigl\| x g x^*
  - \bigl( p_{s_0 (n) t_0} \oplus f, \, f_0, \, f_1 \bigr) \bigr\|
 < \frac{1}{2}.
\]
Restricting to the first summand in $A_{0, n}$,
we find $y \in M_{\I} (C (X_{0, n}) )$ such that
$\| y g_{0, 1} y^* - p_{s_0 (n) t_0} \oplus f \| < \frac{1}{2}$.
Lemma~1.13 of~\cite{HrsPh5}
implies that $p_{s_0 (n) t_0} \oplus f \lessapprox g_{0, 1}$.
The conclusion follows from Lemma~\ref{L_1727_pm}
and by taking $k = s_0 (n) t_0$ and $r = [r (n) - s_0 (n)] t_0$
in Lemma~\ref{L_1727_NoEmbed}.
\end{proof}

The next result is the analog of Theorem~6.15 of~\cite{AsvGlsPhl}.
It follows from Proposition~1.11 of~\cite{HrsPh5}.
We give a (shorter) proof here anyway, since the argument
is used in the proof of Lemma~\ref{L_1727_rc_A_in_cp}.

\begin{thm}\label{T_1727_rc_A}
Assume the notation and choices
in Construction \ref{Cns_1622_Construction}.
Then
$\rc (A) = \max (t_0 \kp_0, \, t_1 \kp_1)$.
\end{thm}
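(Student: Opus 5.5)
We prove the two inequalities $\rc (A) \geq \max (t_0 \kp_0, \, t_1 \kp_1)$ and $\rc (A) \leq \max (t_0 \kp_0, \, t_1 \kp_1)$ separately, following the pattern of Theorem~6.15 of~\cite{AsvGlsPhl}. Since $A$ is simple, stably finite, and has stable rank one (Lemma~\ref{L_1623_Simple}), it is residually stably finite. So Lemma~\ref{L_6920_No_aIb} lets us ignore the condition $a \in I (b)$.

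For the upper bound we use the standard dimension-ratio estimate for AH~algebras. The maps $\Ld_{n + 1, n}$ are diagonal, and the building block $A_{0, n}$ (two copies of $C (X_{0, n}) \otimes M_{r (n)}$) and $A_{1, n}$ have dimension-to-matrix-size ratios
\[
\frac{\dim (X_{i, n})}{2 r (n)} = \frac{t_i s_i (n)}{r (n)} = t_i u_i (n)
\]
for $i = 0, 1$. By the known bound $\rc (C (X) \otimes M_k) \leq \frac{\dim (X) - 1}{2 k}$ for matrix algebras over finite dimensional spaces, combined with lower semicontinuity of $\rc$ under direct limits with injective unital maps (the trivial-group case of Theorem~\ref{DyDirLim}, that is, Proposition~3.2.4 of~\cite{BRTW12}), we get
\[
\rc (A) \leq \liminf_{n \to \infty} \max \bigl( t_0 u_0 (n), \, t_1 u_1 (n) \bigr) = \max (t_0 \kp_0, \, t_1 \kp_1),
\]
because $u_i (n)$ decreases to $\kp_i$. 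The direct-sum formula is the same as in Lemma~\ref{DirExtLemma}(\ref{DirExtLemma_DSum}) with trivial action, and Lemma~\ref{DirExtLemma}(\ref{DirExtLemma_Mn}) handles the amplification.

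For the lower bound we treat $i = 0$; the case $i = 1$ is symmetric and uses Lemma~\ref{L_1727_Id_p0}(\ref{Item_1727_Id_prj_1}) and Corollary~\ref{C_1727_NoSubeq}(\ref{Item_1727_NoSubeq_col2}). We may assume $t_0 \kp_0 > 0$. Fix $\rho < t_0 \kp_0$ and set
\[
a = \Ld_{\I, 0} (p_{t_0}, e_0, e_1),
\]
which has rank $t_0$ in every summand, and let $b$ be a trivial projection in $M_\I (A_0)$ of suitable rank~$N$. We then compute the traces of $a$ and $b$ at each stage $n$. Every tracial state on $A_n$ gives $a$ and $b$ normalized ranks $t_0$ and $N$ respectively, measured in units of $r (n)$. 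So the tracial gap $d_\ta (b) - d_\ta (a)$ is controlled in every tracial state of $A$, and we will choose the rank of $b$, together with a deficit of roughly $s_0 (n) t_0 / r (n) \to t_0 \kp_0$, so that $d_\ta (a) + \rho < d_\ta (b)$ for all $\ta \in \QT (A) = \T (A)$. Here we use that $A$ is exact, or simply that the quasitraces are limits of traces on the building blocks.

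The remaining step is to show $a \not\precsim_A b$. If $a \precsim_A b$, then, since $a$ and $b$ are projections, there are $n$ and $x \in M_\I (A_n)$ with
\[
\bigl\| x \, \Ld_{n, 0} (b) \, x^* - \Ld_{n, 0} (p_{t_0}, e_0, e_1) \bigr\| < \tfrac{1}{2}.
\]
Corollary~\ref{C_1727_NoSubeq}(\ref{Item_1727_NoSubeq_C1}) then forces the first coordinate of $\Ld_{n, 0} (b)$, which is trivial, to have rank at least $[r (n) + s_0 (n)] t_0$. This contradicts the chosen rank of $b$. The main obstacle is the bookkeeping in this last step: choosing $b$, whose rank is fixed at stage $0$, so that its normalized rank exceeds that of $a$ by more than $\rho$ in every trace, while its rank at stage $n$, namely $N r (n)$, stays below $[r (n) + s_0 (n)] t_0$. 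Since $s_0 (n) / r (n) \to \kp_0$, this works after replacing $a$ by a multiple, or $b$ by a projection at a later stage whose rank is a multiple of $r (n)$ in lowest terms, exactly as in~\cite{AsvGlsPhl}. Letting $\rho \to t_0 \kp_0$ gives $\rc (A) \geq t_0 \kp_0$.
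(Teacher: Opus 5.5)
Your upper bound is fine and is essentially the paper's argument. The paper packages the building-block dimension estimate together with lower semicontinuity under injective unital limits as Proposition~6.14 of \cite{AsvGlsPhl}. Your lower bound also uses the same $a = \Ld_{\I, 0} (p_{t_0}, e_0, e_1)$ and the same obstruction, Corollary~\ref{C_1727_NoSubeq}(\ref{Item_1727_NoSubeq_C1}).

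\textbf{The gap: $b$ at stage $0$ does not work.} Take $b$ trivial of rank $N$ in $\Mi (A_0)$. Every trace gives $d_\ta (b) = N$ and $d_\ta (a) = t_0$, so you need $N > t_0 + \rho$. You do not control the stage $m$ at which an approximate subequivalence would appear, so you also need $N r (m) < [r (m) + s_0 (m)] t_0$ for all large $m$, that is, $N - t_0 \leq t_0 \kp_0$. Thus you need an integer in $(\rho, t_0 \kp_0]$. This fails for $\rho$ close to $t_0 \kp_0$ unless $t_0 \kp_0$ is an integer, so this choice only yields $\rc (A) \geq \lfloor t_0 \kp_0 \rfloor$, which may be $0$.

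\textbf{Your proposed repairs do not fix it.} Replacing $a$ by $a^{\oplus k}$ does not help. The topological deficit of $k$ copies of $p_{s_0 (m) t_0}$ is still only $s_0 (m) t_0$; for instance, $(L^{\times s})^{\oplus k}$ embeds in a trivial bundle of rank $(k + 1) s$. So you again need an integer in $(\rho, t_0 \kp_0]$. A projection at a later stage whose rank is a multiple of $r (n)$ has the same problem, since its normalized trace is again an integer.

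\textbf{What works (this is what the paper does).} Put $b$ at a late stage with a rank that is \emph{not} a multiple of $r (n)$.
\begin{itemize}
\item Choose $n$ with $\frac{1}{r (n)} < t_0 \kp_0 - \rho$.
\item Then choose $M \in \Nz$ with $\rho + t_0 < \frac{M}{r (n)} < (\kp_0 + 1) t_0$.
\item Let $q = (q_0, q_0, q_1)$ with $q_0, q_1$ trivial of rank $M$, and set $b = \Ld_{\I, n} (q)$.
\end{itemize}
Then $d_\ta (b) = \frac{M}{r (n)} > d_\ta (a) + \rho$ for every tracial state $\ta$. For every $m \geq n$,
\[
\rank (\Ld_{m, n} (q))
 = M \cdot \frac{r (m)}{r (n)}
 < (\kp_0 + 1) t_0 r (m)
 \leq [r (m) + s_0 (m)] t_0,
\]
where the last inequality uses $u_0 (m) \geq \kp_0$. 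This contradicts Corollary~\ref{C_1727_NoSubeq}(\ref{Item_1727_NoSubeq_C1}) at whatever stage $m$ the approximate subequivalence is realized.
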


\begin{proof}
Using Proposition~6.14 of~\cite{AsvGlsPhl} and
$\dim (X_{i, n}) = 2 s_i (n) t_i$
(from Construction \ref{Cns_1622_Construction}(\ref{I_6720_Dim})),
as well as
Construction \ref{Cns_1622_Construction}(\ref{Im_1622_Ctn_116}),
Construction \ref{Cns_1622_Construction}(\ref{Im_1622_Ctn_1}),
and Construction \ref{Cns_1622_Construction}(\ref{Im_1622_Ctn_112}),
we get
\[
\begin{split}
\rc (A)
& \leq \limsup_{n \to \I}
  \max \left( \frac{s_0 (n) t_0}{r (n)},
      \, \frac{s_1 (n) t_1}{r (n)} \right)
\\
& \leq \max \left( t_0 \lim_{n \to \I} u_0 (n), \, \,
      t_1 \lim_{n \to \I} u_1 (n)  \right)
 = \max (t_0 \kp_0, \, t_1 \kp_1).
\end{split}
\]

For the reverse inequality, it suffices to show that if
$\rh < t_0 \kp_0$ or $\rh < t_1 \kp_1$,
then $A$ does not have $\rh$-comparison.
We do only the first statement; the proof of the second statement
is the same, using Lemma~\ref{L_1727_Id_p0}(\ref{Item_1727_Id_prj_1})
in place of Lemma~\ref{L_1727_Id_p0}(\ref{Item_1727_Id_p0}).
If $\rh < t_0 \kp_0$,
we exhibit $a, b \in M_{\I} (A)$ such that
$d_{\ta} (a) + \rh < d_{\ta} (b)$ for all $\af$-invariant \tst{s}
$\ta$ on~$A$,
but for which $a \precsim_{A} b$ fails.

All three coordinates of every \pj{} $q \in \Mi (A_n)$ used
in this proof will have the same rank.
In particular, if this is true of $q$,
then it is also true of $(\id_{\Mi} \otimes \Ld_{m, n}) (q)$
for any $m \geq n$.
By abuse of notation, we write $\rank (q)$ for the common rank.

Similarly to the proof of Theorem~6.15 of~\cite{AsvGlsPhl},
choose $n \in \N$ and $M \in \Nz$ such that
\begin{equation}\label{Eq_6727_StSt}
\frac{1}{r (n)} < t_0 \kp_0 - \rh
\andeqn
\rho + t_0 < \frac{M}{r (n)} < (\kp_0 + 1) t_0.
\end{equation}
Let $q_0 \in M_{\infty} ( C (X_{0, n} ))$
and $q_1 \in M_{\infty} ( C (X_{1, n} ))$
be trivial \pj{s} of rank~$M$,
and set $q = (q_0, q_0, q_1) \in M_{\infty} (A_n)$.
(By construction, all three coordinates have the same rank.)
We will take $b$ above to be $\Ld_{\I, n} (q)$.
By abuse of notation,
we use $\Lambda_{m, n}$ to denote the amplified map
$M_{\infty} (A_n) \to M_{\infty} (A_m)$ as well.
If $m \geq n$, then
$\rank ( \Lambda_{m, n} (q) ) = M \cdot \frac{r (m)}{r (n)}$.

We claim that if $m \geq n$ then
$\rank (\Lambda_{m, n} (q)) < [r (m) + s_0 (m)] t_0$.
Suppose not.
Then, using the second part of~(\ref{Eq_6727_StSt}) at the second step,
\[
[r (m) + s_0 (m)] t_0
 \leq M \cdot \frac{r (m)}{r (n)}
 < (\kp_0 + 1) r (m) t_0.
\]
Dividing by $r (m) t_0$ and rearranging,
we get $\kp_0 > \frac{s_0 (m)}{r (m)} = u_0 (m)$.
Since $(u_0 (m))_{m \in \Nz}$ is clearly nonincreasing,
this contradicts
\Ctn{Cns_1622_Construction}(\ref{Im_1622_Ctn_112}).
So the claim follows.

Let $e_0$ and $e_1$
be as in Lemma~\ref{L_1727_Id_p0}(\ref{Item_1727_Id_p0}).
The ranks of $p_{t_0}$, $e_0$, and $e_1$ are all the same,
namely~$t_0$.
The element $a$ above will be $\Lambda_{\I, 0} ( p_{t_0}, e_0, e_1 )$.
For any tracial state $\tau$ on $A_m$
(and thus for any tracial state on $A$), we have,
using the second part of~(\ref{Eq_6727_StSt}) at the third step,
\begin{equation}\label{Eq_1727_Clc}
\begin{split}
d_{\tau} (\Lambda_{m, n} (q))
& = \tau (\Lambda_{m, n} (q))
  = \frac{1}{r (m)} \cdot M \cdot \frac{r (m)}{r (n)}
  > t_0 + \rh
\\
& = \tau (\Lambda_{m, 0} ( p_{t_0}, e_0, e_1 )) + \rh
  = d_{\tau} (\Lambda_{m, 0} ( p_{t_0}, e_0, e_1 )) + \rh.
\end{split}
\end{equation}
On the other hand, if
$\Lambda_{\infty, 0} ( p_{t_0}, e_0, e_1 )
  \lessapprox_A \Lambda_{\infty, n} (q)$
then, in particular, there exist $m \geq n$
and $x \in M_{\infty} (A_m)$ such that
\[
\| x \Lambda_{m, n} (q) x^* - \Lambda_{m, 0} ( p_{t_0}, e_0, e_1 ) \|
 < \frac{1}{2}.
\]
By Corollary \ref{C_1727_NoSubeq}(\ref{Item_1727_NoSubeq_C1}),
we have
\[
\rank (\Lambda_{m, n} (q)) \geq [r (m) + s_0 (m)] t_0.
\]
This contradicts the claim.
We have proved that $A$ does not have $\rh$-comparison.
\end{proof}

\begin{lem}\label{L_1727_CP}
Assume the notation and choices
in Construction \ref{Cns_1622_Construction}.
Let $n \in \N$.
Then there is an isomorphism
\[
C^* (\Z / 2 \Z, A_n, \af^{(n)})
 \to M_2 (C (X_{0, n}, M_{r (n)}) )
      \oplus C (X_{1, n}, M_{r (n)}) \oplus C (X_{1, n}, M_{r (n)})
\]
whose composition with the natural inclusion of
\[
A_n = C \bigl( X_{0, n}, M_{r (n)} \bigr)
        \oplus C (X_{0, n}, M_{r (n)}) \oplus C (X_{1, n}, M_{r (n)})
\]
in the
crossed product is given by
\[
(a_{1}, a_{2}, b)
 \mapsto \bigl( \diag ( a_{1}, a_{2}), \, b, \, b \bigr).
\]
\end{lem}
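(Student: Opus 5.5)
The plan is to split $A_n$ equivariantly as $A_{0, n} \oplus A_{1, n}$ and compute the two crossed products separately. This works because $\af^{(n)}_1 = \af^{(0, n)}_1 \oplus \af^{(1, n)}_1$ leaves each summand invariant, so
\[
C^* (\Z / 2 \Z, A_n, \af^{(n)})
 \cong C^* (\Z / 2 \Z, A_{0, n}, \af^{(0, n)})
   \oplus C^* (\Z / 2 \Z, A_{1, n}, \af^{(1, n)}),
\]
compatibly with the inclusions of $A_{0, n}$ and $A_{1, n}$.

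\emph{The summand $A_{0, n}$.} By Lemma~\ref{PUE1}, $\af^{(0, n)}$ is exterior equivalent to $\bt^{(n)}$ via the cocycle $z_{n, 1} = (v_n, v_n)$. Exterior equivalent actions have isomorphic crossed products. The isomorphism is the identity on $A_{0, n}$ and sends the implementing unitary $u$ to $z_{n, 1} u$. So it restricts to the identity on the copy of $A_{0, n}$.

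Now $\bt^{(n)}$ is the flip on $C (X_{0, n}, M_{r (n)}) \oplus C (X_{0, n}, M_{r (n)}) \cong C (\Z / 2 \Z, B)$, with $B = C (X_{0, n}, M_{r (n)})$. This is the translation action of Lemma~\ref{L_1412_CGA}, which gives $C^* (\Z / 2 \Z, C (\Z / 2 \Z, B)) \cong M_2 \otimes B$. Under this isomorphism $(a_1, a_2) \mapsto \diag (a_1, a_2)$.

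For a concrete check, define $\ph (a_1, a_2) = \diag (a_1, a_2)$ and send $u$ to $\begin{pmatrix} 0 & 1 \\ 1 & 0 \end{pmatrix}$. This is a covariant pair. The induced map is surjective, since the image contains the diagonal and the off-diagonal corners. A dimension count over each fiber shows it is injective: both sides have fibers of dimension $4 r (n)^2$.

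\emph{The summand $A_{1, n}$.} Here $\af^{(1, n)}_1 = \Ad (w_n)$ with $w_n^2 = 1$. The action is therefore inner via the honest unitary representation $0 \mapsto 1$, $1 \mapsto w_n$, so it is exterior equivalent to the trivial action. The exterior equivalence again fixes $A_{1, n}$.

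For the trivial action, $C^* (\Z / 2 \Z, D) \cong C (\widehat{\Z / 2 \Z}, D) = D \oplus D$, with $D$ embedded diagonally as $b \mapsto (b, b)$ (as in Lemma~\ref{L_1412_Triv}). Explicitly, the isomorphism is $d + d' u \mapsto (d + d', \, d - d')$.

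\emph{Assembling.} Combining the two summands gives exactly the stated formula $(a_1, a_2, b) \mapsto \bigl( \diag (a_1, a_2), \, b, \, b \bigr)$.

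The only step needing care is to check that both exterior equivalence isomorphisms are the identity on $A_n$, and not merely conjugate to it. This holds because the standard isomorphism $a u_g \mapsto a z_g u_g$ fixes $A$. It should be verified that the cocycle identity here uses $v_n^2 = w_n^2 = 1$ and that $v_n, w_n$ are constant (hence central-free issues do not arise). Apart from that, the argument is routine.
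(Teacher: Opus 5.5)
Your proposal is correct and follows the same route as the paper. The paper simply cites the well-known identification of the crossed product for an action exterior equivalent to the flip on $A_{0, n}$ (Lemma~\ref{PUE1}), and for the inner action $\Ad (w_n)$ with $w_n^2 = 1$ on $A_{1, n}$; you write out those details, and they all check out, including that both exterior equivalence isomorphisms fix $A_n$. One small simplification: in the flip step you can skip the fiberwise dimension count, since every element of the crossed product is uniquely $x + y u$ with $x = (x_1, x_2)$ and $y = (y_1, y_2)$ in $A_{0, n}$, and it is sent to $\begin{pmatrix} x_1 & y_1 \\ y_2 & x_2 \end{pmatrix}$, so injectivity is visible directly.
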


\begin{proof}
This is a well known identification of the crossed products
by an inner action and an action cocycle conjugate to the flip action.
(See Lemma~\ref{PUE1}.)
\end{proof}

\begin{lem}\label{L_1727_rc_A_in_cp}
Assume the notation and choices
in Construction \ref{Cns_1622_Construction}.
Then
\[
\rc' (A, \alpha)
 \geq \max \left( \frac{t_0 \kp_0}{2}, \, t_1 \kp_1 \right).
\]
\end{lem}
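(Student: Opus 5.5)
The plan is to follow the proof of Theorem~\ref{T_1727_rc_A}, replacing Cuntz subequivalence in $A$ by Cuntz subequivalence in $\CGAa$. I would use the finite stage description of the crossed product in Lemma~\ref{L_1727_CP}. It suffices to show two things: if $\rh < t_1 \kp_1$, or if $\rh < \frac{t_0 \kp_0}{2}$, then $(A, \af)$ does not have $\rh$-comparison in the crossed product. In each case I will exhibit projections $a, b \in \Mi (A)$ with $a \in I_{\af} (b)$ and $d_{\ta} (a) + \rh < d_{\ta} (b)$ for all $\ta \in \QT (A)^{\af}$, but $a \not\precsim_{\CGAa} b$.

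\textbf{Setup.} Since $\Z / 2 \Z$ is finite and the connecting maps are equivariant, unital, and injective, $\CGAa$ is the direct limit of the algebras $C^* (\Z / 2 \Z, A_m, \af^{(m)})$, again with injective maps. All projections used will have equal ranks in all three coordinates. Hence every tracial state $\ta$ on $A_m$, and therefore on~$A$, gives $d_{\ta}$ equal to the common rank divided by $r (m)$. Since $A$ is AH, hence exact, all quasitraces are traces, so the trace computation~(\ref{Eq_1727_Clc}) applies verbatim to all of $\QT (A)^{\af}$. Since $A$ is simple (Lemma~\ref{L_1623_Simple}), any nonzero $b$ is full, so $a \in I_{\af} (b)$ is automatic. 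Suppose $\Ld_{\I, 0} (a_0) \precsim_{\CGAa} \Ld_{\I, n} (q)$ for projections $a_0, q$. The standard argument from the proof of Theorem~\ref{T_1727_rc_A} then gives $m \geq n$ and $x$ in the matrices over $C^* (\Z / 2 \Z, A_m, \af^{(m)})$ with
\[
\| x \, \io (\Ld_{m, n} (q)) \, x^* - \io (\Ld_{m, 0} (a_0)) \| < \tfrac{1}{2}.
\]
Apply the isomorphism of Lemma~\ref{L_1727_CP}, under which $\io (f_1, f_2, g) = (\diag (f_1, f_2), \, g, \, g)$, and restrict to a single summand.

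\textbf{Case $\rh < t_1 \kp_1$.} Take $a = \Ld_{\I, 0} (e_0, e_0, p_{t_1})$ as in Lemma~\ref{L_1727_Id_p0}(\ref{Item_1727_Id_prj_1}), with $e_0$ trivial of rank~$t_1$. Take $b = \Ld_{\I, n} (q)$ with $q = (q_0, q_0, q_1)$ trivial of rank~$M$, where $\rh + t_1 < \frac{M}{r (n)} < (\kp_1 + 1) t_1$. Restricting to the summand $C (X_{1, m}, M_{r (m)})$ gives $y$ with $\| y \, g_1 \, y^* - (p_{s_1 (m) t_1} \oplus f) \| < \frac{1}{2}$, where $g_1$ is the third coordinate of $\Ld_{m, n} (q)$. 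This is exactly the situation of Corollary~\ref{C_1727_NoSubeq}(\ref{Item_1727_NoSubeq_col2}), which forces $\rank (\Ld_{m, n} (q)) \geq [r (m) + s_1 (m)] t_1$. That contradicts the rank claim in the proof of Theorem~\ref{T_1727_rc_A}, which uses that $u_1 (m)$ decreases to~$\kp_1$.

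\textbf{Case $\rh < \frac{t_0 \kp_0}{2}$.} This is the main new step. Take $a = \Ld_{\I, 0} (p_{t_0}, e_0, e_1)$ as in Lemma~\ref{L_1727_Id_p0}(\ref{Item_1727_Id_p0}) and $b = \Ld_{\I, n} (q)$. Now choose $n$ and $M$ so that
\[
\rh + t_0 < \frac{M}{r (n)} < \left( 1 + \frac{\kp_0}{2} \right) t_0,
\]
which is possible exactly because $\rh < \frac{t_0 \kp_0}{2}$. Restrict to the summand $M_2 (C (X_{0, m}, M_{r (m)}))$. The image of $a$ there is
\[
\diag \bigl( p_{s_0 (m) t_0} \oplus f, \, f_0 \bigr) \approx p_{s_0 (m) t_0} \oplus (\text{trivial of rank } [2 r (m) - s_0 (m)] t_0).
\]
The image of $b$ is a trivial projection of rank $2 M \frac{r (m)}{r (n)}$. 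By Lemma~1.13 of~\cite{HrsPh5}, the approximate relation gives an embedding of bundles. By Lemma~\ref{L_1727_pm} and Lemma~\ref{L_1727_NoEmbed}, with $k = s_0 (m) t_0$ and $r = [2 r (m) - s_0 (m)] t_0$, we then get
\[
2 M \frac{r (m)}{r (n)} \geq [2 r (m) + s_0 (m)] t_0,
\]
that is, $\frac{M}{r (n)} \geq t_0 \bigl( 1 + \frac{u_0 (m)}{2} \bigr) \geq t_0 \bigl( 1 + \frac{\kp_0}{2} \bigr)$. This contradicts the choice of~$M$.

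The obstacle I expect to need care is exactly this doubling in the $M_2$ summand. One must check that the trivial summand of $a$ there has rank $2 r (m) t_0 - s_0 (m) t_0$, and that Lemma~\ref{L_1727_NoEmbed} applies with a trivial summand added. This is what produces the factor $\frac{1}{2}$ in front of $t_0 \kp_0$.
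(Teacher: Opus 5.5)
Your proposal is correct and follows the paper's proof essentially step by step. It uses the same test projections, the same choice of $M$ (with upper bound $t_0 + \frac{t_0 \kp_0}{2}$, respectively $t_1 + t_1 \kp_1$), restriction to the summand $M_2 (C (X_{0, m}, M_{r (m)}))$ with Lemma~1.13 of~\cite{HrsPh5} and Lemma~\ref{L_1727_NoEmbed} to get the factor $\frac{1}{2}$, and restriction to an $X_{1, m}$-summand in the other case. One minor point: in the second case you are using the argument of Corollary~\ref{C_1727_NoSubeq}(\ref{Item_1727_NoSubeq_col2}) on a single summand of the crossed product, not its literal statement, which concerns an approximation in all of $M_{\infty} (A_m)$; the paper does the same.
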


\begin{proof}
The proof is much like the main argument
in the proof of Theorem \ref{T_1727_rc_A}.
We prove that if $\rh < t_0 \kp_0 / 2$ or $\rh < t_1 \kp_1$,
then $A$ does not have $\rh$-comparison in $C^* (\Z / 2 \Z, A, \af)$,
by exhibiting $a, b \in M_{\I} (A)$ such that
$d_{\ta} (a) + \rh < d_{\ta} (b)$ for all $\af$-invariant \tst{s}
$\ta$ on~$A$,
but for which $a \precsim_{C^* (\Z / 2 \Z, A, \af)} b$ fails.
(This is the definition of $\rc' (A, \alpha)$.)

As in the proof of Theorem~\ref{T_1727_rc_A}, abbreviate
$\id_{M_{\I}} \otimes \Lambda_{m, n}$ to $\Lambda_{m, n}$.
Also as there, and for the same reason,
all projections $q$ in direct sums will
have the same rank in all coordinates, and we write the
common rank as $\rank (q)$.

\emph{\textbf{Case 1:} $\rh < t_0 \kp_0 / 2$.}
Choose $n \in \N$ and $M \in \Nz$ such that
\begin{equation}\label{Eq_6727_MSt}
\frac{1}{r (n)} < t_0 \kp_0 / 2 - \rh
\andeqn
\rho + t_0 < \frac{M}{r (n)} < \frac{t_0 \kp_0}{2} + t_0.
\end{equation}
Let $q_0 \in M_{\infty} ( C (X_{0, n} ))$
and $q_1 \in M_{\infty} ( C (X_{1, n} ))$
be trivial \pj{s} of rank~$M$,
and set $q = (q_0, q_0, q_1) \in M_{\infty} (A_n)$.
(The element $b$ above will be $\Lambda_{\infty, n} (q)$.)
If $m \geq n$, then
\begin{equation}\label{Eq_6721_NoStar}
\rank ( \Lambda_{m, n} (q) ) = M \cdot \frac{r (m)}{r (n)}.
\end{equation}

We claim that if $m \geq n$ then
\[
\rank (\Lambda_{m, n} (q))
 < \left( r (m) + \frac{s_0 (m)}{2} \right) t_0.
\]
Suppose not.
Then, using (\ref{Eq_6721_NoStar}) at the first step
and the second part of~(\ref{Eq_6727_MSt}) at the second step,
\[
\left( r (m) + \frac{s_0 (m)}{2} \right) t_0
 \leq M \cdot \frac{r (m)}{r (n)}
 < \left( \frac{t_0 \kp_0}{2} + t_0 \right) r (m).
\]
Thus
\[
u_0 (m) = \frac{s_0 (m)}{r (m)} < \kp_0.
\]
As in the proof of Theorem~\ref{T_1727_rc_A},
this is a contradiction, and the claim follows.

Let $e_0$ and $e_1$
be as in Lemma~\ref{L_1727_Id_p0}(\ref{Item_1727_Id_p0}).
The ranks of all of $p_{t_0}$, $e_0$, and $e_1$ are the same,
namely~$t_0$.
(The element $a$ above will be
$\Lambda_{\infty, 0} ( p_{t_0}, e_0, e_1 )$.)
As in the proof of Theorem \ref{T_1727_rc_A} (see (\ref{Eq_1727_Clc});
here, we use (\ref{Eq_6721_NoStar})
and the second part of~(\ref{Eq_6727_MSt})),
for any tracial state $\tau$ on $A_m$
(and thus for any $\af$-invariant tracial state on $A$), we have
\[
d_{\tau} (\Lambda_{m, n} (q))
 > d_{\tau} (\Lambda_{m, 0} ( p_{t_0}, e_0, e_1 )) + \rh.
\]
On the other hand, if
\[
\Lambda_{\infty, 0} ( p_{t_0}, e_0, e_1 )
  \lessapprox_{C^* (\Z / 2 \Z, A, \af)} \Lambda_{\infty, n} (q),
\]
then, in particular, there exist $m \geq n$
and $x \in M_{\infty} (C^* (\Z / 2 \Z, A_m, \af^{(m)}))$
such that
$\| x \Lambda_{m, n} (q) x^* - \Lambda_{m, 0}( p_{t_0}, e_0, e_1 ) \|
 < \frac{1}{2}$.
We have $\Lambda_{m, n} (q) = (g_{0, 1}, g_{0, 2}, g_1)$
for trivial \pj{s} $g_{0, 1}, g_{0, 2} \in M_{\infty} ( C (X_{0, m} ))$
and $g_1 \in M_{\infty} ( C (X_{1, m} ))$,
all of rank $M \cdot \frac{r (m)}{r (n)}$.
By the claim and (\ref{Eq_6721_NoStar}),
\begin{equation}\label{Eq_6721_Star}
M \cdot \frac{r (m)}{r (n)}
 < \left( r (m) + \frac{s_0 (m)}{2} \right) t_0.
\end{equation}

Identify $C^* (\Z / 2 \Z, A_m, \af^{(m)})$ as in Lemma~\ref{L_1727_CP},
and restrict to the first summand there.
Using Lemma \ref{L_1727_Id_p0}(\ref{Item_1727_Id_p0})
and the notation there,
we find $y \in M_{\infty} ( C (X_{0, m} ))$ such that
\[
\bigl\| y \, \diag ( g_{0, 1}, g_{0, 2} ) \, y^*
    - \diag( p_{s_0 (m) t_0} \oplus f, \, f_0 \bigr) \bigr\|
  < \frac{1}{2}.
\]
This implies that
\[
p_{s_0 (m) t_0} \oplus f \oplus f_0 \lessapprox g_{0, 1} \oplus g_{0, 2}.
\]
Lemma~\ref{L_1727_pm} and
taking $k = s_0 (m) t_0$ in Lemma~\ref{L_1727_NoEmbed}
together give
\[
\rank \bigl( p_{s_0 (m) t_0} \oplus f \oplus f_0 \bigr) + s_0 (m) t_0
 \leq \rank (g_{0, 1} \oplus g_{0, 2}).
\]
Putting in what these ranks are, we get,
using~(\ref{Eq_6721_Star}) at the second step,
\[
s_0 (m) t_0 + [r (m) - s_0 (m)] t_0 + r (m) t_0 + s_0 (m) t_0
 \leq 2 M \cdot \frac{r (m)}{r (n)}
 < [2 r (m) + s_0 (m)] t_0.
\]
This says $s_0 (m) t_0 < s_0 (m) t_0$, a contradiction.
So $A$ does not have $\rh$-comparison in the crossed product.

\emph{\textbf{Case 2:} $\rh < t_1 \kp_1$.}
The proof in this case is analogous to that for $\rh < t_0 \kp_0 / 2$.
There is one difference.
The obstruction is obtained from one
of the summands in the crossed product involving $X_{1, m}$.
Therefore no factor of two appears.

Choose $n \in \N$ and $M \in \N$ such that
\[
\frac{1}{r(n)} < t_1 \kp_1 - \rh
\andeqn
t_1 + \rh < \frac{M}{r(n)} < t_1 + t_1 \kp_1.
\]
Let $q_0 \in M_{\infty} ( C (X_{0, n} ))$
and $q_1 \in M_{\infty} ( C (X_{1, n} ))$
be trivial \pj{s} of rank~$M$,
and set $q = (q_0, q_0, q_1) \in M_{\infty} (A_n)$.
For $m \geq n$, set
\[
R_m = \rank \bigl( \Lambda_{m, n} (q) \bigr) = M \cdot \frac{r(m)}{r(n)}.
\]
As in the first case, using $u_1 (m) \geq \kp_1$,
one obtains, for every $m \geq n$,
\begin{equation}\label{Eq4_300726}
R_m<[r(m) + s_1 (m)] t_1.
\end{equation}

Let $e_0$ be as in
Lemma~\ref{L_1727_Id_p0}(\ref{Item_1727_Id_prj_1}).
Set $a = \Lambda_{\infty, 0} (e_0, e_0, p_{t_1})$ and
$b = \Lambda_{\infty, n} (q)$.
Then $d_{\ta} (a) = t_1$ and $d_{\ta} (b) = \frac{M}{r(n)}$
for every $\ta \in \T (A)^{\alpha}$, and hence
\[
d_{\ta} (a) + \rh< d_{\ta} (b) \quad \mbox{for all } \ta \in \T (A)^{\alpha}.
\]

Suppose that $a \precsim_{C^* (\mathbb{Z}/2 \mathbb{Z}, A, \alpha)} b$.
Passing to a sufficiently large finite stage and restricting to
either one of the two $X_{1, m}$-summands, we obtain
$p_{s_1 (m) t_1} \oplus f \precsim g_1$,
where $g_1$ is a trivial projection of rank $R_m$ and
$\rank (f) = [r(m) - s_1 (m)] t_1$.
Lemma~\ref{L_1727_NoEmbed} therefore gives
\[
R_m \geq 2s_1 (m) t_1 + [r(m) - s_1 (m)] t_1
= [r(m) + s_1 (m)] t_1,
\]
contradicting (\ref{Eq4_300726}).
Thus $a \not\precsim_{C^* (\mathbb{Z}/2 \mathbb{Z}, A, \alpha)} b$.
\end{proof}

\begin{thm}\label{T_1727_rcPrime}
Assume the notation and choices
in Construction~\ref{Cns_1622_Construction}.
Then
\[
\rc' (A, \alpha)
 = \rc \bigl( C^* (\Z / 2 \Z, A, \af) \bigr)
 = \max \left( \frac{t_0 \kp_0}{2}, \, t_1 \kp_1 \right).
\]
\end{thm}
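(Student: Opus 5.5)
We will establish the chain
\[
\max \left( \frac{t_0 \kp_0}{2}, \, t_1 \kp_1 \right)
 \leq \rc' (A, \alpha)
 \leq \rc \bigl( C^* (\Z / 2 \Z, A, \af) \bigr)
 \leq \max \left( \frac{t_0 \kp_0}{2}, \, t_1 \kp_1 \right).
\]
The first inequality is Lemma~\ref{L_1727_rc_A_in_cp}, and the second is Lemma~\ref{Lem_rcp}(\ref{Lem_rcp_c}). So the only remaining task is the upper bound for the radius of comparison of the crossed product.

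For that upper bound, we first identify the crossed product as a direct limit. Crossed products by finite groups commute with equivariant direct limits, so $C^* (\Z / 2 \Z, A, \af)$ is the direct limit of the algebras $C^* (\Z / 2 \Z, A_n, \af^{(n)})$. By Lemma~\ref{L_1727_CP}, each of these is isomorphic to
\[
B_n = M_{2 r (n)} (C (X_{0, n})) \oplus M_{r (n)} (C (X_{1, n})) \oplus M_{r (n)} (C (X_{1, n})).
\]
The connecting maps are the crossed product maps induced by $\Ld_{n + 1, n}$. They are unital and injective, and we need to check that they are again of diagonal type: point evaluations and coordinate projections, conjugated by unitaries. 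This follows from the explicit formula for $\ld_n$ together with the cocycle $(v_n, v_n)$ of Lemma~\ref{PUE1}.

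Next we bound the radius of comparison using the dimension-to-rank ratio, via Proposition~6.14 of~\cite{AsvGlsPhl}, exactly as in the first half of the proof of Theorem~\ref{T_1727_rc_A}. That result gives
\[
\rc \bigl( C^* (\Z / 2 \Z, A, \af) \bigr)
 \leq \limsup_{n \to \I} \max_{\text{summands}} \frac{\dim (\text{base space})}{2 \cdot \text{matrix size}},
\]
with the convention normalizing by the unit of $B_n$. For the $X_{0, n}$ summand the ratio is
\[
\frac{2 s_0 (n) t_0}{2 \cdot 2 r (n)} = \frac{t_0 u_0 (n)}{2},
\]
and for each $X_{1, n}$ summand it is
\[
\frac{2 s_1 (n) t_1}{2 r (n)} = t_1 u_1 (n).
\]
Taking limits gives $\max \bigl( \frac{t_0 \kp_0}{2}, \, t_1 \kp_1 \bigr)$, which completes the chain.

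The main obstacle is checking that Proposition~6.14 of~\cite{AsvGlsPhl} applies to the crossed product system. Its hypotheses, such as unitality, injectivity, and possibly the form of the connecting maps, have to hold for the maps between the $B_n$, and we must track how the normalization of the unit enters. The unit of $B_n$ has rank $2 r (n)$ on the $X_{0, n}$ summand and $r (n)$ on each $X_{1, n}$ summand, so the normalized traces weigh these differently; this is exactly where the factor $\frac{1}{2}$ on the $X_{0}$ part comes from. If that proposition requires simplicity, we will instead use the limit algebra's simplicity, which follows from Lemma~\ref{L_1623_Simple} together with outerness of the action on the simple algebra $A$ (checked from the construction). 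Alternatively, we can run the argument of Proposition~6.14 of~\cite{AsvGlsPhl} directly, using the standard vector bundle comparison estimate (rank gap of at least half the dimension) in each summand.
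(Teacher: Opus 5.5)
Your proposal is correct and is essentially the paper's own proof. The lower bound is Lemma~\ref{L_1727_rc_A_in_cp}, the middle inequality is Lemma~\ref{Lem_rcp}(\ref{Lem_rcp_c}), and the upper bound on $\rc \bigl( C^* (\Z / 2 \Z, A, \af) \bigr)$ comes from Lemma~\ref{L_1727_CP} and Proposition~6.14 of~\cite{AsvGlsPhl}, using the same dimension-to-matrix-size computation as at the start of the proof of Theorem~\ref{T_1727_rc_A}; the paper simply cites this step, so your extra checks (the direct limit identification and the form of the connecting maps) are added care rather than a different route.
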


\begin{proof}
We have
$\rc' (A, \alpha) \leq \rc \bigl( C^* (\Z / 2 \Z, A, \af) \bigr)$
by Lemma \ref{Lem_rcp}(\ref{Lem_rcp_c}).
Lemma~\ref{L_1727_rc_A_in_cp} therefore implies
that we only need to show
\[
\rc \bigl( C^* (\Z / 2 \Z, A, \af) \bigr)
 \leq \max \left( \frac{t_0 \kp_0}{2}, \, t_1 \kp_1 \right).
\]
This follows from Lemma~\ref{L_1727_CP}
and Proposition~6.14 of~\cite{AsvGlsPhl}
by the same argument as at the beginning of the proof of
Theorem \ref{T_1727_rc_A}.
\end{proof}

\begin{thm}\label{thm_1622_rc}
Assume the notation and choices in
Construction~\ref{Cns_1622_Construction}.
Then
\[
\rc (A, \af)
= \max \left( \frac{t_0 \kp_0}{2}, \, t_1 \kp_1 \right).
\]
\end{thm}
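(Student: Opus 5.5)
The lower bound is immediate: by Lemma \ref{Lem_rcp}(\ref{Lem_rcp_a}) we have $\rc' (A, \alpha) \leq \rc (A, \af)$, and Theorem~\ref{T_1727_rcPrime} (or already Lemma~\ref{L_1727_rc_A_in_cp}) gives $\rc' (A, \alpha) \geq \max \bigl( \frac{t_0 \kp_0}{2}, \, t_1 \kp_1 \bigr)$. So the work is the upper bound $\rc (A, \af) \leq \max \bigl( \frac{t_0 \kp_0}{2}, \, t_1 \kp_1 \bigr)$. The plan is to use the direct limit permanence, Theorem~\ref{DyDirLim}. The connecting maps $\Ld_{n+1, n}$ are unital, injective, and equivariant (Lemma~\ref{L_1623_Cmm}). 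The group $\Z / 2 \Z$ is amenable, and $A$ is simple and stably finite (Lemma~\ref{L_1623_Simple}), hence residually stably finite. Thus $\rc (A, \af) \leq \liminf_n \rc (A_n, \af^{(n)})$, and it suffices to show that $\rc (A_n, \af^{(n)})$ is at most roughly $\max \bigl( \frac{t_0 s_0 (n)}{r (n)}, \, \frac{t_1 s_1 (n)}{r (n)} \bigr)$ up to an error tending to $0$, since $u_i (n) \to \kp_i$.

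To compute $\rc (A_n, \af^{(n)})$, I would split $A_n = A_{0, n} \oplus A_{1, n}$ equivariantly and apply Lemma \ref{DirExtLemma}(\ref{DirExtLemma_DSum}), so that $\rc (A_n, \af^{(n)}) = \max \bigl( \rc (A_{0, n}, \af^{(0, n)}), \, \rc (A_{1, n}, \af^{(1, n)}) \bigr)$.

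\emph{The summand $A_{1, n}$.} The action $\af^{(1, n)}$ is inner, so it is pointwise unitarily equivalent to the trivial action. Lemma \ref{DirExtLemma}(\ref{DirExtLemma_aue}) and Remark~\ref{R_5Z10_Not_rsf} then give $\rc (A_{1, n}, \af^{(1, n)}) = \rc (C (X_{1, n}, M_{r (n)}))$. By the standard dimension bound (Proposition~6.14 of~\cite{AsvGlsPhl}, as in the proof of Theorem~\ref{T_1727_rc_A}), this is at most $\frac{\dim (X_{1, n})}{2 r (n)} = \frac{t_1 s_1 (n)}{r (n)}$, possibly up to a harmless additive $O(1/r(n))$ term.

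\emph{The summand $A_{0, n}$.} By Lemma~\ref{PUE1}, $\af^{(0, n)}$ is exterior equivalent to the flip $\bt^{(n)}$, so the two actions are pointwise unitarily equivalent, and Lemma \ref{DirExtLemma}(\ref{DirExtLemma_aue}) gives $\rc (A_{0, n}, \af^{(0, n)}) = \rc (A_{0, n}, \bt^{(n)})$. The flip is exactly the translation action of Lemma~\ref{L_1412_CGA} with $G = \Z / 2 \Z$ and coefficient algebra $C (X_{0, n}, M_{r (n)})$. That lemma gives $\rc = \frac{1}{2} \rc (C (X_{0, n}, M_{r (n)})) \leq \frac{t_0 s_0 (n)}{2 r (n)}$, again up to $O(1/r(n))$.

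Taking the maximum and letting $n \to \infty$ yields $\liminf_n \rc (A_n, \af^{(n)}) \leq \max \bigl( \frac{t_0 \kp_0}{2}, \, t_1 \kp_1 \bigr)$, which combined with the lower bound proves the theorem.

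The main obstacle is making the finite stage estimate precise. The statement of Proposition~6.14 of~\cite{AsvGlsPhl} is phrased for the limit, so I need the exact form of the bound $\rc (C (X, M_k)) \leq \frac{\dim X - 1}{2k}$ (or similar) for single homogeneous algebras. I also need to check that residual stable finiteness holds at each finite stage, which it does for $C (X, M_k)$, so that Lemma~\ref{L_1412_CGA} and Lemma~\ref{DirExtLemma} apply. If the exact constants are awkward, any bound of the form $\frac{\dim X_{i, n}}{2 r (n)} + o(1)$ suffices, since the limit is what matters.
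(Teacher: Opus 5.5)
Your proposal is correct and matches the paper's proof essentially step for step. The lower bound comes from Theorem~\ref{T_1727_rcPrime} together with Lemma~\ref{Lem_rcp}(\ref{Lem_rcp_a}). The upper bound comes from Theorem~\ref{DyDirLim} combined with the finite-stage computation via Lemma~\ref{DirExtLemma}, Lemma~\ref{PUE1}, Lemmas~\ref{L_1412_CGA} and~\ref{L_1412_Triv}, and the bound $\rc (C (X, M_k)) \leq \frac{\dim (X)}{2 k}$. The paper applies this finite-stage bound directly, with no error term; your $O (1 / r (n))$ slack would be harmless anyway, since $r (n) \to \infty$.
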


\begin{proof}
By Theorem~\ref{T_1727_rcPrime} and
Lemma~\ref{Lem_rcp}(\ref{Lem_rcp_a}), we have
\[
\rc (A, \af)
\geq \max \left( \frac{t_0 \kp_0}{2}, \, t_1 \kp_1 \right).
\]
We prove the reverse inequality.
Let $\io^{(n)}$ be the trivial action of $\Z / 2 \Z$ on $A_{1, n}$.
In the following calculation, use
Lemma~\ref{DirExtLemma}(\ref{DirExtLemma_DSum}) at the second step,
Lemma~\ref{PUE1} and
Lemma \ref{DirExtLemma}(\ref{DirExtLemma_aue}) at the third step,
Lemma~\ref{L_1412_CGA} and Lemma~\ref{L_1412_Triv}
at the fourth step,
Construction \ref{Cns_1622_Construction}(\ref{Im_1622_Ctn_114}))
at the fifth step, and
$\dim (X_{i, n}) = 2 s_i (n) t_i$
(from Construction \ref{Cns_1622_Construction}(\ref{I_6720_Dim}))
at the sixth step, to get
\begin{equation}\label{Eq_12_1}
\begin{split}
\rc (A_n, \alpha^{(n)})
& = \rc \big(A_{0, n} \oplus A_{1, n}, \,
   \alpha^{(0, n)} \oplus \alpha^{(1, n)} \big)
\\
& = \max \left( \rc (A_{0, n}, \alpha^{(0, n)}), \,
    \rc (A_{1, n}, \alpha^{(1, n)}) \right)
\\
& = \max \left( \rc (A_{0, n}, \beta^{(n)}), \,
         \rc (A_{1, n}, \io^{(n)}) \right)
  = \max \left( \frac{1}{2} \cdot \rc (A_{0, n}), \,
      \rc (A_{1, n}) \right)
\\
& \leq \max \left( \frac{\dim (X_{0, n})}{4 r (n)}, \,
         \frac{\dim (X_{1, n})}{2 r (n)} \right)
   = \max \left( \frac{1}{2} \cdot \frac{s_{0} (n) t_0}{r(n)}, \,
  \frac{s_{1} (n) t_1}{r(n)} \right).
\end{split}
\end{equation}
We then get, using Theorem~\ref{DyDirLim} at the first step,
(\ref{Eq_12_1}) at the second step,
and Construction \ref{Cns_1622_Construction}(\ref{Im_1622_Ctn_112})
at the last step,
\[
\begin{split}
\rc (A, \alpha)
& \leq \liminf_{n \to \infty} \rc (A_n, \alpha^{(n)})
  = \liminf_{n \to \I}
  \max \left( \frac{1}{2} \cdot \frac{s_0 (n) t_0}{r (n)},
      \, \frac{s_1 (n) t_1}{r (n)} \right)
\\
& = \max \left( \frac{t_0}{2} \lim_{n \to \I} u_0 (n), \, \,
      t_1 \lim_{n \to \I} u_1 (n)  \right)
  = \max \left( \frac{t_0 \kp_0}{2}, \, t_1 \kp_1 \right).
\end{split}
\]
This completes the proof.
\end{proof}

\section{Open problems}\label{Sec_4712_Open}

\indent
We conclude with several open problems.

\begin{pbm}\label{Pb_5Z14_Strict}
Do there exist a stably finite simple unital \ca~$A$,
a discrete amenable group~$G$,
and an action $\afGAA$
such that
\[
\rc (A) > 0
\andeqn
\rc (A, \af) = 0?
\]
\end{pbm}

For $G = \mathbb{Z}^d$, the examples of~\cite{AHS25}
with $\rc (A) > 0$ and
$\rc (C^*(G, A, \af)) = 0$
are promising candidates.
Obtaining $\rc (A, \af)$ for these actions would already answer
the problem in important cases.

For nonsimple C*-algebrras, Example~\ref{Ex_BPPositiveDynRc}
exhibits the opposite phenomenon,
but the group there is not amenable.

\begin{pbm}
Let $A$ be a \uca, let $G$ be a finite group, and let
$\afGAA$ be an action.
Assume that both $A$ and $\CGAa$ are
infinite dimensional and simple.
Are the following inequalities valid
without any additional assumption on $\af$?
\[
\frac{\rc (A)}{\card (G)} \leq \rc (A, \af) \leq \rc (A)
\andeqn
\frac{\rc (A)}{\card (G)} \leq \rc' (A, \af) \leq \rc (A).
\]
\end{pbm}

\begin{pbm}\label{Pb_1412_InCatQ}
Do there exist a simple unital \ca~$A$, a discrete group~$G$,
and an action $\af \colon G \to \Aut (A)$
such that
\[
\rc \bigl( C^* (G, A, \alpha) \bigr) < \rc (A, \alpha) < \rc (A)?
\]
\end{pbm}

For finite groups and nonsimple C*-algebras, see
Example~\ref{Ex_Three_Radii_Different}.
It should be possible to
combine the construction in that example with the main construction
of Section~\ref{ImportantExample} to obtain a simple example.
This is work in preparation.

\begin{pbm}\label{Pb_DynRcMeanDim}
Let $G$ be a countable amenable group,
let $X$ be a compact metrizable space,
and let $\af$ be a free minimal action of $G$ on~$X$.
Use the same letter for the induced action on $C (X)$.
Does the inequality
\begin{equation}\label{Eq_6923_ModPT}
\rc (C (X), \af) \leq \frac{1}{2} \operatorname{mdim} (\af)
\end{equation}
always hold?
\end{pbm}

This inequality is related to one direction
of the generalized Phillips-Toms conjecture, namely that
\begin{equation}\label{Eq_6926_PT_cj}
\rc (C^* (G, X, \af)) \leq \frac{1}{2} \operatorname{mdim} (\af).
\end{equation}
The general theory does not show that either of
(\ref{Eq_6923_ModPT}) and (\ref{Eq_6926_PT_cj}) implies the other.
Either would imply that
\[
\rc' (C (X), \af) \leq \frac{1}{2} \operatorname{mdim} (\af),
\]
which is also open.

\end{document}